\definecolor{refkey}{rgb}{0.9451,0.2706,0.4941}
\definecolor{labelkey}{rgb}{0.9451,0.2706,0.4941}
\definecolor{mygreen}{rgb}{0,0.7,0.3}
\definecolor{myblue}{rgb}{0,0.50,1.20}%cyan={0,1.74,2.39}
\definecolor{myorange}{rgb}{1,0.5,0.1}
\numberwithin{equation}{section}
\crefname{thm}{Theorem}{Theorems}
\crefname{cor}{Corollary}{Corollaries}
\crefname{lem}{Lemma}{Lemmas}
\crefname{sublem}{Sublemma}{Sublemmas}
\crefname{prop}{Proposition}{Propositions}
\crefname{dfn}{Definition}{Definitions}
\crefname{defi}{Definition}{Definitions}
\crefname{ex}{Example}{Examples}
\crefname{claim}{Claim}{Claims}
\crefname{conj}{Conjecture}{Conjectures}
\crefname{conv}{Notation}{Notations}
\crefname{rem}{Remark}{Remarks}
\crefname{rmk}{Remark}{Remarks}
\crefname{prob}{Problem}{Problems}
\crefname{figure}{Figure}{Figures}
\crefname{table}{Table}{Tables}
\crefname{section}{Section}{Sections}
\crefname{subsection}{Section}{Sections}
\crefname{appendix}{Appendix}{Appendices}
\crefname{introthm}{Theorem}{Theorems}
\crefname{introcor}{Corollary}{Corollaries}
\crefname{introconj}{Conjecture}{Conjectures}
\newtheorem{thm}{Theorem}[section]
\newtheorem{prop}[thm]{Proposition}
\newtheorem{cor}[thm]{Corollary}
\newtheorem{lem}[thm]{Lemma}
\newtheorem{introthm}{Theorem}
\theoremstyle{definition}
\newtheorem{dfn}[thm]{Definition}
\newtheorem{ex}[thm]{Example}
\theoremstyle{remark}
\newtheorem{rmk}[thm]{Remark}
\newtheorem{rem}[thm]{Remark}
\newcommand{\bZ}{\mathbb{Z}}
\newcommand{\bC}{\mathbb{C}}
\newcommand{\bT}{\mathbb{T}}
\newcommand{\bM}{\mathbb{M}}
\newcommand{\bP}{\mathbb{M}_\circ}
\newcommand{\bA}{\mathbb{A}}
\newcommand{\bB}{\mathbb{B}}
\newcommand{\bI}{\mathbb{I}}
\newcommand{\A}{\mathcal{A}}%----------shortened
\newcommand{\cF}{\mathcal{F}}
\newcommand{\cL}{\mathcal{L}}
\newcommand{\cO}{\mathcal{O}}
\def\P{{\mathcal{P}}}
\newcommand{\cR}{\mathcal{R}}
\newcommand{\X}{\mathcal{X}}%--------------shortened
\newcommand{\cZ}{\mathcal{Z}}
\newcommand{\sfa}{\mathsf{a}}
\newcommand{\sfx}{\mathsf{x}}
\newcommand{\sfT}{{\mathsf{T}}}
\newcommand{\bi}{\boldsymbol{i}}
\newcommand{\tri}{\triangle}
\newcommand{\sgn}{\mathrm{sgn}}
\newcommand{\Tri}{\mathrm{Tri}}
\newcommand{\Teich}{Teichm\"uller}
\newcommand{\ve}{\varepsilon}
\newcommand{\Sk}[1]{\mathscr{S}_{#1}^q(\bM)}
\newcommand{\sSk}[1]{\overline{\mathscr{S}}^q_{\!#1}(\bB)}
\newcommand{\sfe}{\mathsf{e}}
\newcommand{\sff}{\mathsf{f}}
\newcommand{\wtri}{{\widehat{\tri}}}
\tikzset{
    squigarrow/.style={-{Classical TikZ Rightarrow[length=4pt]}, decorate, decoration={snake, amplitude=1.8pt, pre length=2pt, post length=3pt}}
}
\newcommand{\BA}{\mathbb A}
\newcommand{\BB}{\mathbb B}
\newcommand{\BD}{\mathbb D}
\newcommand{\BZ}{\mathbb Z}
\newcommand{\CA}{\mathcal A}
\newcommand{\CI}{\mathcal I}
\newcommand{\CO}{\mathcal O}
\newcommand{\CR}{\mathcal R}
\newcommand{\CZ}{\mathcal Z}
\newcommand{\fo}{\mathfrak o}
\newcommand{\al}{\alpha}
\newcommand{\sS}{\mathscr{S}}
\newcommand{\BM}{\mathbb{M}}
\newcommand{\BMp}{\mathbb{M}_\partial}
\newcommand{\BMc}{\BM_\circ}
\newcommand{\ol}{\overline}
\newcommand{\sfB}{\mathsf{B}}
\newcommand{\trt}{{\rm Tr}_T}
\newcommand{\CZbl}{(\CZ_\tri^q)_{\mathrm{bl}}}
\newcommand{\Ibad}{\CI^{{\rm bad}}}
\newcommand{\pSigma}{\partial \Sigma}
\newcommand{\sSsq}{\sS_\Sigma^q}
\newcommand{\congr}{{\rm cong}}
\newcommand{\olSs}{\overline{\sS}_{\!\Sigma}^q}
\DeclareMathOperator{\interior}{\mathrm{int}}
\DeclareMathOperator{\Int}{\mathrm{Int}}
\DeclareMathOperator{\Frac}{\mathrm{Frac}}
\newcommand{\oset}[3][0ex]{%
  \mathrel{\mathop{#3}\limits^{
    \vbox to#1{\kern-2\ex@
    \hbox{$\scriptstyle#2$}\vss}}}}
\newcommand{\osetnear}[3][0ex]{%
  \mathrel{\mathop{#3}\limits^{
    \vbox to#1{\kern-.3\ex@
    \hbox{$\scriptstyle#2$}\vss}}}}
\newcommand\qarrow[2]{\draw[->,shorten >=2pt,shorten <=2pt] (#1) -- (#2) [thick];} %arrow
\def\centerarc(#1)(#2:#3:#4)% Syntax: [draw options] (center) (initial angle:final angle:radius)
\tikzset{
  % style to add an arrow in the middle of a path
  mid arrow/.style={postaction={decorate,decoration={
        markings,
        mark=at position .5 with {\arrow[#1]{stealth}}
      }}},
}
\tikzset{
    partial ellipse/.style args={#1:#2:#3}{
        insert path={+ (#1:#3) arc (#1:#2:#3)}
    }
}
\newcommand{\bline}[3]{
    \path (#1)++(0,-#3) coordinate(m1);
    \path (#2)++(0,-#3) coordinate(m2);
    \filldraw[gray!30] (m1) -- (#1) -- (#2) -- (m2) --cycle;
    \draw[thick] (#1) -- (#2);
}
\newcommand{\tline}[3]{
    \path (#1)++(0,#3) coordinate(m1);
    \path (#2)++(0,#3) coordinate(m2);
    \filldraw[gray!30] (m1) -- (#1) -- (#2) -- (m2) --cycle;
    \draw[thick] (#1) -- (#2);
}
\tikzset{->-/.style 2 args={
	postaction={decorate},
	decoration={markings, mark=at position #1 with {\arrow[thick, #2]{>}}} 
    },
    ->-/.default={0.5}{}
}%refer arrow tips
\tikzset{-<-/.style 2 args={
	postaction={decorate},
	decoration={markings, mark=at position #1 with {\arrow[thick, #2]{<}}} 
    },
    -<-/.default={0.5}{}
}
\tikzset{
	overarc/.style={
		white, double=red, double distance=1.2pt, line width=2.4pt
	}
}
\title[Quantum duality maps, skein algebras and their ensemble compatibility]{Quantum duality maps, skein algebras and their ensemble compatibility}
\author[Tsukasa Ishibashi]{Tsukasa Ishibashi}
\address{Tsukasa Ishibashi, Mathematical Institute, Tohoku University, 
6-3 Aoba, Aramaki, Aoba-ku, Sendai, Miyagi 980-8578, Japan.}
\email{tsukasa.ishibashi.a6@tohoku.ac.jp}
\author[Hiroaki Karuo]{Hiroaki Karuo}
\address{Hiroaki Karuo, Department of Mathematics, Gakushuin University, Mejiro, Toshima-ku, Tokyo, Japan.}
\email{hiroaki.karuo@gakushuin.ac.jp}
\date{\today}
\begin{document}
\maketitle

\begin{abstract}
We generalize the quantum duality map $\mathbb{I}_{\mathcal{A}}$ of Allegretti--Kim \cite{AK} for punctured closed surfaces to general marked surfaces. 
When the marked surface has no interior marked points, 
we investigate its compatibility with the quantum duality map $\mathbb{I}_{\mathcal{X}}$ on the dual side based on the quantum bracelets basis \cite{Th,MQ}. Our construction factors through reduced stated skein algebras, based on the quantum trace maps \cite{Le_triangular} together with an appropriate way of \emph{skein lifting} of integral $\mathcal{A}$-laminations. We also give skein theoretic proofs for some expected properties of Laurent expressions, and positivity of structure constants for marked disks and a marked annulus. 
\end{abstract}

\setcounter{tocdepth}{1}
\tableofcontents

\section{Introduction}

\subsection{Background: quantum \Teich\ theory and the skein algebra}
Quantization of the \Teich\ theory is a program driven by physical observations given in the Verlinde's paper \cite{Ver90}, and has also attracted many people in mathematics. While the original description is based on the Fenchel--Nielsen coordinates, Chekhov--Fock \cite{CF99} developed a quantum \Teich\ theory based on the shear coordinates associated with ideal triangulations. Here to admit ideal triangulations, the surface we consider should have a specified set of marked points (namely, \emph{marked surfaces}). 
One expects that the Poisson algebra of trace functions along loops can be deformed into a non-commutative algebra (``algebra of line operators"), which leads to a quantum ordering problem of their classical coordinate expressions, as is usual in quantum theories (see \emph{e.g.}, \cite[Section 3]{CP07} for a discussion). 

The quantum ordering problem was beautifully solved by Bonahon--Wong \cite{BW11}, based on the skein algebra. Let us briefly recall its definition. 
For a unital ring $\CR$ with a quantum parameter $q \in \cR$ and an oriented surface $\Sigma$, 
Przytycki \cite{Prz91} and Turaev \cite{Tu} independently introduced the (ordinary) \emph{skein algebra} $\mathscr{S}^q_{\Sigma}$ of $\Sigma$. 
It is the $\CR$-algebra generated by all isotopy classes of framed links in $\Sigma\times (-1,1)$ as a module, subject to the following relations and equipped with multiplication by stacking:  
\begin{align*}({\rm A}) \begin{array}{c}\includegraphics[scale=0.23]{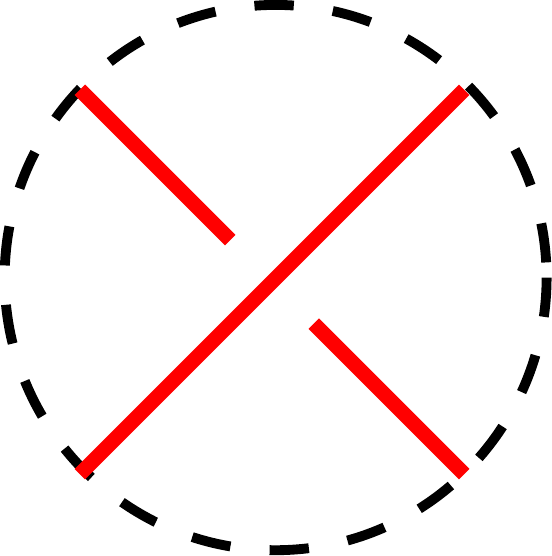}\end{array}=q\begin{array}{c}\includegraphics[scale=0.23]{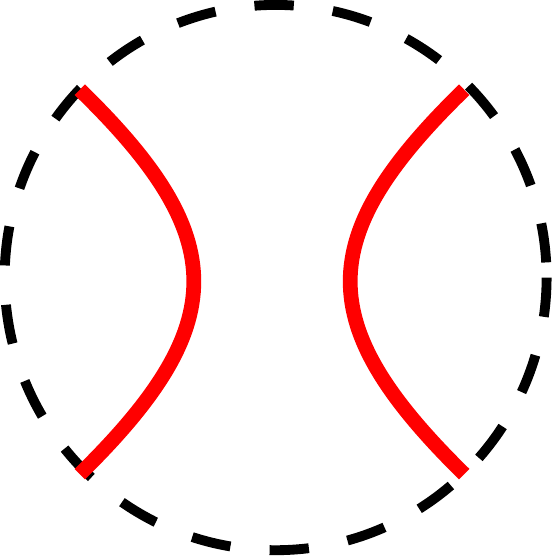}\end{array}+q^{-1}\begin{array}{c}\includegraphics[scale=0.23]{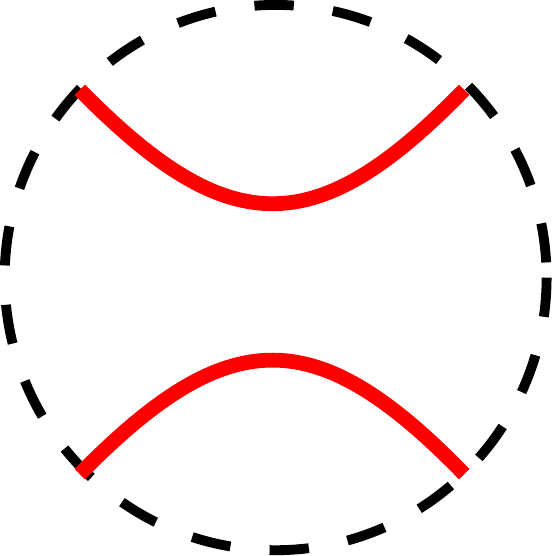}\end{array},\ \ \ ({\rm B}) \begin{array}{c}\includegraphics[scale=0.23]{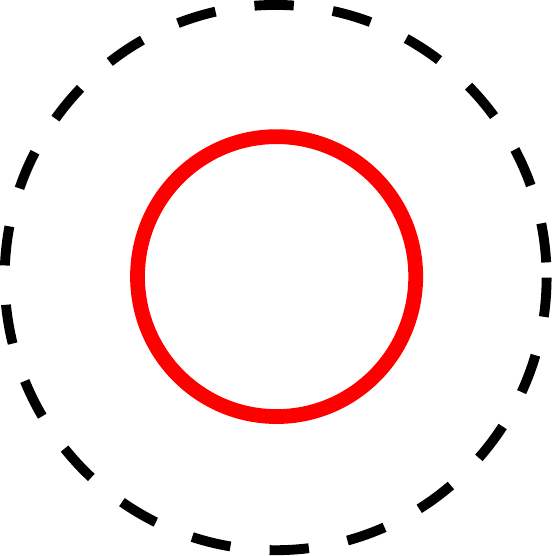}\end{array}=(-q^2-q^{-2})\begin{array}{c}\includegraphics[scale=0.23]{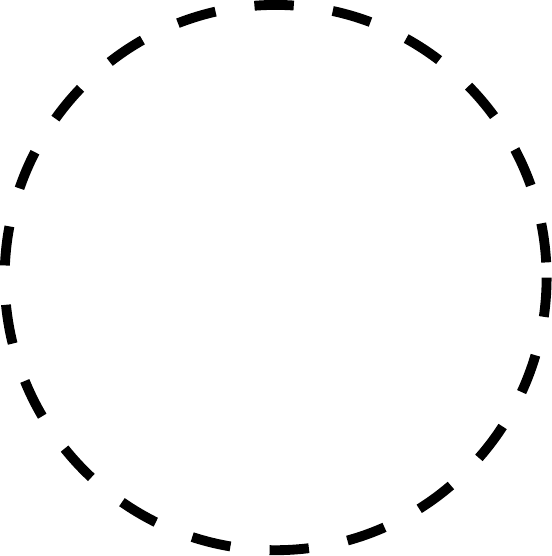}\end{array}.\end{align*}
The skein algebra is known to give a deformation quantization of the ${\rm SL}_2(\bC)$-character variety of $\Sigma$ \cite{Bu}. 
Its algebraic structure has been studied in various ways.

In the case where $\Sigma$ has empty boundary (but with interior marked points), Bonahon--Wong constructed \emph{quantum trace maps}, which give algebra embeddings of $\mathscr{S}^q_{\Sigma}$ into quantum tori quantizing the shear coordinates. In this construction the quantum ordering problem is solved, where the ordering is controlled by the elevation in the $(-1,1)$-direction.

Nowadays, Chekhov--Fock's quantum \Teich\ theory is far generalized to the framework of \emph{quantum cluster varieties} by Fock--Goncharov \cite{FG09} and its representation theory \cite{FG08}. In our case, the \emph{quantum cluster Poisson algebra} (``quantized function algebra" of the cluster Poisson variety) associated with a marked surface has the form
\begin{align*}
    \cO_v(\X_\Sigma) = \bigcap_{\tri} \X_\tri^v,
\end{align*}
where $\tri$ runs over all the \emph{tagged} triangulations of $\Sigma$ \cite{FST}, and $\X_\tri^v$ is the quantum torus deforming the shear coordinates (a.k.a. cluster Poisson variables) with quantum parameter $v=q^{-2}$ (see \cref{eq:emb_to_CF,rem:q-parameters}). The non-commutative algebra $\cO_v(\X_\Sigma)$ should be interpreted as the algebra of line operators. To clarify the situation, let us also consider a similarly defined algebra $\cO_v(\X^\circ_\Sigma) \supset \cO_v(\X_\Sigma)$, where $\tri$ runs only over the ideal triangulations without self-folded triangles. We have $\cO_v(\X^\circ_\Sigma)= \cO_v(\X_\Sigma)$ in the absence of interior marked points. 

Since the classical expression of trace functions in the vector representation $\bC^2$ involves square-roots of shear coordinates\footnote{More precisely, one lifts a $PGL_2(\bC)$-representation to an $SL_2(\bC)$-representation, as discussed in \cite[Section 1.3]{BW11} by using the hyperbolic structure (or the spin structure determined by it). The result involves the square-roots of cluster Poisson variables (see \cite[Lemma 3]{BW11}), which take positive values on the enhanced \Teich\ space (=positive real part of the cluster Poisson variety).}, one is led to introduce \emph{square-root Chekhov--Fock algebra} $\cZ_\tri^q$, which is larger than $\X_\tri^v$. 
In this language, the Bonahon--Wong's quantum trace maps are homomorphisms
\begin{align*}
    \mathrm{Tr}_\tri: \mathscr{S}^q_{\Sigma} \to \cZ_\tri^q
\end{align*}
defined for ideal triangulations $\tri$, which are compatible with the square-root quantum version of coordinate transformations among shear coordinates \cite{Hi}. 
Their restrictions to a subalgebra $\mathscr{S}^q_{\Sigma,\congr} \subset \mathscr{S}^q_{\Sigma}$ (cf. \cref{def:congruent}) take values in $\X_\tri^v$, and combine to give a homomorphism
\begin{align}\label{eq:BW_trace}
    \mathrm{Tr}_\Sigma: \mathscr{S}^q_{\Sigma,\congr} \to \cO_v(\X^\circ_\Sigma)
\end{align}
for any marked surfaces without boundary.

Some of the expected properties of (quantum) trace functions are generalized to the axioms of \emph{(quantum) duality maps} for cluster varieties \cite[Section 4]{FG09}. 
Allegretti--Kim \cite{AK} upgraded the map \eqref{eq:BW_trace} into a (candidate for) quantum duality map
\begin{align}\label{eq:AK_duality}
    \bI_\A: \A_\Sigma(\bZ^\sfT) \to \mathscr{S}^q_{\Sigma,\congr} \to \cO_v(\X^\circ_\Sigma),
\end{align}
where $\A_\Sigma(\bZ^\sfT)$ denotes the set of integral $\A$-laminations on $\Sigma$ \cite{FG07}. In general, a quantum duality map is a map (of sets) from the tropical points of some cluster variety, whose image gives a basis of the function algebra of the dual cluster variety satisfying certain strong axioms, such as the positivity of structure constants.

On the other extreme, let us consider the case where $\Sigma$ has marked points only on the boundary. In this case, we also have another quantum algebra $\cO_q(\A_\Sigma)$, called the \emph{quantum upper cluster algebra} \cite{BZ} (``quantized function algebra" of the cluster $K_2$-variety). Muller \cite{Muller} introduced a version of skein algebra $\Sk{\Sigma}$ (\cref{dfn_Muller}) for marked surfaces $\Sigma$ of this case, and provided an embedding
\begin{align}\label{eq:cutting}
    \mathrm{Cut}_\Sigma: \Sk{\Sigma}[\partial^{-1}]\to \cO_q(\A_\Sigma),
\end{align}
where $[\partial^{-1}]$ stands for the boundary-localization. If $\Sigma$ has at least two marked points, $\mathrm{Cut}_\Sigma$ is an isomorphism \cite[Theorem 9.8]{Muller}. 
In view of this, there is a construction of quantum duality map
\begin{align}\label{eq:bracelets_duality}
    \bI_\X: \X_\Sigma(\bZ^\sfT) \to\Sk{\Sigma}[\partial^{-1}]\to \cO_q(\A_\Sigma),
\end{align}
whose image constitutes the \emph{bracelets basis} \cite{Th}. 
Since $\cO_q(\A_\Sigma)$ is related to $\cO_v(\X_\Sigma)$ via the \emph{ensemble map} $p_\Sigma^\ast: \cO_v(\X_\Sigma) \to \cO_q(\A_\Sigma)$ (\cref{prop:ensemble_mutation}), this relation of skein and cluster algebras also gives us a hint to construct a quantum duality map for $\cO_v(\X_\Sigma)$. Here we use a correction on frozen variables in defining $p_\Sigma^\ast$ (see \eqref{eq:ensemble_linear}) following Goncharov--Shen \cite{GS19}, which is crucial to our results below. 

In the particular case of marked disks, Allegretti \cite{All} gives a construction of quantum duality map 
\begin{align}\label{eq:Allegretti_duality}
    \bI'_\A: \A_\Sigma(\bZ^\sfT) \to \cO_v(\X_\Sigma)
\end{align}
based on the quantum $F$-polynomials, together essentially with this ensemble map (with a related correction on frozen variables).

\subsection{A construction of quantum duality maps via stated skein algebras}
Our aim in this paper is to give a skein theoretic construction of quantum duality maps for general marked surfaces, generalizing the aforementioned works. 

Inspired by the work of Bonahon--Wong, L\^e \cite{Le_triangular} introduced the \emph{stated skein algebras} $\mathscr{S}^q_{\Sigma}(\bB)$ of marked surfaces $\Sigma$ for a better understanding of the quantum trace maps. It consists of framed tangles having endpoints on the boundary, together with \emph{states} $+$ or $-$ on their endpoints. When $\partial\Sigma=\emptyset$, it is just the ordinary skein algebra. 
The stated skein algebra of the bigon is isomorphic to the quantized coordinate ring $\CO_{q^2}({\rm SL}(2))$ as Hopf algebras \cite[Section 3]{CL}. 
The stated skein algebra is related to the Muller's skein algebra $\Sk{\Sigma}$: there is an embedding of $\Sk{\Sigma}$ into $\mathscr{S}_\Sigma^q(\bB)$ as the part consisting only of $-$ states, which induces an isomorphism \cite{LY22}
\begin{align}\label{eq:LY_isom}
    \Sk{\Sigma}[\partial^{-1}]\xrightarrow{\sim} \sSk{\Sigma}.
\end{align}
Here $\sSk{\Sigma}$ is the quotient of $\mathscr{S}^q_{\Sigma}(\bB)$ by an ideal, called the \emph{reduced stated skein algebra} (\cref{def:reduced_stated}).

Based on these backgrounds, it is natural to consider a generalization of Allegretti--Kim's duality map \eqref{eq:AK_duality} for a general marked surface in terms of stated skein algebras, which also gives a reformulation of Allegretti's duality map \eqref{eq:Allegretti_duality} in this language.

For any 
%unpunctured 
marked surface $\Sigma$, we construct a quantum duality map
\begin{align}\label{eq:our_duality}
    \bI_\A: \A_\Sigma(\bZ^\sfT) \to \cO_v(\X_\Sigma^\circ)
\end{align}
with $v=q^{-2}$, generalizing \eqref{eq:AK_duality}. The main scope of this paper is the case without interior marked points (which is opposite to \cite{AK}), with a focus on the compatibility with the bracelets construction \eqref{eq:bracelets_duality} on the dual side. The case with interior marked points will be discussed in \cite{IKp}, with a refinement of \eqref{eq:AK_duality} into $\cO_v(\X_\Sigma)$. 
The following is our main theorem:
\begin{introthm}[Ensemble compatibility: \cref{thm:compatibility}]\label{introthm:compatibility}
When $\Sigma$ has no interior marked points,
%(``punctures"), 
the map \eqref{eq:our_duality} fits into the following commutative diagram (in the category of sets):
\begin{equation*}
    \begin{tikzcd}
    \A_\Sigma(\bZ^\sfT) \ar[r,"\bI_\A"] \ar[d,"\check{p}_\Sigma^\sfT"'] & \cO_v(\X_\Sigma) \ar[d,"p_\Sigma^\ast"] \\
    \X_\Sigma(\bZ^\sfT) \ar[r,"\bI_\X"'] & \cO_q(\A_\Sigma).
    \end{tikzcd}
\end{equation*}
Here $v=q^{-2}$, $\bI_\X$ is the map \eqref{eq:bracelets_duality}, and $\check{p}_\Sigma^\sfT$ denotes the tropicalization of the Langlands dual ensemble map \cite{Ish}. 
\end{introthm}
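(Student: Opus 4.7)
My plan is to enlarge the $2\times 2$ diagram of \cref{introthm:compatibility} into a $3\times 2$ one by inserting the reduced stated skein algebra $\sSk{\Sigma}$ as a common middle row, and to verify the two resulting squares independently. Write $\mathrm{SL}_\A : \A_\Sigma(\bZ^\sfT) \to \mathscr{S}^q_\Sigma(\bB)$ for the stated-skein lift by which $\bI_\A$ is constructed, so that $\bI_\A = \mathrm{Tr}_\tri \circ \mathrm{SL}_\A$ for any ideal triangulation $\tri$ without self-folded triangles (with $\mathrm{Tr}_\tri$ L\^e's stated quantum trace of \cite{Le_triangular}). Similarly write $\mathrm{SL}_\X : \X_\Sigma(\bZ^\sfT) \to \Sk{\Sigma}[\partial^{-1}]$ for the bracelets lift giving $\bI_\X = \mathrm{Cut}_\Sigma \circ \mathrm{SL}_\X$. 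Via \eqref{eq:LY_isom}, both lifts land in $\sSk{\Sigma}$, so the claim reduces to a \emph{geometric square} asserting $\mathrm{SL}_\A = \mathrm{SL}_\X \circ \check p_\Sigma^\sfT$ in $\sSk{\Sigma}$, together with an \emph{algebraic square} asserting $p_\Sigma^\ast \circ \mathrm{Tr}_\Sigma = \mathrm{Cut}_\Sigma$ on the image of $\sSk{\Sigma}$.

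For the geometric square, I would exploit that the reduction $\mathscr{S}^q_\Sigma(\bB) \twoheadrightarrow \sSk{\Sigma}$ kills any tangle carrying a $+$ state on the boundary and identifies stated arcs with boundary-localized unstated ones. The tropicalized Langlands dual ensemble map $\check p_\Sigma^\sfT$ is multiplication by the transpose of the exchange matrix (with a boundary shift), which geometrically doubles interior weights of an $\A$-lamination and discards its boundary weights. These two operations should exactly coincide, so the square reduces to a diagrammatic check on three building blocks --- bulk arcs, peripheral loops around punctures, and closed curves in the interior --- where the Chebyshev normalization built into $\mathrm{SL}_\A$ must match the bracelet normalization of $\mathrm{SL}_\X$ modulo the bad ideal.

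For the algebraic square, both $p_\Sigma^\ast \circ \mathrm{Tr}_\tri$ and $\mathrm{Cut}_\tri$ are algebra homomorphisms from $\sSk{\Sigma}$ into a quantum torus indexed by the edges of $\tri$. It therefore suffices to compare them on generators, and by the cutting compatibility of both maps one is reduced to stated arcs along a single edge of $\tri$. On such arcs both sides are given by explicit $2\times 2$ matrix-coefficient formulas built from the $\CO_{q^2}(\mathrm{SL}_2)$ Hopf structure on the bigon, and the Goncharov--Shen correction on frozen variables in $p_\Sigma^\ast$ (see \eqref{eq:ensemble_linear}) is exactly what makes the two formulas agree. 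Gluing across triangles then extends the identity to all of $\sSk{\Sigma}$.

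The principal obstacle is to balance three boundary-level corrections against each other: the Goncharov--Shen shift in $p_\Sigma^\ast$, the boundary-inversion in \eqref{eq:LY_isom}, and the boundary shift in $\check p_\Sigma^\sfT$. These three contributions on frozen indices must fit together exactly, and the matching is not symmetry-forced; it depends on the specific form of the Langlands dual ensemble map from \cite{Ish}. The delicate case will be peripheral curves near punctures, even in the absence of interior marked points, because tagged triangulations enter the definition of $\cO_v(\X_\Sigma^\circ)$ and their tag conventions must align with the $\pm$-states on the skein side. Once this local match is verified on a bigon and a triangle, the global diagram follows by cutting $\Sigma$ into triangles and stacking the two squares.
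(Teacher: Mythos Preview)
Your high-level strategy --- factoring through the reduced stated skein algebra and checking two squares (a ``geometric'' one comparing the skein lifts and an ``algebraic'' one comparing $p_\Sigma^\ast\circ\mathrm{Tr}_\Sigma$ with $\mathrm{Cut}_\Sigma$) --- is exactly the paper's decomposition \eqref{eq:strategy}. The middle vertical map is the state-clasp isomorphism $\Phi_\Sigma$ of \cref{thm:state-clasp}, and the two squares are \cref{prop:lifting_compatible} and \cref{prop:trace_cut_compatible}.

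However, several of your concrete claims are wrong and would derail the proof if followed:
\begin{itemize}
    \item The reduction $\sSsq(\BB)\twoheadrightarrow\sSk{\Sigma}$ does \emph{not} kill every tangle carrying a $+$ state. It kills only bad arcs (\cref{def:reduced_stated}). The $+$ states survive and are essential: in \cref{def:skein_lift_A} a peripheral arc with negative weight is lifted via $[\gamma^-]^{-1}=[\gamma^+]$, and under $\Phi_\Sigma$ a $+$ end becomes a clasp together with an inverse boundary arc (see \eqref{eq:state_to_clasp}). Your description of the geometric square therefore cannot match the actual maps.
    \item The map $\check p_\Sigma^\sfT$ does not ``double interior weights and discard boundary weights''. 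In the unpunctured case it is the bijection \eqref{eq:ensemble_lamination}: it keeps non-peripheral components with their weights unchanged, forgets peripheral arcs, and records their weights as pinnings. The verification of the geometric square in \cref{prop:lifting_compatible} is accordingly a curve-by-curve check (non-peripheral loop, non-peripheral arc, peripheral arc), not a matrix argument.
    \item Since $\bM_\circ=\emptyset$ by hypothesis, there are no punctures, no peripheral loops around punctures, and no tagged triangulations; your remarks about those cases are irrelevant here.
\end{itemize}

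For the algebraic square your plan (``reduce to a single edge via cutting compatibility'') is in the right spirit but too vague. The paper first checks the triangle case explicitly (\cref{lem:triangle_compatible}), then uses the split triangulation $\widehat{\tri}$ together with the splitting/cutting compatibility (\cref{thm:split_cut_compatibility}) and a factorization of the ensemble map through $(\X_{\widehat{\tri}}^v)_0$ (\cref{lem:coproduct,lem:compatible_skein}). The Goncharov--Shen frozen correction enters only in the triangle computation, where $p_T^\ast(Z_i^{-1}Z_j^{-1})$ produces a single $A_k^{-1}$; there is no ``$\cO_{q^2}(\mathrm{SL}_2)$ matrix-coefficient'' identity to invoke.
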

Our construction factors through (stated) skein algebras, where the asserted diagram is decomposed as 

\begin{equation*}
    \begin{tikzcd}
    \A_\Sigma(\bZ^\sfT) \ar[r,"S_\A"] \ar[d,"\check{p}_\Sigma^\sfT"'] & \sSk{\Sigma}_{\congr} \ar[dr,dashed] \ar[r,"\mathrm{Tr}_\Sigma"] \ar[d,"\Phi_\Sigma"] & \cO_v(\X_\Sigma) \ar[d,"p_\Sigma^\ast"] \\
    \X_\Sigma(\bZ^\sfT) \ar[r,"S_\X"'] & \Sk{\Sigma}[\partial^{-1}] \ar[r,"\mathrm{Cut}_\Sigma"'] & \cO_q(\A_\Sigma).
    \end{tikzcd}
\end{equation*}
Here:
\begin{itemize}
    \item $\sSk{\Sigma}_{\congr} \subset \sSk{\Sigma}$ denotes the \emph{congruent subalgebra} (\cref{def:congruent}). 
    %\item The maps $p_\Sigma^\ast$ and $\check{p}_\Sigma^\sfT$ are ensemble maps. It is necessary to use the ``Langlands dual" version on the tropical side in order to obtain the compatibility.
    \item The map $\Phi_\Sigma$ is the restriction of the inverse of the isomorphism \eqref{eq:LY_isom}. 
    \item The horizontal maps $S_\A$ and $S_\X$ are appropriate ``skein lifting" maps (\cref{subsec:lifting}). 
    \item The map $\mathrm{Tr}_\Sigma$ is based on the quantum trace map of L\^e \cite{Le_triangular}
    , and $\mathrm{Cut}_\Sigma$ is the map \eqref{eq:cutting}. 
\end{itemize}
The quantum duality maps are defined to be $\bI_\A:=\mathrm{Tr}_\Sigma\circ S_\A$ and $\bI_\X:=\mathrm{Cut}_\Sigma \circ S_\X$. To our best knowledge, the construction of $S_\A$ is new. The classical aspect ($q^{1/2}=1$) of \cref{introthm:compatibility} is studied in \cite{Ish}.

We remark that our construction agrees with the general construction based on the quantum theta functions of Davison--Mandel \cite{DM}, thanks to the recent result of Mandel--Qin \cite{MQ}. 
When $\Sigma$ is a marked disk, we see that the map $\bI_\A$ also recovers Allegretti's construction in \cref{rem:Allegretti}. 

A similar compatibility result has been also obtained in the work of L\^e--Yu \cite{LY22}. Roughly speaking, their result is concerned with the top-right triangle involving the dashed arrow in the diagram above. A precise relation is discussed in \cref{rem:LY_compatibility}.

With a help of the results of Mandel--Qin \cite{MQ}, we get the following ``skein realization" of the quantum cluster Poisson algebra:
\begin{introthm}[\cref{thm:skein_q-Poisson_isom}]\label{introthm:isomorphism}
The quantum trace map gives an algebra isomorphism 
\begin{align*}
    \mathrm{Tr}_\Sigma: \sSk{\Sigma}_\congr \xrightarrow{\sim} \cO_v(\X_\Sigma)
\end{align*}
if $\bM_\circ=\emptyset$ and $|\bM_\partial|\geq 2$.
\end{introthm}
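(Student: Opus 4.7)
The plan is to show that $\mathrm{Tr}_\Sigma$ is bijective; it is an algebra homomorphism by the construction of the quantum trace map of L\^e \cite{Le_triangular}, and the fact that its image lies inside $\cO_v(\X_\Sigma)$ (rather than only the larger square-root algebra $\cZ^q_\tri$) is ensured by the congruence condition built into $\sSk{\Sigma}_\congr$.

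For injectivity, I would exploit the right-hand commutative square of the diagram decomposing \cref{introthm:compatibility}, namely the identity
\[
p_\Sigma^\ast \circ \mathrm{Tr}_\Sigma \;=\; \mathrm{Cut}_\Sigma \circ \Phi_\Sigma \colon \sSk{\Sigma}_\congr \to \cO_q(\A_\Sigma).
\]
Under the standing hypotheses $\bM_\circ=\emptyset$ and $|\bM_\partial|\geq 2$, Muller's theorem \cite[Theorem~9.8]{Muller} makes $\mathrm{Cut}_\Sigma$ an isomorphism, and $\Phi_\Sigma$ is injective as the restriction of the inverse of the L\^e--Yu isomorphism \eqref{eq:LY_isom}. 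Hence the right-hand composite is injective, forcing $\mathrm{Tr}_\Sigma$ to be injective as well.

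For surjectivity, I would run a basis argument. By the construction of the skein lifting map $S_\A$ in \cref{subsec:lifting}, the family $\{S_\A(\ell)\}_{\ell \in \A_\Sigma(\bZ^\sfT)}$ is an $\CR$-basis of $\sSk{\Sigma}_\congr$, so $\mathrm{Tr}_\Sigma(\sSk{\Sigma}_\congr)$ coincides with the $\CR$-span of $\bI_\A(\A_\Sigma(\bZ^\sfT)) = \mathrm{Tr}_\Sigma\circ S_\A(\A_\Sigma(\bZ^\sfT))$. As indicated in the remarks preceding the theorem, our $\bI_\A$ agrees with the quantum theta function construction of Davison--Mandel \cite{DM}, and Mandel--Qin \cite{MQ} prove that the resulting set is in fact an $\CR$-basis of $\cO_v(\X_\Sigma)$. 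This gives surjectivity, and combined with injectivity we obtain the claimed algebra isomorphism.

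The main obstacle is the surjectivity step: one must carefully certify the identification of the skein-theoretic $\bI_\A$ with the Davison--Mandel theta basis, paying particular attention to the Goncharov--Shen correction at frozen variables encoded in $p_\Sigma^\ast$ (cf.\ \eqref{eq:ensemble_linear}), so that the Mandel--Qin basis result can be applied. By contrast, the injectivity step is a formal consequence of \cref{introthm:compatibility} together with the Muller and L\^e--Yu isomorphisms.
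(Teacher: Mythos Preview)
Your proposal is correct and matches the paper's approach for surjectivity: the paper invokes \cite[Theorem 10.11]{MQ} to identify $\bI_\A(L)$ with the quantum theta functions, and then combines \cite[Proposition 0.14]{GHKK}, \cite[Theorem 1.2(5)]{DM}, and \cite[Theorem 1.3]{GS18} to conclude that these form a $\bZ_v$-basis of $\cO_v(\X_\Sigma)$, giving surjectivity of $\mathrm{Tr}_\Sigma$. Your injectivity argument via the compatibility square is valid but more circuitous than necessary: in the paper, injectivity of $\mathrm{Tr}_\tri$ on $\sSk{\Sigma}$ is already established in Section~\ref{sec:skein} directly from the construction (the reduced splitting map $\overline{\Psi}_\tri$ is injective and each triangle map $\mathrm{Tr}_T$ is an isomorphism), so no appeal to $p_\Sigma^\ast$, $\mathrm{Cut}_\Sigma$, or $\Phi_\Sigma$ is needed.
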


We remark that the results on the quantum theta basis is only needed for the surjectivity of $\mathrm{Tr}_\Sigma$. One can prove that the image $S_\A(\A_\Sigma(\bZ^\sfT)) \subset \sSk{\Sigma}_\congr$ of the skein lifting map forms a $\bZ[q^{\pm 1}]$-basis (\cref{cor:basis}), without referring to the quantum theta basis.

\subsection{Skein proof of structure constant positivity for small surfaces}
D. Thurston \cite{Th} considered the positivity of the structure constants of specific bases of (the classical limits of) skein algebras over $\BZ[q^{\pm 1/2}]$. 
In particular, Thurston conjectured that 
%considered 
the \emph{bracelets bases} defined by the Chebyshev polynomials of the first kind of ordinary skein algebras are positive bases, i.e. the structure constants are in $\BZ_{\geq 0}[q^{\pm 1/2}]$.

This conjecture is true for commutative skein algebras and the skein algebra of the torus from the product-to-sum formula in \cite{FG00}. 
Moreover, the cases of the $4$-punctured sphere and the $1$-punctured torus were solved affirmatively by Bousseau \cite{Bo} in the context of mirror symmetry. 
Based on Thurston's work, L$\hat{{\rm e}}$ \cite{Le_positivity} gave the definition of bracelets bases for Muller skein algebras and tried to clarify a necessary condition to be positive bases.  
From these results, the bracelets bases have been recognized as candidates for positive bases of Muller skein algebras. 
Similar to Chebyshev polynomials the first kind, there is the positivity conjecture for those of the second kind. In \cite{LTY}, L$\hat{{\rm e}}$--Thurston--Yu gave upper and lower bounds of positive bases for Muller skein algebras in an appropriate sense, where the lower bound is the bracelets basis and the upper bound is defined using the Chebyshev polynomials of the second kind. 
Later, Queffelec showed that the upper bound is actually a positive basis for ordinary skein algebras using foams \cite{Qu}.

In the context of quantum duality maps, we say that $\bI_V$ for $V \in \{\A,\X\}$ is \emph{strongly positive} if its image has structure constants in $\bZ_{\geq 0}[q^{\pm 1/2}]$. 
In this paper, we give a proof of the positivity for the bracelets bases of the Muller skein algebras of marked disks and the annulus with two marked points, and the strong positivity results as consequences:

\begin{introthm}[\cref{thm_skein,thm_S_A}]\label{thm_intro}
Let $(\Sigma,\bM)$ be a marked disk with at least 3 special points or a marked annulus with exactly one marked point on each boundary component. Then we have the following.
\begin{enumerate}
    \item The bracelets basis $\mathsf{B}^b(\Sigma,\bM)$ (\cref{def:bracelets}) of the Muller skein algebra $\Sk{\Sigma}$ is positive.
    \item The quantum duality maps $\bI_\A$ and $\bI_\X$ are strongly positive. 
\end{enumerate}

\end{introthm}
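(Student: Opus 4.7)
The plan is to prove part (1) directly by carrying out product-to-sum computations in the Muller skein algebra $\Sk{\Sigma}$, and then to deduce part (2) from (1) by transferring positivity through the algebra maps in \cref{introthm:compatibility}.

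For the marked disk case, the surface carries no essential simple closed curves, so every element of the bracelets basis $\mathsf{B}^b(\Sigma,\bM)$ is a disjoint union of (peripheral and diagonal) arcs. It then suffices to compute the product of two arcs in $\Sk{\Sigma}$: one resolves their finitely many transverse crossings using the skein relation (A), which introduces only $q^{\pm 1}$ coefficients, and converts the resulting multi-arcs back into bracelets by isotopy and boundary reductions. Induction on the total number of crossings then yields a product-to-sum expansion with coefficients in $\mathbb{Z}_{\geq 0}[q^{\pm 1/2}]$.

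For the marked annulus with one marked point on each boundary, the bracelets basis additionally contains the Chebyshev-of-the-first-kind polynomials $T_n(\alpha)$ of the essential loop $\alpha$. Three types of products must be analyzed: $T_m(\alpha)\cdot T_n(\alpha)$ is positive by the Chebyshev identity $T_m T_n = T_{m+n}+T_{|m-n|}$; arc $\times$ arc products are handled as in the disk case; and the genuinely new case is $\gamma \cdot T_n(\alpha)$ for a diagonal arc $\gamma$, where one resolves all intersections of $\gamma$ with the $n$ parallel strands representing $T_n(\alpha)$ via relation (A) and then re-bundles the surviving loop components back into Chebyshev polynomials. This last computation is the main technical obstacle: I would classify the intersection patterns by the winding data of $\gamma$, carry out the skein resolutions configuration-by-configuration, and verify that the final re-bundling into bracelets produces only nonnegative coefficients.

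Once (1) is established, positivity of $\bI_\X$ is immediate: by construction $\bI_\X = \mathrm{Cut}_\Sigma \circ S_\X$, the image of $S_\X$ is the bracelets basis inside $\Sk{\Sigma}[\partial^{-1}]$, and $\mathrm{Cut}_\Sigma$ is an algebra embedding, so the bracelets structure constants carry over to $\cO_q(\A_\Sigma)$ unchanged. For $\bI_\A = \mathrm{Tr}_\Sigma \circ S_\A$, I would use the L\^e--Yu identification $\Phi_\Sigma:\sSk{\Sigma} \xrightarrow{\sim} \Sk{\Sigma}[\partial^{-1}]$ to recognize $\Phi_\Sigma(S_\A(\A_\Sigma(\bZ^\sfT)))$ as bracelets elements multiplied by monomials in boundary arcs; products of such elements therefore expand with the same positive coefficients computed in part (1). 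Since $\mathrm{Tr}_\Sigma$ is an algebra homomorphism, positivity descends to $\cO_v(\X_\Sigma)$. The remaining bookkeeping is to verify compatibility with the congruence condition (\cref{def:congruent}) and the boundary-arc correction factors, which I expect to be routine but essential for the structure constants to match.
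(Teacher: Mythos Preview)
Your disk argument and your derivation of part (2) from part (1) match the paper's approach. The gap is in the annulus case, where you have the difficulty of the subcases reversed.

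The product $\gamma\cdot T_n(z)$ of a diagonal arc with a Chebyshev bracelet is \emph{not} the hard case: the paper dispatches it in one line via the known formula $T_n(z)\cdot\alpha = q^n\tau^n(\alpha)+q^{-n}\tau^{-n}(\alpha)$ (\cref{formula_arc_loop}, quoted from \cite{Le_Kauffman}). Note also that $T_n(z)$ is not ``$n$ parallel strands''---that would be $z^n$; $T_n$ is a signed polynomial in $z$, so your proposed strand-by-strand resolution would not be operating on the right object.

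Conversely, the arc~$\times$~arc case is \emph{not} ``handled as in the disk case''. In the disk two arcs cross at most once, but in the annulus $\alpha$ and $\tau^n(\alpha)$ cross $|n|$ times, and resolving those crossings via relation (A) produces \emph{loop} components that must then be re-expressed in the bracelets $T_k(z)$. This is where the real work lies: the paper introduces auxiliary diagrams $B_n$, proves the recursion $B_n=q^{n-1}S_{n-1}(z)\cdot B_1$ involving Chebyshev polynomials of the \emph{second} kind (\cref{formulra_B}), and then uses $S_n=T_n+S_{n-2}$ to rewrite everything positively in the $T_k$'s (\cref{formula_arcs}). Your outline does not anticipate this loop-creation phenomenon, and without it the induction on crossings does not close.
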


While this result is an immediate corollary of the general results proved in \cite{MQ}, one of the advantages of this paper is that we can see explicit structure constants of the bracelets bases. In particular, we see that the 
%structure constants 
expansions of two basis elements 
contain the Chebyshev polynomials of the second kind. From this, we expect a
%${\rm SL}_2\bC$-
%representation theoretic 
background behind the positivity result originating from the representation theory of ${\rm SL}_2(\bC)$.

\subsection*{Acknowledgements} The authors would like to express their gratitude to Dylan Allegretti, Hyun Kyu Kim and Wataru Yuasa for helpful comments. 
H. K. is grateful to Thang T. Q. L\^e for valuable comments before the authors began the project. T. I. was partially supported by JSPS KAKENHI Grant-in-Aid for Research Activity Start-up (No.~20K22304). 
H.K. was partially supported by JSPS KAKENHI Grant Numbers JP22K20342, JP23K12976.

\section{Marked surfaces and associated quantum tori}

\subsection{Marked surfaces and their triangulations}\label{subsec:notation_marked_surface}
A marked surface $(\Sigma,\bM)$ is a compact oriented surface $\Sigma$ together with a fixed non-empty finite set $\bM \subset \Sigma$ of \emph{marked points}. 
A marked point is called a \emph{puncture} if it lies in the interior of $\Sigma$, and a \emph{special point} otherwise. 
Let $\bP=\bP(\Sigma)$ (resp. $\bM_\partial=\bM_\partial(\Sigma)$) denote the set of punctures (resp. special points), so that $\bM=\bP \sqcup \bM_\partial$. 
Let $\Sigma^*:=\Sigma \setminus \bP$. 
We always assume the following conditions:
\begin{enumerate}
    \item[(S1)] Each boundary component (if exists) has at least one marked point.\footnote{There is another convention that a boundary component without marked points is regarded as a puncture, in which case one assumes $\bM \subset \partial\Sigma$.}
    \item[(S2)] $-2\chi(\Sigma^*)+|\bM_\partial| >0$.
    \item[(S3)] $(\Sigma,\bM)$ is not a once-punctured monogon.
\end{enumerate}
When the choice of $\bM$ is clear from the context, we simply denote $(\Sigma,\bM)$ by $\Sigma$. 
We say that $\Sigma$ is \emph{unpunctured} if $\bM_\circ=\emptyset$. 

We call a connected component of the punctured boundary $\partial^\ast \Sigma:=\partial\Sigma\setminus \bM_\partial$ a \emph{boundary interval}. The set of boundary intervals is denoted by $\bB=\bB(\Sigma)$. Note that $|\bB|=|\bM_\partial|$. 
By convention, we endow each boundary interval $\alpha \in \bB$ with the orientation induced from $\partial\Sigma$. Let $m^+_\alpha$ (resp. $m^-_\alpha$) denote its initial (resp. terminal) marked point. 

An \emph{ideal arc} in $(\Sigma,\bM)$ is 
an immersed arc in $\Sigma^\ast$ with endpoints in $\bM$ having no self-intersections except possibly at its endpoints, and not contractible in $\Sigma^\ast$. 
An \emph{ideal triangulation} is a triangulation $\tri$ of $\Sigma$ whose set of $0$-cells (vertices) coincides with $\bM$, $1$-cells (edges) being ideal arcs. 
In this paper, we always consider an ideal triangulation without any \emph{self-folded triangle} where two of its sides are identified. 
The conditions (S1)--(S3) ensure the existence of such an ideal triangulation. See, for instance, \cite[Lemma 2.13]{FST}. 

For an ideal triangulation $\tri$, denote the set of edges (resp. interior edges, triangles) by $e(\tri)$ (resp. $e_{\interior}(\tri)$, $t(\tri)$). Since the boundary intervals belong to any ideal triangulation, we always have $e(\tri)=e_{\interior}(\tri) \sqcup \bB$. By a computation on the Euler characteristics, we get
\begin{align*}
    &|e(\tri)|=-3\chi(\Sigma^*)+2|\bM_\partial|, \quad |e_{\interior}(\tri)|=-3\chi(\Sigma^*)+|\bM_\partial|, \\
    &|t(\tri)|=-2\chi(\Sigma^*)+|\bM_\partial|.
\end{align*}
Let $n(\Sigma):=|e_{\interior}(\tri)|$ denote the number of interior edges. 

\subsection{Quantum torus}\label{def:q-torus}
For a formal quantum parameter $q^{1/2}$, let $\bZ_q:=\bZ[q^{\pm 1/2}]$. 
Let $\Lambda$ be a lattice equipped with a skew-symmetric form $\omega:\Lambda \times \Lambda \to \bZ$. Then the associated \emph{based quantum torus} over $\bZ_q$ is the associative $\bZ_q$-algebra $\bT_{(\Lambda,\omega)}$ having
\begin{itemize}
    \item a free $\bZ_q$-basis $B_\lambda$ parametrized by $\lambda \in \Lambda$, and
    \item the multiplication given by $B_\lambda\cdot B_\mu = q^{\omega(\lambda,\mu)/2} B_{\lambda+\mu}$. 
\end{itemize}
The \emph{rank} of $\bT_{(\Lambda,\omega)}$ is defined to be the rank of $\Lambda$. The map $B:\Lambda \to \bT_{(\Lambda,\omega)}$, $\lambda \mapsto B_\lambda$ is called the \emph{framing} of $\bT_{(\Lambda,\omega)}$. 
The $q$-commutation relation of the form 
\begin{align*}
    B_\lambda\cdot B_\mu = q^{\omega(\lambda,\mu)} B_{\mu}\cdot B_{\lambda}
\end{align*}
is frequently used. For $\lambda_1,\dots,\lambda_k \in \Lambda$, the element $B_{\lambda_1+\cdots+\lambda_k}$ is called the \emph{Weyl normalized product} of $B_{\lambda_1},\dots,B_{\lambda_k}$. It is also written as
\begin{align}\label{eq:Weyl_ordering}
    [B_{\lambda_1}\cdots B_{\lambda_k}]:=B_{\lambda_1+\cdots+\lambda_k} = q^{-\frac 1 2 \sum_{i < j}\omega_{ij}} B_{\lambda_1}\cdots B_{\lambda_k},
\end{align}
where $\omega_{ij}:=\omega(\lambda_i,\lambda_j)$. It is designed as the non-commutative product which is independent of the order. 

\subsection{Matrices and quantum tori associated with triangulations}
To (the isotopy class of) an ideal triangulation $\tri$ of a marked surface $\Sigma$, associated are the following matrices and the corresponding quantum tori.

\subsubsection{The Chekhov--Fock square-root algebra}\label{subsub:exchange}
For a 
%non-self-folded 
triangle $T$ of $\tri$ and two edges $\alpha,\beta \in e(\tri)$, define
\begin{align*}
    \ve_{\alpha\beta}(T):= \begin{cases}
        1 & \mbox{if $T$ has $\alpha$ and $\beta$ as its consecutive edges in the clockwise order}, \\
        -1 & \mbox{if the same holds with the counter-clockwise order}, \\
        0 & \mbox{otherwise}.
    \end{cases}
\end{align*}
Then we set $\ve_{\alpha\beta}^\tri:=\sum_T \ve_{\alpha\beta}(T)$, where $T$ runs over all triangles of $\tri$. We need a modification if $\tri$ has self-folded triangles: see \cite[Definition 4.1]{FST}.
\begin{dfn}
We call $\ve^\tri:=(\ve^\tri_{\alpha\beta})_{\alpha,\beta \in e(\tri)}$ the \emph{exchange matrix} associated with $\tri$ \cite{FST}. It is the same as the \emph{face matrix} in \cite{Le_quantum_teich}.
\end{dfn}

Consider the lattice $N^\tri:=\bigoplus_{\alpha \in e(\tri)} \bZ \sfe^\tri_\alpha$ freely generated by the formal symbols $\sfe^\tri_\alpha$ parametrized by the edges $\alpha \in e(\tri)$. The exchange matrix induces a skew-symmetric form $\omega_Z: N^\tri \times N^\tri \to \bZ$ by
\begin{align*}
    \omega_Z(\sfe^\tri_\alpha,\sfe^\tri_\beta):=-\ve^\tri_{\alpha\beta}.
\end{align*}

\begin{dfn}\label{def:CF_torus}
Let $\cZ^q_\tri:=\bT_{(N^\tri,\omega_Z)}$ denote the associated based quantum torus over $\bZ_q$, equipped with a framing $\mathbf{Z}^\tri:N^\tri \to \cZ^q_\tri$, $\lambda \mapsto \mathbf{Z}^\tri(\lambda)$. The quantum torus $\cZ_\tri^q$ is called the \emph{Chekhov--Fock square-root algebra} \cite{BW11}. The elements $Z^\tri_\alpha:=\mathbf{Z}^\tri(\sfe_\alpha^\tri)$ satisfy the $q$-commutation relation
\begin{align}\label{eq:Le_relation}
        Z^\tri_\alpha Z^\tri_\beta = q^{-\ve^\tri_{\alpha\beta}} Z^\tri_\beta Z^\tri_\alpha.
\end{align}
for $\alpha,\beta \in e(\tri)$. 
\end{dfn}

\begin{rem}
The $q$-commutation relation \eqref{eq:Le_relation} agrees with the defining relation of the quantum torus $\mathcal{Y}(\tri)$ introduced in \cite[Section 6.1]{Le_quantum_teich}. 
\end{rem}

\begin{dfn}
An element $\lambda=\sum_{\alpha} \lambda_\alpha \sfe_\alpha^\tri \in N^\tri$ is said to be \emph{balanced} if $\lambda_{\alpha_1}+\lambda_{\alpha_2}+\lambda_{\alpha_3} \in 2\bZ$ for any triple $(\alpha_1,\alpha_2,\alpha_3)$ that constitute a triangle in $\tri$. Balanced elements form a sub-lattice $K^\tri \subset N^\tri$. We call the subalgebra $(\cZ_\tri^q)_{\mathrm{bl}} \subset \cZ_\tri^q$ corresponding to $K^\tri$ the \emph{balanced Chekhov--Fock (square-root) algebra} associated with $\tri$. 
\end{dfn}

\subsubsection{Quantum cluster Poisson torus}
Consider the same lattice $N^\tri$ but with the skew-symmetric form $\omega_X:N^\tri \times N^\tri \to \bZ$ defined by
\begin{align*}          \omega_X(\sfe^\tri_\alpha,\sfe^\tri_\beta):=2\ve^\tri_{\alpha\beta}.
\end{align*}
We introduce a different quantum parameter $v$ which satisfies $v=q^{-2}$. 

\begin{dfn}\label{def:Poisson_torus}
The associated based quantum torus $\X^v_\tri:=\bT_{(N^\tri,\omega_X)}$ over $\bZ_v$, equipped with a framing $\mathbf{X}^\tri:N^\tri \to \X^v_\tri$, $\lambda \mapsto \mathbf{X}^\tri(\lambda)$, is called the \emph{quantum cluster Poisson torus}. The elements $X^\tri_\alpha:=\mathbf{X}^\tri(\sfe_\alpha^\tri)$ satisfy the $q$-commutation relation
\begin{align}\label{eq:FG_relation}
        X^\tri_\alpha X^\tri_\beta = v^{2\ve^\tri_{\alpha\beta}} X^\tri_\beta X^\tri_\alpha.
\end{align}
for $\alpha,\beta \in e(\tri)$. 
\end{dfn}

\begin{rem}
The $v$-commutation relation \eqref{eq:FG_relation} agrees with the Fock--Goncharov convention \cite[(49)]{FG09} by interpreting our $v$ as $q$. It is crucial to have even $v$-exponents to get a correct quantum cluster Poisson transformation, in view of the $v$-difference relation \eqref{eq:q-difference}.
\end{rem}

As the name indicates, the variables $Z_\alpha^\tri$ should be viewed as ``square-roots'' of (the inverses of) $X_\alpha^\tri$. 

\begin{lem}\label{eq:emb_to_CF}
We have an algebra embedding
\begin{align*}
    \iota:\X_\tri^v \to (\cZ_\tri^q)_{\mathrm{bl}}, \quad X_\alpha^\tri \mapsto (Z_\alpha^\tri)^{-2}
\end{align*}
with $v=q^{-2}$. 
\end{lem}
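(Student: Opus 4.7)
The plan is to construct $\iota$ as the homomorphism of based quantum tori associated with the lattice embedding $\varphi : N^\tri \to N^\tri$, $\sfe_\alpha^\tri \mapsto -2\sfe_\alpha^\tri$, and then verify in sequence that it (i) lands in the balanced subalgebra, (ii) is well-defined as a $\bZ_v$-algebra map into a $\bZ_q$-algebra via $v^{1/2} = q^{-1}$, and (iii) is injective.

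First I would observe that $\varphi(N^\tri) = 2 N^\tri \subset K^\tri$: for any $\lambda = \sum_\beta \lambda_\beta \sfe_\beta^\tri \in N^\tri$, the element $-2\lambda$ has all coefficients even, so for every triple $(\alpha_1,\alpha_2,\alpha_3)$ bounding a triangle the sum $-2(\lambda_{\alpha_1}+\lambda_{\alpha_2}+\lambda_{\alpha_3}) \in 2\bZ$ automatically. Hence the putative image $(Z_\alpha^\tri)^{-2} = \mathbf{Z}^\tri(-2\sfe_\alpha^\tri)$ lies in $(\cZ_\tri^q)_{\mathrm{bl}}$, and more generally so does every monomial in the $(Z_\alpha^\tri)^{-2}$.

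Next I would check that the defining $v$-commutation relations \eqref{eq:FG_relation} of $\X_\tri^v$ are respected. Using the Weyl-normalized description \eqref{eq:Weyl_ordering}, a direct computation in $\cZ_\tri^q$ gives
\[
(Z_\alpha^\tri)^{-2}\,(Z_\beta^\tri)^{-2} = q^{\,\omega_Z(-2\sfe_\alpha^\tri,\,-2\sfe_\beta^\tri)}\,(Z_\beta^\tri)^{-2}\,(Z_\alpha^\tri)^{-2} = q^{-4\ve^\tri_{\alpha\beta}}\,(Z_\beta^\tri)^{-2}\,(Z_\alpha^\tri)^{-2},
\]
while in $\X_\tri^v$ one has $X_\alpha^\tri X_\beta^\tri = v^{2\ve^\tri_{\alpha\beta}} X_\beta^\tri X_\alpha^\tri = q^{-4\ve^\tri_{\alpha\beta}} X_\beta^\tri X_\alpha^\tri$ since $v = q^{-2}$. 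Thus the $v$-commutation relations match, so $\iota$ extends to a $\bZ_v$-algebra homomorphism (regarding $\bZ_v \subset \bZ_q$). A quick bookkeeping of the Weyl-normalization constants in fact yields the cleaner formula $\iota(\mathbf{X}^\tri(\lambda)) = \mathbf{Z}^\tri(-2\lambda)$ for every $\lambda \in N^\tri$.

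For injectivity I would use this cleaner formula: $\iota$ sends the free $\bZ_v$-basis $\{\mathbf{X}^\tri(\lambda)\}_{\lambda \in N^\tri}$ of $\X_\tri^v$ into the free $\bZ_q$-basis $\{\mathbf{Z}^\tri(\mu)\}_{\mu \in N^\tri}$ of $\cZ_\tri^q$ by the injection $\lambda \mapsto -2\lambda$, and $\bZ_v \hookrightarrow \bZ_q$ is injective; hence a $\bZ_v$-linear relation among the $\mathbf{Z}^\tri(-2\lambda)$ forces all coefficients to vanish. I do not anticipate any serious obstacle: the only subtle point is to keep the quantum parameters and sign conventions straight so that the factor $v = q^{-2}$ comes out of the comparison of $\omega_X$ with $4\,\omega_Z|_{2N^\tri}$ correctly, but once the Weyl-normalization identity $\iota(\mathbf{X}^\tri(\lambda)) = \mathbf{Z}^\tri(-2\lambda)$ is in hand, both well-definedness and injectivity are immediate.
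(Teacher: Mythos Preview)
Your proof is correct and follows essentially the same approach as the paper: the core step is exactly the computation showing $(Z_\alpha^\tri)^{-2}(Z_\beta^\tri)^{-2} = q^{-4\ve_{\alpha\beta}^\tri}(Z_\beta^\tri)^{-2}(Z_\alpha^\tri)^{-2} = v^{2\ve_{\alpha\beta}^\tri}(Z_\beta^\tri)^{-2}(Z_\alpha^\tri)^{-2}$, which is all the paper actually writes down. You add explicit verification that the image is balanced and that the map is injective, both of which the paper leaves implicit; these are straightforward and your arguments for them are fine.
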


\begin{proof}
For $\alpha,\beta \in e(\tri)$, we have
\begin{align*}
    \iota(X_\alpha^\tri) \iota(X_\beta^\tri) = (Z_\alpha^\tri)^{-2} (Z_\beta^\tri)^{-2} = q^{-4\ve_{\alpha\beta}^\tri} (Z_\beta^\tri)^{-2} (Z_\alpha^\tri)^{-2} = v^{2\ve_{\alpha\beta}^\tri} \iota(X_\beta^\tri) \iota(X_\alpha^\tri)
\end{align*}
as desired. 
\end{proof}

\begin{rem}\label{rem:q-parameters}
Let us explain the relations to the several quantum parameters introduced in the literature. The parameter $q$ in this paper will be the same as $q$ in \cite{Le_quantum_teich}. It is the parameter both for the stated and Muller skein algebras. 

In the foundational paper \cite{BW11}, they introduced three parameters $A,q,\omega$. Let us denote them by $\hat{A},\hat{q},\hat{\omega}$. 
Then we have the relations
\begin{align*}
    &\hat{q} = \hat{\omega}^4, \quad \hat{A} = \hat{\omega}^{-2}, \quad \hat{q} = \hat{A}^{-2}, \\
    &v = \hat{q}, \quad q=\hat{A}, \quad v=q^{-2}.
\end{align*}
In \cite{BW11}, $\hat{A}$ is the skein parameter, while $\hat{q}$ and $\hat{\omega}$ are parameters for the quantum cluster Poisson algebra (a.k.a. the Chekhov--Fock algebra) and its square-root version, respectively. 
\end{rem}

\subsubsection{Quantum cluster $K_2$-torus}\label{subsub:compatibility}
Suppose that $\Sigma$ has no punctures. For $\alpha \in e(\tri)$, let $\alpha_1,\alpha_2$ denote its two ends (with an arbitrary numbering). For two edges $\alpha,\beta \in e(\tri)$, define 
\begin{align*}
    \pi_{\alpha_i,\beta_j}:=\begin{cases}
    1 & \mbox{if $\alpha_i$ is clockwise to $\beta_j$ at a common special point}, \\
    -1 & \mbox{if $\alpha_i$ is counter-clockwise to $\beta_j$  at a common special point}, \\
    0 & \mbox{otherwise}
    \end{cases}
\end{align*}
and set $\pi_{\alpha\beta}:=\sum_{i,j=1}^2 \pi_{\alpha_i,\beta_j}$. 

\begin{dfn}\label{def:comp_matrix}
The matrix $\Pi^{\tri}=(\pi_{\alpha\beta})_{\alpha,\beta \in e(\tri)}$ is called the \emph{compatibility matrix} \cite{Muller}. It is the same as the \emph{vertex matrix} in \cite{Le_quantum_teich}.
\end{dfn}

Consider the lattice $M^\tri:=\bigoplus_{\alpha \in e(\tri)} \bZ \sff^\tri_\alpha$ freely generated by the formal symbols $\sff^\tri_\alpha$ parametrized by the edges $\alpha \in e(\tri)$. The compatibility matrix induces a skew-symmetric form $\omega_A: M^\tri \times M^\tri \to \bZ$ by
\begin{align*}
    \omega_A(\sff^\tri_\alpha,\sff^\tri_\beta):=\pi_{\alpha\beta}.
\end{align*}
We regard $M^\tri$ as the dual lattice of $N^\tri$ via the pairing $\langle \sff_\alpha^\tri,\sfe_\beta^\tri\rangle:=\delta_{\alpha,\beta}$. 

\begin{dfn}\label{def:K2_torus}
The associated based quantum torus $\A^q_\tri:=\bT_{(M^\tri,\omega_A)}$ over $\bZ_q$, equipped with a framing $\mathbf{A}^{\!\tri}:M^\tri \to \A^q_\tri$, $\lambda \mapsto \mathbf{A}^{\!\tri}(\lambda)$, is called the \emph{quantum cluster $K_2$-torus}.  
The elements $A_\alpha:=\mathbf{A}^{\!\tri}(\sff_\alpha^\tri)$ satisfy the $q$-commutation relation
\begin{align*}
        A_\alpha A_\beta = q^{\pi_{\alpha\beta}} A_\beta A_\alpha.
\end{align*}
for $\alpha,\beta \in e(\tri)$. 
\end{dfn}
Here we omitted the superscript $\tri$ from the cluster $K_2$-variables. 
\section{Skein algebras}\label{sec:skein}

In what follows, let $\CR$ be a unital commutative ring 
with a distinguished invertible element $q^{1/2} \in \cR$. and let $(\Sigma,\BM)$ be an oriented marked surface. Recall the notation $\Sigma^\ast=\Sigma\setminus \BMc$.
Let $\Int \Sigma^\ast$ denote the interior of $\Sigma^\ast$.

\subsection{$\BMp$-tangles and $\BB$-tangles}
In this paper, we use the following two kinds of tangles in the 3-manifolds $\Sigma \times (-1,1)$ with different boundary conditions. 

\begin{dfn}
An {\em $\BMp$-tangle} in $\Sigma\times(-1,1)$ is a properly embedded unoriented $1$-dimensional submanifold $\gamma$ of $\Sigma^\ast \times (-1,1)$  equipped with a framing such that 
\begin{enumerate}
\item $\partial \gamma \subset \BMp\times (-1,1)$ 
\item the framing at each of $\partial \gamma$ is vertical, i.e. the normal vector points towards the direction of $1$. 
\end{enumerate}

Two $\BMp$-tangles are {\it isotopic} if they are isotopic in the class of $\BMp$-tangles. 
\end{dfn}

\begin{dfn}
A {\em $\BB$-tangle} in $\Sigma\times (-1,1)$ is a properly embedded  unoriented $1$-dimensional submanifold $\gamma$ of $(\Sigma\setminus \BM) \times (-1,1)$ equipped with a framing such that the endpoints have distinct heights over each boundary interval, and the framing at each of $\partial\gamma$ is vertical. 
Two $\BB$-tangles are {\it isotopic} if they are isotopic in the class of $\BB$-tangles. 
\end{dfn}

\paragraph{\textbf{Picture conventions.}}
For $(p,t)\in \Sigma\times (-1,1)$, we call $t$ the {\it height} of $(p,t)$. 
\begin{description}
\item[$\BMp$-tangle diagram] Every $\BMp$-tangle in $\Sigma\times(-1,1)$ is isotopic to one with a vertical framing. 
An $\BMp$-tangle in $\Sigma\times(-1,1)$ with a vertical framing is in {\em general position} if its image under the vertical projection $\Sigma\times(-1,1)\to \Sigma\times\{0\}=\Sigma$ has only transverse points as singular points, especially only transverse double points in $\Int \Sigma^\ast$. 
In figures, we will show an $\BMp$-tangle in general position by its image under the vertical projection, assigning over-/under-passing information to each singular point according to the heights. 
The resulting diagram is refered to as the {\em $\BMp$-tangle diagram} of the $\BMp$-tangle.

\item[$\BB$-tangle diagram]
Every $\BB$-tangle in $\Sigma\times(-1,1)$ is isotopic to one with a vertical framing. 
A $\BB$-tangle in $\Sigma\times(-1,1)$ with a vertical framing is in {\em general position} if its image under the vertical projection has only transverse double points as singular points in $\Int \Sigma^\ast$, and the images of endpoints on boundary intervals are distinct. 
Similarly to $\bM_\partial$-tangles, we represent a $\BB$-tangle in general position by its {\em $\BB$-tangle diagram}, the image under the vertical projection with over-/under-passing information to each double point. 
Moreover, we assign the total order to the endpoints on each boundary edge with respect to their heights. 
If a boundary edge is assigned an orientation in a picture, the heights of the shown arcs incident to that edge are assumed to be increasing with respect to the orientation. 
However, the heights are not always increasing without a specified boundary orientation. 
\item[local relation] 
In each local relation (such as (A)--(F) below), the parts bounded by the dotted curves and the horizontal or vertical lines will stand for the same region of $\Sigma$ in both sides. 
The thick horizontal lines (as in (C) and (D)) are parts of $\partial \Sigma$, and the bullets on them are special points.
In each relation, the \textcolor{red}{red} strands
are parts of $\BMp$- or $\BB$-tangle diagrams, which are assumed to be the same outside the shown region. 
\end{description}

%%%%%%%%%%%%%%%%%%%%%%%%%%%%%%%%%%%%%%%%%%%%%%%%%%%%%%%%%%%%%%%%
\subsection{Muller skein algebras}
\begin{dfn}[The Muller skein module/algebra \cite{Muller,BL}]\label{dfn_Muller}
The {\it Muller skein module} of a marked surface $(\Sigma, \BM)$ is the $\CR$-module freely spanned by the isotopy classes of $\BMp$-tangles in $(\Sigma, \BM)$ subject to the interior skein relations (A), (B) and the following relations (C) and (D):
\begin{align*}
&({\rm C})\ q^{-1/2}\begin{array}{c}\includegraphics[scale=0.27]{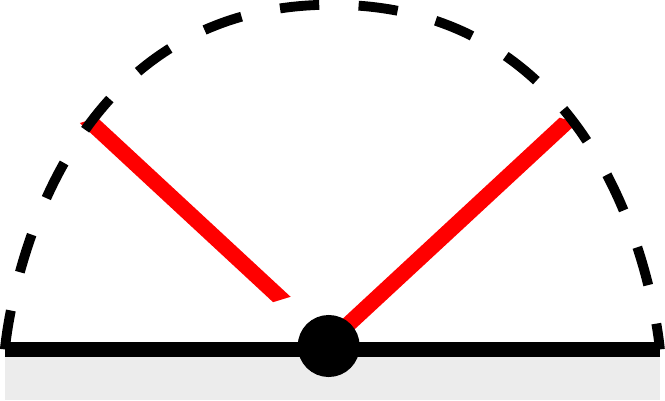}\end{array}=\begin{array}{c}\includegraphics[scale=0.27]{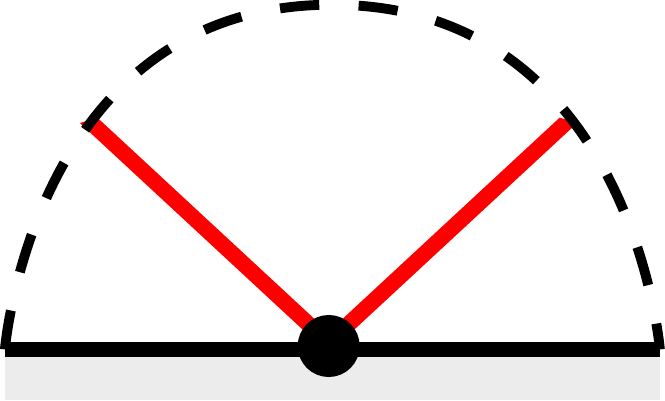}\end{array}=q^{1/2}\begin{array}{c}\includegraphics[scale=0.27]{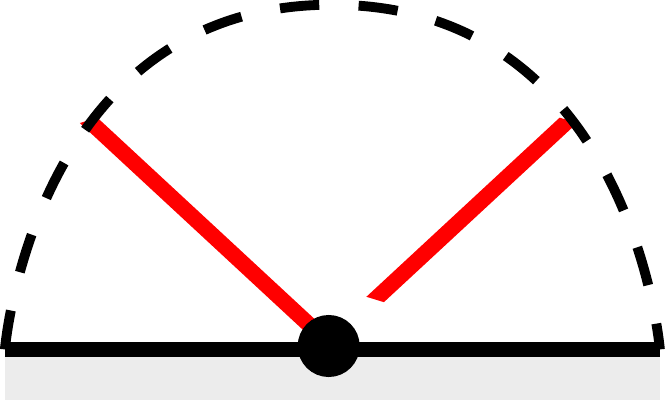}\end{array},\\
&({\rm D})\ \begin{array}{c}\includegraphics[scale=0.27]{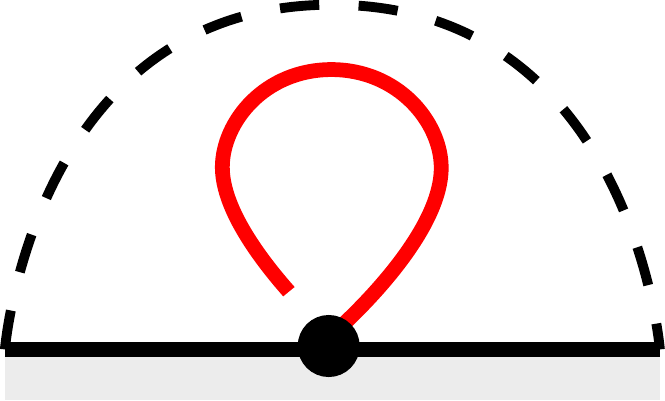}\end{array}=0=\begin{array}{c}\includegraphics[scale=0.27]{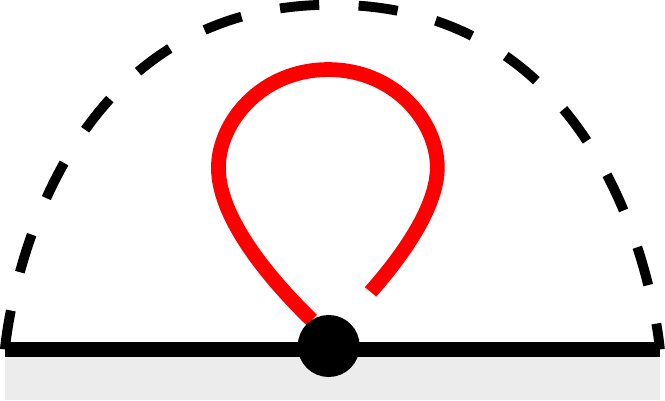}\end{array}
\end{align*}
Note that the middle term in (C), which we call the \emph{simultaneous crossing}, is not an $\BMp$-tangle diagram: the relation (C) can be understood as its defining relation. 
A product of two $\BMp$-tangles $\al$ and $\beta$, denoted by $\al\beta$, is defined by stacking $\al$ over $\beta$ and then rescaling them to be in $\Sigma \times (-1,1)$.  
With this multiplication, the Muller skein module becomes an associative algebra $\sSsq(\bM)$, which we call the \emph{Muller skein algebra}. 
\end{dfn}

The defining relations (A)--(D) of Muller skein modules implies the \emph{Reidemeister moves} shown in Figure \ref{Reidemeister}. 
See \cite[Proposition 3.2]{Muller} for more details. 

\begin{figure}[ht]
    \centering
    \includegraphics[width=400pt]{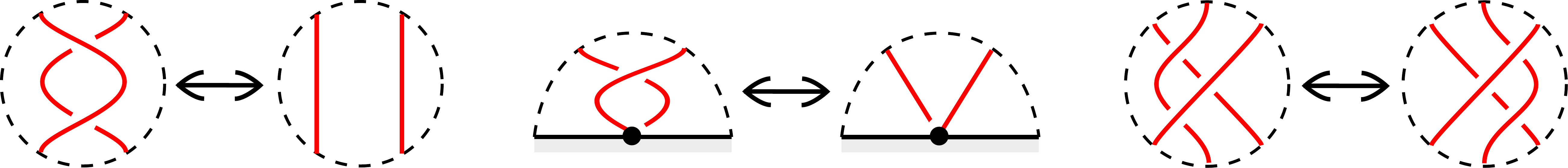}
    \caption{Left: Reidemeister move II, Middle: Reidemeister move II', Right: Reidemeister move III}\label{Reidemeister}
\end{figure}

\paragraph{\bf{Muller's basis of Muller skein algebras}}
A {\it multicurve} on $\Sigma$ is an immersion $\al\colon (\bigsqcup [0,1]) \sqcup (\bigsqcup \,S^1)\to \Sigma^\ast$ such that 
\begin{enumerate}
\item the restriction  of $\al$ on $(\bigsqcup (0,1)) \sqcup (\bigsqcup \,S^1)$ is a proper embedding into $\Sigma^\ast$, 
\item $\al\left(\bigsqcup\, \partial [0,1]\right)\subset \BMp$,
\item at each point of $\BMp$, the ends of $\al$ incident to the special point are totally ordered. 
\end{enumerate}

A multicurve on $\Sigma$ is {\it simple} if none of its connected component bounds an embedded disk in $\Sigma$.

For each multicurve $\al$ on $\Sigma$, there is an $\BMp$-tangle in $\Sigma\times (-1,1)$ with vertical framing such that its image under the vertical projection coincides with $\alpha$ and the over-/under-passing information at each special point is compatible with the ordering of the ends of $\al$ there.
In this way, we will occasionally regard a multicurve as an $\BMp$-tangle in $\Sigma\times (-1,1)$.

When we regard a multicurve $\al$ as an element of $\sSsq(\BM)$, 
it is equal (up to a power of $q^{1/2}$) to the product of loops and arcs on $\Sigma$ 
with simultaneous crossings at $\BMp$
defined by (C). 
The latter product is called the Weyl normalization of the multicurve $\al$, and is denoted by $[\al]$.

\begin{dfn}\label{prop:Muller_basis}
Let $(\Sigma,\bM)$ be a marked surface. 
\begin{enumerate}
    \item The set
    $$
    \sfB(\Sigma,\BM):=\{[\al]\,|\, \text{$\al$ is a simple multicurve on $\Sigma$}\}
    $$
    forms an $\cR$-basis of $\sSsq(\BM)$ \cite[Lemma 4.1]{Muller}, which we call the \emph{Muller's basis} (a.k.a. \emph{bangles basis} \cite{Th}).  
    \item Recall the Chebyshev polynomials of the first kind given by the following initial data and the recurrence relation: 
    \begin{align*}
        T_0(x)=2,\quad T_1(x)=x,\quad T_{n+2}(x)=x T_{n+1}(x)-T_{n}(x)\quad (n\geq0).
    \end{align*}
    %It is easy to see that $x^n$ for $n\in \BZ_{\geq 0}$ can be written as a linear sum of $1,T_1(x),\dots, T_n(x)$ with non-negative coefficients. 
    Then we get another $\cR$-basis $\sfB^b(\Sigma,\BM)$ of $\sSsq(\BM)$, called the \emph{bracelets basis}, obtained by replacing each $k$-parallel copy of a simple loop $\gamma$ with the element $T_k(\gamma) \in \Sk{\Sigma}$ in each $[\al]\in \sfB(\Sigma,\BM)$, where $k$ is the maximal number of parallel copies of $\gamma$ in $\al$. See \cref{sec:positivity} for more details.
\end{enumerate}
\end{dfn}

Let us assume that $(\Sigma,\bM)$ is an unpunctured marked surface. 
Given a triangulation $\tri$ of $\Sigma$, each edge of $e(\tri)$ (or any subset of $e(\tri)$) can be naturally regarded as a simple multicurve. Let us write $A_\al:=[\al] \in \sSsq(\BM)$ for $\al \in e(\tri)$. 
Then it can be verified from the relation (C) that
\begin{align}\label{eq:skein_cluster_A_relation}
    A_\al A_\beta = q^{\pi_{\al\beta}} A_\beta A_\al
\end{align}
holds for all $\al,\beta \in e(\tri)$, where $\pi_{\al\beta}$ is the compatibility matrix (\cref{def:comp_matrix}). The Weyl normalized product $[A_\al A_\beta]$ in the sense of \eqref{eq:Weyl_ordering} is the same as the Weyl normalization of the multicurve $\al \cup \beta$. 

For an ideal triangulation $\tri$, let $\sSsq(\BM)[\tri^{-1}]$ denote the localization along the multiplicative set generated by $\{A_\al \mid \al \in e(\tri)\}$.  

\begin{thm}[Muller, {\cite[Theorem 6.14]{Muller}}]
For an unpunctured marked surface $(\Sigma, \bM)$, the localization $\sSsq(\BM)[\tri^{-1}]$ is injective Ore. 
Moreover, we have an $\CR$-algebra isomorphism 
$\sSsq(\BM)[\tri^{-1}]\cong \CA^q_\tri$.
\end{thm}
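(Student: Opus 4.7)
The plan is to construct a map $\Phi: \cA^q_\tri \to \mathrm{Frac}(\sSsq(\bM))$ sending the generator $A_\al$ to the edge class $[\al]$ for each $\al \in e(\tri)$, and to show that its image coincides with the subring $\sSsq(\bM)[\tri^{-1}]$ of the Ore field of fractions; this simultaneously yields the Ore condition, injectivity of the localization, and the asserted isomorphism. Well-definedness of $\Phi$ is ensured by the $q$-commutation \eqref{eq:skein_cluster_A_relation} established just above the statement.

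The starting point is to verify that $\sSsq(\bM)$ is an Ore domain, so that $\mathrm{Frac}(\sSsq(\bM))$ exists. I would argue by splitting $\Sigma$ along interior edges of $\tri$ into triangles and exhibiting an embedding of $\sSsq(\bM)$ into a tensor product of triangle skein algebras, each of which is a subalgebra of an explicit quantum torus (computable directly from relations (A)--(D) on a disk). Quantum tori are Ore domains, hence so is $\sSsq(\bM)$. Injectivity of $\Phi$ then follows from a leading-term argument in the quantum torus filtration: distinct Laurent monomials $A^\lambda$ with $\lambda \in \bZ^{e(\tri)}$ have distinct leading terms and so are $\cR$-linearly independent.

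The main geometric content is showing surjectivity onto $\sSsq(\bM)[\tri^{-1}]$, i.e., that every Muller basis element $[\mu] \in \sfB(\Sigma,\bM)$ lies in $\Phi(\cA^q_\tri)$. I would induct on the intersection complexity $i(\mu,\tri) := \sum_{\al \in e_\interior(\tri)} |\mu \pitchfork \al|$. In the base case $i(\mu,\tri) = 0$, the multicurve $\mu$ may be isotoped into the union of triangles, and relations (A)--(D) applied inside each triangle express each component as a scalar multiple of a monomial in the three edges of that triangle, showing $[\mu] \in \Phi(\cA^q_\tri)$. For the inductive step, choose $\al \in e_\interior(\tri)$ meeting $\mu$ transversally and apply relation (A) at a crossing between $\mu$ and a parallel push-off of $\al$, introduced into the product via multiplication by $[\al]\cdot [\al]^{-1}$; the two resolved terms each have strictly smaller $i(\cdot,\tri)$, and the induction closes. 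Since $\Phi(\cA^q_\tri)$ thus contains $\sSsq(\bM)$ together with all inverses $[\al]^{-1}$, it contains (and equals) the subring $\sSsq(\bM)[\tri^{-1}]$; the Ore condition and the isomorphism follow at once.

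The hard part will be the inductive step in the surjectivity argument: one must check that the recursion terminates with bounded denominators, namely that each inverse $[\al]^{-1}$ is invoked at most $|\mu \pitchfork \al|$ times during the expansion, and that the ambiguity in the order of crossings resolved does not affect the final Laurent expression. This control on denominators is the combinatorial core of the Laurent phenomenon underlying skein/cluster algebras, and is what elevates the result above a formal localization statement.
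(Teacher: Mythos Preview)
The paper does not prove this theorem; it is quoted verbatim from Muller \cite[Theorem 6.14]{Muller} and used as a black box. So there is no ``paper's own proof'' to compare against, only Muller's original argument.

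Your outline is broadly in the right spirit, but there is a genuine gap in your first step. You propose to show that $\sSsq(\bM)$ is an Ore domain by ``splitting $\Sigma$ along interior edges of $\tri$ into triangles and exhibiting an embedding of $\sSsq(\bM)$ into a tensor product of triangle skein algebras.'' But the Muller skein algebra has no such splitting homomorphism: relations (A)--(D) give you no way to send a multicurve crossing an interior arc $\al$ to anything in the cut surface without first multiplying by a power of $A_\al$ and then dividing by it again---which is exactly the cutting map $\mathrm{Cut}_\al$ of Definition~\ref{def:cutting}, and that already presupposes the localization you are trying to justify. (The splitting map of Theorem~\ref{Thm_splitting} exists only for \emph{stated} skein algebras, where the boundary states absorb the bookkeeping.) So this route is circular as written. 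Muller's actual proof that $\sSsq(\bM)$ is a domain is different: he builds a filtration on $\sSsq(\bM)$ whose associated graded is a quantum affine space in the edge variables, and domains are detected on the associated graded. The Ore property for the multiplicative set generated by the edges then follows because each $A_\al$ is a ``quasi-central'' element (it $q$-commutes with every basis multicurve up to lower-order terms).

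Your surjectivity argument (step~4) is essentially correct and is indeed how Muller's proof proceeds: resolve crossings with edges of $\tri$ one at a time, paying with a factor of $A_\al^{-1}$ each time. Your worry about ``bounded denominators'' is handled automatically once you know the algebra is a domain and the edges form an Ore set: the induction produces an honest element of the Ore localization, and the Ore localization is exactly $\sSsq(\bM)[\tri^{-1}]$ by construction. The order-of-resolution ambiguity is a non-issue because you are computing inside a fixed associative algebra, not defining a new map.
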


\begin{dfn}\label{def:cutting}
In particular, we have an embedding 
\begin{align*}
    \mathrm{Cut}_\tri: \Sk{\Sigma}[\partial^{-1}] \to \A_\tri^q,
\end{align*}
where $\Sk{\Sigma}[\partial^{-1}]$ denotes the localization along the multiplicative set generated by $\{A_\al \mid \al \in \BB\}$. We call this map the \emph{cutting map}.
\end{dfn}
For later use, we reinterpret the cutting map as a composite
\begin{align}\label{eq:cut_composite}
    \mathrm{Cut}_\tri = \mathrm{Cut}_{\alpha_1}\circ \mathrm{Cut}_{\alpha_2} \circ \cdots\circ \mathrm{Cut}_{\alpha_{n(\Sigma)}},
\end{align}
where $e_{\interior}(\tri)=\{\alpha_1,\alpha_2,\dots,\alpha_{n(\Sigma)}\}$ and each $\mathrm{Cut}_{\alpha_i}$ is defined as follows.

For an ideal arc $\alpha$ and a multicurve $C$ on $\Sigma$, the product $A_\alpha^{\bi(\alpha,C)} \cdot C$ can be written an $\cR$-linear combination of multicurves on $\Sigma\setminus \alpha$. Dividing the resulting element by $A_\alpha^{\bi(\alpha,C)}$, we get
\begin{align*}
    \mathrm{Cut}_\al:\Sk{\Sigma}[\partial^{-1}] \to \Sk{\Sigma}[\partial^{-1},\al^{-1}].
\end{align*}
For two ideal arcs $\alpha,\beta$ which do not intersect except for their endpoints, we have $\mathrm{Cut}_\alpha \circ \mathrm{Cut}_\beta =\mathrm{Cut}_\beta \circ \mathrm{Cut}_\alpha$.
Then the composite \eqref{eq:cut_composite} does not depend on the order of composition, and it cuts any multicurve $C$ into a Laurent polynomial of ideal arcs in $e(\tri)$, which lives in $\Sk{\Sigma}[\tri^{-1}] \cong \A_\tri^q$. This description of $\mathrm{Cut}_\tri$ will be in parallel with that for the splitting map $\Psi_\tri$ (\cref{def:q-trace}).

\subsection{Stated skein algebras}

A {\em state} on a $\BB$-tangle $\gamma$ is a map $s:\partial \gamma \to \{+,-\}$.%, where $\partial\gamma$ is the set of the boundary points of $\gamma$. 
Then the pair $(\gamma,s)$ is called a \emph{stated $\BB$-tangle}. 
If there is no confusion, we abbreviate it to $\gamma$. 
Similarly, a \emph{stated $\BB$-tangle diagram} is a $\BB$-tangle diagram $\gamma$ equipped with a state $s\colon \partial \gamma\to \{+,-\}$.

\begin{dfn}[The stated skein module/algebra \cite{Le_triangular}]
The {\em stated skein module} of a marked surface $(\Sigma,\BM)$ is the $\CR$-module generated by all isotopy classes of stated $\BB$-tangles in $\Sigma\times (-1,1)$, subject to the interior skein relations (A) and (B) and the following relations: 
\begin{align*}
&({\rm E}) \begin{array}{c}\includegraphics[scale=0.25]{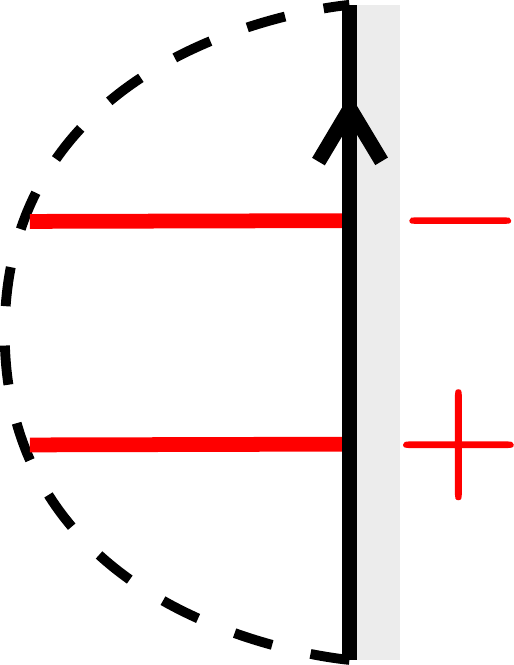}\end{array}=q^2\begin{array}{c}\includegraphics[scale=0.25]{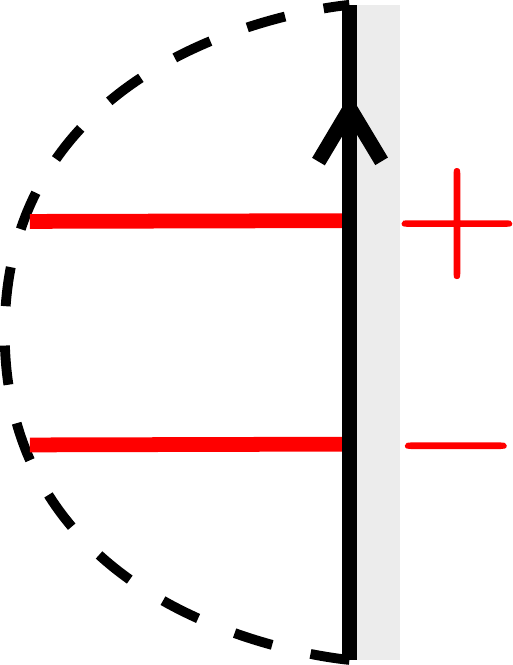}\end{array}+q^{-1/2}\begin{array}{c}\includegraphics[scale=0.25]{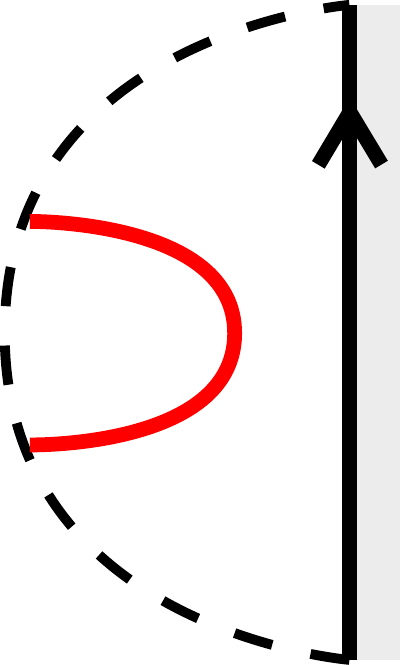}\end{array},\\
&({\rm F})\ \begin{array}{c}\includegraphics[scale=0.25]{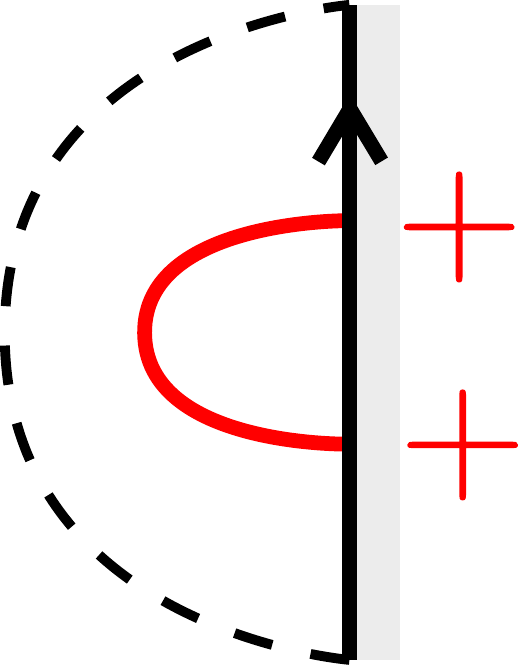}\end{array}=\begin{array}{c}\includegraphics[scale=0.25]{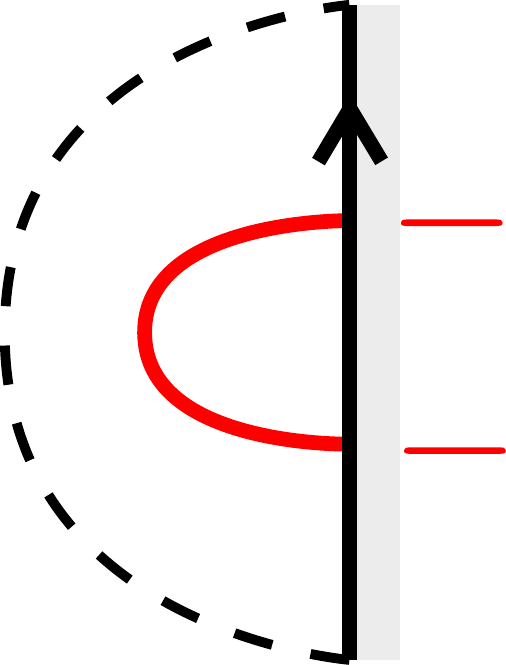}\end{array}=0,\ \ \ \begin{array}{c}\includegraphics[scale=0.25]{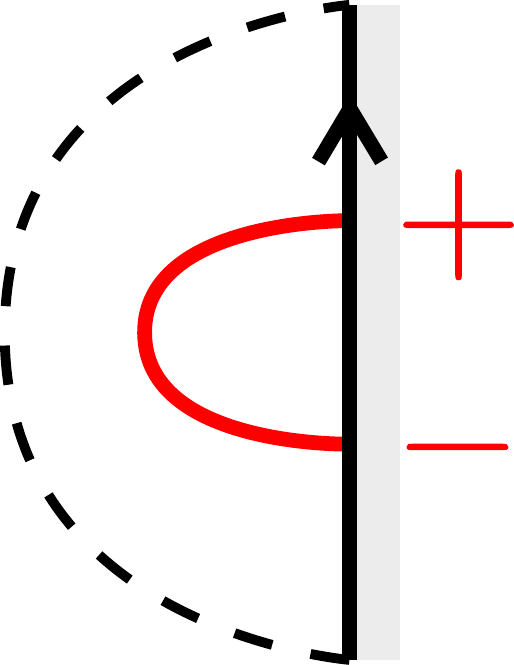}\end{array}=q^{-1/2}\begin{array}{c}\includegraphics[scale=0.25]{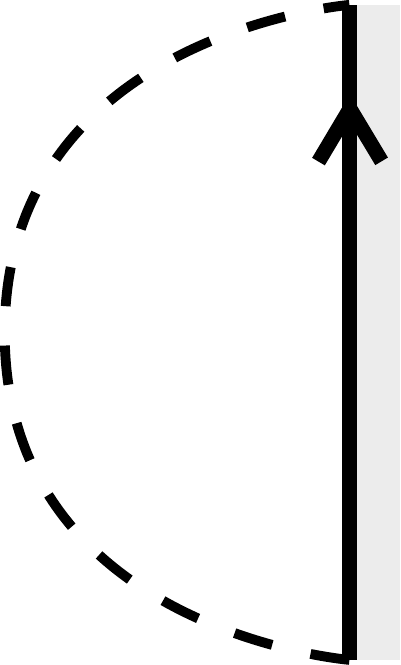}.\end{array}
\end{align*}
Similarly to the (Muller) skein algebras, the stated skein module of $(\Sigma,\BM)$ has an associative multiplication given by the vertical stacking. 
We call the resulting $\CR$-algebra the {\em stated skein algebra} of $(\Sigma, \BM)$, and denote by $\sSsq(\BB)$. 
\end{dfn}

\paragraph{\textbf{L\^e's basis of stated skein algebras}}
Let $\fo$ be an orientation of $\partial^\ast \Sigma$, which may or may not be compatible with the orientation induced from that of $\Sigma$. 
If $\fo$ is the orientation induced from $\Sigma$, then %$\fo$-order 
$\fo$ is called the {\em positive order}. 

A $\BB$-tangle diagram is {\em simple} if it has neither double points in $\Int\Sigma^\ast$ nor connected components bounding an embedded disk or homotopic to a part of a boundary interval relative to $\pSigma$. 
Given an orientation $\fo$, a simple $\BB$-tangle diagram $\gamma$ is {\em $\fo$-ordered} if the partial order of $\gamma$ with respect to the height is increasing when we go along the direction of $\fo$ for each boundary interval. 

Note that every $\BB$-tangle can be presented by an $\fo$-ordered $\BB$-tangle diagram after a deformation by isotopy. 
In particular, if $\fo$ is the positive order, we call an $\fo$-ordered $\BB$-tangle diagram a positively ordered $\BB$-tangle diagram.

We assign an order on the set $\{+,-\}$ so that $+$ is greater than $-$. 
For a stated $\BB$-tangle diagram $\gamma$, 
a state $s\colon \partial \gamma\to \{+,-\}$ is {\em increasing} if it satisfies $s(x)\geq s(y)$
for $x,y\in \partial \gamma$ such that the height of $x$ is greater than that of $y$.

Let $\sfB(\Sigma,\BB)$ be the set of isotopy classes of increasingly stated, positively ordered simple $\BB$-tangle diagrams in $(\Sigma,\BM)$. 
Then, L\^e showed the following theorem. 
\begin{thm}[{\cite[Theorem 2.8]{Le_triangular}}]
For a marked surface $(\Sigma,\BM)$, the set $\sfB(\Sigma,\BB)$ forms an $\CR$-basis of the stated skein algebra $\sSsq(\BB)$. 
\end{thm}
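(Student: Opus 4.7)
The plan is to establish both spanning and linear independence of $\sfB(\Sigma,\BB)$ in $\sSsq(\BB)$, via a normal-form rewriting argument for spanning and a splitting-map argument for independence.

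For spanning, I would introduce a well-founded lexicographic complexity measure on stated $\BB$-tangle diagrams, consisting of (i) the number of interior crossings, (ii) the number of contractible loops and $\partial$-parallel components, and (iii) the number of adjacent endpoint pairs on each boundary interval that violate the ``increasing, positively ordered'' condition, and show that every diagram reduces modulo (A), (B), (E), (F) to an $\CR$-linear combination of elements of $\sfB(\Sigma,\BB)$. Concretely, repeated application of (A) together with (B) strictly decreases (i)--(ii), reducing any diagram to a sum of stated \emph{simple} $\BB$-tangle diagrams (still with arbitrary state and height data). Relation (F) then eliminates forbidden $++$ and $--$ boundary loops and replaces a $-+$ boundary loop by a closed loop detached from the boundary. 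Finally, relation (E) (combined with the boundary Reidemeister II' move) is applied to each adjacent pair of endpoints violating the order: the left-hand side is rewritten as a principal correctly-ordered term plus a ``returning arc'' correction that is strictly simpler with respect to the chosen complexity.

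For linear independence, the cleanest route is through the splitting map along interior ideal arcs, reducing by induction on the complexity of $\Sigma$ to the cases of triangles and bigons. Choose an ideal triangulation $\tri$ of $\Sigma$ (which exists by (S1)--(S3)). The stated skein algebra $\sSsq(\BB)$ admits a splitting homomorphism
\[
    \Psi_\tri : \sSsq(\BB) \to \bigotimes_{T \in t(\tri)} \sS^q_T(\BB),
\]
defined by cutting along each interior edge of $\tri$ and summing over all $(+,-)$-state assignments on the resulting endpoints (this is the geometric content of the quantum trace construction used later in the paper). On the bigon, $\sS^q_{\mathrm{bigon}}(\BB) \cong \CO_{q^2}(\mathrm{SL}_2)$ has the well-known monomial basis, from which the triangle case follows by a direct presentation analysis. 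Linear independence of $\sfB(\Sigma,\BB)$ then follows by verifying that $\Psi_\tri$ is injective and that distinct elements of $\sfB(\Sigma,\BB)$ are sent to $\CR$-linearly independent tensor combinations of triangle basis elements.

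The main obstacle is the precise control of $\Psi_\tri$: while injectivity of splitting is a known structural property of stated skein algebras, one must show that each element of $\sfB(\Sigma,\BB)$ produces a tensor decomposition whose leading term (in a term order indexed by the state sequences on the cut arcs, determined by the intersection pattern of the diagram with $\tri$) is distinct. This upper-triangularity, combined with the manifest linear independence of monomials in the bigon/triangle factors, upgrades the spanning set $\sfB(\Sigma,\BB)$ to an $\CR$-basis. A secondary obstacle is verifying confluence of the rewriting in the spanning step, which must be checked on all critical pairs of the relations (A)--(F); this is a finite case analysis of local configurations near boundary intervals and interior crossings.
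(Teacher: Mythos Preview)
The paper does not give its own proof of this statement: it is quoted verbatim as \cite[Theorem 2.8]{Le_triangular} and used as input. So there is no in-paper argument to compare against; the relevant benchmark is L\^e's original proof.

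Your spanning argument is essentially the standard one and is fine. The problem is your linear independence strategy. You propose to use the splitting map $\Psi_\tri$ and reduce to triangles and bigons, invoking injectivity of splitting as ``a known structural property''. But in \cite{Le_triangular} the logical order is the reverse: the basis theorem (Theorem~2.8 there) is established first, by a direct confluence argument on the rewriting system given by relations (A), (B), (E), (F), and only afterwards is the splitting homomorphism shown to be well-defined and injective (Theorem~3.1 there), with injectivity deduced \emph{from} the basis description on both sides. So your primary route is circular as stated. The ``secondary obstacle'' you mention---checking confluence on all critical pairs, including the nontrivial boundary overlaps among (E) and (F) and the height-exchange relations---is in fact the entire content of L\^e's proof of linear independence, not an afterthought. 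If you want a self-contained argument, you should promote that confluence analysis to the main line and drop the appeal to splitting; alternatively, if you insist on the splitting route, you must first prove well-definedness of $\Psi_\tri$ without the basis (a relation check), then make your ``leading term/upper-triangularity'' claim precise enough to avoid ever using injectivity, which is considerably more work than you indicate.
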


\paragraph{{\bf Reduced stated skein algebras}}

Let $\Ibad$ denote the two-sided ideal generated by stated $\BB$-tangle diagrams containing at least one bad arc, 
where a {\em bad arc} is a corner arc (i.e. an arc which cuts off from $\Sigma$ a triangle with one special point) with the states $+$ and $-$ 
in this clockwise order depicted as in Figure \ref{bad_arc}.  
\begin{figure}[ht]\centering\includegraphics[width=80pt]{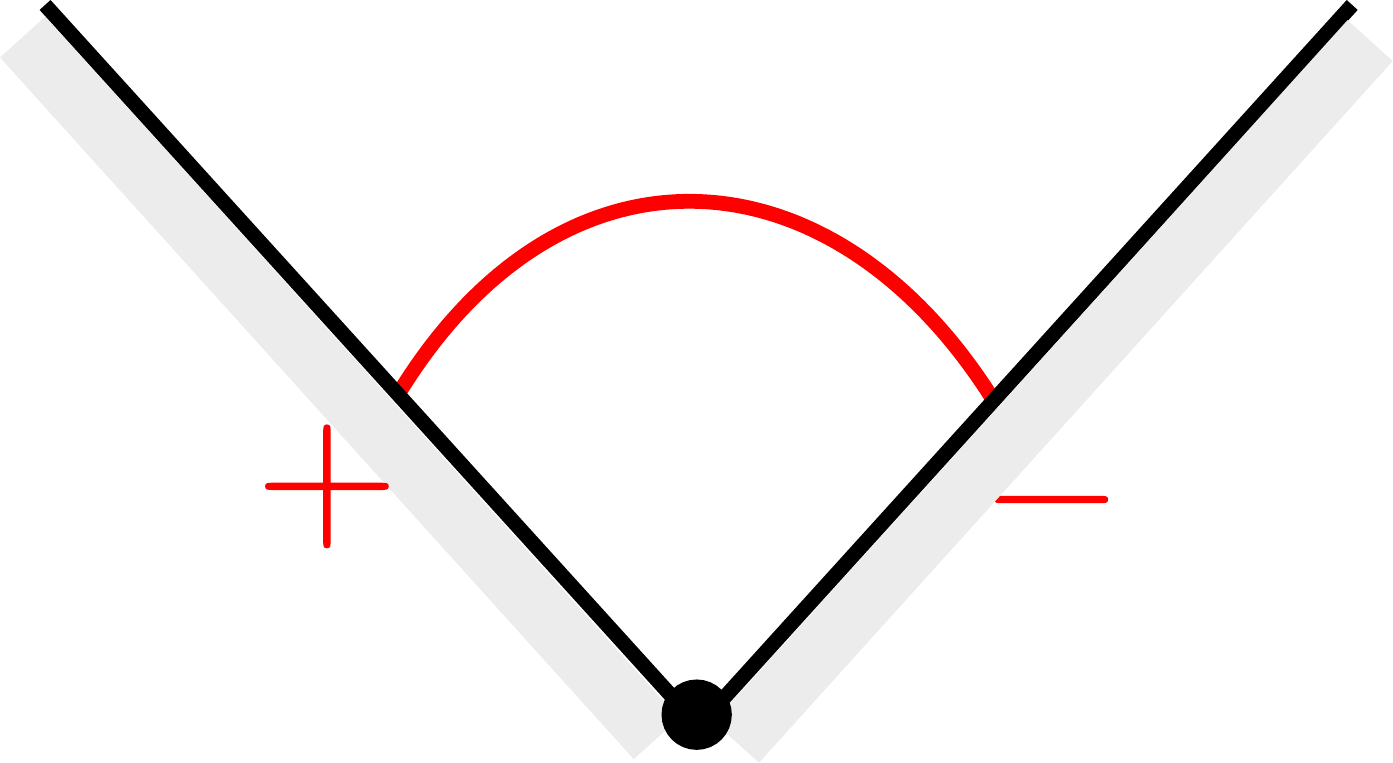}\caption{A bad arc}\label{bad_arc}\end{figure}

\begin{dfn}[\cite{CL}] \label{def:reduced_stated}
The {\em reduced stated skein algebra} of $(\Sigma, \BM)$ is defined to be the quotient algebra $\olSs(\BB):=\sSsq(\BB)/\Ibad$.
\end{dfn}

\begin{figure}[ht]\centering\includegraphics[width=100pt]{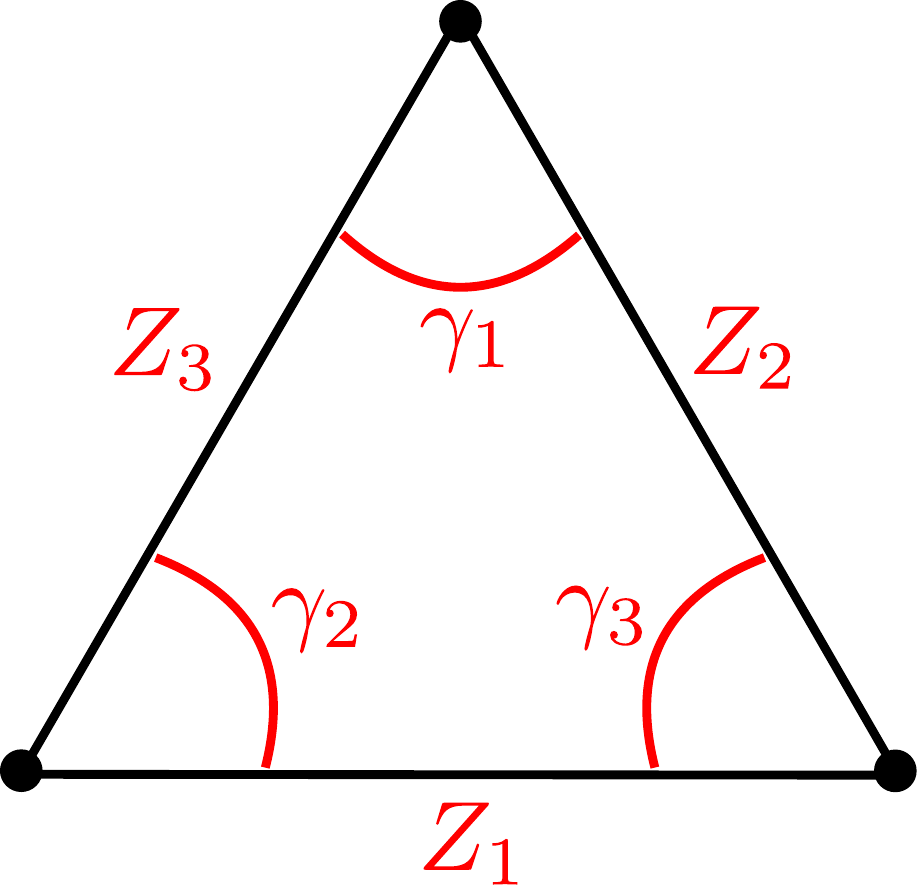}\caption{The generators of the Chekhov--Fock algebras}\label{CFalg_triangle}\end{figure}

\begin{ex}[Triangle case]\label{ex:triangle}
Let $T$ be a triangle, which is a disk with 3 marked points. The stated skein algebra $\mathscr{S}_T^q(\BB)$ is generated by
\begin{align*}
    \{\gamma_i(\ve,\ve') \mid i=1,2,3,~ \ve,\ve' \in \{+,-\}\},
\end{align*}
where $\gamma_i$ are the $\BB$-tangle diagrams shown in Figure \ref{CFalg_triangle}, and $\gamma_i(\varepsilon, \varepsilon')$ denotes $\gamma_i$ with states $\varepsilon$ and $\varepsilon'$ in this counter-clockwise order \cite[Theorem 4.6]{Le_triangular}.

The Chekhov--Fock square-root algebra $\CZ^q(T):=\CZ_\tri^q(T)$ for its unique triangulation $\tri$ is given by
%;
$$\CZ^q(T)=\CR\langle Z_1^{\pm 1}, Z_2^{\pm1}, Z_3^{\pm1} \rangle/ 
(Z_1 Z_2=q^{-1}Z_2 Z_1, Z_2 Z_3=q^{-1}Z_3 Z_2, Z_3 Z_1=q^{-1}Z_1 Z_3). $$
The %element $\tau$ in the 
cyclic group $\BZ/3\BZ=\langle \tau \,|\,\tau^3=1\rangle$ acts on each of $\sS^q_T(\BB)$ and $\sSk{T}$
%and  $\CZ^q(T)$ 
as algebra automorphsims induced by $2\pi/3$-rotations. 
It acts on $\CZ^q(T)$ by $\tau(Z_i)=Z_{i+1}$ with $Z_4:=Z_1$. 
Then, Costantino--L\^e \cite{CL} showed that 
there is a $\tau$-equivariant $\CR$-algebra homomorphism 
\begin{alignat}{3}\label{isom_triangle}
&\widetilde{\mathrm{Tr}}_T\colon \sS^q_{T}(\BB)\to \CZ^q(T)_{\mathrm{bl}}, \quad&
&\gamma_i(+,+)\mapsto [Z_{i+1}^{-1}Z_{i+2}^{-1}],\quad&
&\gamma_i(+,-)\mapsto [Z_{i+1} Z_{i+2}^{-1}],\\
& &
&\gamma_i(-,+)\mapsto 0,&
&\gamma_i(-,-)\mapsto [Z_{i+1}Z_{i+2}], \nonumber
\end{alignat}
where $[\cdot]$ denotes the Weyl normalization. It induces an $\cR$-algebra isomorphism
\begin{align}\label{eq:triangle_isom}
    {\mathrm{Tr}}_T: \sSk{T} \xrightarrow{\sim} \cZ^q(T)_\mathrm{bl}. 
\end{align}
See \cite[Section 7.7]{CL} for details.
\end{ex}

\begin{rem}
The assignment \eqref{isom_triangle} is the Bonahon--Wong's original convention, except for that we use the inverses of $Z$-variables. In the assignment given in \cite[Theorem 7.11]{CL}, the variables $\alpha,\beta,\gamma$ are cluster $K_2$-variables rather than the Chekhov--Fock square-root variables. These two kinds of variables are related by a monomial relation, as we will see in \cref{lem:triangle_compatible}.
\end{rem}

For the reduced stated skein algebra $\sSk{\Sigma}$ of $(\Sigma, \BM)$, 
consider the subset $\overline{\sfB}(\Sigma,\BB)\subset {\sfB}(\Sigma,\BB)$ consisting of stated $\BB$-tangle diagrams without bad arcs. 
\begin{thm}[Costantino--L\^e, {\cite[Theorem 7.1]{CL}}]
The set $\overline{\sfB}(\Sigma,\BB)$ is an $\cR$-basis of $\sSk{\Sigma}$. 
\end{thm}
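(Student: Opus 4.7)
The plan is to reduce the claim to L\^e's basis theorem for $\sSsq(\BB)$ by proving that the two-sided ideal $\Ibad$ coincides, as an $\cR$-submodule, with $\mathrm{span}_\cR(\sfB^{\mathrm{bad}}(\Sigma,\BB))$, where $\sfB^{\mathrm{bad}} := \sfB(\Sigma,\BB) \setminus \overline{\sfB}(\Sigma,\BB)$ consists of those increasingly-stated, positively-ordered simple $\BB$-tangle diagrams containing at least one bad arc. One inclusion is immediate: each element of $\sfB^{\mathrm{bad}}$ is by construction a stated diagram containing a bad arc, hence lies in $\Ibad$. Granting the opposite inclusion, L\^e's direct-sum decomposition $\sSsq(\BB) = \mathrm{span}_\cR(\overline{\sfB}) \oplus \mathrm{span}_\cR(\sfB^{\mathrm{bad}})$ descends to $\sSk{\Sigma} = \mathrm{span}_\cR(\overline{\sfB})$, giving the theorem.

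To prove the reverse inclusion, I would construct an $\cR$-algebra homomorphism $F \colon \sSsq(\BB) \to R$ into some auxiliary algebra $R$ that annihilates $\Ibad$ while separating the images of elements of $\overline{\sfB}$. Fix an ideal triangulation $\tri$ of $\Sigma$ without self-folded triangles, whose existence is guaranteed by conditions (S1)--(S3). L\^e's splitting construction along the interior edges of $\tri$ yields an injective $\cR$-algebra homomorphism $\widetilde{\mathrm{Tr}}_\tri \colon \sSsq(\BB) \hookrightarrow \bigotimes_{T \in t(\tri)} \mathscr{S}_T^q(\BB)$, which I post-compose with the tensor product of the triangle quantum traces $\widetilde{\mathrm{Tr}}_T$ from \eqref{isom_triangle} to obtain
\[
    F \colon \sSsq(\BB) \longrightarrow \bigotimes_{T \in t(\tri)} \cZ^q(T)_{\mathrm{bl}}.
\]
The explicit formula \eqref{isom_triangle} shows that on each triangle the local generator $\gamma_i(-,+)$, which is precisely a bad arc localized inside $T$, is sent to $0$. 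Since $\ker F$ is a two-sided ideal and every bad arc in $\Sigma$ may be isotoped into the interior of a single triangle of $\tri$, this yields $\Ibad \subseteq \ker F$.

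It then remains to verify that $F$ is injective on $\mathrm{span}_\cR(\overline{\sfB})$. For $\alpha \in \overline{\sfB}$, the triangle-by-triangle description \eqref{isom_triangle} expresses $F(\alpha)$ as a Weyl-normalized monomial in the square-root variables $Z^\tri_e$, whose exponents encode the geometric intersection numbers of $\alpha$ with the interior edges of $\tri$ together with the boundary states of $\alpha$, plus strictly lower-order corrections in the length filtration by total exponent. A Lusztig-type leading-term argument then matches distinct elements of $\overline{\sfB}$ with distinct leading monomials in $\bigotimes_{T} \cZ^q(T)_{\mathrm{bl}}$, establishing linear independence.

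The principal obstacle is the intermediate step: verifying that the \emph{entire} two-sided ideal $\Ibad$, and not merely its generating bad-arc diagrams, lies in $\ker F$. This requires careful compatibility between the splitting $\widetilde{\mathrm{Tr}}_\tri$ and the stated skein multiplication, and in particular controlling what happens when a bad arc meets an interior edge of $\tri$—one must isotope the relevant diagram so that every bad arc sits entirely within one triangle, then invoke the triangle-local vanishing. Once this triangular localization of bad arcs is in place, the leading-term argument completing linear independence is routine.
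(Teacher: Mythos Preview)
The paper does not give a proof of this theorem; it is quoted from \cite[Theorem 7.1]{CL} and used as a black box, so there is nothing in the paper to compare your argument against directly.

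That said, your overall strategy---show $\Ibad=\mathrm{span}_\cR(\sfB^{\mathrm{bad}})$ by constructing an algebra map $F$ that kills $\Ibad$ but is injective on $\mathrm{span}_\cR(\overline{\sfB})$, using the quantum trace---is reasonable and is essentially how the Costantino--L\^e argument proceeds. However, you have inverted the relative difficulty of the two steps.

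The step you call the ``principal obstacle,'' namely $\Ibad\subseteq\ker F$, is nearly automatic. Since $F$ is an algebra homomorphism, $\ker F$ is a two-sided ideal, so it suffices that each \emph{generator} of $\Ibad$ lies in $\ker F$. A generator is a stated diagram one of whose components is a bad arc; being a corner arc at a special point of $\Sigma$, that component can be isotoped (without altering boundary heights) to lie inside a single triangle $T\in t(\tri)$. After splitting, the $T$-tensor factor then contains a bad arc and hence lies in $\ker\widetilde{\mathrm{Tr}}_T$ by the triangle case \eqref{eq:triangle_isom}. (You must of course take the triangle case as independently established by the explicit computation in \cite[Section~7.7]{CL}; otherwise the argument is circular.) There is no further ``compatibility with multiplication'' to check.

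Conversely, the step you call ``routine''---the leading-term injectivity of $F$ on $\mathrm{span}_\cR(\overline{\sfB})$---is where the genuine work lies. Your description of the leading monomial is too quick: the natural ``all $-$ on interior edges'' state assignment can produce local bad arcs in some triangles (for a corner arc at a boundary special point carrying a $+$ from $\alpha$ on one side and your assigned $-$ on the other, in the bad order), and that term then vanishes. So you must specify a monomial order, identify for each $\alpha\in\overline{\sfB}$ which admissible state assignment actually realizes the extremal term, and prove that $\alpha\mapsto(\text{extremal monomial})$ is injective. This is exactly the combinatorial core of the Costantino--L\^e proof, and it is not routine.
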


For later comparison with skein liftings of $\A$-laminations, we slightly modify the basis $\overline{\sfB}(\Sigma,\BB)$ as follows. 

\begin{dfn}\label{def:admissible} 
A stated $\BB$-tangle diagram is {\em admissible} if it consists of 
\begin{enumerate}
    \item corner arcs with the same states, 
    \item non-corner arcs with states $-$
\end{enumerate}
without crossings. 
\end{dfn}

\begin{prop}\label{prop:admissible_span}
In the reduced stated skein algebra $\sSk{\Sigma}$, 
every stated $\BB$-tangle diagram can be presented as an $\cR$-linear sum of admissible stated $\BB$-tangle diagrams. 
\end{prop}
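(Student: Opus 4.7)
The plan is to give a reduction procedure. Starting from an arbitrary stated $\BB$-tangle diagram $D$, I would express $D$ as an $\cR$-linear combination of admissible diagrams via successive applications of the defining relations of $\sSk{\Sigma}$. First, I would apply relation (A) at each interior double point to eliminate interior crossings, and use (B) to absorb any trivial loops that arise. After finitely many iterations this produces a sum of stated $\BB$-tangle diagrams without interior crossings, so from here on one may assume the diagrams under consideration are interior-simple.

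Next, I would address the endpoint states on each boundary interval $b$. Relation (E) allows one to exchange the heights of two adjacent endpoints on $b$ having states $(+, -)$ in decreasing height order, at the cost of a scalar multiple of the swapped configuration plus a ``returning'' corner arc term that caps the two endpoints together. Iterating this exchange along $b$, any $+$-state endpoint lying on a non-corner arc can be pushed toward a designated adjacent special point, eventually becoming an endpoint of a corner arc (or being absorbed into one). Along the way, each returning corner arc has mismatched states; those which are \emph{bad} (states $(+, -)$ in clockwise order) vanish in $\sSk{\Sigma}$ by the very definition of the reduced algebra.

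The main obstacle is handling the \emph{non-bad} mismatched corner arcs (with states $(-, +)$ in clockwise order) that survive the second phase, since these are nonzero in $\sSk{\Sigma}$. For each such arc $c$, I would localize to a small triangular neighborhood having $c$ as one of its corner arcs and apply further instances of relation (E) to an endpoint on a common boundary interval --- essentially performing the triangle computation of \cref{ex:triangle} --- in order to rewrite $c$ as an $\cR$-linear combination of products of matched-state corner arcs, which are admissible. Termination of the entire procedure then follows from a monovariant argument: to each diagram associate the triple (number of interior crossings, number of $+$-state endpoints on non-corner arcs, number of non-bad mismatched corner arcs), ordered lexicographically; each step of Phase~1, Phase~2, and Phase~3 strictly decreases this triple, so the procedure terminates after finitely many steps in a sum of admissible diagrams.
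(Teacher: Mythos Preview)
Your overall reduction strategy is in the same spirit as the paper's, but Phase~2 as described does not do what you claim. Relation~(E) exchanges the \emph{heights} of two adjacent endpoints on a single boundary interval and throws off a returning-arc term that lives on that same interval; it does not move an endpoint along the interval, and it certainly does not transport an endpoint around a special point onto the neighbouring interval. In particular, iterating~(E) can at best sort the endpoints on~$b$ by state; in the ``swap'' summand the offending $+$~endpoint remains on the very same non-corner arc, so the second coordinate of your monovariant does not decrease and the procedure stalls with $+$~states still present on non-corner arcs. (Also, the byproduct of~(E) is a returning arc on a single interval, not a corner arc, so calling it a ``returning corner arc'' and then invoking the bad-arc relation is a category error.) Phase~3 has a separate problem: a corner of $\Sigma$ is a half-disk, not a triangle, so there is no ``triangular neighbourhood'' to localise to and the computation of \cref{ex:triangle} is not available.

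What actually eliminates a $+$~endpoint is the paper's \emph{sticking trick}: one rewrites the local picture near the $+$~endpoint so that the strand bends around the neighbouring special point and now lands on the adjacent boundary interval with state~$-$, together with a new $(+,+)$ corner arc; the only other term produced contains a \emph{bad} arc, hence vanishes in $\sSk{\Sigma}$. This is a single local identity (derived from~(E) and the bad-arc ideal) which removes the $+$~state outright---no height-sorting and no branching into surviving swap terms. The same trick, applied to the $+$~endpoint of a $(-,+)$-clockwise corner arc, handles your Phase~3 case as well. The paper streamlines things further by first passing to the known basis $\overline{\sfB}(\Sigma,\bB)$, so that crossings (your Phase~1) and mismatched corner arcs (your Phase~3) are absent from the start, leaving only the sticking trick to perform.
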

\begin{proof}
To show the claim, it suffices to show that each element of $\overline{\sfB}(\Sigma, \bB)$ can be written as a linear sum of admissible stated $\bB$-tangle diagrams in $\sSk{\Sigma}$. 
It suffices to consider the case that $\partial^\ast\Sigma$ is endowed with the orientation such that $\overline{\sfB}(\Sigma, \bB)$ contains no corner arcs with states $-$ and $+$ in this clockwise order.

We fix $\gamma\in \ol{\sfB}(\Sigma, \bB)$. 
Note that $\gamma$ has no crossings, no bad arcs, and no corner arcs with states $-$ and $+$ in this clockwise order. 
If there is a non-corner arc such that one of its states is $+$, 
we apply the ``sticking trick'' based on the relation (E) as follows:
\begin{align}\label{eq:sticking}
\begin{array}{c}\includegraphics[scale=0.18]{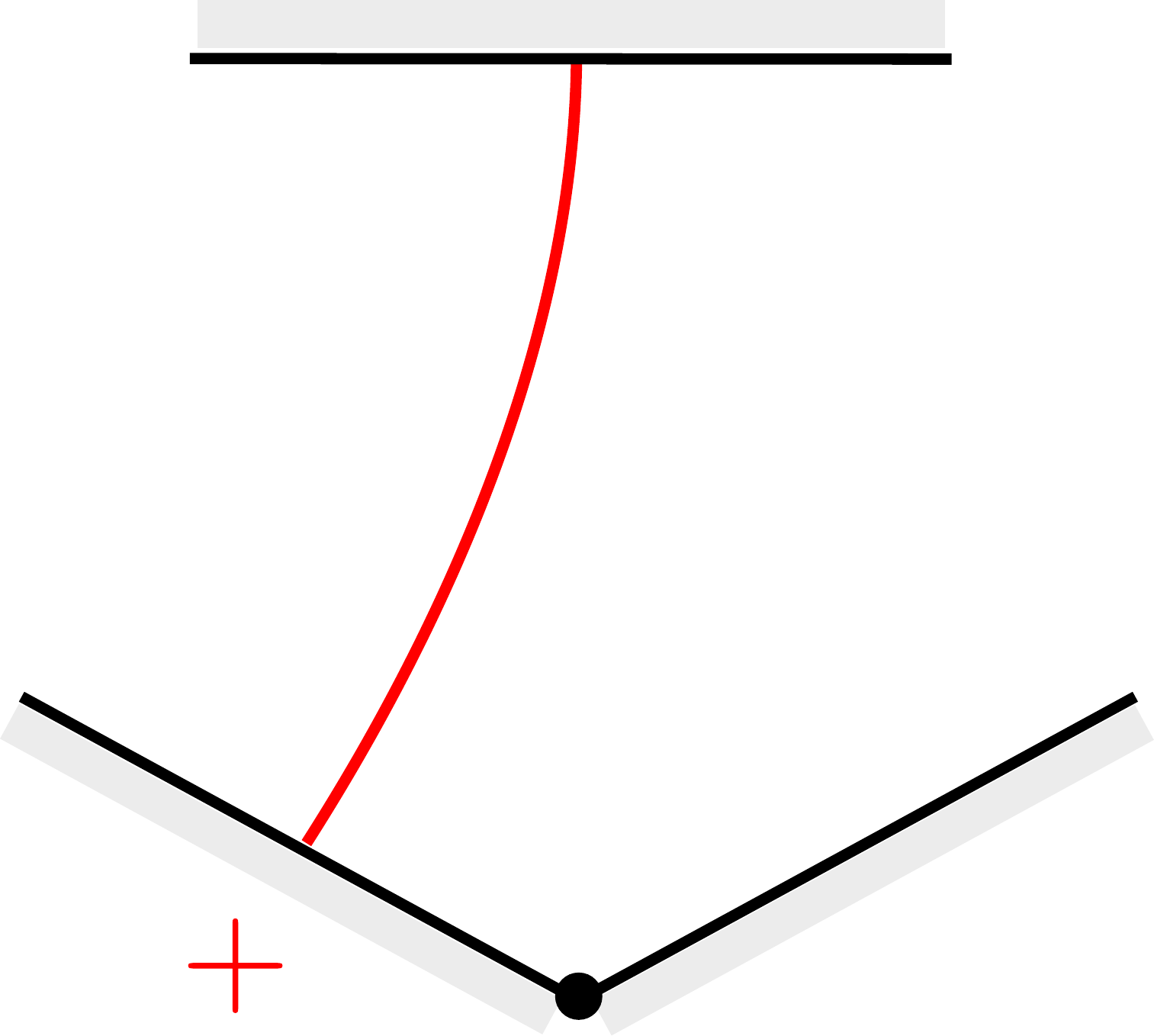}\end{array}=q^{1/2}\begin{array}{c}\includegraphics[scale=0.18]{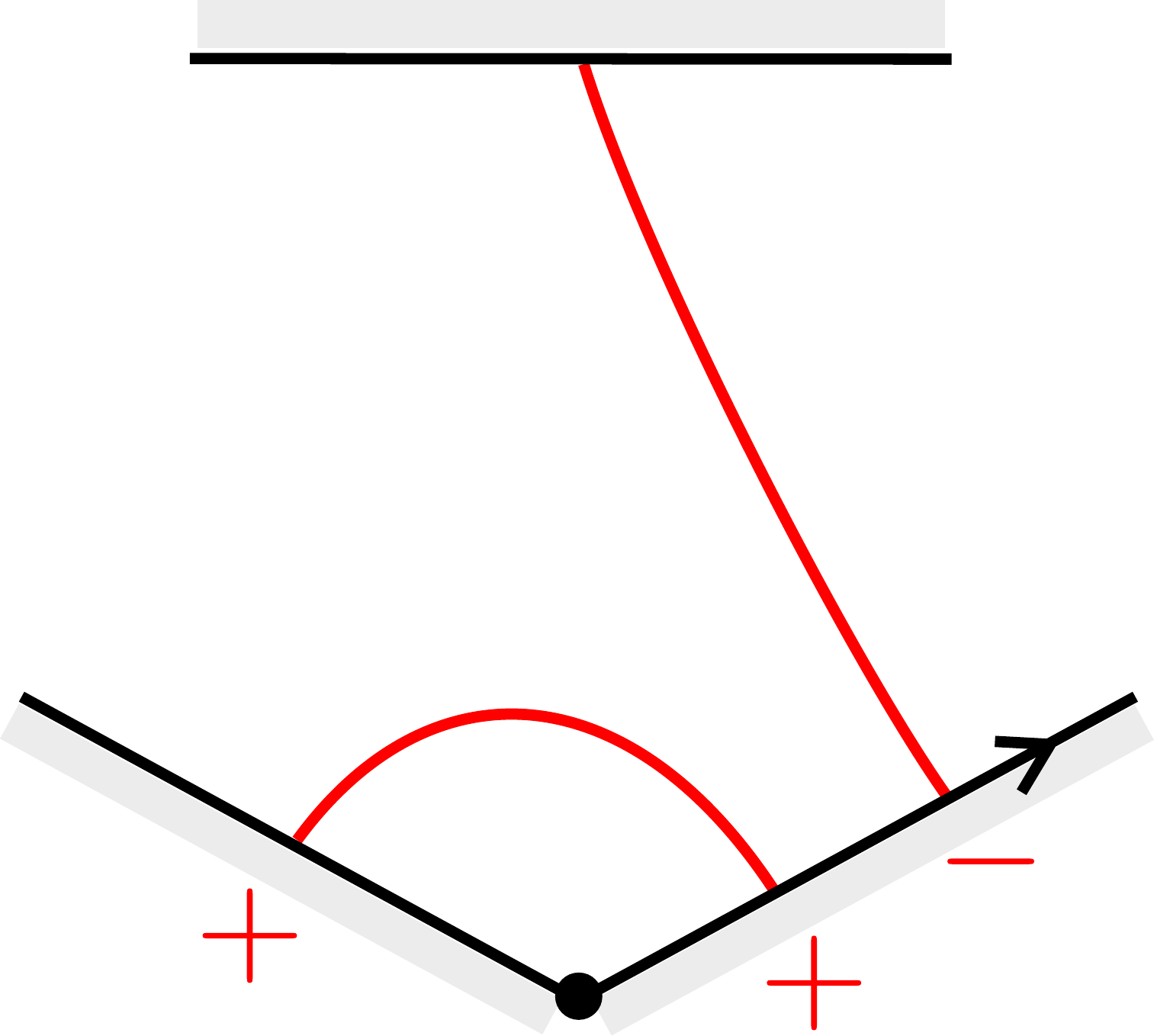}\end{array}
-q^{5/2}
\begin{array}{c}\includegraphics[scale=0.18]{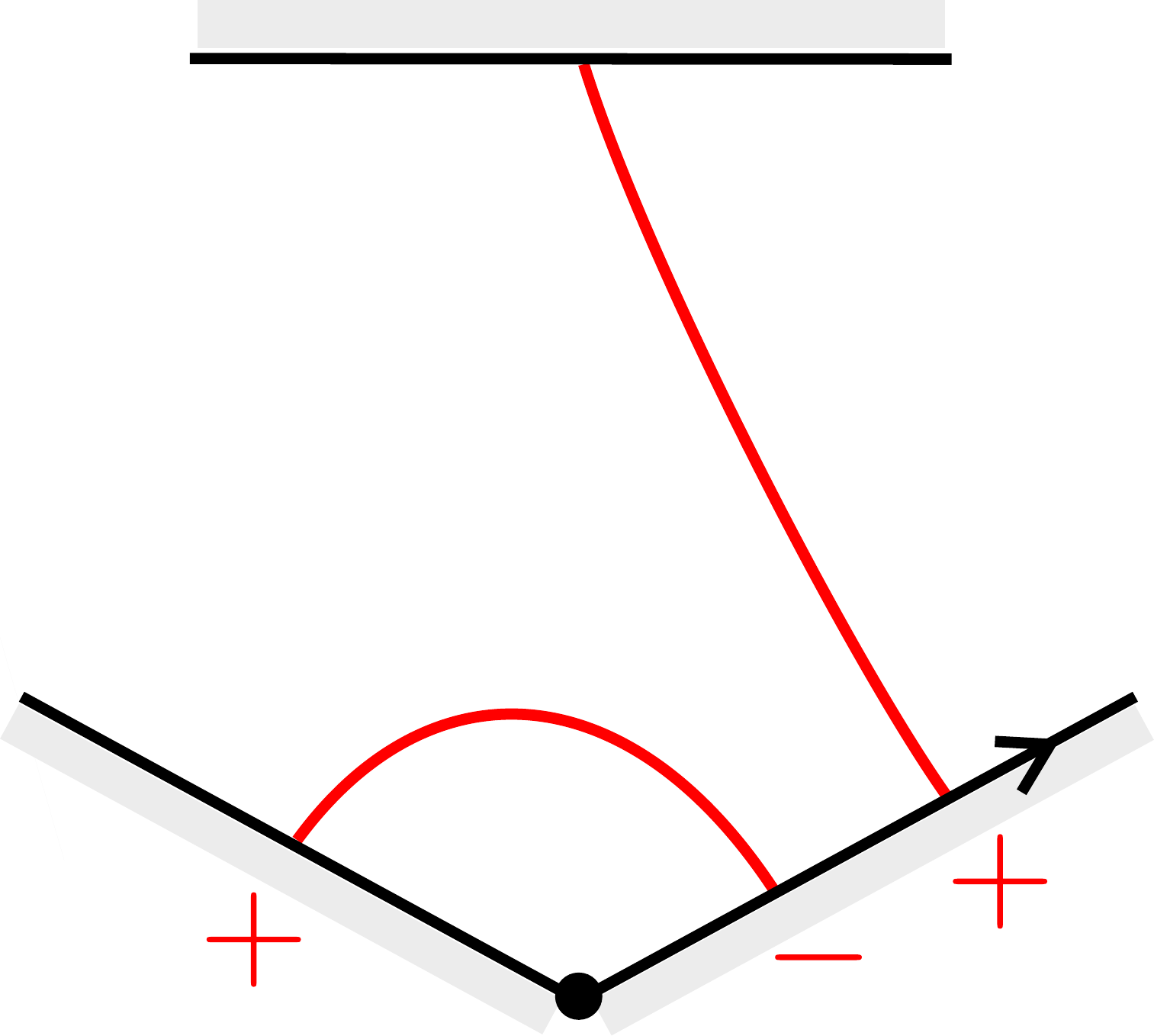}\end{array}
=q^{1/2}\begin{array}{c}\includegraphics[scale=0.18]{lamination_image_06.pdf}\end{array},
\end{align}
where we eliminated the bad arc. 
Note that even if the three boundary intervals appearing in the picture are not distinct, 
the computation still works. 
In this way, one can remove all non-admissible arcs, i.e. non-corner arcs with one of the states is $+$, without affecting the others and additional crossings. 
\end{proof}

\begin{rem}
The linear independence will be discussed in \cref{cor:basis}. 
\end{rem}

\subsection{Splitting map and quantum trace map}

Let $\al$ be an ideal arc on $\Sigma$. 
By splitting $\Sigma$ along $\al$, we obtain a new surface $\Sigma'$ (possibly disconnected), where $\al$ is splitted into two boundary intervals $\al',\al''$. There is a projection $\Sigma'\to \Sigma$ maps the edges $\al'$ and $\al''$ to $\al$ and it is identity except for $\al'\cup \al''$. 
Given a $\BB$-tangle $\gamma$ in $\Sigma \times (-1,1)$, let $\partial_\al\gamma$ be the set of all crossings of $\gamma$ and $\al\times (-1,1)$. 
Then we say that $\gamma$ is {\em vertically transverse} to $\al$ if 
\begin{enumerate}
\item $\gamma$ intersects transversely with $\al\times (-1,1)$ and 
\item the points of $\partial_\al\gamma$ have distinct heights and vertical framing. 
\end{enumerate}

Suppose that $\gamma$ is vertically transverse to $\al$. 
By splitting $\gamma$ along $\al\times (-1,1)$ in $\Sigma\times (-1,1)$, 
we have a new stated $\BB$-tangle $(\tilde{\gamma},\varepsilon)$ in $\Sigma'\times (-1,1)$ equipped with a state $\varepsilon \colon \partial_\al\gamma\to\{+,-\}$ at the corresponding endpoints on $\al'\cup \al''$ \cite[Section 3.1]{LY21}. See \cref{fig:splitting}.

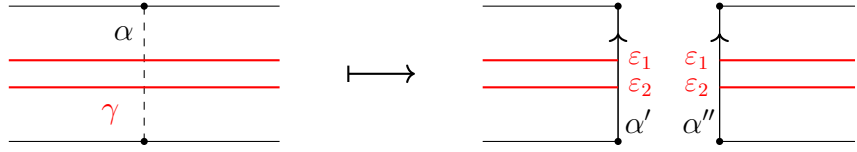
\begin{figure}[ht]
    \centering
\begin{tikzpicture}[scale=0.9]
\begin{scope}
\draw (-2,0) -- (2,0);
\draw (-2,2) -- (2,2);
\draw[dashed] (0,0) -- (0,2);
\draw[red,thick] (-2,0.8) -- (2,0.8);
\draw[red,thick] (-2,1.2) -- (2,1.2);
\node[red] at (-0.5,0.4) {$\gamma$};
\node at (-0.3,1.6){$\al$};
\foreach \j in {0,2} \fill(0,\j) circle(1.5pt);
\draw[thick,|->] (3,1) --++(1,0);
\end{scope}
\begin{scope}[xshift=7cm]
\draw (-2,0) -- (0,0) -- (0,2) -- (-2,2);
\draw (3.5,0) -- (1.5,0) -- (1.5,2) -- (3.5,2);
\draw[->-={0.8}{}] (0,0) -- (0,2);
\draw[->-={0.8}{}] (1.5,0) -- (1.5,2);
\draw[red,thick] (-2,1.2) -- (0,1.2) node[right,scale=0.9]{$\varepsilon_1$};
\draw[red,thick] (3.5,1.2) -- (1.5,1.2) node[left,scale=0.9]{$\varepsilon_1$};
\draw[red,thick] (-2,0.8) -- (0,0.8) node[right,scale=0.9]{$\varepsilon_2$};
\draw[red,thick] (3.5,0.8) -- (1.5,0.8) node[left,scale=0.9]{$\varepsilon_2$};
\node at (0.3,0.3) {$\alpha'$};
\node at (1.2,0.3) {$\alpha''$};
\foreach \i in {0,1.5} \foreach \j in {0,2} \fill(\i,\j) circle(1.5pt);
\end{scope}
\end{tikzpicture}
    \caption{The splitting along an ideal arc $\al$.}
    \label{fig:splitting}
\end{figure}

\begin{thm}[Splitting theorem, {\cite[a part of Theorem 3.1]{Le_triangular}}]\label{Thm_splitting}
Let $\al$ be an ideal arc on $(\Sigma,\bM)$. 
Let $\Sigma'$ be the resulting surface obtained from $\Sigma$ by splitting $\Sigma$ along $\al$. 
Then, we have the following; 
\begin{enumerate}
\item there is a unique $\CR$-algebra homomorphism 
\begin{align}\label{eq:splitting}
    \theta_\al\colon \sSsq(\BB)\to \sS_{\Sigma'}^q(\BB)
\end{align}
such that 
$$\displaystyle\theta_\al(\gamma)=\sum_{\varepsilon\colon \partial_\al\gamma\to\{+,-\}}(\tilde{\gamma},\varepsilon)$$
for a stated $\BB$-tangle $\gamma$ vertically transverse to $\al$.

\item if $\al_1$ and $\al_2$ are ideal arcs on $\Sigma$ which do not intersect except for their endpoints, then $\theta_{\al_1}\circ\theta_{\al_2}=\theta_{\al_2}\circ\theta_{\al_1}$.
\end{enumerate}
\end{thm}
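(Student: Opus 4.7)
The plan is to prove (1) by defining $\theta_\al$ first on stated $\bB$-tangles in a ``good position'' relative to $\al$, checking well-definedness, and then verifying it factors through the stated skein relations; part (2) will then follow from locality of the construction.

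First, I would establish a \emph{general position lemma}: every stated $\bB$-tangle $\gamma$ in $\Sigma\times(-1,1)$ is isotopic to one that is vertically transverse to $\al$. This is a standard transversality argument, combined with a small isotopy to separate the heights of points in $\partial_\al\gamma$ and rotate their framings to vertical. Having this, I define $\theta_\al(\gamma)$ on a vertically transverse representative by the prescribed formula $\sum_{\varepsilon} (\tilde\gamma,\varepsilon)$ and extend $\cR$-linearly. The main content is to verify that this definition is independent of choices and descends to an algebra map on $\sSsq(\bB)$.

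For well-definedness I would proceed in three stages. (a) \emph{Isotopy invariance inside vertical transversality.} Any two vertically transverse representatives in the same isotopy class are connected by a finite sequence of local moves: Reidemeister II/III moves away from $\al$, planar isotopies, local ``passing through $\al$'' moves where an arc is pushed across $\al$, and ``height-swap'' moves where two points of $\partial_\al\gamma$ exchange their heights. The first two act trivially on both sides. For a height-swap, the two summands on the split side are related by a boundary $R$-matrix computation, and the net effect matches the identity on the unsplit side precisely because we sum over \emph{all} states $\varepsilon:\partial_\al\gamma\to\{+,-\}$; the key input is relation (E), which is exactly the $q$-deformed analogue of ``summing over an intermediate basis is basis-independent.'' (b) \emph{Defining relations (A),(B),(E),(F) are respected.} The interior relations (A) and (B) are local in $\Int\Sigma^*$ and can always be placed disjointly from $\al\times(-1,1)$, so they pose no issue. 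The boundary relations (E), (F) occur near boundary intervals of $\Sigma$, which are also boundary intervals of $\Sigma'$; the summation over $\varepsilon$ goes through transparently. (c) \emph{Multiplicativity.} For two stated $\bB$-tangles $\gamma_1,\gamma_2$, one first makes each vertically transverse to $\al$ with the stacking still in general position; then splitting and stacking commute term-by-term, and the sum over joint states on $\partial_\al(\gamma_1\cdot\gamma_2)=\partial_\al\gamma_1\sqcup\partial_\al\gamma_2$ factors as the product of sums. Uniqueness is automatic since vertically transverse stated $\bB$-tangles generate $\sSsq(\bB)$ as an $\cR$-algebra.

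For part (2), choose $\gamma$ in general position with respect to both $\al_1$ and $\al_2$ simultaneously (possible since $\al_1\cap\al_2$ is contained in $\bM_\partial$, away from the interior of the tangle). The two splitting procedures then act in disjoint collar neighborhoods of $\al_1$ and $\al_2$, so they manifestly commute on the defining formula:
\begin{equation*}
(\theta_{\al_1}\circ\theta_{\al_2})(\gamma)=\sum_{\varepsilon_1,\varepsilon_2}(\widetilde{\widetilde\gamma},\varepsilon_1,\varepsilon_2)=(\theta_{\al_2}\circ\theta_{\al_1})(\gamma).
\end{equation*}

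The principal obstacle I expect is stage (a) of well-definedness, namely the height-swap compatibility. Tracking the precise signs and $q$-powers from relation (E) when two strands in $\partial_\al\gamma$ exchange heights, with all four possible state combinations, is the computation that encodes why the summed formula is forced; everything else is either formal or standard skein-theoretic bookkeeping. Once this single identity is verified in the bigon adjacent to $\al$, the rest of the proof is largely organizational.
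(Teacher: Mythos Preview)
The paper does not prove this theorem at all: it is stated with attribution to L\^e \cite[a part of Theorem 3.1]{Le_triangular} and used as a black box, with no proof given in the present paper. Your outline is essentially the standard argument that L\^e gives in the cited reference (general position, then checking invariance under height-swaps via the boundary relations, then compatibility with the defining relations and multiplicativity), so there is nothing to compare against here beyond noting that your sketch matches the original source rather than anything in this paper.
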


\begin{dfn}\label{def:splitting}
      The homomorphism \eqref{eq:splitting} is called the \emph{splitting map} along $\al$. 
\end{dfn}

By cutting $\Sigma$ along the edges of $\tri$, we obtain the disjoint union $\bigsqcup_{T \in t(\tri)} T$ of triangles. 
\begin{dfn}\label{def:q-trace}
The {\em quantum trace map} with respect to an ideal triangulation $\tri$ is defined to be the composite
$$
\widetilde{\mathrm{Tr}}_\tri\colon \sSsq(\BB)\xhookrightarrow{\,\Psi_\tri\ }  \bigotimes_{T \in t(\tri)} \sS^q_{T}(\BB) \xrightarrow{\otimes_T \widetilde{\mathrm{Tr}}_T} \bigotimes_{T \in t(\tri)}\CZ^q(T), 
$$
where $\Psi_\tri:=\theta_{\al_1}\circ\theta_{\al_2}\circ\dots\circ\theta_{\al_{n(\Sigma)}}$ with $e_{\interior}(\tri)=\{\al_1,\al_2,\dots,\al_{n(\Sigma)}\}$.
\end{dfn}
Note that the image of $\widetilde{\mathrm{Tr}}_\tri$ is in $\CZbl\subset\bigotimes_T\CZ^q(T)$, see \cite{BW16}, \cite{Le_quantum_teich}.

The splitting map $\Psi_\tri$ maps the bad arc ideal of $\Sigma$ to the bad arc ideals of triangles. It induces an injective $\cR$-homomorphism
\begin{align*}
    \overline{\Psi}_\tri: \sSk{\Sigma} \to \bigotimes_T \sSk{T}.
\end{align*}
Moreover, as we saw in Example $\ref{isom_triangle}$, each $\widetilde{\mathrm{Tr}}_T$ descends to an $\CR$-algebra isomorpshism ${\trt}\colon \sSk{T}\xrightarrow{\sim} \CZ^q(T)$. Therefore we get an injective $\cR$-homomorphism
\begin{align}
    \mathrm{Tr}_\tri:=\bigotimes_T \mathrm{Tr}_T \circ \overline{\Psi}_\tri: \sSk{\Sigma} \to \CZbl \subset \bigotimes_T \cZ^q(T),
\end{align}
which fits into the commutative diagram
$$\begin{tikzcd}
\sSsq(\BB) %\ar[rr, bend left,"\Psi"] 
\ar[r,"\widetilde{\mathrm{Tr}}_\tri"] \arrow[d] &\CZbl \\
\sSk{\Sigma} \ar[ru,hook,"\mathrm{Tr}_\tri"'] & 
\end{tikzcd}\label{quantum_trace} 
$$

\subsection{Relation between Muller skein algebras and stated skein algebras}

For any marked surface $(\Sigma,\bM)$, the Muller skein algebra of $(\Sigma,\BM)$ can be regarded as 
a subalgebra of the stated skein algebra of $(\Sigma,\BM)$, as follows. 
For an $\BMp$-tangle diagram and each special point $p$, 
we slightly move all strands incident to $p$ to be strands incident to the right boundary interval of $p$ so that the endpoints of the obtained strands are distinct, see Figure \ref{moving_right}. 
Moreover, we impose that the obtained 
$\bB$-tangle diagram is positively ordered and assign states $-$ to all the endpoints. 
The operation implies that an $\BMp$-tangle diagram can be regarded as a $\BB$-tangle diagram. 
Therefore we get an algebra embedding $\sSsq(\bM) \hookrightarrow \sSsq(\bB)$.

\begin{figure}\centering\includegraphics[width=160pt]{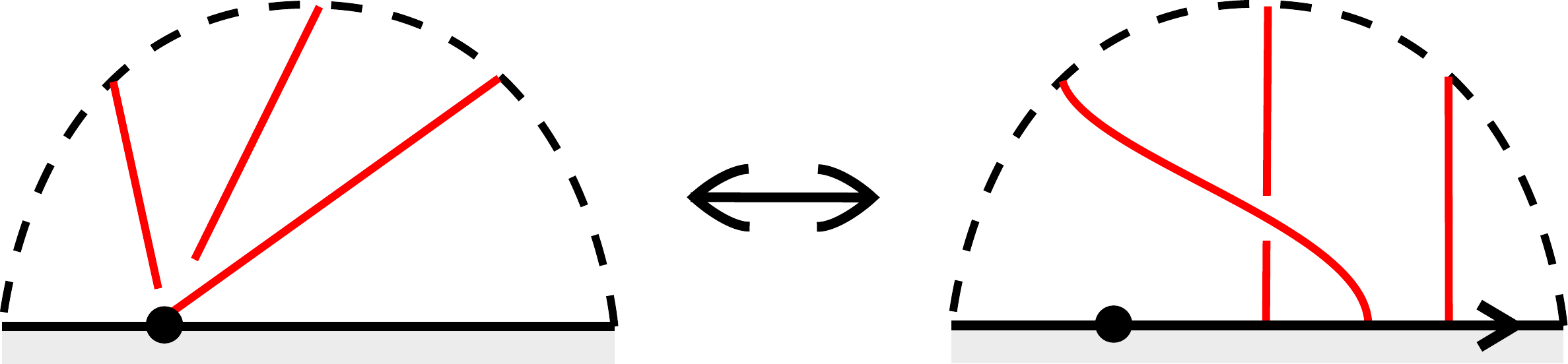}\caption{The operation from left to right is "moving right"}\label{moving_right}\end{figure}

\begin{rem}\label{rem:left_right_moving}
In \cite[Section 4.4]{LY22}, L\^e--Yu consider a similar operation so-called the ``moving left'' with states $+$, so that $\sSsq(\BM)$ is regarded as a subalgebra of $\sSsq(\BB)$ consisting only of $+$ states instead. 
\end{rem}
\begin{thm}[{\cite[Theorem 5.2]{LY22}}]\label{thm:state-clasp}
There is an $\CR$-algebra isomorphism 
\begin{align*}
    \Phi_\Sigma: \sSk{\Sigma} \xrightarrow{\sim} \mathscr{S}^q_\Sigma(\bM)[\partial^{-1}].
\end{align*}
\end{thm}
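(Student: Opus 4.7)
The plan is to construct an inverse $\Psi_\Sigma: \mathscr{S}^q_\Sigma(\bM)[\partial^{-1}] \to \sSk{\Sigma}$ directly. First, the ``moving right with states $-$'' embedding $\mathscr{S}^q_\Sigma(\bM) \hookrightarrow \sSsq(\bB)$ introduced just before the theorem, composed with the quotient $\sSsq(\bB) \twoheadrightarrow \sSk{\Sigma}$, defines an $\cR$-algebra homomorphism $\iota_\Sigma : \mathscr{S}^q_\Sigma(\bM) \to \sSk{\Sigma}$.

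The crux is to show that for each $\alpha \in \bB$ the element $\iota_\Sigma(A_\alpha)$, represented by the stated arc running along $\alpha$ with both endpoint states equal to $-$, is invertible in $\sSk{\Sigma}$. A natural candidate inverse is a $q$-scalar multiple of the parallel arc along $\alpha$ with both states $+$: stacking the two boundary arcs on $\alpha$ and applying relations (E) and (F) produces a ``returning'' term that involves a bad arc and hence vanishes in $\sSk{\Sigma}$, leaving only a unit multiple of the empty diagram. By the universal property of (Ore) localization, $\iota_\Sigma$ then extends to the desired $\Psi_\Sigma$. Surjectivity of $\Psi_\Sigma$ follows from Proposition~\ref{prop:admissible_span}: $\sSk{\Sigma}$ is spanned by admissible stated $\bB$-tangle diagrams, which consist of corner arcs with equal $\pm$ states and non-corner arcs with $-$ states. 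The pieces with $(-,-)$ or $(-)$ states lie directly in the image of $\iota_\Sigma$ (arising from the underlying $\bM_\partial$-tangle obtained by ``contracting'' each $-$ endpoint back to the corresponding special point). A corner arc with states $(+,+)$ along a special point on $\alpha$ can, via the invertibility above, be rewritten as a $q$-scalar multiple of the inverse $\iota_\Sigma(A_\alpha)^{-1}$ times admissible $(-,-)$ content, and hence lies in the image of $\Psi_\Sigma$.

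For injectivity, I would compare the Muller basis $\sfB(\Sigma,\bM)$ of $\mathscr{S}^q_\Sigma(\bM)$, extended by Laurent monomials in the boundary arcs $A_\alpha$ ($\alpha \in \bB$) to yield an $\cR$-basis of $\mathscr{S}^q_\Sigma(\bM)[\partial^{-1}]$, with the reduced basis $\overline{\sfB}(\Sigma,\bB)$ of Costantino--L\^e. Under $\Psi_\Sigma$, each extended basis element maps, after moving right and converting inverse boundary arcs into $(+,+)$ corner arcs, to a specific admissible diagram in $\overline{\sfB}(\Sigma,\bB)$; distinct basis elements yield distinct admissible diagrams (up to unit $q$-scalars), giving the required injectivity. \textbf{Main obstacle.} The heart of the argument is the skein-theoretic verification that $\iota_\Sigma(A_\alpha)$ is invertible in $\sSk{\Sigma}$ together with the precise $q$-power of its inverse; once this identification is in hand, the remainder is a direct comparison of bases.
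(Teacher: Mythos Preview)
The paper does not give its own proof of this theorem; it is cited from \cite[Theorem 5.2]{LY22}, and only the explicit description of $\Phi_\Sigma$ and its inverse is recorded after the statement. Your proposal is correct in outline and is essentially the argument of the cited reference: the paper itself confirms (immediately after the theorem) that $\Phi_\Sigma^{-1}$ is exactly the map you call $\Psi_\Sigma$, namely the extension of the ``moving right with states $-$'' embedding to the boundary-localization. The invertibility of $\iota_\Sigma(A_\alpha)$ that you flag as the main obstacle is indeed the key point, and the computation goes as you describe; the paper uses this fact elsewhere (e.g.\ in \cref{def:skein_lift_A}, where $[\gamma_i^-]^{-1}=[\gamma_i^+]$ for peripheral $\gamma_i$).

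One comment on your injectivity step: matching the extended Muller basis against $\overline{\sfB}(\Sigma,\bB)$ directly requires some care, since after inserting $(+,+)$ corner arcs you must reorder endpoints to meet the ``increasingly stated, positively ordered'' conditions, and you need to argue that distinct extended basis elements do not collide after this reordering. This can be done, but a cleaner route is available: the explicit formula \eqref{eq:state_to_clasp} defines a candidate $\Phi_\Sigma$ in the other direction, and checking $\Phi_\Sigma\circ\Psi_\Sigma=\mathrm{id}$ on generators (ideal arcs and boundary inverses) and $\Psi_\Sigma\circ\Phi_\Sigma=\mathrm{id}$ on stated ends is a finite local verification. Either approach closes the argument.
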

In accordance with \cite{IY}, we call the isomorphism $\Phi_\Sigma$ the \emph{state-clasp correspondence}. The map $\Phi_\Sigma$ sends each stated end as
\begin{align}\label{eq:state_to_clasp}
\mathord{
\tikz[scale=0.9,baseline=1cm]{
\draw[->-={0.55}{}] (-2,0) -- (0,0) -- (0,2) -- (-2,2);
\draw[red,thick] (-2,1) -- (0,1) node[right,scale=0.8]{$+$};
\foreach \j in {0,2} \fill(0,\j) circle(1.5pt);
\draw[thick,|->] (0.8,1) --node[midway,above]{$\Phi_\Sigma$}++(1,0);
\begin{scope}[xshift=4.3cm]
\draw (-2,0) -- (0,0) -- (0,2) -- (-2,2);
\draw[myblue,thick] (0,0) to[bend left=10] (0,2);
\draw[red,thick,rounded corners=5pt] (-2,1) -- (-0.5,1) -- (0,2);
\foreach \j in {0,2} \fill(0,\j) circle(1.5pt);
\end{scope}
}}\, \qquad 
\mathord{
\tikz[scale=0.9,baseline=1cm]{
\draw[->-={0.55}{}] (-2,0) -- (0,0) -- (0,2) -- (-2,2);
\draw[red,thick] (-2,1) -- (0,1) node[right,scale=0.8]{$-$};
\foreach \j in {0,2} \fill(0,\j) circle(1.5pt);
\draw[thick,|->] (0.8,1) --node[midway,above]{$\Phi_\Sigma$}++(1,0);
\begin{scope}[xshift=4.3cm]
\draw (-2,0) -- (0,0) -- (0,2) -- (-2,2);
\draw[red,thick,rounded corners=5pt] (-2,1) -- (-0.5,1) -- (0,0);
\foreach \j in {0,2} \fill(0,\j) circle(1.5pt);
\end{scope}
}}
\end{align}
Here, each stated end is send to a simultaneous crossing, while the relative heights of different ends are preserved. The blue arc stands for the inverse element of the arc. 
The inverse map $\Phi_\Sigma^{-1}$ is the natural extension of the embedding $\sSsq(\bM) \hookrightarrow \sSsq(\bB)$.

The following compatibility between the cutting map (\cref{def:cutting}) and the splitting map (\cref{def:splitting}) is a key to our main theorem. Assume $(\Sigma,\bM)$ is an unpunctured marked surface. Let $\al$ be an ideal arc on $\Sigma$. We duplicate $\al$ into a pair $\{\al',\al''\}$ of parallel arcs that bound together a biangle $B_\al$. Let $\Sigma':=\Sigma \setminus B_\al$, which can be connected or disconnected. Consider the corresponding splitting map
\begin{align*}
    \widehat{\theta}_\al: \sSk{\Sigma} \to \sSk{\Sigma'} \otimes \sSk{B_\al}.
\end{align*}
Observe that $\sSk{B_\al}$ is a Laurent polynomial ring over $\cR$ generated by $\al_\bot^+$, which is the unique arc transverse to $B_\al$ equipped with the state $+$ on the both endpoints. As an analogue of the state-clasp correspondence, we define an $\cR$-algebra isomorphism
\begin{align*}
    \Phi_{B_\al}: \sSk{B_\al} \xrightarrow{\sim} \Sk{B_\al}[\partial^{-1}], \quad \al_\bot^+ \mapsto \al.
\end{align*}

\begin{thm}[splitting-cutting compatibility]\label{thm:split_cut_compatibility}
In the situation as above, we have the commutative diagram  
\begin{equation*}
    \begin{tikzcd}
    \sSk{\Sigma} \ar[r,"\widehat{\theta}_\al"] \ar[d,"\Phi_\Sigma"'] & \sSk{\Sigma'} \otimes \sSk{B_\al} \ar[d,"{\big[\Phi_{\Sigma'}\otimes\Phi_{B_\al}\big]}"] \\
    \Sk{\Sigma}[\partial^{-1}] \ar[r,"\mathrm{Cut}_\al"'] & \Sk{\Sigma}[\partial^{-1},\al^{-1}].
    \end{tikzcd}
\end{equation*}
Here the right vertical map takes the Weyl normalization of the product of the image of $\Phi_{\Sigma'}\otimes\Phi_{B_c}$. 
\end{thm}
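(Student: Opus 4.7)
The plan is to verify commutativity on a spanning set of $\sSk{\Sigma}$. Since both composites are $\cR$-linear, it suffices to test the identity on generators; I would use the Costantino--L\^e basis $\overline{\sfB}(\Sigma,\bB)$ (or, more concretely, admissible stated $\bB$-tangle diagrams via \cref{prop:admissible_span}), isotoping each representative $\gamma$ into general position so as to be vertically transverse to $\al$, with a fixed height ordering on the intersection set $\partial_\al\gamma$.

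I would then induct on the number $k=|\partial_\al\gamma|$ of transverse crossings of $\gamma$ with $\al$. The base case $k=0$ is immediate: $\gamma$ lies entirely in $\Sigma'$, and both routes produce $\Phi_{\Sigma'}(\gamma)$ (tensored with $1\in\sSk{B_\al}$ on one side, no resolution required by $\mathrm{Cut}_\al$ on the other). For the inductive step, the main task is the local identity at a single transverse crossing $p$: inside a small disk neighborhood of $p$, the splitting produces a sum over states $\varepsilon\in\{+,-\}$ of stated-arc pieces in $\sSk{\Sigma'}\otimes\sSk{B_\al}$, the $B_\al$-piece being a stated copy of $\al_\bot$. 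I would then verify
\begin{align*}
\sum_{\varepsilon\in\{+,-\}} \bigl[\Phi_{\Sigma'}(\widetilde{\gamma}_\varepsilon)\cdot\Phi_{B_\al}(\al_\bot^{(\varepsilon,\varepsilon)})\bigr] = \mathrm{Cut}_\al(\Phi_\Sigma(\gamma)),
\end{align*}
reducing to a comparison of the two terms produced by the skein relation (A) on the right (with coefficients $q$ and $q^{-1}$, and an overall factor of $\al^{-1}$) with the two terms indexed by $\varepsilon$ on the left, where $\Phi_{\Sigma'}$ supplies hook patterns (with inverse-arc factors for $\varepsilon=+$ as in \eqref{eq:state_to_clasp}) and $\Phi_{B_\al}$ converts the $B_\al$-piece into a monomial in $\al^{\pm 1}$. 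The sticking identity \eqref{eq:sticking} allows one to clear any bad arcs that might momentarily appear during this matching.

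The principal obstacle is bookkeeping the half-integer powers of $q^{1/2}$, which arise from three independent sources: the $q$-commutation of the $k$ parallel strands crossing $\al$ when iterating the skein relation inside $\mathrm{Cut}_\al$, the Weyl normalization implicit in the right vertical map (which reorders factors lying on either side of $\al$), and the height conventions built into the state-clasp correspondence. I expect these contributions to cancel exactly, but the careful check --- plausibly by placing all strands at generic heights and comparing term-by-term with a fixed ordering --- is the hardest part of the proof. Once the single-crossing case is settled, multiplicativity of both $\widehat{\theta}_\al$ and $\mathrm{Cut}_\al$ on disjoint local pieces (modulo these $q$-commutation corrections) closes the induction.
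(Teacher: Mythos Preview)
Your proposal is correct and follows essentially the same approach as the paper: reduce to the case of a single transverse crossing with $\al$ and verify the local identity by direct computation, matching the two state-sum terms from $\widehat{\theta}_\al$ against the two resolutions from $\mathrm{Cut}_\al$. The paper is terser --- it simply asserts ``it suffices to consider a local diagram $\gamma$ on $\Sigma$ that crosses $\al$ only once'' and carries out exactly the pictorial computation you outline --- whereas you frame the reduction as an induction on $k=|\partial_\al\gamma|$ and are more explicit about the $q^{1/2}$-bookkeeping; in the paper these powers are absorbed in one stroke by observing that the Weyl normalizations of both two-term outputs coincide.
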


\begin{proof}
It suffices to consider a local diagram $\gamma$ on $\Sigma$ that crosses $\al$ only once. Viewing $\gamma$ as an element of $\sSk{\Sigma}$, we get
\begin{align*}
\begin{tikzpicture}[scale=0.8]
\begin{scope}
\draw (-2,0) -- (2,0);
\draw (-2,2) -- (2,2);
\draw[dashed] (0,0) to[bend right=30] (0,2);
\draw[dashed] (0,0) to[bend left=30] (0,2);
\draw[red,thick] (-2,1) -- (2,1);
\node[red,scale=0.9] at (-1.5,0.7) {$\gamma$};
\node at (-0.55,1.6){$\al'$};
\node at (0.55,1.6){$\al''$};
\foreach \j in {0,2} \fill(0,\j) circle(1.5pt);
\draw[thick,|->] (3,1) --node[midway,above]{$\widehat{\theta}_\al$}++(1,0);
\draw[thick,|->] (3,-2) --node[midway,above=0.3em,anchor=south]{$\Phi_{\Sigma'}\otimes \Phi_{B_\al}$}++(1,0);
\end{scope}
\begin{scope}[xshift=7cm]
%Left
\draw (-2,0) -- (0,0) to[bend left=30] (0,2) -- (-2,2);
\draw[red,thick] (-2,1) -- (-0.3,1) node[right=-0.1em,scale=0.7]{$+$};
\foreach \i in {0,0.75,1.5} \foreach \j in {0,2} \fill(\i,\j) circle(1.5pt);
%Biangle
{
\begin{scope}[xshift=0.75cm]
\draw (0,0) to[bend right=30] (0,2);
\draw (0,0) to[bend left=30] (0,2);
\draw[red,thick] (-0.3,1)node[left=-0.1em,scale=0.7]{$+$} -- (0.3,1) node[right=-0.1em,scale=0.7]{$+$};
\end{scope}
}
%Right
{
\begin{scope}[xshift=1.5cm]
\draw (2,0) -- (0,0) to[bend right=30] (0,2) -- (2,2);
\draw[red,thick] (2,1) -- (0.3,1) node[left=-0.1em,scale=0.7]{$+$};
\node at (2.75,1) {$+$};
\end{scope}
}
\end{scope}
\begin{scope}[xshift=14cm]
%Left
\draw (-2,0) -- (0,0) to[bend left=30] (0,2) -- (-2,2);
\draw[red,thick] (-2,1) -- (-0.3,1) node[right=-0.1em,scale=0.7]{$-$};
\foreach \i in {0,0.75,1.5} \foreach \j in {0,2} \fill(\i,\j) circle(1.5pt);
%Biangle
{
\begin{scope}[xshift=0.75cm]
\draw (0,0) to[bend right=30] (0,2);
\draw (0,0) to[bend left=30] (0,2);
\draw[red,thick] (-0.3,1)node[left=-0.1em,scale=0.7]{$-$} -- (0.3,1) node[right=-0.1em,scale=0.7]{$-$};
\end{scope}
}
%Right
{
\begin{scope}[xshift=1.5cm]
\draw (2,0) -- (0,0) to[bend right=30] (0,2) -- (2,2);
\draw[red,thick] (2,1) -- (0.3,1) node[left=-0.1em,scale=0.7]{$-$};
\end{scope}
}
\end{scope}
\begin{scope}[xshift=7cm,yshift=-3cm]
%Left
\draw (-2,0) -- (0,0) to[bend left=30] (0,2) -- (-2,2);
\draw[myblue,thick] (0,0) to[bend left=40] (0,2);
\draw[red,thick,rounded corners=5pt] (-2,1) -- (-0.8,1) -- (0,2);
\foreach \i in {0,0.75,1.5} \foreach \j in {0,2} \fill(\i,\j) circle(1.5pt);
%Biangle
{
\begin{scope}[xshift=0.75cm]
\draw (0,0) to[bend right=30] (0,2);
\draw (0,0) to[bend left=30] (0,2);
\draw[red,thick] (0,2)-- (0,0);
\end{scope}
}
%Right
{
\begin{scope}[xshift=1.5cm]
\draw (2,0) -- (0,0) to[bend right=30] (0,2) -- (2,2);
\draw[myblue,thick] (0,0) to[bend right=40] (0,2);
\draw[red,thick,rounded corners=5pt] (2,1) -- (0.8,1) -- (0,0);
\node at (2.75,1) {$+$};
%\draw[thick,|->] (2.5,-0.5) --node[midway,right=0.3em,anchor=west]{$\Phi_{\Sigma'}\otimes \Phi_{B_c}$}++(0,-1);
\end{scope}
}
\end{scope}
\begin{scope}[xshift=14cm,yshift=-3cm]
%Left
\draw (-2,0) -- (0,0) to[bend left=30] (0,2) -- (-2,2);
\draw[red,thick,rounded corners=5pt] (-2,1) -- (-0.8,1) -- (0,0);
\foreach \i in {0,0.75,1.5} \foreach \j in {0,2} \fill(\i,\j) circle(1.5pt);
%Biangle
{
\begin{scope}[xshift=0.75cm]
\draw (0,0) to[bend right=30] (0,2);
\draw (0,0) to[bend left=30] (0,2);
\draw[myblue,thick] (0,2)-- (0,0);
\end{scope}
}
%Right
{
\begin{scope}[xshift=1.5cm]
\draw (2,0) -- (0,0) to[bend right=30] (0,2) -- (2,2);
\draw[red,thick,rounded corners=5pt] (2,1) -- (0.8,1) -- (0,2);
%\draw[thick,|->] (2.5,-0.5) --node[midway,right=0.3em,anchor=west]{$\Phi_{\Sigma'}\otimes \Phi_{B_c}$}++(0,-1);
\end{scope}
}
\end{scope}
\end{tikzpicture}.
\end{align*}
On the other hand, we compute $\Phi_\Sigma(\gamma)$, whose local picture around $\al$ looks the same, as
\begin{align*}
\begin{tikzpicture}[scale=0.8]
\begin{scope}
\draw (-2,0) -- (2,0);
\draw (-2,2) -- (2,2);
\draw[dashed] (0,0) -- (0,2);
\draw[red,thick] (-2,1) -- (2,1);
\node[red,scale=0.9] at (-1.5,0.6) {$\Phi_\Sigma(\gamma)$};
\node at (-0.3,1.6){$\al$};
\foreach \j in {0,2} \fill(0,\j) circle(1.5pt);
\draw[thick,|->] (3,1) --node[midway,above]{$\mathrm{Cut}_\al$}++(1,0);
\end{scope}
\begin{scope}[xshift=7.75cm]
\draw (-2,0) -- (2,0);
\draw (-2,2) -- (2,2);
\draw[dashed] (0,0) -- (0,2);
\draw[myblue,thick] (0,0) -- (0,2);
\draw[red,thick,rounded corners=5pt,shorten >=5pt] (-2,1) -- (-0.8,1) -- (0,2);
\draw[red,thick,rounded corners=5pt,shorten >=5pt] (2,1) -- (0.8,1) -- (0,0);
\foreach \j in {0,2} \fill(0,\j) circle(1.5pt);
\node[anchor=east] at (-2.3,1) {$q^{-1}$};
\node at (3,1) {$+$};
\end{scope}
\begin{scope}[xshift=14.5cm]
\draw (-2,0) -- (2,0);
\draw (-2,2) -- (2,2);
\draw[dashed] (0,0) -- (0,2);
\draw[myblue,thick] (0,0) -- (0,2);
\draw[red,thick,rounded corners=5pt,shorten >=5pt] (-2,1) -- (-0.8,1) -- (0,0);
\draw[red,thick,rounded corners=5pt,shorten >=5pt] (2,1) -- (0.8,1) -- (0,2);
\foreach \j in {0,2} \fill(0,\j) circle(1.5pt);
\node[anchor=east] at (-2.3,1) {$q$};
\end{scope}
.\end{tikzpicture}
\end{align*}
Observe that both their Weyl normalizations give 
\begin{align*}
\begin{tikzpicture}[scale=0.8]
\begin{scope}[xshift=7.75cm]
\draw (-2,0) -- (2,0);
\draw (-2,2) -- (2,2);
\draw[dashed] (0,0) -- (0,2);
\draw[myblue,thick] (0,0) -- (0,2);
\draw[red,thick,rounded corners=5pt] (-2,1) -- (-0.8,1) -- (0,2);
\draw[red,thick,rounded corners=5pt] (2,1) -- (0.8,1) -- (0,0);
\foreach \j in {0,2} \fill(0,\j) circle(1.5pt);
\node at (3,1) {$+$};
\end{scope}
\begin{scope}[xshift=14cm]
\draw (-2,0) -- (2,0);
\draw (-2,2) -- (2,2);
\draw[dashed] (0,0) -- (0,2);
\draw[myblue,thick] (0,0) -- (0,2);
\draw[red,thick,rounded corners=5pt] (-2,1) -- (-0.8,1) -- (0,0);
\draw[red,thick,rounded corners=5pt] (2,1) -- (0.8,1) -- (0,2);
\foreach \j in {0,2} \fill(0,\j) circle(1.5pt);
\end{scope}
\end{tikzpicture}
.\end{align*}
Thus the assertion is proved.
\end{proof}

\subsection{Congruent subalgebras of stated skein algebras}

A $\BB$-tangle $\al$ is {\em congruent} with respect to an ideal triangulation $\tri$ if the geometric intersection number of a diagram of $\al$ and each edge of $\tri$ is even, \emph{i.e.}, is congruent to $0$ modulo $2$.  
Note that the congruence condition is invariant under isotopy and the relations (A), (B), (E) and (F). 
Moreover, if $\al$ is congruent with respect to $\tri$ then so is $\al$ with respect to another triangulation $\tri'$. 
Hence, the following definition makes sense.

\begin{dfn}\label{def:congruent}
The subalgebra $\sSsq(\bB)_\congr \subset \sSsq(\bB)$ (resp. $\sSk{\Sigma}_\congr \subset \sSk{\Sigma}$) generated by the stated congruent $\BB$-tangle diagrams 
is called the {\em congruent subalgebra} of $\sSsq(\BB)$ (resp. $\sSk{\Sigma}$). 
\end{dfn}

\begin{rem}
In the higher rank case associated with a semisimple Lie algebra $\mathfrak{g}$, we should consider a subalgebra of the stated $\mathfrak{g}$-skein algebra similarly determined by the congruence condition modulo $d$, where $d$ is the determinant of the Cartan matrix of $\mathfrak{g}$. In other words, $d$ is the index of the root lattice in the weight lattice of $\mathfrak{g}$. 
\end{rem}

The congruent subalgebra is related to the cluster Poisson tori:
\begin{prop}\label{prop:congruent_image}
For any ideal triangulation $\tri$, we have $\mathrm{Tr}_\tri(\sSk{\Sigma}_\congr) \subset \X^v_\tri$.
\end{prop}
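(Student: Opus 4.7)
The plan is to introduce a $(\bZ/2)^{e(\tri)}$-grading on both sides and show that the quantum trace map is a graded homomorphism in such a way that the degree-zero parts are exactly $\sSk{\Sigma}_\congr$ and $\iota(\X^v_\tri)$. On $\cZ^q_\tri$ the grading is the reduction modulo $2$ of the exponent vector in $N^\tri$: a monomial $\mathbf{Z}^\tri(\sum_\alpha n_\alpha \sfe^\tri_\alpha)$ has degree $(n_\alpha \bmod 2)_\alpha$. The degree-zero component consists exactly of monomials with all even exponents, i.e., of the image of $\iota: \X^v_\tri \to (\cZ^q_\tri)_\mathrm{bl}$ (\cref{eq:emb_to_CF}).

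On $\sSsq(\bB)$, I would grade a stated $\bB$-tangle diagram $\gamma$ by the vector $(|\gamma\cap\alpha|\bmod 2)_{\alpha\in e(\tri)}$, where $|\gamma\cap\alpha|$ counts interior crossings when $\alpha$ is an interior edge and endpoints on $\alpha$ when $\alpha$ is a boundary edge. To verify this descends to a grading on $\sSsq(\bB)$ and on the quotient $\sSk{\Sigma}$, one inspects each defining relation: (A), (B) are supported in an interior disk that may be isotoped away from any edge of $\tri$, hence preserve intersection numbers on the nose; in (E) the two right-hand terms either preserve endpoint positions (state swap) or introduce a ``returning arc'' that removes $2$ endpoints on one boundary interval; (F) similarly changes endpoint counts by $0$ or $\pm 2$. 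The bad arc ideal is generated by homogeneous elements (a bad corner arc has parity $(1,1,0,\ldots,0)$), so the grading persists on $\sSk{\Sigma}$. By construction the degree-zero component is $\sSk{\Sigma}_\congr$.

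For the compatibility of $\mathrm{Tr}_\tri$ with these gradings, it suffices to check the two constituents $\overline{\Psi}_\tri$ and $\bigotimes_T \mathrm{Tr}_T$. For the triangle map, inspecting the formula \eqref{isom_triangle} shows that for every state $(\varepsilon,\varepsilon')$ the image $\mathrm{Tr}_T(\gamma_i(\varepsilon,\varepsilon'))$ is either $0$ or a Weyl-normalized monomial in $Z_{i+1}^{\pm 1} Z_{i+2}^{\pm 1}$, whose parity vector is always $(0,1,1)$ — exactly matching the endpoint parities of $\gamma_i$ on $(\alpha_i,\alpha_{i+1},\alpha_{i+2})$. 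For the splitting map, each crossing of $\gamma$ with an interior edge $\alpha$ produces one endpoint on $\alpha'$ in $T_1$ and one on $\alpha''$ in $T_2$ sharing a common state, so in the embedding $(\cZ^q_\tri)_\mathrm{bl}\hookrightarrow \bigotimes_T \cZ^q(T)$ the exponents of $Z_{\alpha'}$ and $Z_{\alpha''}$ accumulate with matching parities, both equal to $|\gamma\cap\alpha|\bmod 2$, which is the parity of the $Z_\alpha$-exponent in $\cZ^q_\tri$. Combining these observations, $\mathrm{Tr}_\tri$ shifts the grading trivially; in particular, a congruent $\gamma$ is sent to an element with all even exponents, which is precisely $\iota(\X^v_\tri)$.

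The main delicate point is the splitting step: one must confirm that for an interior edge $\alpha$ the ``two sides'' contribute with identical parity signs, so that the total exponent of $Z_\alpha$ in $\cZ^q_\tri$ — rather than the exponents of $Z_{\alpha'}$ and $Z_{\alpha''}$ separately in the tensor product — has the claimed parity. This amounts to tracing through the explicit identification $\cZ^q_\tri \hookrightarrow \bigotimes_T \cZ^q(T)$ implicit in the construction of $\mathrm{Tr}_\tri$ and verifying that the states matched by $\theta_\alpha$ produce the same $\pm 1$ contribution regardless of the orientation of the arcs abutting $\alpha'$ versus $\alpha''$, using the fact that in \eqref{isom_triangle} the exponent sign depends only on the state and not on which of the two endpoints carries it.
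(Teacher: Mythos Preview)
Your argument is correct and follows the same idea as the paper's proof, which is a two-sentence sketch: congruence means the geometric intersection with each edge $\alpha$ is even, and since in the state-sum formula each crossing with $\alpha$ contributes exactly one factor $Z_\alpha^{\pm 1}$ (via \eqref{isom_triangle}), the total $Z_\alpha$-exponent is even, whence the image lies in $\iota(\X^v_\tri)$. You have simply made this parity bookkeeping precise by packaging it as a $(\bZ/2)^{e(\tri)}$-grading and verifying compatibility with the skein relations and with each constituent of $\mathrm{Tr}_\tri$; this is a legitimate and fuller version of the same proof.

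One remark on the ``delicate point'' you raise at the end. The concern is not really an obstacle: the Bonahon--Wong state-sum expresses $\mathrm{Tr}_\tri(\gamma)$ directly as a sum of monomials in the $Z_\alpha^\tri$, where the exponent of $Z_\alpha^\tri$ in a given term is $\sum_{p\in\gamma\cap\alpha}(-\varepsilon_p)$ with $\varepsilon_p\in\{+1,-1\}$ the state at $p$. This already lives in $\cZ^q_\tri$ without passing through the tensor product, so the parity of the $Z_\alpha^\tri$-exponent is manifestly $|\gamma\cap\alpha|\bmod 2$. Your observation that in \eqref{isom_triangle} the sign of the $Z_j$-exponent depends only on the state at the endpoint on edge $\alpha_j$ (and not on which endpoint of the corner arc it is) is exactly what guarantees that the two triangle-side contributions at an interior edge agree and recombine into a single $Z_\alpha^\tri$-exponent; once you note this, the embedding $(\cZ^q_\tri)_{\mathrm{bl}}\hookrightarrow\bigotimes_T\cZ^q(T)$ plays no further role.
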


\begin{proof}
By the congruence condition, each Chekhov--Fock square-root variable $Z_\alpha^\tri$ appears in an even number of times. Recall the assignment \eqref{isom_triangle}. Hence the image $\mathrm{Tr}_\tri(\sSk{\Sigma}_\congr)$ lies in $\X^v_\tri \subset \CZbl$. 
\end{proof}

Let $\sfB(\Sigma,\BB)_\congr\subset \sfB(\Sigma,\BB)$ (resp. $\overline{\sfB}(\Sigma,\BB)_\congr\subset \overline{\sfB}(\Sigma,\BB)$) be the subset consisting of stated congruent $\BB$-tangle diagrams. 
\begin{prop}
The set $\sfB(\Sigma,\BB)_\congr$ (resp. $\overline{\sfB}(\Sigma,\BB)_\congr$) is an $\cR$-basis of $\sSsq(\BB)_\congr$ (resp. $\sSk{\Sigma}_\congr$). 
\end{prop}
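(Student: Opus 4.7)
The plan is to separate the spanning and the linear independence claims, leveraging the bases of the ambient algebras $\sSsq(\BB)$ and $\sSk{\Sigma}$ established in the preceding theorems, together with the stated invariance of the congruence condition under the defining relations.

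Linear independence is automatic. By construction, $\sfB(\Sigma,\BB)_\congr$ is a subset of L\^e's basis $\sfB(\Sigma,\BB)$ of $\sSsq(\BB)$, hence $\cR$-linearly independent inside $\sSsq(\BB)$, and \emph{a fortiori} inside the subalgebra $\sSsq(\BB)_\congr$. The argument applies verbatim to $\overline{\sfB}(\Sigma,\BB)_\congr \subset \overline{\sfB}(\Sigma,\BB)$ via the Costantino--L\^e basis theorem.

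For spanning, first observe that $\sfB(\Sigma,\BB)_\congr \subset \sSsq(\BB)_\congr$ tautologically, since every simple stated congruent $\BB$-tangle diagram is in particular a stated congruent $\BB$-tangle diagram and hence a generator of $\sSsq(\BB)_\congr$. Next, take a general $x \in \sSsq(\BB)_\congr$ and write it as an $\cR$-linear combination of products (via stacking) of stated congruent $\BB$-tangles. Each such stack is again stated congruent, because the parity of the geometric intersection number with each edge of $\tri$ is additive under vertical stacking (after a small isotopy to resolve coinciding heights). Reducing each stack to the basis $\sfB(\Sigma,\BB)$ by applying the relations (A), (B), (E), (F) together with the isotopies needed to achieve a simple, $\fo$-ordered, increasingly stated representative, the invariance of the congruence condition under each of those moves (already recorded in the excerpt) forces every nonzero coefficient basis element to lie in $\sfB(\Sigma,\BB)_\congr$.

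For the reduced version, I would pass through the quotient map $\pi:\sSsq(\BB)\twoheadrightarrow \sSk{\Sigma}$. Since the defining generators of $\sSsq(\BB)_\congr$ project onto the defining generators of $\sSk{\Sigma}_\congr$, we have $\pi(\sSsq(\BB)_\congr)=\sSk{\Sigma}_\congr$. Combined with the spanning of $\sSsq(\BB)_\congr$ by $\sfB(\Sigma,\BB)_\congr$ already shown, we obtain a spanning set of $\sSk{\Sigma}_\congr$ by the images of these basis elements. Each such image either contains a bad arc, in which case it is $0$ by definition of $\Ibad$, or lies in $\overline{\sfB}(\Sigma,\BB)_\congr$; so $\overline{\sfB}(\Sigma,\BB)_\congr$ spans $\sSk{\Sigma}_\congr$.

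The only subtle step is verifying congruence invariance throughout the basis reduction in the spanning argument; the excerpt's assertion that (A), (B), (E), (F) preserve the parity of edge intersections handles the local moves, and standard isotopies (to make diagrams $\fo$-ordered, increasingly stated, and simple) do not change geometric intersection numbers with $\tri$, so no serious obstacle arises.
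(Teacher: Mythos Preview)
Your proof is correct and takes essentially the same approach as the paper: the key point in both is that the congruence condition is preserved under the relations (A), (B), (E), (F) and isotopy, so expanding a congruent element in the ambient basis yields only congruent basis elements. The paper compresses this into a single sentence, while you spell out the spanning/independence split and handle the reduced case explicitly via the quotient map, but the underlying idea is identical.
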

\begin{proof}
The proof follows from an observation that the congruence condition of a congruent stated $\BB$-tangle diagram is invariant under the relations {\rm (A)}, {\rm (B)}, {\rm (E)} and {\rm (F)} and isotopy. 
\end{proof}

For a triangulation $\tri$ of $(\Sigma, \BM)$ and a stated $\BB$-tangle diagram $\gamma$, %(not isotopy class), 
a triangle in $t(\tri)$ is {\em congruent} with respect to $\gamma$ if the number of corner arcs of $\gamma$ is even around each corner. 

\begin{prop}
For any ideal triangulation $\tri$ of $(\Sigma, \BM)$, 
every element of $\sfB(\Sigma,\BB)_\congr$
%congruent stated $\BB$-tangle diagram 
can be presented as an $\cR$-linear sum of congruent stated $\BB$-tangle diagrams $\gamma_i$ in $\sSsq(\BB)$ such that each triangle of $\tri$ is congruent with respect to $\gamma_i$. 
%, in each triangle, there are even number of parallel copies of corner arcs. 
\end{prop}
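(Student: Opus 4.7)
The plan is to take a basis element $\gamma \in \sfB(\Sigma,\BB)_{\congr}$, place it in minimal position with respect to $\tri$, and induct on the number $N(\gamma)$ of triangles of $\tri$ that are \emph{not} congruent with respect to $\gamma$. Inside a triangle $T$ with vertices $v_1,v_2,v_3$ opposite to edges $e_1,e_2,e_3$, every component of $\gamma\cap T$ isotopes to a corner arc, and the number of corner arcs around $v_k$ equals the number $a_{ij}(T)$ of arcs of $\gamma\cap T$ joining $e_i$ and $e_j$ (with $\{i,j,k\}=\{1,2,3\}$). The congruence of $\gamma$ forces $a_{12}(T)$, $a_{13}(T)$, $a_{23}(T)$ to share a common parity, so $T$ is congruent with respect to $\gamma$ if and only if this parity is even. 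Call $T$ \emph{bad} otherwise; a bad triangle contains at least one arc of each type.

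If $N(\gamma)=0$, take $\gamma_1=\gamma$. Otherwise, fix a bad triangle $T_0$ together with three arcs $\alpha_1,\alpha_2,\alpha_3$ of $\gamma\cap T_0$, one of each type. The inductive step will replace the subconfiguration $\alpha_1\cup\alpha_2\cup\alpha_3$, with its induced states, using the defining relations of $\sS^q_{\Sigma}(\BB)$, by an $\cR$-linear combination of stated subconfigurations each having an even number of corner arcs at every vertex of $T_0$. Substituting the resulting local identity into $\gamma$ will yield an expression $\gamma=\sum c_i\gamma_i$ where each $\gamma_i$ is still congruent and satisfies $N(\gamma_i)<N(\gamma)$, so the induction concludes the proof.

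To construct the local identity I would apply the triangle quantum trace $\widetilde{\mathrm{Tr}}_{T_0}\colon \sS^q_{T_0}(\BB)\to \cZ^q(T_0)$ from \cref{ex:triangle}. Each corner arc with states is sent to a Weyl-ordered monomial of the form $[Z_j^{\pm 1}Z_k^{\pm 1}]$, so the image of the $(1+1+1)$-configuration is a monomial whose exponent in every $Z_i$ is even, hence lies in the quantum cluster Poisson torus $\X^v_{T_0}\subset\cZ^q(T_0)_{\mathrm{bl}}$. The same monomial can be realized by a triangle-congruent combination, for instance by pairs of corner arcs at two appropriate vertices of $T_0$ with suitably chosen states (illustrated by the identity $\widetilde{\mathrm{Tr}}_{T_0}(\gamma_1(+,+)\gamma_1(+,-)\gamma_2(+,+)\gamma_2(+,-)\gamma_3(+,+)\gamma_3(+,-))=[Z_1^{-2}Z_2^{-2}Z_3^{-2}]$ up to a power of $q$ for the all-$(+,+)$-states case). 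Lifting back through $\widetilde{\mathrm{Tr}}_{T_0}$ modulo its kernel, which is controlled by the vanishing $\gamma_i(-,+)$-corner arcs together with the bad arc ideal, then yields the desired local identity in $\sS^q_{T_0}(\BB)$.

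The principal obstacle is endpoint matching: any triangle-congruent realization of $\widetilde{\mathrm{Tr}}_{T_0}(\alpha_1\cup\alpha_2\cup\alpha_3)$ generally alters the number of endpoints on each edge of $\partial T_0$, so one must compensate by inserting matching arcs in the adjacent triangles and verify that the global diagrams produced remain congruent. One also has to ensure that the modification does not re-create bad triangles in neighbors of $T_0$; this is handled by a careful parity bookkeeping across shared edges, or, if needed, by replacing $N$ with a lexicographic refinement that pairs the number of bad triangles with a secondary arc-multiplicity invariant guaranteeing strict decrease. Once this accounting is verified, iterating the local rewriting finishes the proof.
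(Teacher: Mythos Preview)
Your parity analysis is correct: in a congruent diagram the three corner-arc counts $a_{12},a_{13},a_{23}$ in any triangle share a common parity, so a bad triangle is exactly one where all three are odd. But the local rewriting you propose cannot exist, and the ``principal obstacle'' you flag is not a technicality to be bookkept away---it is fatal. Any identity in $\sS^q_{T_0}(\BB)$ that can be substituted back into $\gamma$ must preserve the endpoints (with their states) on each edge of $T_0$, since those are where $\gamma$ continues into neighboring triangles. A $(1,1,1)$-configuration has two endpoints on each edge, and the system $a_{12}+a_{13}=a_{12}+a_{23}=a_{13}+a_{23}=2$ with all $a_{ij}$ even has \emph{no} solution (the unique solution is $(1,1,1)$ itself). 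Returning arcs do not help: by relation (F) they evaluate to scalars or zero, reducing the endpoint count. Your quantum-trace argument only yields an equality modulo the bad-arc ideal, i.e.\ in $\sSk{T_0}$, and the even-corner realizations of the monomial (such as your $(2,2,2)$ example) have four endpoints per edge, so they cannot be glued back into $\gamma$. There is simply no local move available: the defining relations (E),(F) of $\sSsq(\BB)$ live only on $\partial\Sigma$, not on interior edges of $\tri$.

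The paper's proof is accordingly global. Given a bad triangle $T$, it takes the outermost corner arc $a$ at an odd corner $\angle$ and finds an embedded path in $\Sigma$ from $a$ to a boundary interval $\beta\in\BB$ that first crosses the edge of $T$ opposite $\angle$ and meets every edge of $\tri$ minimally. One then isotopes $a$ along this path to a neighborhood of $\beta$ and applies the sticking trick (relation (E)) at $\partial\Sigma$. This move turns $T$ congruent while leaving the congruence of every other triangle unchanged, and the resulting diagrams remain in minimal position with respect to $\tri$. Iterating over the bad triangles finishes the argument. The essential idea you are missing is that the repair must reach the actual boundary of $\Sigma$, where the stated-skein boundary relations are available.
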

\begin{proof}
By using isotopy, we can suppose that $\gamma\in \sfB(\Sigma,\BB)_\congr$ intersects minimally
%realizes the geometric intersection number 
with each edge of $\tri$. 
Using the sticking trick as in \eqref{eq:sticking}, 
$\gamma$ can be presented as a linear sum of stated $\BB$-tangle diagrams $\gamma_i\ (i=1,2,\dots,n)$ consisting only of arcs. 
Pick up one such diagram $\gamma_i$, and let us focus on a non-congruent triangle $T \in t(\tri)$ with respect to $\gamma_i$. 
Then, one of the corners of $T$ has an odd number of corner arcs of $\gamma_i$. 
Fix one of such corners $\angle$. 
At the corner $\angle$, we take the outermost arc $a$, as depicted the middle in Figure \ref{even_corner_arcs}. 
Here, the outermost arc means the arc nearest to the other corners among the corner arcs around $\angle$. 
There is an embedded path in $\Sigma$ connecting $\al$ to a boundary interval $\beta \in \BB$ such that 
\begin{enumerate}
    \item the path first crosses the edge of $t$ opposite to $\angle$;  
    \item the path intersects minimally with each of $e(\tri)$. 
\end{enumerate}
See the middle in Figure \ref{even_corner_arcs}. 
We move $\al$ near $\beta$ along the path then use the sticking trick, see the right in Figure \ref{even_corner_arcs}.
Although the non-congruent triangle turns into a congruent one, the other triangles do not change their congruence. 
Moreover, from the above second condition, 
the obtained stated $\BB$-tangle diagrams realize the geometric intersection numbers with each of $e(\tri)$. 
By repeating the same procedure to other non-congruent triangles, we finally have a linear sum of stated $\BB$-tangle diagrams with only congruent triangles. 
\begin{figure}\centering\includegraphics[width=280pt]{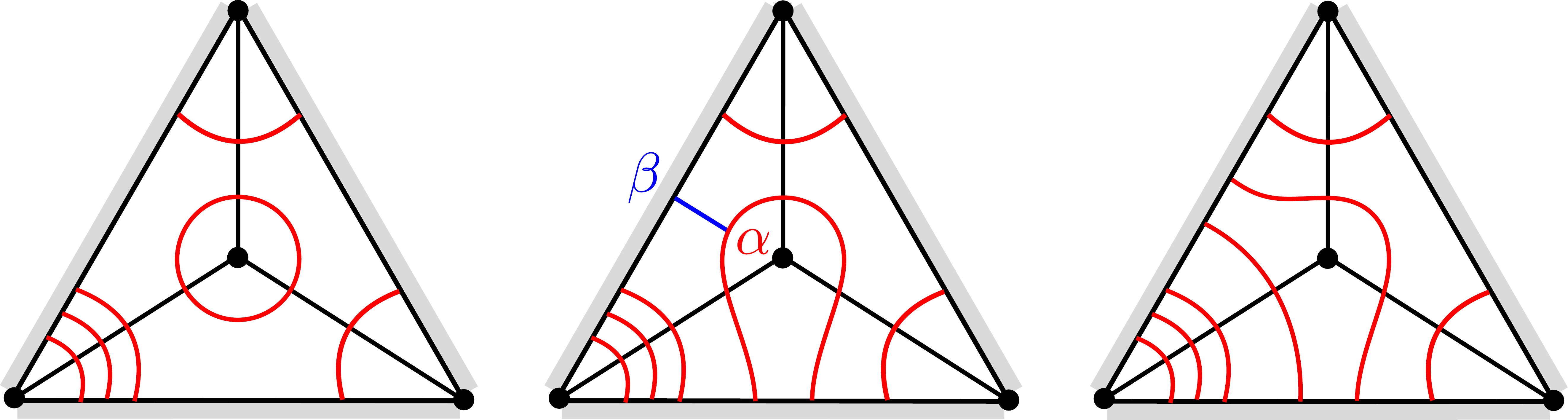}\caption{Procedure to reduce non-congruent triangles}\label{even_corner_arcs}\end{figure}
\end{proof}
\begin{rmk}
The procedure in the above proof might be redundant since it does not matter if there are loops to show the claim. 
\end{rmk}

\section{Quantum cluster algebras associated with a marked surface}\label{sec:cluster}

\subsection{The Ptolemy graph}
Let $\Sigma$ be a marked surface. The \emph{Ptolemy graph} of $\Sigma$ is the graph $\Tri_\Sigma$ such that
\begin{itemize}
    \item the vertices of $\Tri_\Sigma$ are isotopy classes of ideal triangulations $\tri$ of $\Sigma$ without self-folded triangles;
    \item two vertices are connected by an edge if the corresponding ideal triangulations $\tri,\tri'$ are related by a flip along an interior edge. See \cref{fig:flip}.
\end{itemize}

\begin{figure}[ht]
    \centering
\begin{tikzpicture}[>=latex]
\path(0,0) node [fill, circle, inner sep=1.2pt] (x1){};
\path(135:2) node [fill, circle, inner sep=1.2pt] (x2){};
\path(0,2*1.4142) node [fill, circle, inner sep=1.2pt] (x3){};
\path(45:2) node [fill, circle, inner sep=1.2pt] (x4){};
\draw[blue](x1) to node[midway,left=0.3em,black]{$\beta$} (x2) 
to node[midway,left=0.3em,black]{$\alpha$} (x3) 
to node[midway,right=0.3em,black]{$\delta$} (x4) 
to node[midway,right=0.3em,black]{$\gamma$} (x1) 
to node[midway,left=0.3em,black]{$\kappa$} (x3);
{\color{mygreen}
\draw($(x1)!0.5!(x2)$) circle(2pt) coordinate(y2);
\draw($(x2)!0.5!(x3)$) circle(2pt) coordinate(y1);
\draw($(x1)!0.5!(x4)$) circle(2pt) coordinate(y3);
\draw($(x1)!0.5!(x3)$) circle(2pt) coordinate(y0);
\draw($(x4)!0.5!(x3)$) circle(2pt) coordinate(y4);
\qarrow{y1}{y2}
\qarrow{y2}{y0}
\qarrow{y0}{y1}
\qarrow{y3}{y4}
\qarrow{y4}{y0}
\qarrow{y0}{y3}
}
\draw(x1)++(1,0) node{$\tri$};
\draw[->,thick](2,1.4142) --node[midway,above]{$f_\kappa$} (3,1.4142);
\begin{scope}[xshift=5cm]
\path(0,0) node [fill, circle, inner sep=1.2pt] (x1){};
\path(135:2) node [fill, circle, inner sep=1.2pt] (x2){};
\path(0,2*1.4142) node [fill, circle, inner sep=1.2pt] (x3){};
\path(45:2) node [fill, circle, inner sep=1.2pt] (x4){};
\draw[blue](x1) to node[midway,left=0.3em,black]{$\beta$} (x2)
to node[midway,left=0.3em,black]{$\alpha$} (x3) 
to node[midway,right=0.3em,black]{$\delta$} (x4) 
to node[midway,right=0.3em,black]{$\gamma$} (x1);
\draw[blue] (x2) to node[midway,above=0.3em,black]{$\kappa'$} (x4);
{\color{mygreen}
\draw($(x1)!0.5!(x2)$) circle(2pt) coordinate(y2);
\draw($(x2)!0.5!(x3)$) circle(2pt) coordinate(y1);
\draw($(x1)!0.5!(x4)$) circle(2pt) coordinate(y3);
\draw($(x1)!0.5!(x3)$) circle(2pt) coordinate(y0);
\draw($(x4)!0.5!(x3)$) circle(2pt) coordinate(y4);
\qarrow{y4}{y1}
\qarrow{y1}{y0}
\qarrow{y0}{y4}
\qarrow{y2}{y3}
\qarrow{y3}{y0}
\qarrow{y0}{y2}
}
\draw(x1)++(1,0) node{$\tri'$};
\end{scope}
\end{tikzpicture}
\caption{The flip along an interior edge $\kappa \in e_{\interior}(\tri)$.}
    \label{fig:flip}
\end{figure}
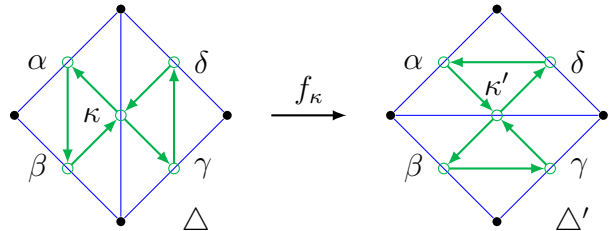

It is known that $\Tri_\Sigma$ is a connected graph (see, for instance, \cite[Proposition 3.8 and Corollary 3.9]{FST}). We denote a vertex of $\Tri_\Sigma$ by $\tri \in \Tri_\Sigma$, and the oriented edge given by the flip along $\kappa \in e(\tri)$ by $\tri \xrightarrow{f_\kappa} \tri'$. Recall the exchange matrix $\ve^\tri$ associated with $\tri$ (\cref{subsub:exchange}).

\begin{lem}\label{lem:mutation_matrix}
For any oriented edge $\tri \xrightarrow{f_\kappa} \tri'$, the exchange matrices $\ve:=\ve^{\tri}$ and $\ve':=\ve^{\tri'}$ are related by the matrix mutation \cite[(12)]{FG09}
\begin{align*}
    \ve'_{\alpha\beta}&=
    \begin{cases}
        -\ve_{\alpha\beta} & \mbox{if $\alpha=\kappa'$ or $\beta=\kappa'$,}\\
        \varepsilon_{\alpha\beta}+\dfrac{\left|\varepsilon_{\alpha\kappa}\right| \varepsilon_{\kappa\beta}+\varepsilon_{\alpha\kappa}\left|\varepsilon_{\kappa\beta}\right|}{2} & \mbox{otherwise}.
    \end{cases}
\end{align*}
\end{lem}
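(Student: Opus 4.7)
The plan is to exploit the strong locality of the defining formula $\ve^\tri_{\alpha\beta}=\sum_T \ve_{\alpha\beta}(T)$: under the flip $\tri \xrightarrow{f_\kappa}\tri'$, the only triangles that change are the two containing $\kappa$, which are replaced by the two new triangles containing $\kappa'$. Writing $T_1,T_2$ for the two triangles of $\tri$ on either side of $\kappa$ (so that together they form the quadrilateral with boundary edges $\alpha,\beta,\gamma,\delta$ as in \cref{fig:flip}) and $T_1',T_2'$ for the two new triangles in $\tri'$, the difference
\[
\ve'_{\alpha\beta}-\ve^\tri_{\alpha\beta}
= \bigl(\ve_{\alpha\beta}(T_1')+\ve_{\alpha\beta}(T_2')\bigr)
-\bigl(\ve_{\alpha\beta}(T_1)+\ve_{\alpha\beta}(T_2)\bigr)
\]
depends only on the local quadrilateral data. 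All contributions from triangles outside the quadrilateral cancel identically.

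First I would handle the ``mutated row/column'' case $\alpha=\kappa'$ (the case $\beta=\kappa'$ is symmetric). The edge $\kappa'$ lies only in $T_1',T_2'$. For each other edge $\beta$ of the quadrilateral I would directly compare the cyclic order of $(\kappa',\beta)$ in $T_i'$ with that of $(\kappa,\beta)$ in $T_i$: the two diagonals of the same quadrilateral traverse the boundary edges in opposite rotational senses at their common vertex, forcing $\ve_{\kappa'\beta}(T_i')=-\ve_{\kappa\beta}(T_{i})$ for the unique triangle in each triangulation where the pair appears. Summing yields $\ve'_{\kappa'\beta}=-\ve^\tri_{\kappa\beta}$, which matches the stated formula.

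For the remaining case $\alpha,\beta\neq\kappa'$, I would first observe that if either $\alpha$ or $\beta$ is \emph{not} a boundary edge of the quadrilateral, then it contributes $0$ to all of $\ve_{\alpha\beta}(T_i^{(\prime)})$, so $\ve'_{\alpha\beta}=\ve^\tri_{\alpha\beta}$; simultaneously the correction term on the right-hand side vanishes because $\ve_{\alpha\kappa}=0$ or $\ve_{\kappa\beta}=0$. This reduces the verification to the finite case $\alpha,\beta\in\{\alpha,\beta,\gamma,\delta\}$ of the quadrilateral, where I would run through the subcases (the pairs $(\alpha,\beta),(\beta,\gamma),(\gamma,\delta),(\delta,\alpha)$ each sit in exactly one of the old or new triangles, while $(\alpha,\gamma),(\beta,\delta)$ sit in none) and check that the numbers match $\tfrac12(|\ve_{\alpha\kappa}|\ve_{\kappa\beta}+\ve_{\alpha\kappa}|\ve_{\kappa\beta}|)$ using the known signs of $\ve_{\bullet\kappa}$ read off \cref{fig:flip}.

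The main obstacle I anticipate is \emph{not} the orientation bookkeeping in the generic case, but the degenerate situations in which two boundary edges of the quadrilateral are identified in $\Sigma$ (e.g.\ $\beta=\delta$ when the quadrilateral is glued along two opposite sides). In such cases a single abstract edge may contribute to both $T_1$ and $T_2$, and one must carefully sum the two contributions with the correct signs and verify cancellations still produce the mutation formula. The hypothesis that $\tri,\tri'$ contain no self-folded triangles rules out the worst pathologies; the remaining self-gluings can be handled either by reducing to a finite case check on the universal cover of a neighborhood of $\kappa$, or by invoking the essentially identical local computation in \cite[Definition 4.1, Proposition 4.8]{FST} from which the result can also be deduced.
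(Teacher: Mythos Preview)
The paper does not supply a proof of this lemma: it is stated with a reference to \cite[(12)]{FG09} and treated as a known fact (the general statement, including the self-folded case, is \cite[Proposition~4.8]{FST}). Your proposal is the standard direct verification via locality of the triangle-sum definition of $\ve^\tri$, and it is correct in outline; the degenerate self-gluing cases you flag are exactly those handled in the cited references, so deferring to \cite{FST} there is appropriate.
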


When $\bM_\circ=\emptyset$, the (quantum) cluster charts will be parametrized by the Ptolemy graph.
In the presence of punctures, we are led to consider the \emph{tagged triangulations} (as well as those with self-folded triangles) and their graph, which we will not describe in this paper. See \cite{FST} for details.

\subsection{Quantum cluster Poisson algebra}
Let $\Sigma$ be a marked surface, and $\tri \in \Tri_\Sigma$ (the isotopy class of) its ideal triangulation. Recall from \cref{def:Poisson_torus} the based quantum torus $\X_\tri^v$ over $\bZ_v$. 
The framing $\mathbf{X}^\tri: N^\tri \to \X^v_\tri$ is called a \emph{quantum seed} associated with $\tri$. The mutations of quantum seeds are defined as follows.

\begin{dfn} 
Let $\tri \xrightarrow{f_\kappa} \tri'$ be an oriented edge corresponding to a flip. Then we relate the two quantum seeds associated with $\tri,\tri'$ by the following rule. 
\begin{description}
\item[Lattice mutation] Define a lattice isomorphism $\mu'_\kappa: N^{\tri'} \to N^{\tri}$ by
\begin{align*}
    \mu'_\kappa(\sfe^{\tri'}_\alpha):=\begin{cases}
    -\sfe^{\tri}_\kappa & \mbox{if $\alpha=\kappa'$}, \\
    \sfe^\tri_\alpha + [\ve^\tri_{\alpha\kappa}]_+\sfe^\tri_\kappa & \mbox{otherwise}.
    \end{cases}
\end{align*}

\item[Quantum cluster Poisson transformation]
Let $\Psi_v(x):= \prod_{k=1}^\infty (1+v^{2k-1}x)^{-1}$
be the quantum dilogarithm power series. Then we define an algebra isomorphism $\mu_\kappa^\ast: \Frac \X^v_{\tri'} \to \Frac \X^v_\tri$ by
\begin{align}\label{eq:q-cluster_transf_X}
    \mu_{\kappa}^\ast(\mathbf{X}^{\tri'}(\lambda)) := \mathrm{Ad}_{\Psi_v(X^\tri_\kappa)} (\mathbf{X}^\tri(\mu'_\kappa(\lambda))).
\end{align}
Although $\Psi_v(X^\tri_\kappa)$ is a formal power series, its adjoint action can be computed by using the difference relation
\begin{align}\label{eq:q-difference}
    \Psi_v(v^2 x) = (1+vx)\Psi_v(x)
\end{align}
and the result is rational. See \cite[Lemma 3.4]{FG09} for an explicit formula.
\end{description}
\end{dfn}

\begin{lem}
The map \eqref{eq:q-cluster_transf_X} indeed defines an algebra isomorphism $\mu_\kappa^\ast: \Frac \X^v_{\tri'} \xrightarrow{\sim} \Frac \X^v_\tri$.
\end{lem}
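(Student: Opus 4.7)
The plan is to factor $\mu_\kappa^\ast$ as the composition of a monomial transformation induced by the lattice map $\mu'_\kappa$ with the inner automorphism $\mathrm{Ad}_{\Psi_v(X^\tri_\kappa)}$ of $\Frac \X^v_\tri$. The monomial piece is a standard lattice check; the substantive content is that conjugation by the formal power series $\Psi_v(X^\tri_\kappa)$ actually defines an automorphism of $\Frac\X^v_\tri$, landing in rational (rather than formal) expressions.

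First I would verify that $\mu'_\kappa: N^{\tri'} \to N^\tri$ is a lattice isomorphism preserving the skew form $\omega_X$. Invertibility is immediate (the inverse admits an analogous description using $\ve^{\tri'}$ in place of $\ve^\tri$). Form-preservation reduces to the case $\alpha,\beta \neq \kappa'$, where a direct expansion yields
\begin{equation*}
    \omega_X^\tri\bigl(\mu'_\kappa(\sfe^{\tri'}_\alpha),\, \mu'_\kappa(\sfe^{\tri'}_\beta)\bigr) = 2\ve^\tri_{\alpha\beta} + 2[\ve^\tri_{\alpha\kappa}]_+\ve^\tri_{\kappa\beta} + 2\ve^\tri_{\alpha\kappa}[-\ve^\tri_{\kappa\beta}]_+,
\end{equation*}
and substituting $[x]_+ = (x+|x|)/2$ matches this to $2\ve^{\tri'}_{\alpha\beta}$ via the mutation formula of \cref{lem:mutation_matrix}; the cases involving $\kappa'$ are trivial. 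Consequently $\mathbf{X}^{\tri'}(\lambda) \mapsto \mathbf{X}^\tri(\mu'_\kappa(\lambda))$ extends to an algebra isomorphism $\X^v_{\tri'} \xrightarrow{\sim} \X^v_\tri$, hence to their skew-fields of fractions.

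Next I would verify that $\mathrm{Ad}_{\Psi_v(X^\tri_\kappa)}$ preserves $\Frac\X^v_\tri$. For each $\mu \in N^\tri$, set $c_\mu := \omega_X(\sfe^\tri_\kappa,\mu)/2 \in \bZ$ (integrality holds because $\omega_X$ takes values in $2\bZ$ on the standard generators). The commutation $X^\tri_\kappa \mathbf{X}^\tri(\mu) = v^{2c_\mu}\mathbf{X}^\tri(\mu)X^\tri_\kappa$ propagates to an identity of formal power series
\begin{equation*}
    \Psi_v(X^\tri_\kappa)\,\mathbf{X}^\tri(\mu) = \mathbf{X}^\tri(\mu)\,\Psi_v(v^{2c_\mu}X^\tri_\kappa).
\end{equation*}
Iterating the $v$-difference relation \eqref{eq:q-difference} a total of $|c_\mu|$ times expresses $\Psi_v(v^{2c_\mu}X^\tri_\kappa)\Psi_v(X^\tri_\kappa)^{-1}$ as a finite signed product of factors $(1+v^{2j+1}X^\tri_\kappa)^{\pm 1}$, which is a rational function $R_\mu(X^\tri_\kappa) \in \Frac\X^v_\tri$ (see also \cite[Lemma 3.4]{FG09} for the closed form). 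Therefore $\mathrm{Ad}_{\Psi_v(X^\tri_\kappa)}(\mathbf{X}^\tri(\mu)) = \mathbf{X}^\tri(\mu)\cdot R_\mu(X^\tri_\kappa)$ lies in $\Frac\X^v_\tri$, and as an inner action on the fraction field it is automatically an algebra automorphism.

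Composing the two yields the claimed algebra isomorphism $\mu_\kappa^\ast: \Frac\X^v_{\tri'} \xrightarrow{\sim} \Frac\X^v_\tri$; invertibility also follows from the symmetric role of $\tri$ and $\tri'$ under the flip $f_\kappa$. The main obstacle is the second step, where one must carefully interpret the formal power series $\Psi_v(X^\tri_\kappa)$ as defining an honest rational operator; everything else reduces to matrix mutations and $q$-commutation bookkeeping.
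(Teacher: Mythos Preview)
Your proof is correct and follows essentially the same decomposition as the paper: verify that the lattice map $\mu'_\kappa$ preserves $\omega_X$ (hence induces an algebra isomorphism on the quantum tori), then check that $\mathrm{Ad}_{\Psi_v(X^\tri_\kappa)}$ lands in rational expressions. The only noteworthy difference is in the invertibility step: the paper instead checks the involutivity $\mu_\kappa^\ast \circ \mu_{\kappa'}^\ast = \mathrm{id}$ on generators for the sequence $\tri \xrightarrow{f_\kappa} \tri' \xrightarrow{f_{\kappa'}} \tri$, whereas you deduce it directly from each factor being an isomorphism, which is arguably cleaner.
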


\begin{proof}
In view of \cref{lem:mutation_matrix}, one can verify that $\mu'_\kappa: N^{\tri'} \to N^{\tri}$ preserves the skew-symmetric forms $\omega_X$. Hence it induces an algebra homomorphism $\mu'_\kappa: \X^v_{\tri'} \to \X^v_{\tri}$ such that $\mu'_\kappa(\mathbf{X}^{\tri'}(\lambda)) = \mathbf{X}^{\tri}(\mu'_\kappa(\lambda))$. Then the composite $\mu_\kappa^\ast=\mathrm{Ad}_{\Psi_v(X^\tri_\kappa)} \circ \mu'_\kappa$ is also an algebra homomorphism. By considering the involutive sequence $\tri \xrightarrow{f_\kappa} \tri' \xrightarrow{f_{\kappa'}} \tri$ of flips, one can see that $\mu_{\kappa}^\ast \circ \mu_{\kappa'}^\ast (X^\tri_\alpha) = X^\tri_\alpha$ for all $\alpha \in e(\tri)$. In particular, $\mu_\kappa^\ast$ is an isomorphism. 
\end{proof}

\begin{rem} 
\begin{enumerate}
    \item As noted in \cite[Section 1.2.2]{FG09}, the composite $\mu'_{\kappa'} \circ \mu'_\kappa(\sfe_\alpha^{\tri}) = \sfe_\alpha^{\tri} + \ve^\tri_{\alpha\kappa}\sfe_\kappa^{\tri}$ is not an identity, while it preserves the form on $N^\tri$.

    \item In the original formulation of \cite{FG09}, a lattice $N$ is fixed while its bases $(\sfe_i)_{i \in I}$ transform under mutation. The two formulations are equivalent, and we choose to distinguish the lattices associated with different triangulations in this paper.
\end{enumerate}
\end{rem}

We identify the fields $\Frac \X^v_\tri$ via the quantum cluster Poisson transformations, which is denoted by $\cF^X_\Sigma$. Observe that for each $\tri \in \Tri_\Sigma$, we have a subalgebra $\X^v_\tri \subset \cF^X_\tri$. 
\begin{dfn}
For any 
%unpunctured 
marked surface, the subalgebra
\begin{align*}
    \cO_v(\X_\Sigma^\circ):=\bigcap_{\tri\in \Tri_\Sigma} \X^v_\tri 
\end{align*}
of $\cF^X_\Sigma$ is called the \emph{quantum cluster Poisson algebra}, a.k.a. \emph{quantum universally Laurent algebra} (without tagged triangulations) associated with $\Sigma$.
\end{dfn}

\begin{rem}
When $\bM_\circ\neq \emptyset$, we need to include the quantum tori associated with tagged triangulations to define the genuine quantum cluster Poisson algebra $\cO_v(\X_\Sigma) \subset \cO_v(\X_\Sigma^\circ)$. In the case $\bM_\circ=\emptyset$, we write $\cO_v(\X_\Sigma) :=\cO_v(\X_\Sigma^\circ)$. In the latter case, it is proved by Shen \cite{Shen22} that $\cO_1(\X_\Sigma)$ coincides with the function algebra of the moduli space $\P_{PGL_2,\Sigma}$ of framed $PGL_2$-local systems with pinnings in the classical limit $v=1$. 
\end{rem}

We will see that the quantum traces can be seen as embeddings $\mathrm{Tr}_\tri:\sSk{\Sigma}_{\mathrm{cong}} \to \X_\tri^v$ under the relation $v=q^{-2}$, which combine to give an embedding $\sSk{\Sigma}_{\mathrm{cong}} \to \cO_v(\X_\Sigma^\circ)$.

\subsection{Quantum upper cluster algebra}\label{subsec:q-CA}
Here it is essential to assume that $\Sigma$ is unpunctured ($\bM_\circ=\emptyset$). Following \cite{Muller}, we are going to construct quantum (upper) cluster algebras inside the skew-field $\cF^A_\Sigma:=\Frac \mathscr{S}_\Sigma^q(\bM)$. 

For each edge $\alpha \in e(\tri)$, let $A_\alpha=[\alpha] \in \Sk{\Sigma}$. Then they satisfy the $q$-commutation relation \eqref{eq:skein_cluster_A_relation}. 
Hence they generate a based quantum torus $\A^q_\tri \subset \cF^A_\Sigma$, equippped with a framing $\mathbf{A}^{\!\tri}:M^\tri \to \A^q_\tri$ such that $\mathbf{A}^{\!\tri}(\sff^\tri_\alpha):=A_\alpha$ for $\alpha \in e(\tri)$. It is isomorphic to the quantum cluster $K_2$-torus introduced abstractly in \cref{def:K2_torus}.

\begin{prop}[Muller]
For any oriented edge $\tri \xrightarrow{f_\kappa} \tri'$, 
the quantum cluster $K_2$-variables satisfy the \emph{quantum exchange relation} \cite{BZ}
\begin{align}\label{eq:q-exchange}
    A_{\kappa'} &= %\mathrm{Ad}_{\Psi_q(y_k)}(\mathbf{A}(f'_i)),\label{eq:q-mutation} \\
    \begin{cases}
   A_\alpha & \mbox{if $\alpha \neq \kappa$}, \\
   \mathbf{A}^{\!\tri}\big(-\sff^\tri_\kappa+\sum_{\beta \in e(\tri)} [\ve^\tri_{\kappa\beta}]_+ \sff^\tri_\beta\big) + \mathbf{A}^{\!\tri}\big(-\sff^\tri_\kappa+\sum_{\beta \in e(\tri)} [-\ve^\tri_{\kappa\beta}]_+ \sff^\tri_\beta\big) & \mbox{if $\alpha=\kappa$}.
    \end{cases} 
\end{align}
\end{prop}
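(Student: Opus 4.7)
The plan is to derive both cases directly from the Muller skein presentation $A_\alpha = [\alpha] \in \sSsq(\bM)$ together with the skein relations (A) and (C) applied in the quadrilateral that houses the flip. As in all such quantum Ptolemy arguments, the geometry is immediate and the only real work is tracking powers of $q$.

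The first case, when $\alpha \neq \kappa$, is essentially trivial: the edge $\alpha$ is literally the same ideal arc in $\tri$ and in $\tri'$, so the element $A_\alpha = [\alpha]$ of the Muller skein algebra does not depend on which of the two triangulations we view it through. There is nothing to compute here.

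For the exchange at $\kappa$, I would localize the argument inside the quadrilateral $Q \subset \Sigma$ containing the flip, with cyclically labeled sides $\alpha,\beta,\gamma,\delta \in e(\tri)$; say $\kappa$ joins the corner $(\alpha,\beta)$ to the corner $(\gamma,\delta)$, and $\kappa'$ joins the corner $(\beta,\gamma)$ to the corner $(\delta,\alpha)$. The two triangles of $\tri$ inside $Q$ are $(\alpha,\kappa,\beta)$ and $(\gamma,\kappa,\delta)$, so reading off $\ve^\tri_{\kappa\,\cdot\,}$ directly from its definition yields
\[
[\ve^\tri_{\kappa\eta}]_+ = 1 \iff \eta \in \{\alpha,\gamma\}, \qquad [-\ve^\tri_{\kappa\eta}]_+ = 1 \iff \eta \in \{\beta,\delta\},
\]
with all other entries zero (up to the rotation convention). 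Hence the right-hand side of the claim is exactly
\[
A_\kappa^{-1}\cdot [A_\alpha A_\gamma] \;+\; A_\kappa^{-1}\cdot [A_\beta A_\delta],
\]
interpreted as Weyl-normalized monomials in $\A^q_\tri \subset \cF^A_\Sigma$.

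It therefore remains to establish the quantum Ptolemy identity $A_\kappa A_{\kappa'} = [A_\alpha A_\gamma] + [A_\beta A_\delta]$. I would draw $A_\kappa \cdot A_{\kappa'}$ by stacking $\kappa$ over $\kappa'$ in $Q \times (-1,1)$; this produces exactly one transverse double point in the interior of $Q$, together with simultaneous crossings at each of the four special points of $Q$ where $\kappa$ or $\kappa'$ meets the boundary. Resolving the interior crossing by (A) gives two planar diagrams, one featuring the arcs $\alpha \cup \gamma$ and the other the arcs $\beta \cup \delta$, while the corner simultaneous crossings are rewritten by (C) as Weyl-normalized products with explicit $q^{\pm 1/2}$ factors. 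Solving for $A_{\kappa'}$ in $\cF^A_\Sigma$ then finishes the proof.

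The main obstacle is purely combinatorial: one has to check that the $q^{\pm 1}$ coming out of (A) at the single interior crossing, the four half-integer powers produced by (C) at the corners of $Q$, and the half-integer powers absorbed when reordering $A_\kappa A_{\kappa'}$ into a Weyl-normalized form all conspire to leave the coefficient $1$ on \emph{both} of the monomials $[A_\alpha A_\gamma]$ and $[A_\beta A_\delta]$. This is precisely the computation carried out by Muller \cite[\S 7]{Muller}, and once the coefficients are verified the stated exchange relation — in the form of \cite{BZ} with matrix $\ve^\tri$ — follows by multiplying on the left by $A_\kappa^{-1}$ and re-collecting Weyl-normalized monomials.
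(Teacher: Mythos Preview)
Your proposal is correct and follows exactly the standard quantum Ptolemy argument from Muller's paper \cite{Muller}. Note that the paper under review does not actually give its own proof of this proposition: it is stated with attribution to Muller and used as input for the subsequent definitions, so there is nothing further to compare against.
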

Therefore the following definition agrees with the general definition in \cite{BZ}:

\begin{dfn}
The subalgebra
\begin{align*}
    \cO_q(\A_\Sigma):=\bigcap_{\tri\in \Tri_\Sigma} \A^q_\tri 
\end{align*}
of $\cF^A_\Sigma$ is called the \emph{quantum upper cluster algebra} associated with $\Sigma$. The \emph{quantum cluster algebra} is the $\bZ_q$-subalgebra $\mathscr{A}^q_\Sigma\subset \cF^A_\Sigma$ generated by the cluster $K_2$-variables $A_\alpha$ associated with all the ideal arcs and the inverses of $A_\alpha$ for $\alpha \in \bB$ (the ``frozen'' variables). 
\end{dfn}

\begin{rem}
We have the inclusion $\mathscr{A}^q_\Sigma \subset \cO_q(\A_\Sigma)$, known as the quantum Laurent phenomenon \cite[Corollary 5.2]{BZ}.
\end{rem}
From the construction, we have $\mathscr{A}^q_\Sigma \subset \mathscr{S}_\Sigma(\bM)[\partial^{-1}]$. Moreover, we will see that the cutting maps can be seen as embeddings $\mathrm{Cut}_\tri: \mathscr{S}_\Sigma(\bM)[\partial^{-1}] \to \A^q_\tri$, which combine to give an embedding $\mathscr{S}_\Sigma(\bM)[\partial^{-1}] \to \cO_q(\A_\Sigma)$.

\subsection{Relation between $\cO_v(\X_\Sigma)$ and $\cO_q(\A_\Sigma)$}\label{subsec:ensemble}
Let $\Sigma$ be an unpunctured marked surface. 
For each $\tri \in \Tri_\Sigma$, consider the lattice morphism
\begin{align}\label{eq:ensemble_linear}
    p_\tri^\ast: N^\tri \to M^\tri, \quad \sfe^\tri_\alpha \mapsto \sum_{\beta \in e(\tri)} (\ve^\tri_{\alpha\beta} + m_{\alpha\beta}) \sff^\tri_\beta. 
\end{align}
Here $\ve^\tri=(\ve^\tri_{\alpha\beta})$ is the exchange matrix from \cref{subsub:exchange}, and we set $m_{\alpha\beta}:=-\delta_{\alpha,\beta}$ if $\alpha,\beta \in \bB$ and otherwise $m_{\alpha\beta}:=0$. The correction term $m_{\alpha\beta}$ on the boundary intervals is due to Goncharov--Shen (cf. \cite[Theorem 12.5]{GS19}). Let us write $p_{\alpha\beta}^\tri:=\ve^\tri_{\alpha\beta} + m_{\alpha\beta}$. 

\begin{lem}[{\cite[Theorem 2.16 and Lemma 2.17]{Ish}}]
For any unpunctured marked surface, the matrix $(p_{\alpha\beta}^\tri)_{\alpha,\beta \in e(\tri)}$ is invertible. Moreover, we have 
\begin{align}\label{eq:compatibility_Poisson}
    \sum_{\gamma,\delta \in e(\tri)} p^\tri_{\alpha\gamma}\pi_{\gamma\delta}p^\tri_{\beta\delta}=-4\ve_{\alpha\beta}^\tri
\end{align}
for $\alpha,\beta \in e(\tri)$.
\end{lem}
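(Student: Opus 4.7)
I would prove both claims by direct local computation, exploiting the fact that all three matrices $\ve^\tri, \Pi^\tri, M:=(m_{\alpha\beta})$ decompose into contributions indexed by simple combinatorial data of $\tri$. Specifically, $\ve^\tri=\sum_{T\in t(\tri)}\ve(T)$ with $\ve(T)$ supported on the three edges of $T$, while $\Pi^\tri=\sum_{v\in\bM_\partial}\Pi^{(v)}$ with $\Pi^{(v)}$ supported on the edge-ends incident to $v$ (where it is the standard skew-symmetric cyclic matrix $\pm 1$ encoding the clockwise order at $v$). Finally $M$ is diagonal and supported on $\bB\times\bB$.

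To check the compatibility identity, I would fix $\alpha,\beta\in e(\tri)$ and expand
\[
(P\Pi P^\top)_{\alpha\beta}=(\ve\Pi\ve^\top)_{\alpha\beta}+(\ve\Pi M)_{\alpha\beta}+(M\Pi\ve^\top)_{\alpha\beta}+(M\Pi M)_{\alpha\beta}.
\]
For the first term, I would partition the double sum $\sum_{\gamma,\delta}\ve^\tri_{\alpha\gamma}\pi_{\gamma\delta}\ve^\tri_{\beta\delta}$ according to the special point $v$ at which $\gamma$ and $\delta$ meet. At each $v$ the local evaluation reduces to a sum of products $\ve_{\alpha\gamma}(T)\,\pi^{(v)}_{\gamma\delta}\,\ve_{\beta\delta}(T')$ over triangles $T,T'$ adjacent to $v$; using the cyclic structure of $\Pi^{(v)}$ one obtains a telescoping of contributions which, for each pair of triangles meeting along a \emph{interior} edge, is proportional to $\ve^\tri_{\alpha\beta}$. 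The key observation is that this telescoping counts a pair of adjacent triangles (with total contribution $-4\ve_{\alpha\beta}$ per interior edge) differently from a triangle adjacent only to a boundary edge; the latter produces precisely the boundary anomaly that is cancelled by the cross-terms $\ve\Pi M$, $M\Pi\ve^\top$, $M\Pi M$. The correction $m_{\alpha\beta}=-\delta_{\alpha\beta}$ on $\bB$ is, by design, exactly what makes these boundary anomalies cancel. The factor $-4$ on the right-hand side then emerges from the two triangle-endpoints of each edge contributing once each, doubled again by the two end-contributions of the partner edge.

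For invertibility of $P^\tri$, I would argue combinatorially. The matrix $\ve^\tri$ generally has a nontrivial kernel for unpunctured $\Sigma$ (coming from linear combinations of boundary edges along each boundary component), and the correction $M$ is precisely the minimal diagonal perturbation removing this kernel: along each boundary component, restricting $\ve^\tri$ to the boundary edges and adding $-I$ on $\bB$ gives an invertible circulant-type block. One can make this explicit either by computing $\det P^\tri$ directly (up to sign a power of $2$, reflecting the finite covering degree of the ensemble map of Fock--Goncharov for unpunctured surfaces), or alternatively by verifying invertibility on one convenient triangulation and transporting it along flips using \cref{lem:mutation_matrix}, since the transformation of $P^\tri$ under a flip of an interior edge is by invertible matrices.

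\textbf{Main obstacle.} The delicate part is the sign bookkeeping in the local vertex computation: reconciling the ``clockwise consecutive in a triangle'' convention defining $\ve$ with the ``clockwise at a shared special point'' convention defining $\pi$, and tracking how these signs interact with the diagonal correction $M$ at boundary ends. Getting the overall factor $-4$ (and not $-2$ or $-1$) correct hinges on carefully distinguishing interior edges (shared by two triangles) from boundary ones (adjacent to a single triangle), and this is exactly where the Goncharov--Shen boundary correction earns its keep.
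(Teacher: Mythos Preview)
The paper does not give its own proof of this lemma: it is stated with a citation to \cite[Theorem 2.16 and Lemma 2.17]{Ish} and used as a black box. So there is nothing in the present paper to compare your argument against.

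Your outlined strategy---expanding $P\Pi P^\top$ with $P=\ve^\tri+M$ into four terms and reducing each to a sum of local vertex contributions that telescope---is a reasonable route to the identity, and is presumably close in spirit to the computation in \cite{Ish}. That said, what you have written is a plan rather than a proof: the actual verification that the interior-edge contributions give exactly $-4\ve^\tri_{\alpha\beta}$ and that the boundary anomalies are cancelled by the $M$-terms is precisely the content of the lemma, and you have only asserted that it works out. The ``main obstacle'' you name (sign conventions reconciling the triangle-based $\ve$ with the vertex-based $\pi$, and the role of the Goncharov--Shen correction) is not a peripheral bookkeeping issue but the entire computation. Similarly, for invertibility your claim that the kernel of $\ve^\tri$ is spanned by boundary-edge combinations is not obviously true in general and would itself require justification; the alternative you mention (checking one triangulation and transporting via \cref{lem:mutation_matrix}) is cleaner but you would still need to exhibit the base case and verify that the mutation of $P^\tri$ is by invertible matrices on the full index set $e(\tri)$, not just $e_{\interior}(\tri)$.
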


\begin{rem}
The corresponding relation for the $e_{\interior}(\tri)\times e(\tri)$-submatrix is obtained in \cite[(27)]{Le_quantum_teich}.
\end{rem}

\begin{cor} \label{cor:BZ-compatible}
For $(\alpha,\beta) \in e_{\interior}(\tri)\times e(\tri)$, the \emph{compatibility relation} \cite{BZ}
\begin{align*}
    \sum_{\gamma \in e(\tri)} \ve_{\alpha\gamma}^\tri \pi_{\gamma\beta} = 4 \delta_{\alpha\beta}
\end{align*}
holds.
\end{cor}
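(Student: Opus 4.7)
The plan is to deduce the compatibility relation as a purely formal consequence of the matrix identity $P \Pi P^T = -4 E$ provided by the previous lemma, where I view $P := (p^\tri_{\alpha\beta})$, $\Pi := (\pi_{\alpha\beta})$, and $E := (\ve^\tri_{\alpha\beta})$ as matrices indexed by $e(\tri)\times e(\tri)$.

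The key observation is that the Goncharov--Shen correction term $m_{\alpha\beta}$ in $p^\tri_{\alpha\beta} = \ve^\tri_{\alpha\beta} + m_{\alpha\beta}$ vanishes whenever at least one of $\alpha, \beta$ lies in $e_{\interior}(\tri)$. Consequently, for every $\alpha \in e_{\interior}(\tri)$ the $\alpha$-th row of $P$ coincides with that of $E$, namely $p^\tri_{\alpha\gamma} = \ve^\tri_{\alpha\gamma}$ for all $\gamma \in e(\tri)$; and by the skew-symmetry of $\ve^\tri$, I also have $\ve^\tri_{\alpha\beta} = -\ve^\tri_{\beta\alpha} = -p^\tri_{\beta\alpha}$ for every $\beta \in e(\tri)$ (using the same vanishing $m_{\beta\alpha} = 0$).

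Restricting the identity of the lemma to rows $\alpha \in e_{\interior}(\tri)$ and substituting these two equalities into the left- and right-hand sides respectively, I obtain
\[
\sum_{\gamma,\delta \in e(\tri)} \ve^\tri_{\alpha\gamma}\, \pi_{\gamma\delta}\, p^\tri_{\beta\delta} \;=\; 4\, p^\tri_{\beta\alpha}
\]
for all $\alpha \in e_{\interior}(\tri)$ and $\beta \in e(\tri)$. Introducing the auxiliary quantity $C_{\alpha\delta} := \sum_{\gamma} \ve^\tri_{\alpha\gamma} \pi_{\gamma\delta}$, this says exactly that $C \cdot P^T$ equals $4$ times the submatrix of $P^T$ consisting of the rows indexed by $e_{\interior}(\tri)$, viewed as an identity of rectangular $e_{\interior}(\tri) \times e(\tri)$-matrices.

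The final step is to cancel $P^T$ on the right using the invertibility of $P$ (and hence of $P^T$) asserted in the same lemma, whereby $C_{\alpha\beta} = 4 \delta_{\alpha\beta}$ for $\alpha \in e_{\interior}(\tri)$ and $\beta \in e(\tri)$, which is exactly the desired compatibility relation. There is no real obstacle here: the corollary is a formal linear-algebra manipulation, and the key conceptual input — that the correction $m_{\alpha\beta}$ on the frozen (boundary) variables is invisible to interior rows and columns — is precisely what makes the Berenstein--Zelevinsky compatibility relation hold in this setup.
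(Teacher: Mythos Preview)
Your proof is correct and follows essentially the same approach as the paper's: both use the observation that $p^\tri_{\alpha\beta}=\ve^\tri_{\alpha\beta}$ whenever one index is interior, the skew-symmetry of $\ve^\tri$, and the invertibility of $P$ from the preceding lemma. The paper explicitly introduces the inverse matrix $(q^\tri_{\nu\beta})$ and multiplies index by index, whereas you phrase the same step as right-cancelling $P^T$; these are the same linear-algebra manipulation.
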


\begin{proof}
Observe that $p_{\alpha\beta}^\tri=\ve_{\alpha\beta}^\tri$ if $\alpha \in e_{\interior}(\tri)$ or $\beta \in e_{\interior}(\tri)$.  
Let $(q^\tri_{\nu\beta})$ be the inverse matrix of $(p_{\beta\delta}^\tri)$. Multiplying both sides of \eqref{eq:compatibility_Poisson} by it, we get
\begin{align*}
    \sum_{\gamma \in e(\tri)} \ve_{\alpha\gamma}^\tri \pi_{\gamma\nu} &= -4 \sum_{\beta \in e(\tri)} \ve^\tri_{\alpha\beta}q^\tri_{\nu\beta} 
    = 4 \sum_{\beta \in e(\tri)} q^\tri_{\nu\beta}\ve^\tri_{\beta\alpha} 
    = 4 \sum_{\beta \in e(\tri)} q^\tri_{\nu\beta}p^\tri_{\beta\alpha} = 4\delta_{\nu\alpha}
\end{align*}
for $\alpha \in e_{\interior}(\tri)$. 
Here we used the skew-symmetricity of $\ve^\tri$ in the second equality.
\end{proof}

As the (quantum) monomial version of \eqref{eq:ensemble_linear}, we also consider 
\begin{align}\label{eq:ensemble_monomial}
    p_\tri^\ast: &\X^v_\tri \to \A^q_\tri, \\ 
    &\mathbf{X}^\tri(\lambda) \mapsto \mathbf{A}^{\!\tri}(p_\tri^\ast(\lambda)), \quad  \lambda \in N^\tri.\nonumber
\end{align}
Both the maps \eqref{eq:ensemble_linear}, \eqref{eq:ensemble_monomial} are called the \emph{(extended) ensemble map}. See \cref{fig:ensemble_example}.

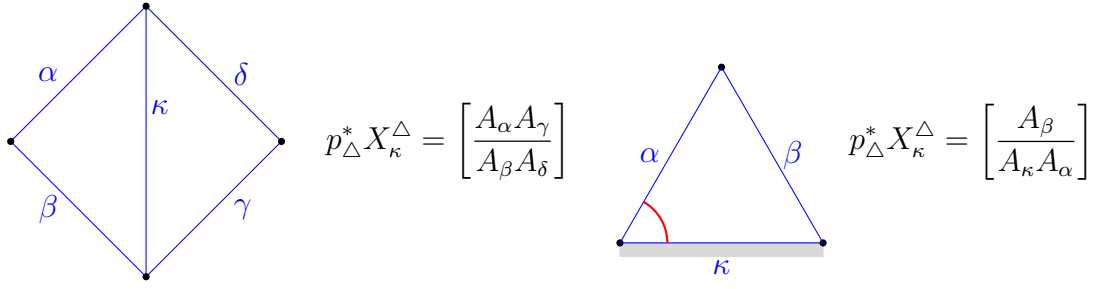
\begin{figure}[ht]
    \centering
\begin{tikzpicture}[scale=0.9]
\draw[blue] (-2,0) --node[midway,left=0.3em]{$\beta$} (0,-2) --node[midway,right=0.3em]{$\gamma$} (2,0) --node[midway,right=0.3em]{$\delta$} (0,2) --node[midway,left=0.3em]{$\alpha$} cycle;
\draw[blue] (0,-2) -- (0,2);
\fill(-2,0) circle(1.5pt);
\fill(2,0) circle(1.5pt);
\fill(0,-2) circle(1.5pt);
\fill(0,2) circle(1.5pt);
\node[blue] at (0.2,0.5) {$\kappa$};
\node[anchor=west] at (2.5,0) {$\displaystyle p_\tri^*X_{\kappa}^\tri = \bigg[\frac{A_{\alpha}A_{\gamma}}{A_{\beta} A_{\delta}}\bigg]$};

\begin{scope}[xshift=7cm,yshift=-1.5cm]
\draw[fill, gray!30]  (0,-0.2) rectangle (3,0);
\fill(0,0) circle(1.5pt) {};
\fill(3,0) circle(1.5pt);
\fill(60:3) circle(1.5pt);
\draw[blue](0,0) --node[midway,below=0.2em]{$\kappa$} (3,0) --node[midway,right]{$\beta$} (60:3) --node[midway,left]{$\alpha$} cycle;
\draw[red,thick] (0.7,0) arc[radius=0.7cm,start angle=0, end angle=60];
\node at (5.2,1.5) {$\displaystyle p_\tri^*X_{\kappa}^\tri = \bigg[\frac{A_{\beta}}{A_{\kappa} A_{\alpha}}$\bigg]};
\end{scope}
\end{tikzpicture}
    \caption{Ensemble map applied to the cluster Poisson variables.}
    \label{fig:ensemble_example}
\end{figure}

\begin{lem}
The map \eqref{eq:ensemble_monomial} is a well-defined algebra homomorphism under the relation $v=q^{-2}$.
\end{lem}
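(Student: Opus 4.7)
The plan is to unravel what it means for $p_\tri^\ast$ to be an algebra homomorphism between based quantum tori with different quantum parameters, and then to reduce the verification to the compatibility relation \eqref{eq:compatibility_Poisson} which is already available.

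Recall that for any lattice $(\Lambda,\omega)$, the based quantum torus with parameter $p$ is determined by $B_\lambda B_\mu = p^{\omega(\lambda,\mu)/2} B_{\lambda+\mu}$. Hence a $\bZ$-linear map $\phi:(\Lambda,\omega) \to (\Lambda',\omega')$ induces an algebra homomorphism between the associated based quantum tori with parameters $p$ and $p'$ respectively (via $B_\lambda \mapsto B'_{\phi(\lambda)}$) if and only if
\[
    (p')^{\omega'(\phi(\lambda),\phi(\mu))/2} \;=\; p^{\omega(\lambda,\mu)/2}
    \qquad \text{for all } \lambda,\mu \in \Lambda.
\]
In our situation $p=v$, $p'=q$, $\phi=p_\tri^\ast$, $(\Lambda,\omega)=(N^\tri,\omega_X)$ and $(\Lambda',\omega')=(M^\tri,\omega_A)$. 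Under $v=q^{-2}$, the condition becomes
\[
    \omega_A\bigl(p_\tri^\ast(\lambda),p_\tri^\ast(\mu)\bigr) \;=\; -2\,\omega_X(\lambda,\mu)
    \qquad \text{for all } \lambda,\mu \in N^\tri.
\]

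By $\bZ$-bilinearity it suffices to test this identity on the basis $\{\sfe^\tri_\alpha\}_{\alpha \in e(\tri)}$. Substituting $p_\tri^\ast(\sfe^\tri_\alpha) = \sum_\gamma p^\tri_{\alpha\gamma} \sff^\tri_\gamma$ and using the definitions of $\omega_A$ and $\omega_X$, the two sides become
\[
    \omega_A\bigl(p_\tri^\ast(\sfe^\tri_\alpha),p_\tri^\ast(\sfe^\tri_\beta)\bigr)
    \;=\; \sum_{\gamma,\delta \in e(\tri)} p^\tri_{\alpha\gamma}\, \pi_{\gamma\delta}\, p^\tri_{\beta\delta},
    \qquad
    -2\,\omega_X(\sfe^\tri_\alpha,\sfe^\tri_\beta) \;=\; -4\,\ve^\tri_{\alpha\beta}.
\]
These agree by the compatibility relation \eqref{eq:compatibility_Poisson}. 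Hence the $q$-commutation relations are preserved, which is the only thing needed for $p_\tri^\ast$ to extend (uniquely) to an algebra homomorphism $\X^v_\tri \to \A^q_\tri$ sending $\mathbf{X}^\tri(\lambda) \mapsto \mathbf{A}^{\!\tri}(p_\tri^\ast(\lambda))$.

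There is no real obstacle; the whole content of the statement is \eqref{eq:compatibility_Poisson}, which has already been established. The only potential subtlety is keeping track of the $\tfrac12$-factors and of the sign flip introduced by converting from $v$ to $q$ via $v=q^{-2}$, which is handled uniformly once one states the general criterion for a lattice map to induce a quantum torus homomorphism.
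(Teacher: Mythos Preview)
Your proof is correct and follows essentially the same approach as the paper: both reduce the verification to the compatibility relation \eqref{eq:compatibility_Poisson} together with $v=q^{-2}$. The only difference is cosmetic---you state the general criterion for a lattice map to induce a quantum torus homomorphism before specializing, whereas the paper computes $p^\ast(\mathbf{X}_\alpha)\cdot p^\ast(\mathbf{X}_\beta)$ directly on generators.
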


\begin{proof}
During the proof, we omit the scripts $\tri$ from the notation. 
For any $\alpha,\beta \in e(\tri)$, we have
\begin{align*}
    p^\ast(\mathbf{X}_\alpha) \cdot p^\ast(\mathbf{X}_\beta) &= \mathbf{A}(p^\ast\sfe_\alpha)\cdot \mathbf{A}(p^\ast\sfe_\beta) 
    \\
    &= q^{\omega_A(p^\ast\sfe_\alpha,\ p^\ast\sfe_\beta)/2} \mathbf{A}(p^\ast(\sfe_\alpha+\sfe_\beta)) \\
    &=
    q^{\sum p_{\alpha\gamma}\pi_{\gamma\delta}p_{\beta\delta}/2 }p^\ast(\mathbf{X}(\sfe_\alpha+\sfe_\beta)) \\
    &=
    q^{-2\ve_{\alpha\beta}}p^\ast(\mathbf{X}(\sfe_\alpha+\sfe_\beta)) \\
    &= v^{\ve_{\alpha\beta}}p^\ast(\mathbf{X}(\sfe_\alpha+\sfe_\beta)).
\end{align*}
Here we used the relation \eqref{eq:compatibility_Poisson} and $v=q^{-2}$. Thus the assertion is proved.
\end{proof}
In what follows, we always assume the relation $v=q^{-2}$.

\begin{prop}\label{prop:ensemble_mutation}
The ensemble maps \eqref{eq:ensemble_monomial} commute with mutations. In particular, they combine to give an algebra homomorphism $p^\ast_\Sigma: \cO_v(\X_\Sigma) \to \cO_q(\A_\Sigma)$.
\end{prop}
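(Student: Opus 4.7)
Since the Ptolemy graph $\Tri_\Sigma$ is connected, it suffices to prove that $p_\tri^*$ is compatible with a single mutation. Fix an oriented flip $\tri \xrightarrow{f_\kappa} \tri'$ along an interior edge $\kappa \in e_{\interior}(\tri)$. Viewing all $\X_\tri^v$ inside a common skew-field $\cF^X_\Sigma$ via the mutation isomorphisms $\mu_\kappa^*$, and all $\A_\tri^q$ inside $\cF^A_\Sigma$ via the quantum exchange relation \eqref{eq:q-exchange}, one must show
\[
p_\tri^* \circ \mu_\kappa^* = p_{\tri'}^*: \Frac \X^v_{\tri'} \longrightarrow \Frac \A^q_\tri.
\]
Granting this, each $p_\tri^*$ restricts to the same map $\cO_v(\X_\Sigma) \to \bigcap_\tri \A^q_\tri = \cO_q(\A_\Sigma)$, yielding the globally-defined algebra homomorphism $p_\Sigma^*$.

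Decompose $\mu_\kappa^* = \mathrm{Ad}_{\Psi_v(X_\kappa^\tri)} \circ \mu'_\kappa$ and treat each factor separately. For the monomial part $\mu'_\kappa$, the required identity reduces to a lattice-level computation, immediate from \cref{lem:mutation_matrix} and the definition \eqref{eq:ensemble_linear} by direct expansion; here the Goncharov--Shen correction $m_{\alpha\beta}$, being supported on the frozen pairs $\bB\times\bB$, is unaffected by flipping an interior edge and does not obstruct the calculation. For the dilogarithm conjugation part, the key ingredient is the formula
\[
p_\tri^*(X_\kappa^\tri) = [M_+ M_-^{-1}], \qquad M_\pm := \mathbf{A}^\tri\bigl(\textstyle\sum_\beta [\pm\ve^\tri_{\kappa\beta}]_+ \sff_\beta^\tri\bigr),
\]
obtained by splitting $\ve^\tri_{\kappa\beta} = [\ve^\tri_{\kappa\beta}]_+ - [-\ve^\tri_{\kappa\beta}]_+$ in \eqref{eq:ensemble_linear} (using $m_{\kappa\beta}=0$ since $\kappa$ is interior). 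Combined with the quantum exchange relation $A_\kappa A_{\kappa'} = M_+ + M_-$, this yields a binomial identity expressing $1 + p_\tri^*(X_\kappa^\tri)$ as a monomial multiple of $A_\kappa A_{\kappa'}$. This is precisely the binomial factor that arises when one iterates the $v$-difference relation \eqref{eq:q-difference} to compute $\mathrm{Ad}_{\Psi_v(X_\kappa^\tri)}$ on a Poisson monomial, so the two sides match.

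The main obstacle is the precise bookkeeping of quantum parameters. One must reconcile the powers of $v = q^{-2}$ produced by $\Psi_v$-conjugation with the powers of $q$ produced by Weyl normalizations on the $\A$-side; this alignment is controlled by the compatibility relation \eqref{eq:compatibility_Poisson} and its corollary \cref{cor:BZ-compatible}, which together ensure that the $q$-commutation relations of the images $p_\tri^*(X_\alpha^\tri)$ in $\A^q_\tri$ reproduce the $v$-commutation relations \eqref{eq:FG_relation} of the $X_\alpha^\tri$ in $\X^v_\tri$ verbatim. A secondary subtlety lies in the case $\lambda = \sfe_{\kappa'}^{\tri'}$, for which $\mu_\kappa^*(X_{\kappa'}^{\tri'}) = (X_\kappa^\tri)^{-1}$; here the verification reduces to the monomial comparison of $p_\tri^*((X_\kappa^\tri)^{-1}) = [M_+^{-1} M_-]$ with $p_{\tri'}^*(X_{\kappa'}^{\tri'})$ expanded through \eqref{eq:q-exchange}, and the matching follows directly from \cref{lem:mutation_matrix}.
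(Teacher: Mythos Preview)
Your approach is correct in outline and will succeed, but it differs from the paper's proof in one key structural respect. The paper first establishes \cref{lem:GS-BZ}, the Goncharov--Shen reformulation of the quantum exchange relation: the $\A$-side mutation itself can be written in adjoint form
\[
\mu_\kappa^\ast(\mathbf{A}^{\!\tri'}(\nu)) = \mathrm{Ad}_{\Psi_v(p_\tri^\ast X^\tri_\kappa)}\bigl(\mathbf{A}^{\!\tri}(\mu'_\kappa\nu)\bigr),
\]
with the dilogarithm applied to $p_\tri^\ast X_\kappa^\tri$. Once this is in hand, the proof collapses to three lines: since $p_\tri^\ast$ is an algebra homomorphism, it carries $\mathrm{Ad}_{\Psi_v(X_\kappa^\tri)}$ to $\mathrm{Ad}_{\Psi_v(p_\tri^\ast X_\kappa^\tri)}$, and one is left only with the lattice identity $p_\tri^\ast \circ \mu'_\kappa = \mu'_\kappa \circ p_{\tri'}^\ast$, which is elementary (and unaffected by $m_{\alpha\beta}$, as you note).

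What you are doing instead is effectively unpacking \cref{lem:GS-BZ} inside the main computation: your ``binomial identity expressing $1 + p_\tri^\ast(X_\kappa^\tri)$ as a monomial multiple of $A_\kappa A_{\kappa'}$'' together with the iterated $v$-difference relation \emph{is} the content of that lemma. This is why you end up worrying about ``precise bookkeeping of quantum parameters''---a concern that simply evaporates in the paper's organization, because the parameter alignment is done once in \cref{lem:GS-BZ} (via \cref{cor:BZ-compatible}) and never revisited. Your route works, but isolating the reformulation as a separate lemma is what buys the paper its clean three-line proof.
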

In order to give a proof, we use the following reformulation of the quantum exchange relation \eqref{eq:q-exchange} due to Goncharov--Shen \cite[Section 18]{GS19}. 
Let $\tri \xrightarrow{f_\kappa} \tri'$ be a flip. 
\begin{description}
\item[Lattice mutation] Define a lattice isomorphism $\mu'_\kappa: M^{\tri'} \to M^{\tri}$ by
\begin{align*}
    \mu'_\kappa(\sff^{\tri'}_\alpha):=\begin{cases}
        -\sff_\kappa^\tri + \sum_{\beta \in e(\tri)} [-\ve^\tri_{\kappa\beta}]_+ \sff^\tri_\beta & \mbox{if $\alpha=\kappa'$}, \\
        \sff^\tri_\alpha & \mbox{otherwise}.
        \end{cases}
\end{align*}
It is the dual inverse to the lattice isomorphism $\mu'_\kappa: N^{\tri'} \to N^{\tri}$. 

\item[Quantum cluster $K_2$-transformation]
We define an algebra isomorphism $\mu_\kappa^\ast: \Frac \A^q_{\tri'} \to \Frac \A^q_\tri$ by
    \begin{align}\label{eq:q-cluster_transf_A}
        \mu_{\kappa}^\ast(\mathbf{A}^{\!\tri'}(\nu)) := \mathrm{Ad}_{\Psi_v(p_\tri^\ast (X^\tri_\kappa))} (\mathbf{A}^{\!\tri}(\mu'_\kappa(\nu)).
    \end{align}
Here notice that we use $\Psi_v$ with $v=q^{-2}$.
\end{description}

\begin{lem}[cf. {\cite[Theorem 13.9]{GS19}}]\label{lem:GS-BZ}
The formula \eqref{eq:q-cluster_transf_A} coincides with the quantum exchange relation \eqref{eq:q-exchange}.
\end{lem}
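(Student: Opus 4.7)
The plan is to verify the claim separately for each framing generator $\sff^{\tri'}_\alpha$ of $M^{\tri'}$, with the two cases $\alpha \neq \kappa'$ and $\alpha = \kappa'$ handled in turn. Throughout I would use the assumption $v=q^{-2}$ and the observation that, since $\kappa \in e_\interior(\tri)$, the correction term $m_{\kappa\beta}$ vanishes and therefore $p_{\tri}^\ast X_\kappa^\tri = \mathbf{A}^{\!\tri}\bigl(\sum_\beta \ve^\tri_{\kappa\beta}\sff^\tri_\beta\bigr)$.

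The key preliminary step is to identify the $q$-commutation of $p_{\tri}^\ast X_\kappa^\tri$ with $A_\alpha^\tri$. From the definition of $\omega_A$ and the skew-symmetry of the compatibility matrix one obtains
\[
p_{\tri}^\ast X_\kappa^\tri \cdot A_\alpha^\tri = q^{\sum_\beta \ve^\tri_{\kappa\beta}\pi_{\beta\alpha}}\, A_\alpha^\tri \cdot p_{\tri}^\ast X_\kappa^\tri,
\]
and by \cref{cor:BZ-compatible} the exponent equals $4\delta_{\kappa\alpha}$. In particular $p_{\tri}^\ast X_\kappa^\tri$ commutes with $A_\alpha^\tri$ whenever $\alpha\neq \kappa$ and $v$-commutes with $A_\kappa^\tri$ with parameter $q^4=v^{-2}$. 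The case $\alpha\neq\kappa'$ is now immediate: since $\mu'_\kappa(\sff^{\tri'}_\alpha)=\sff^\tri_\alpha$, the adjoint action by $\Psi_v(p_{\tri}^\ast X_\kappa^\tri)$ leaves $A_\alpha^\tri$ invariant, recovering the first branch of \eqref{eq:q-exchange}.

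For $\alpha=\kappa'$, set $M_\pm:=\mathbf{A}^{\!\tri}\bigl(-\sff^\tri_\kappa+\sum_\beta [\pm\ve^\tri_{\kappa\beta}]_+\sff^\tri_\beta\bigr)$, so the RHS of \eqref{eq:q-exchange} is $M_++M_-$ and, by the definition of $\mu'_\kappa$, the GS-formula evaluates to $\mathrm{Ad}_{\Psi_v(p_{\tri}^\ast X_\kappa^\tri)}(M_-)$. A direct calculation using $\ve^\tri_{\kappa\kappa}=0$ and \cref{cor:BZ-compatible} gives the commutation $p_{\tri}^\ast X_\kappa^\tri\cdot M_-=v^2 M_-\cdot p_{\tri}^\ast X_\kappa^\tri$. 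Combining the general identity $f(X)Y=Yf(v^{2c}X)$ for $q$-commuting variables $XY=v^{2c}YX$ (here $c=1$) with the $v$-difference relation \eqref{eq:q-difference} collapses the formal power series to a linear factor:
\[
\mathrm{Ad}_{\Psi_v(p_{\tri}^\ast X_\kappa^\tri)}(M_-)=M_-\cdot(1+v\, p_{\tri}^\ast X_\kappa^\tri)=M_-+v\,M_-\cdot p_{\tri}^\ast X_\kappa^\tri.
\]

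It remains to identify $v\, M_-\cdot p_{\tri}^\ast X_\kappa^\tri$ with $M_+$. Using the identity $[-a]_+ + a = [a]_+$, the product $M_-\cdot p_{\tri}^\ast X_\kappa^\tri$ equals $q^{\omega/2}M_+$ where $\omega=\omega_A\bigl(-\sff^\tri_\kappa+\sum_\beta [-\ve^\tri_{\kappa\beta}]_+\sff^\tri_\beta,\ \sum_\gamma \ve^\tri_{\kappa\gamma}\sff^\tri_\gamma\bigr)$. The contribution of $-\sff^\tri_\kappa$ against the second factor evaluates via \cref{cor:BZ-compatible} to $+4$, while the contribution of the $[-\ve^\tri_{\kappa\beta}]_+$-sum vanishes because the resulting $\delta_{\kappa\beta}$-term is multiplied by $[-\ve^\tri_{\kappa\kappa}]_+=0$. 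Hence $q^{\omega/2}=q^2=v^{-1}$, and therefore $v\,M_-\cdot p_{\tri}^\ast X_\kappa^\tri=M_+$, yielding the desired identity $\mathrm{Ad}_{\Psi_v(p_{\tri}^\ast X_\kappa^\tri)}(M_-)=M_++M_-$.

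The main obstacle is bookkeeping: the Weyl normalization introduces half-integer $q$-powers and the compatibility matrix $\pi$ is skew-symmetric, so signs flip at each transposition. The cleanest way to avoid errors is to invoke \cref{cor:BZ-compatible} in the exact orientation it is stated (with the internal index first) and to separately tabulate each pairing before summing.
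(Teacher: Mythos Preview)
Your proof is correct and follows essentially the same approach as the paper: both compute the $\omega_A$-pairing between $p_\tri^\ast\sfe_\kappa^\tri$ and $\mu'_\kappa\sff_{\kappa'}^{\tri'}$ via \cref{cor:BZ-compatible} to obtain the value $-4$, then use the $v$-difference relation \eqref{eq:q-difference} to collapse the adjoint action of $\Psi_v$ to the linear factor $(1+v\,p_\tri^\ast X_\kappa^\tri)$ and identify the two resulting terms with the two monomials in \eqref{eq:q-exchange}. Your treatment is slightly more explicit in separating the case $\alpha\neq\kappa'$ and in tracking the $q$-powers, but the argument is the same.
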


\begin{proof}
For $\kappa \in e_{\interior}(\tri)$ and $\alpha \in e(\tri)$, we get
\begin{align*}
    \omega_A(p_\tri^\ast \sfe^\tri_\kappa, \mu'_\kappa\sff_\alpha^{\tri'}) = \omega_A(p_\tri^\ast \sfe^\tri_\kappa, -\sff_\alpha^{\tri}) = -\sum_{\beta \in e(\tri)} p_{\kappa\beta}^\tri \pi_{\beta\alpha} = -4\delta_{\kappa\alpha}
\end{align*}
from \cref{cor:BZ-compatible}. 
Then the element \eqref{eq:q-cluster_transf_A} is computed as
\begin{align*}
    &\Psi_q(\mathbf{A}(p_\tri^\ast \sfe^{\tri}_\kappa)) \cdot\mathbf{A}(\mu'_\kappa\sff^{\tri'}_\kappa)\cdot \Psi_q(\mathbf{A}(p_\tri^\ast \sfe^{\tri}_\kappa))^{-1} \\
    &= \mathbf{A}(\mu'_\kappa\sff^{\tri'}_\kappa) \cdot\Psi_v(q^{-4} \mathbf{A}(p_\tri^\ast \sfe^{\tri}_\kappa))\cdot\Psi_q(\mathbf{A}(p_\tri^\ast \sfe^{\tri}_\kappa))^{-1} \\
    &= \mathbf{A}(\mu'_\kappa\sff^{\tri'}_\kappa)(1+v \mathbf{A}(p_\tri^\ast \sfe^{\tri}_\kappa)) \\
    &=\mathbf{A}(\mu'_\kappa\sff^{\tri'}_\kappa) + \mathbf{A}(\mu'_\kappa\sff^{\tri'}_\kappa + p_\tri^\ast \sfe^{\tri}_\kappa).
\end{align*}
Since $\mu'_\kappa\sff^{\tri'}_\kappa + p_\tri^\ast \sfe^{\tri}_\kappa = -\sff^\tri_\kappa + \sum_{\beta \in e(\tri)}[\ve^\tri_{\kappa\beta}]_+ \sff^\tri_\beta$, the assertion follows.
\end{proof}

\begin{proof}[Proof of \cref{prop:ensemble_mutation}]
In view of \cref{lem:GS-BZ}, it suffices to compare the transformations \eqref{eq:q-cluster_transf_X} and \eqref{eq:q-cluster_transf_A}. It is easily verified that the lattice morphism $p_\tri^\ast$ commutes with $\mu'_\kappa$. Indeed, it is well-known if $m_{\alpha\beta}=0$ (e.g. \cite{FG09}). Then we have
\begin{align*}
    p_\tri^\ast \mu_\kappa^\ast (X^{\tri'}(\lambda)) &= \mathrm{Ad}_{p_\tri^\ast\Psi_v(X^\tri_\kappa)} (p_\tri^\ast\mathbf{X}^\tri(\mu'_\kappa\lambda)) \\
    &= \mathrm{Ad}_{\Psi_v(p_\tri^\ast X^\tri_\kappa)} (\mathbf{A}^\tri(\mu'_\kappa p_\tri^\ast\lambda)) \\
    &=\mu_\kappa^\ast \mathbf{A}^{\tri'}(p_\tri^\ast\lambda)
\end{align*}
for any $\lambda \in N^\tri$. Thus the assertion is proved. 
\end{proof}

\paragraph{\textbf{Extension to balanced Chekhov--Fock algebra.}} 

\begin{lem}\label{lem:balanced_ensemble}
For $\lambda=\sum_{\alpha \in e(\tri)} \lambda_\alpha \sfe_\alpha^\tri \in N^\tri$ balanced, we have $p_\tri^\ast(\lambda)/2 \in M^\tri$. In other words, 
\begin{align}\label{eq:balanced_ensemble}
    \sum_{\alpha \in e(\tri)}\lambda_\alpha(\ve^\tri_{\alpha\kappa}+m_{\alpha\kappa}) \in 2 \bZ
\end{align}
for all $\kappa \in e(\tri)$. 
\end{lem}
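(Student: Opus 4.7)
The strategy is to evaluate the sum $\sum_{\alpha} \lambda_\alpha (\ve^\tri_{\alpha\kappa} + m_{\alpha\kappa})$ modulo $2$ by splitting it triangle-by-triangle using the decomposition $\ve^\tri_{\alpha\kappa} = \sum_T \ve_{\alpha\kappa}(T)$, and then invoking the balanced condition in each triangle. I would first fix $\kappa \in e(\tri)$ and analyze the local contribution of a single triangle $T$ containing $\kappa$ as an edge. If $T$ has edges $(\alpha_1,\alpha_2,\alpha_3)$ in clockwise order with $\kappa = \alpha_i$, then only $\alpha = \alpha_{i-1}$ and $\alpha = \alpha_{i+1}$ give nonzero $\ve_{\alpha\kappa}(T)$, with values $+1$ and $-1$ respectively. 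Hence the contribution of $T$ to $\sum_\alpha \lambda_\alpha \ve_{\alpha\kappa}(T)$ is $\lambda_{\alpha_{i-1}} - \lambda_{\alpha_{i+1}}$.

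The key observation is then that, by the balanced condition applied to the triangle $T$, we have $\lambda_{\alpha_{i-1}} + \lambda_{\alpha_{i+1}} \equiv \lambda_{\alpha_i} = \lambda_\kappa \pmod{2}$, and since $-1 \equiv 1 \pmod 2$ we get
\[
    \lambda_{\alpha_{i-1}} - \lambda_{\alpha_{i+1}} \equiv \lambda_{\alpha_{i-1}} + \lambda_{\alpha_{i+1}} \equiv \lambda_\kappa \pmod{2}.
\]
Thus each triangle containing $\kappa$ contributes $\lambda_\kappa$ modulo $2$ to $\sum_\alpha \lambda_\alpha \ve^\tri_{\alpha\kappa}$.

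Now I would split into the two cases according to whether $\kappa$ is interior or a boundary interval, which is exactly where the Goncharov--Shen correction $m_{\alpha\beta}$ enters. If $\kappa \in e_{\interior}(\tri)$, then $\kappa$ bounds exactly two triangles of $\tri$ (recall we assume $\tri$ has no self-folded triangle), so $\sum_\alpha \lambda_\alpha \ve^\tri_{\alpha\kappa} \equiv 2\lambda_\kappa \equiv 0 \pmod{2}$; moreover $m_{\alpha\kappa} = 0$ for all $\alpha$ since $\kappa \notin \bB$, so \eqref{eq:balanced_ensemble} holds. If instead $\kappa \in \bB$, then $\kappa$ lies on the boundary of exactly one triangle, so $\sum_\alpha \lambda_\alpha \ve^\tri_{\alpha\kappa} \equiv \lambda_\kappa \pmod{2}$; on the other hand $\sum_\alpha \lambda_\alpha m_{\alpha\kappa} = -\lambda_\kappa$ by the definition of $m_{\alpha\kappa}$. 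Adding the two yields $\lambda_\kappa - \lambda_\kappa = 0 \pmod{2}$, as desired.

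The main (and only) subtle point is getting the orientation bookkeeping right so that the contribution of a single triangle is really $\lambda_{\alpha_{i-1}} - \lambda_{\alpha_{i+1}}$ and thus congruent to $\lambda_\kappa$ mod $2$; everything else is a routine count of how many triangles meet $\kappa$. The calculation also makes transparent why the correction term $m_{\alpha\beta}$ is built into the definition of $p_\tri^\ast$: without it, the formula \eqref{eq:balanced_ensemble} would fail precisely on the boundary edges $\kappa \in \bB$, by an amount $\lambda_\kappa$.
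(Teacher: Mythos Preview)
Your proof is correct and follows essentially the same approach as the paper's: both arguments split into the cases $\kappa \in e_{\interior}(\tri)$ and $\kappa \in \bB$, compute the contribution of each adjacent triangle modulo $2$ via the balanced condition to get $\lambda_\kappa$, and then count that an interior edge borders two triangles while a boundary edge borders one (where the correction $-\lambda_\kappa$ from $m_{\alpha\kappa}$ compensates). The only cosmetic difference is that the paper writes out the interior case in the concrete quadrilateral notation of \cref{fig:flip}, whereas you phrase it via the general decomposition $\ve^\tri_{\alpha\kappa} = \sum_T \ve_{\alpha\kappa}(T)$.
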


\begin{proof}
Suppose $\kappa \in e_{\interior}(\tri)$. Note that $m_{\alpha\kappa}=0$ for all $\alpha \in e(\tri)$. 
Let us focus on a quadrilateral as in the left of \cref{fig:flip}, and compute the left-hand side of \eqref{eq:balanced_ensemble} as
\begin{align*}
    (-\lambda_\alpha+\lambda_\beta)+(-\lambda_\gamma+\lambda_\delta) \equiv \lambda_\kappa + \lambda_\kappa \equiv 0 \quad (\text{mod 2}).
\end{align*}
Here we used the balanced condition for the triangles $(\alpha,\beta,\kappa)$ and $(\gamma,\delta,\kappa)$.

Consider the case $\kappa \in \bB$. Let $(\alpha,\kappa,\beta)$ be the unique triangle containing $\kappa$, whose edges being in this counter-clockwise order. Then the left-hand side of \eqref{eq:balanced_ensemble} is
\begin{align*}
    \lambda_\alpha -\lambda_\beta - \lambda_\kappa \equiv \lambda_\alpha +\lambda_\beta + \lambda_\kappa \equiv 0 \quad (\text{mod 2}).
\end{align*}
\end{proof}
In particular, the ensemble map \eqref{eq:ensemble_monomial} can be extended to 
\begin{align}\label{eq:ensemble_balanced}
    p_\tri^\ast :\CZbl \to \A_\tri^q, \quad \mathbf{Z}^\tri(\lambda) \mapsto \mathbf{A}^{\!\tri}(-p_\tri^\ast(\lambda)/2)
\end{align}
for $\lambda \in N^\tri$ balanced.

\begin{rem}[Relation to the L\^e--Yu's work {\cite{LY22}}]\label{rem:LY_compatibility}
For an ideal triangulation $\tri$ of $\Sigma$, let us consider the lattices
\begin{align*}
    \widetilde{N}^\tri := N^\tri \oplus \bigoplus_{\alpha \in \bB} \bZ\widehat{\sfe}_\alpha, \quad 
    \widetilde{M}^\tri := M^\tri \oplus \bigoplus_{\alpha \in \bB} \bZ\widehat{\sff}_\alpha.
\end{align*}

Let $\widetilde{\cZ}^q_\tri:=\bT_{(\widetilde{N}^\tri,\widetilde{\omega}_\cZ)}$ and $\widetilde{\A}^q_\tri:=\bT_{(\widetilde{M}^\tri,\widetilde{\omega}_\A)}$ denote the associated based quantum tori over $\bZ_q$, where $\widetilde{\omega}_{\cZ}$ and $\widetilde{\omega}_\A$ are certain extensions of $\omega_\cZ$ and $\omega_\A$, respectively \cite[Section 6.3 and Section 4.2]{LY22}.

L\^e--Yu gives a $\bZ_q$-isomorphism \cite[(71)]{LY22}
\begin{align*}
    \psi_\tri: \widetilde{\A}^q_\tri \xrightarrow{\sim} (\widetilde{\cZ}^q_\tri)_{\mathrm{bl}}, 
\end{align*}
where $(\widetilde{\cZ}^q_\tri)_{\mathrm{bl}} \subset \widetilde{\cZ}^q_\tri$ is the subalgebra determined by a certain balanced condition. For $\alpha \in e(\tri)$, the map is given by
\begin{align}\label{eq:LeYu_ensemble}
    \psi_\tri(A_\alpha):= \bigg[ \prod_{\beta \in e(\tri)} (Z_\beta^\tri)^{K_\beta(\alpha)}\bigg].
\end{align}
Here, $K_a(c)=\pi_{ac}$ denotes the exponent appearing in the formula \cite[(70)]{LY22}. The %negative 
transpose is due to the conventional difference explained in \cref{rem:left_right_moving}. 
The isomorphism $\psi_\tri$ intertwins their two versions of ``extended'' quantum trace maps \cite[Theorem 7.1]{LY22}. 

On the precise relation to the present work, first note that the natural inclusions $N^\tri \to \widetilde{N}^\tri$ and $M^\tri \to \widetilde{M}^\tri$ induce embeddings $(\cZ_\tri^q)_{\mathrm{bl}} \to (\widetilde{\cZ}_\tri^q)_{\mathrm{bl}}$ and $\A_\tri^q \to \widetilde{\A}_\tri^q$. Then we have the following commutative diagram:
\begin{equation*}
    \begin{tikzcd}
    (\cZ_\tri^q)_{\mathrm{bl}} \ar[r] \ar[d,"p_\tri^\ast"'] & (\widetilde{\cZ}_\tri^q)_{\mathrm{bl}} \\
    \A_\tri^q \ar[r] & \widetilde{\A}_\tri^q. \ar[u,"\psi_\tri"']
    \end{tikzcd}
\end{equation*}
Indeed, the exponent $K_\beta(\alpha)$ in \eqref{eq:LeYu_ensemble} is nothing but $-2q_{\alpha\beta}$ in \cite[Theorem 2.16]{Ish}. As stated in its proof, we have $\sum_{\beta \in e(\tri)} q_{\alpha\beta}p_{\beta\gamma}^\tri=\delta_{\alpha,\gamma}$. 
%where $p_{\beta\gamma}^\tri:=\ve_{\beta\gamma}^\tri +m_{\beta\gamma}$ is the presentation matrix of $p_\tri^\ast$. 
Together with the relation $Z_\beta^\tri=(X_\beta^\tri)^{-1/2}$, the asserted commutativity follows. 
\end{rem}

\subsection{Tropical cluster varieties and their realization by laminations}
Recall that for each $\tri \in \Tri_\Sigma$, associated are the based lattices $N^\tri=\bigoplus_{\alpha \in e(\tri)} \bZ \sfe_\alpha^\tri$ and $M^\tri=\bigoplus_{\alpha \in e(\tri)} \bZ \sff_\alpha^\tri$. Since these lattices are dual to each other, the basis vector $\sff_\alpha^\tri$ (resp. $\sfe_\alpha^\tri$) determines a linear coordinate function $\sfx_\alpha^\tri:N^\tri \to \bZ$ (resp. $\sfa_\alpha^\tri:M^\tri \to \bZ$). 

For each $\kappa \in e_{\interior}(\tri)$, define the \emph{tropical cluster transformations} by
\begin{align}
    \mu_{\kappa,x}^{\mathsf{T}}: M^\tri \to M^{\tri'}, \quad
    (\mu_{\kappa,x}^{\mathsf{T}})^* \sfx_\alpha^{\tri'}= \begin{cases}
    -\sfx_\kappa^\tri & \mbox{if $\alpha=\kappa$},\\
    \sfx_\alpha^\tri - \ve_{\alpha\kappa}^\tri[ -\sgn(\ve_{\alpha\kappa}^\tri)\sfx_\kappa^\tri]_+ & \mbox{if $\alpha\neq \kappa$}, 
    \end{cases} \label{eq:tropical x-transf}
\end{align}
and 
\begin{align}
    \mu_{\kappa,a}^{\mathsf{T}}: N^\tri \to N^{\tri'}, \quad
    (\mu_{\kappa,a}^{\mathsf{T}})^* \sfa_\alpha^{\tri'}= \begin{cases}
    -\sfa_\kappa^\tri + \max\left\{\sum_{\beta \in e(\tri)} [\ve_{\kappa\beta}^\tri]_+\sfa_\beta^\tri,\sum_{\beta \in e(\tri)} [-\ve_{\kappa\beta}^\tri]_+\sfa_\beta^\tri\right\}  & \mbox{if $\alpha=\kappa$},\\
    \sfa_\alpha^\tri & \mbox{if $\alpha\neq \kappa$}.
    \end{cases} \label{eq:tropical a-transf}
\end{align}

\begin{dfn}
The \emph{tropical cluster varieties} are defined to be
\begin{align*}
    \X_\Sigma(\bZ^{\mathsf{T}}) :=\bigg(\bigsqcup_{\tri \in \Tri_\Sigma} M^\tri\bigg)\bigg/\!\!\sim,\quad \A_\Sigma(\bZ^{\mathsf{T}}) :=\bigg(\bigsqcup_{\tri \in \Tri_\Sigma} N^\tri\bigg)\bigg/\!\!\sim.
\end{align*}
Here two points $\mu \in M^\tri$ and $\mu' \in M^{\tri'}$ are identified if they correspond to each other by a sequence of mutations \eqref{eq:tropical x-transf} and coordinate permutations. 
Similarly for the $\A$-side. 
\end{dfn}
In the notataion, $\bZ^\sfT=(\bZ,\max,+)$ stands for the max-plus (tropical) semifield. See \cite[Section 1.1]{FG09} for a general definition of semifield-valued points of positive spaces. 

Let us briefly recall the topological realization of $\X_\Sigma(\bZ^\sfT)$ and $\A_\Sigma(\bZ^\sfT)$ in terms of integral laminations on $\Sigma$ \cite{FG07,Ish}. To simplify the presentation, $\X_\Sigma(\bZ^\sfT)$ is described only for the unpunctured case. 
We use the following terminology:

\begin{itemize}
    \item By a \emph{simple loop} in $\Sigma$, we mean the image of a proper embedding $\gamma: S^1 \to \Sigma^\ast$. It is said to be \emph{essential} if it is not homotopic to a point; %within this class; 
    \emph{peripheral} if it is homotopic to a point in $\bM_\circ$. %within this class.
    \item By a \emph{simple (transverse) arc} in $\Sigma$, we mean the image of a proper embedding $\gamma: [0,1] \to \Sigma^\ast$ such that $\gamma(0),\gamma(1) \in \partial^\ast \Sigma$. It is said to be \emph{essential} if it is not based-homotopic to a subarc of a boundary interval; \emph{peripheral} if it is based-isotopic to a corner arc that encircles exactly one special point.
    \item An \emph{ideal arc} in $\Sigma$ is the image of a map $\alpha:[0,1] \to \Sigma^\ast$ such that $\alpha(0),\alpha(1) \in \bM$ and $\alpha|_{(0,1)}$ is a proper embedding into $\Sigma^\ast$. 
\end{itemize}

\begin{dfn}[\cite{FG07}]
Let $\Sigma$ be any marked surface. 
An \emph{integral $\A$-lamination} on $\Sigma$ is the isotopy class of a mutually non-isotopic, disjoint collection $\{\gamma_j\}_j$ of essential simple loops/arcs in $\Sigma$ together with integral weights $w_j \in \bZ\setminus \{0\}$ such that $w_j >0$ if $\gamma_j$ is non-peripheral. 
Let $\cL^a(\Sigma,\bZ)$ denote the set of integral $\A$-laminations.
\end{dfn}
An element of $\cL^a(\Sigma,\bZ)$ is denoted by $L=\{(\gamma_j,w_j)\}_j$. 

\begin{dfn}[\cite{Ish}]
Assume $\bM_\circ=\emptyset$ for simplicity. 
An \emph{integral $\P$-lamination} on $\Sigma$ consists of the following data:
\begin{enumerate}
    \item the isotopy class of a mutually non-isotopic, disjoint collection $\{\gamma_j\}_j$ of essential non-peripheral simple loops/arcs in $\Sigma$ together with integral weights $w_j \in \bZ_{>0}$.

    \item the \emph{pinning} $\nu=(\nu_\alpha)_{\alpha \in \bB}: \bB \to \bZ$. 
\end{enumerate}
Let $\cL^p(\Sigma,\bZ)$ denote the set of integral $\P$-laminations.
\end{dfn}
An element of $\cL^p(\Sigma,\bZ)$ is denoted by $(L=\{(\gamma_j,w_j)\}_j,%\sigma_L,
\nu)$. Forgetting the pinnings $\nu$, we recover the space of integral $\X$-laminations studied by Fock--Goncharov \cite{FG07}.

\smallskip
\paragraph{\textbf{The intersection coordinates on $\cL^a(\Sigma,\bZ)$.}}
Let $\tri$ be an ideal triangulation of $\Sigma$. 
For $L=\{(\gamma_j,w_j)\}_j \in \cL^a(\Sigma,\bZ)$, isotope each curve $\gamma_j$ so that minimally intersecting with $\tri$. Then for $\alpha \in e(\tri)$ we define 
\begin{align*}
    \sfa_\alpha(L):=\frac 1 2 \sum_j w_j\cdot \boldsymbol{i}(\gamma_j,\alpha),  
\end{align*}
where $\boldsymbol{i}(\gamma_j,\alpha) \in \bZ_{\geq 0}$ denotes the geometric intersection number between the curves $\gamma_j$ and $\alpha$. We say that $L$ is \emph{($\tri$-)congruent} if $\sfa_\alpha(L) \in \bZ$ for all $\alpha \in e(\tri)$, and let $\cL^a(\Sigma,\bZ)_{\congr} \subset \cL^a(\Sigma,\bZ)$ denote the subset of congruent laminations. 
Then 
\begin{align*}
    \sfa_\tri:=(\sfa_\alpha)_{\alpha \in e(\tri)}:\cL^a(\Sigma,\bZ)_\congr \to \bZ^{e(\tri)}
\end{align*}
defines a bijection, and the coordinate transformation $\sfa_{\tri'}\circ \sfa_\tri^{-1}$ coincides with the tropical cluster transformation \eqref{eq:tropical a-transf} via the identification $N^\tri=\bZ^{e(\tri)}$ \cite{FG07}. In particular, it follows that the congruence is independent of $\tri$. 
The coordinate systems $\sfa_\tri$ combine to give a canonical piecewise-linear isomorphism 
\begin{align*}
    \cL^a(\Sigma,\bZ)_\congr \cong \A_\Sigma(\bZ^\sfT).
\end{align*}

\smallskip
\paragraph{\textbf{The shear coordinates on $\cL^p(\Sigma,\bZ)$.}}
For later discussion, we use the Langlands dual version of shear coordinates in \cite[Section 4.4]{Ish}. The interior coordinates are the same as in \cite{FG07}. 
Let $\tri$ be an ideal triangulation of $\Sigma$. 
For $(L=\{(\gamma_j,w_j)\}_j,\nu) \in \cL^p(\Sigma,\bZ)$, isotope each curve $\gamma_j$ so that minimally intersecting with $\tri$. 
For each edge $\alpha \in e(\tri)$ and a curve $\gamma_j$ in $L$, let $(\alpha:\gamma_j)^\vee \in \bZ$ be the integer defined as follows (see \cref{f:intersection sign}): 
\begin{itemize}
    \item if $\alpha \in e_{\interior}(\tri)$, then it is the diagonal of a unique quadrilateral $Q_\alpha$ in $\tri$. An intersection between a portion of $\gamma_j$ and $Q_\alpha$ as in the left (resp. right) of \cref{f:intersection sign} contributes as $+1$ (resp. $-1$), and the others $0$. Then $(\alpha:\gamma_j)^\vee$ is the sum of these local contributions. 
    \item if $\alpha \in \bB$, then $(\alpha:\gamma_j)^\vee
    :=+1$ if $\gamma_j$ contains a corner arc around the terminal marked point $m^-_\alpha$ as its portion, and otherwise $0$.  
\end{itemize}

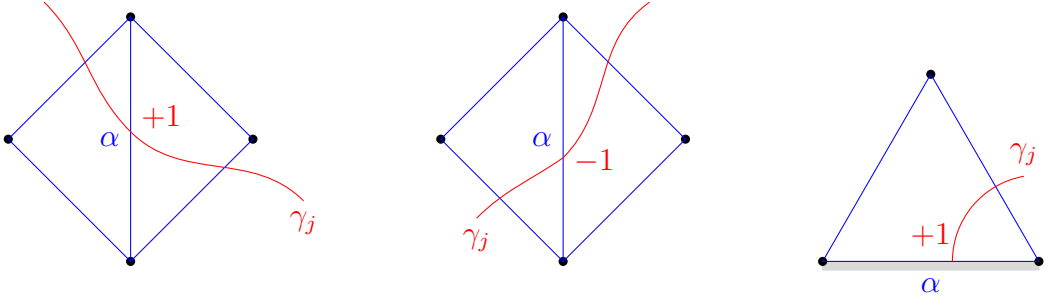
\begin{figure}[ht]
\centering
\begin{tikzpicture}[scale=1.15]
\fill(0,0) circle(1.5pt) coordinate(x1);
\fill(135:2) circle(1.5pt) coordinate(x2){};
\fill(0,2*1.4142) circle(1.5pt) coordinate(x3){};
\fill(45:2) circle(1.5pt) coordinate(x4){};
\draw[blue](x1) to (x2) to (x3) to (x4) to (x1) to node[midway,left]{$\alpha$} node[midway,above right,red]{$+1$} (x3);
\draw [red] (2,0.7) to[out=135,in=-45] (0,1.5) to[out=135,in=-45] (-1,3);
\draw [red] (2,0.7) node[below]{$\gamma_j$}; 

\begin{scope}[xshift=5cm]
\fill(0,0) circle(1.5pt) coordinate(x1){};
\fill(135:2) circle(1.5pt) coordinate(x2){};
\fill(0,2*1.4142) circle(1.5pt) coordinate(x3){};
\fill(45:2) circle(1.5pt) coordinate(x4){};
\draw[blue] (x1) to (x2) to (x3) to (x4) to (x1) to node[midway,left]{$\alpha$} node[midway,below right,red]{$-1$} (x3);
\draw [red] (-1,0.5)  to[out=45,in=215] (0,1.2) to[out=45,in=215] (1,3);
\draw [red] (-1,0.5) node[below]{$\gamma_j$};
\end{scope}

\begin{scope}[xshift=8cm]
\draw[fill, gray!30]  (0,-0.1) rectangle (2.5,0);
\fill(0,0) circle(1.5pt) {};
\fill(2.5,0) circle(1.5pt);
\fill(60:2.5) circle(1.5pt);
\draw[blue](0,0) --node[midway,below=0.2em]{$\alpha$} (2.5,0) -- (60:2.5) --cycle;
\draw[red] (1.5,0) arc[radius=1cm,start angle=180, end angle=100] node[above]{$\gamma_j$};
\node[red] at (1.25,0.3) {$+1$};
\end{scope}
\end{tikzpicture}
\caption{Contributions to $(\alpha:\gamma_j)^\vee$.}
\label{f:intersection sign}
\end{figure}

Define the \emph{shear coordinates} by
\begin{align*}
    \check{\sfx}^\tri_\alpha(L,\nu):=\begin{cases}
       \sum_j w_j (\alpha:\gamma_j)^\vee & \mbox{if $\alpha \in e_{\interior}(\tri)$}, \\
       \nu_\alpha+ \sum_j w_j (\alpha:\gamma_j)^\vee 
       & \mbox{if $\alpha \in \bB$}. 
    \end{cases}
\end{align*}
Then 
\begin{align*}
    \check{\sfx}_\tri:=(\check{\sfx}^\tri_\alpha)_{\alpha \in e(\tri)}:\cL^p(\Sigma,\bZ) \to \bZ^{e(\tri)}
\end{align*}
defines a bijection, and the coordinate transformation $\check{\sfx}_{\tri'}\circ \check{\sfx}_\tri^{-1}$ coincides with the tropical cluster transformation \eqref{eq:tropical x-transf} via the identification $M^\tri=\bZ^{e(\tri)}$ \cite{FG07,Ish}. 
The coordinate systems $\check{\sfx}_\tri$ combine to give a canonical piecewise-linear isomorphism
\begin{align*}
    \cL^p(\Sigma,\bZ) \cong \X_\Sigma(\bZ^\sfT).
\end{align*}

\paragraph{\textbf{The ensemble map.}}
Suppose $\Sigma$ is unpunctured. 
Let us consider the Langlands dual ensemble map \cite[Section 4.4]{Ish}
\begin{align}\label{eq:ensemble_lamination}
    \check{p}_\Sigma^\sfT: \cL^a(\Sigma,\bZ) \to \cL^p(\Sigma,\bZ)
\end{align}
defined by forgetting the peripheral components, and defining the pinning $\nu_\alpha \in \bZ$ to be 
%minus 
the weight of the peripheral component around the terminal marked point $m^-_\alpha$. Since we can uniquely recover the weights on peripheral components from the data of pinnings, the map \eqref{eq:ensemble_lamination} is bijective. 

\begin{lem}[{\cite[Theorems 4.13 and 3.10]{Ish}}]
For any ideal triangulation $\tri$, we have
\begin{align*}
    (\check{p}_\Sigma^\sfT)^\ast \check{\sfx}^\tri_\kappa = \sum_{\alpha \in e(\tri)} (\ve_{\kappa\alpha}^\tri -m_{\kappa\alpha}) \sfa_\alpha. 
\end{align*}
Moreover, the presentation matrix $(\ve_{\kappa\alpha}^\tri -m_{\kappa\alpha})_{\kappa,\alpha \in e(\tri)}$ is invertible over $\frac 1 2 \bZ$.
%the image $\check{p}_\Sigma^\sfT(\cL^a(\Sigma,\bZ)_\congr) \subset \cL^p(\Sigma,\bZ)$ has index $2$. 
\end{lem}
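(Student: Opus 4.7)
The plan is to reduce the asserted identity to a local, curve-by-curve computation, and then deduce the invertibility of $M := (\ve^\tri_{\kappa\alpha} - m_{\kappa\alpha})$ either by a structural comparison of coordinate systems or by adapting the argument that shows $p^\tri$ is invertible. Since both sides are additive over disjoint unions of weighted components and $\check{p}_\Sigma^\sfT$ respects that decomposition (it forgets each peripheral arc while recording its weight as a pinning on the appropriate boundary interval), it suffices to verify the identity for a single unweighted curve $\gamma$. I distinguish two cases: (A) $\gamma$ is a non-peripheral essential arc or loop, and (B) $\gamma$ is a peripheral corner arc around some special point $p\in\bM_\partial$.

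For case (A) with $\kappa \in e_{\interior}(\tri)$ we have $m_{\kappa\alpha}=0$, and the identity reduces to the classical matching between shear and half-intersection coordinates inside the quadrilateral $Q_\kappa$; this is a finite check, enumerating up to symmetry how $\gamma$ traverses $Q_\kappa$. For case (A) with $\kappa \in \bB$, the extra contribution $-m_{\kappa\kappa}\sfa_\kappa(\gamma)=+\sfa_\kappa(\gamma)$ compensates an overcount: the Langlands dual shear $\check{\sfx}^\tri_\kappa$ is pinned at the terminal marked point $m^-_\kappa$, and the correction is verified by inspecting the unique triangle containing $\kappa$. For case (B), $\check{p}_\Sigma^\sfT(\gamma)$ has trivial underlying lamination and pinning $+1$ at the unique boundary interval $\kappa_0$ with $m^-_{\kappa_0}=p$, so the left-hand side equals $\delta_{\kappa,\kappa_0}$; on the right-hand side, $\sfa_\alpha(\gamma)=1/2$ on the two boundary intervals adjacent to $p$ and vanishes elsewhere, so a short local computation of the entries $\ve^\tri_{\kappa\alpha}$ around $p$ (using that the two relevant summands cancel in $T$ except at $\kappa_0$, where the diagonal correction contributes $+1/2\cdot 2$) recovers $\delta_{\kappa,\kappa_0}$.

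For the invertibility of $M$ over $\tfrac{1}{2}\bZ$, I would argue structurally once the formula is in hand: the composite $\check{\sfx}_\tri\circ\check{p}_\Sigma^\sfT=M\cdot\sfa_\tri$, viewed as maps between the global coordinate models of $\cL^a(\Sigma,\bZ)$ and $\cL^p(\Sigma,\bZ)\cong\X_\Sigma(\bZ^\sfT)$, is a bijection (both $\check{p}_\Sigma^\sfT$ and $\check{\sfx}_\tri$ being bijections). Because $\sfa_\tri$ takes values in $\tfrac{1}{2}\bZ^{e(\tri)}$ while $\check{\sfx}_\tri$ takes integer values, this forces $M$ to be invertible with $M^{-1}\in M_{e(\tri)}(\tfrac{1}{2}\bZ)$. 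Alternatively, one may adapt the invertibility argument for $p^\tri$ by establishing a Langlands-dual analogue of the compatibility relation \eqref{eq:compatibility_Poisson}, noting that $M$ differs from $p^\tri$ only in the signs of the diagonal entries on the $\bB\times\bB$ block.

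The main obstacle is case (A) with $\kappa\in\bB$: one must carefully align the sign conventions of the Langlands dual shear coordinate (which uses the terminal marked point, in contrast to the Fock--Goncharov convention) with the $+\sfa_\kappa(\gamma)$ correction, and for this a clean local picture in the triangle adjacent to $\kappa$ is indispensable.
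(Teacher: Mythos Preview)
The paper does not prove this lemma at all; it is quoted from \cite{Ish} (Theorems~4.13 and~3.10 there). So there is no in-paper argument to compare against, and your outline is a reasonable sketch of a direct verification.

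That said, your case~(B) computation has a genuine gap. A peripheral corner arc $\gamma$ around a special point $p$ does \emph{not} in general satisfy ``$\sfa_\alpha(\gamma)=\tfrac12$ on the two adjacent boundary intervals and $0$ elsewhere'': it must cross every interior edge of $\tri$ incident to $p$ as well (compare the right picture in \cref{fig:comparison_Allegretti}). So the right-hand side picks up a contribution $\tfrac12\,\ve^\tri_{\kappa\alpha_i}$ for each such interior edge $\alpha_i$, and your ``short local computation'' has to track how these telescope through the fan of triangles at $p$. This is not hard, but it is the actual content of the boundary case and you have skipped it.

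For the invertibility, your structural argument is shaky because $\sfa_\tri$ is only known to be a bijection onto $\bZ^{e(\tri)}$ when restricted to \emph{congruent} laminations, and you would need control of its image on all of $\cL^a(\Sigma,\bZ)$. The clean route---which the paper itself points to in the sentence immediately following the lemma---is to observe that since $\ve^\tri$ is skew-symmetric and $m$ is symmetric,
\[
\ve^\tri_{\kappa\alpha}-m_{\kappa\alpha}=-\bigl(\ve^\tri_{\alpha\kappa}+m_{\alpha\kappa}\bigr)=-p^\tri_{\alpha\kappa},
\]
so the matrix in question is exactly $-(p^\tri)^{\!T}$. Its invertibility over $\tfrac12\bZ$ then follows immediately from the already-cited invertibility of $p^\tri$ (\cite[Theorem~2.16]{Ish}); no adaptation of the argument is needed.
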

The name ``Langlands dual'' comes from the fact that the presentation matrix of $\check{p}_\Sigma^\sfT$ appearing in the lemma above is minus the transpose of $p^\tri_{\kappa\alpha}$ (cf. \cite[Section 1.2.10]{FG09}).

\section{Quantum duality maps}
In this section, we are going to prove \cref{introthm:compatibility}. Our strategy is to decompose the asserted diagram into

\begin{equation}\label{eq:strategy}
    \begin{tikzcd}
    \A_\Sigma(\bZ^\sfT) \ar[r,"S_\A"] \ar[d,"\check{p}_\Sigma^\sfT"'] & \sSk{\Sigma}_{\congr} \ar[r,"\mathrm{Tr}_\Sigma"] \ar[d,"\Phi_\Sigma"] & \cO_v(\X_\Sigma) \ar[d,"p_\Sigma^\ast"] \\
    \X_\Sigma(\bZ^\sfT) \ar[r,"S_\X"'] & \Sk{\Sigma}[\partial^{-1}] \ar[r,"\mathrm{Cut}_\Sigma"'] & \cO_q(\A_\Sigma),
    \end{tikzcd}
\end{equation}
where the vertical maps are already defined. In \cref{subsec:lifting}, we %are going to 
define the \emph{skein lifting maps} $S_\A$, $S_\X$. In \cref{subsec:trace_cut}, we define the \emph{quantum trace map} $\mathrm{Tr}_\Sigma$ by combining $\mathrm{Tr}_\tri$ (\cref{def:q-trace}), and the \emph{cutting map} $\mathrm{Cut}_\Sigma$ by combining $\mathrm{Cut}_\tri$ (\cref{def:cutting}). The commutativity of the two blocks in the diagram \eqref{eq:strategy} are proved in these subsections, respectively. 
Finally, the quantum duality maps are defined to be $\bI_\A:=\mathrm{Tr}_\Sigma\circ S_\A$ and $\bI_\X:=\mathrm{Cut}_\Sigma \circ S_\X$. 

\subsection{Skein lifting maps}\label{subsec:lifting}
The following is verified by a direct computation:
\begin{lem}
We have the following relations in the reduced stated skein algebra for $\ve_1\geq \ve_2$:
\begin{align}
    \mathord{
    \tikz[baseline=2ex]{
    \fill[gray!20] (-1.5,0) -- (1.5,0) -- (1.5,-0.1) -- (-1.5,-0.1) --cycle;
    \draw[thick,->-={0.7}{}] (-1.5,0) -- (1.5,0);
    \foreach \i in {-0.75,0.75} \filldraw(\i,0) circle(1.5pt);
    \draw[red,thick] (0.25,1) -- (-0.25,0);
    \fill[white] (0,0.5) circle(0.15cm);
    \draw[red,thick] (-0.25,1) -- (0.25,0);
    \node[red,scale=0.8] at (0.25,-0.2){$\ve_2$};
    \node[red,scale=0.8] at (-0.25,-0.2){$\ve_1$};
    }
    }\ 
    &= q^{-\ve_1\ve_2}\ 
    \mathord{
    \tikz[baseline=2ex]{
    \fill[gray!20] (-1.5,0) -- (1.5,0) -- (1.5,-0.1) -- (-1.5,-0.1) --cycle;
    \draw[thick,->-={0.7}{}] (-1.5,0) -- (1.5,0);
    \foreach \i in {-0.75,0.75} \filldraw(\i,0) circle(1.5pt);
    \draw[red,thick] (-0.25,1) -- (-0.25,0);
    \draw[red,thick] (0.25,1) -- (0.25,0);
    \node[red,scale=0.8] at (0.25,-0.2){$\ve_1$};
    \node[red,scale=0.8] at (-0.25,-0.2){$\ve_2$};
    }
    }\ ,\label{eq:Weyl_2}\\
    \mathord{
    \tikz[baseline=2ex]{
    \fill[gray!20] (-1.5,0) -- (1.5,0) -- (1.5,-0.1) -- (-1.5,-0.1) --cycle;
    \draw[thick,->-={0.7}{}] (-1.5,0) -- (1.5,0);
    \foreach \i in {-0.75,0.75}\filldraw(\i,0) circle(1.5pt);
    \draw[red,thick] (0,1) -- (0,0);
    \draw[white,line width=0.2cm,shorten >=0.3cm, shorten <=0.3cm] (-1.05,0) arc(180:0:0.7);
    \draw[red,thick] (-1.05,0) arc(180:0:0.7);
    \node[red,scale=0.8] at (0,-0.2){$-$};
    \node[red,scale=0.8] at (-1.05,-0.2){$+$};
    \node[red,scale=0.8] at (-1.05+1.4,-0.2){$+$};
    }
    }\ 
    &= q\ 
    \mathord{
    \tikz[baseline=2ex]{
    \fill[gray!20] (-1.5,0) -- (1.5,0) -- (1.5,-0.1) -- (-1.5,-0.1) --cycle;
    \draw[thick,->-={0.7}{}] (-1.5,0) -- (1.5,0);
    \foreach \i in {-0.75,0.75} \filldraw(\i,0) circle(1.5pt);
    \draw[red,thick] (0,1) -- (0,0);
    \draw[red,thick] (-0.45,0) arc(0:180:0.3);
    \node[red,scale=0.8] at (0,-0.2){$-$};
    \node[red,scale=0.8] at (-0.45,-0.2){$+$};
    \node[red,scale=0.8] at (-1.05,-0.2){$+$};
    }
    }\ .\label{eq:Weyl_1} 
\end{align}
\end{lem}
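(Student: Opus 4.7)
The plan is to verify both identities by direct expansion using the defining relations of $\olSs(\bB)$: the Kauffman bracket relation (A), the boundary height-exchange relation (E), the boundary-loop relations (F), and the bad arc quotient.

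For \eqref{eq:Weyl_2}, I would apply (A) to the crossing on the left-hand side. This produces two terms: a parallel resolution, in which the original states $(\ve_1,\ve_2)$ remain attached to the left and right boundary endpoints, and a cup-and-cap resolution, which introduces a boundary cap carrying those same states. I would then apply (E) to the parallel term in order to swap the states into the order $(\ve_2,\ve_1)$ that appears on the right-hand side, producing a further cap correction. The proof then splits into cases. When $\ve_1 = \ve_2$, the cap produced by (A) is a bdrloop-pp or bdrloop-mm and is killed by (F), while the (E) swap is trivial, so the parallel term contributes the coefficient $q^{-1} = q^{-\ve_1\ve_2}$ directly. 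When $\ve_1 = +, \ve_2 = -$, the cap contributions from (A) and from (E) are both bdrloop terms on the same boundary interval, and together with the $q^{2}$ factor coming from (E) they combine to yield the swapped-parallel configuration with coefficient $q = q^{-(+)(-)}$.

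For \eqref{eq:Weyl_1}, I would apply (A) to the crossing between the returning arc (states $+,+$) and the transverse vertical strand (state $-$). One of the two resolutions produces a returning arc that encloses the adjacent special point and carries states $+$ and $-$ read clockwise: this is a bad arc, hence vanishes in $\olSs(\bB)$. The other resolution produces a plain bdrloop with states $-, +$ on the boundary interval between the two special points, which is reducible by (F) to $q^{-1/2}$ times the empty diagram, leaving a single modified strand whose shape, after isotopy, matches the right-hand side. Tracking the $q$-powers contributed by (A) and (F) yields the asserted factor $q$.

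The main obstacle is the careful bookkeeping: each returning-arc term arising during the computation must be correctly classified as bad (killed in the quotient), as a plain bdrloop (reducible to a scalar by (F)), or as a persistent corner arc around a special point, and the various $q$-powers coming from (A), (E), and (F) must be tracked so as to combine to the claimed coefficient. The $\ve_1 = +, \ve_2 = -$ subcase of \eqref{eq:Weyl_2} is the most delicate, because no single cap correction vanishes on its own; the two cap contributions arising from (A) and from (E) must be shown to combine to the stated form rather than being individually killed.
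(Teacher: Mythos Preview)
Your overall approach---direct computation from (A), (E), (F), and the bad-arc quotient---is exactly what the paper means by ``verified by a direct computation'' (it gives no further argument, only citing \cite[(20)]{Le_triangular} for \eqref{eq:Weyl_2} and \cite[Figure~28]{Le_triangular} for \eqref{eq:Weyl_1}). Your treatment of \eqref{eq:Weyl_2} is correct: in the equal-state case the bdrloop term from (A) dies by (F), and in the $(+,-)$ case the two cap corrections coming from (A) and from (E) cancel, leaving the coefficient $q$.

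There is, however, a genuine gap in your argument for \eqref{eq:Weyl_1}. Your identification of the two (A)-resolutions is right: one produces a corner arc with clockwise states $(+,-)$, which is bad and vanishes; the other produces a trivial bdrloop with states $(-,+)$, which (F) evaluates to $q^{-1/2}$, together with a single strand running from the $+$-endpoint on the left interval over the special point and out the top of the picture. But this single strand is \emph{not} isotopic to the right-hand side: it has one boundary endpoint, while the right-hand side has three (two from the $(+,+)$ corner arc and one from the vertical strand). The phrase ``after isotopy, matches the right-hand side'' is therefore false as stated.

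What is missing is a second use of (E) and the bad-arc relation, applied now to the right-hand side. On the middle interval the right-hand side has height-ordered states $(+,-)$; applying (E) produces a swapped term whose corner arc has clockwise states $(+,-)$ and hence vanishes, plus a cap term which joins the corner arc to the vertical strand and yields exactly the single strand above. Thus both sides reduce to $q^{1/2}$ times that single strand, which gives the asserted factor $q$. Without this extra step the argument does not close.
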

Note that \eqref{eq:Weyl_2} is nothing but \cite[(20)]{Le_triangular}, while \eqref{eq:Weyl_1} is valid for the case where the end with the state $+$ belongs to a peripheral arc (cf. \cite[Figure 28]{Le_triangular}).

\begin{dfn}\label{dfn_Weyl}
We define the \emph{Weyl normalization} of the two stated arcs appearing in \eqref{eq:Weyl_1} to be $q^{-1/2}$ times the left-hand side, or equivalently, $q^{1/2}$ times the diagram in the right-hand side. Similarly for those appearing in \eqref{eq:Weyl_2}, replacing $q^{\pm 1/2}$ with $q^{\mp \ve_1\ve_2/2}$. 
\end{dfn}

Given a disjoint collection $C=\{\gamma_1,\dots,\gamma_k\}$ of simple loops, simple arcs and ideal arcs, its \emph{skein lift} is the disjoint union $\bigsqcup_{i=1}^k (\gamma_i \times \{t_k\}) \subset \Sigma\times (-1,1)$ of lifts with constant elevations $t_i \in (-1,1)$ and vertical framing. 
We call this element the \emph{constant-elevation lift} of the collection $C$.

\begin{dfn}[skein lifting of integral $\A$-laminations]\label{def:skein_lift_A}
Let $\Sigma$ be an unpunctured marked surface,\footnote{The case with punctures is briefly discussed in \cref{rem:peripheral}.} and $L=\{(\gamma_i,w_i)\} \in \cL^a(\Sigma,\bZ)$ be an integral $\A$-lamination. 
We define the corresponding element $S_\A(L) \in \sSk{\Sigma}$ in the reduced stated skein algebra as follows.
\begin{itemize}
    \item For each weighted %non-peripheral 
    loop $(\gamma_i,w_i)$, associate the element 
    \begin{align*}
        T_{w_i}([\gamma_i]) \in \sSk{\Sigma},
    \end{align*}
    where $[\gamma_i]$ is the constant-elevation lift of $\gamma_i$ and $T_{w_i}$ denotes the $w_i$-th Chebyshev polynomial.
    \item For each weighted arc $(\gamma_i,w_i)$, associate the element 
    \begin{align*}
        [\gamma_i^-]^{w_i} \in \sSk{\Sigma},
    \end{align*}
    namely the constant-elevation lift of the curve $\gamma_i$ equipped with the state $-$ on both of its endpoints. Here note that $[\gamma_i^-]^{-1}=[\gamma_i^+]$ if $\gamma$ is peripheral, hence the the assignment makes sense even if $w_i$ is negative. 
    % \item For each weighted peripheral arc $(\gamma_i,w_i)$, associate the element 
    % \begin{align*}
    %     &[\gamma_i^-]^{w_i} \in \sSk{\Sigma} 
    % \end{align*}
    % namely the constant-elevation lift of the curve $\gamma_i$ equipped with the state $-$ on both of its endpoints. Here note that $[\gamma_i^-]^{-1}=[\gamma_i^+]$, $\gamma$ being peripheral. 
\end{itemize}
Then $S_\A(L)$ is defined to be the product of these elements, where 
\begin{itemize}
    \item the elevation of each component is chosen so that their projection diagrams do not intersect with each other;
    \item the Weyl normalization is applied.
\end{itemize}
\end{dfn}
The following is clear from the correspondence $\A_\Sigma(\bZ^\sfT) \cong \cL^a(\Sigma,\bZ)_\congr$:

\begin{lem}
We have $S_\A(\A_\Sigma(\bZ^\sfT)) \subset \sSk{\Sigma}_\congr$.
\end{lem}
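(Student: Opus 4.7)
The plan is to produce, for each lamination $L = \{(\gamma_j, w_j)\}_j \in \cL^a(\Sigma, \bZ)_\congr$, a single stated $\BB$-tangle diagram $D_L$ (without bad arcs) whose class in $\sSk{\Sigma}$ agrees with $S_\A(L)$ up to an invertible scalar, and then to verify that $D_L$ is $\tri$-congruent for some (equivalently any) ideal triangulation $\tri$. Since $\sSk{\Sigma}_\congr$ is generated by stated congruent $\BB$-tangle diagrams and contains all scalar multiples thereof, this will imply $S_\A(L) \in \sSk{\Sigma}_\congr$.

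First I rewrite each loop contribution $T_{w_j}([\gamma_j])$ in \cref{def:skein_lift_A} as the single-component bracelet $\mathrm{Br}_{w_j}(\gamma_j)$, using the standard skein identity $T_w([\gamma]) = [\mathrm{Br}_w(\gamma)]$ for a simple loop $\gamma$. Each arc contribution $[\gamma_j^-]^{w_j}$ is already a bundle of $|w_j|$ parallel copies of $\gamma_j$ carrying a uniform state, namely $-$ when $w_j > 0$, and $+$ when $w_j < 0$ (which only arises for peripheral $\gamma_j$). Since every arc component of the resulting picture carries matching states at its two endpoints, no bad arc is produced. After choosing mutually distinct elevations and applying the Weyl normalization, the product in \cref{def:skein_lift_A} takes the form $S_\A(L) = q^{s/2} \cdot D_L$ for some $s \in \bZ$, where $D_L$ is a single stated $\BB$-tangle diagram consisting of the bracelets together with the bundles of parallel arc copies.

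Next, fix any ideal triangulation $\tri$. The projection of $\mathrm{Br}_{w_j}(\gamma_j)$ to $\Sigma$ is isotopic to $|w_j|$ parallel copies of $\gamma_j$ in an annular neighborhood, and one may arrange them to realize $\bi(\gamma_j,\alpha)$ transverse intersections with each edge $\alpha \in e(\tri)$; the same minimality applies to each arc bundle. Summing the local contributions gives
\[
\bi(D_L, \alpha) = \sum_j |w_j| \cdot \bi(\gamma_j, \alpha).
\]
The hypothesis $L \in \A_\Sigma(\bZ^\sfT) \cong \cL^a(\Sigma, \bZ)_\congr$ means $\sfa_\alpha(L) = \tfrac{1}{2} \sum_j w_j \bi(\gamma_j, \alpha) \in \bZ$, i.e., $\sum_j w_j \bi(\gamma_j, \alpha) \in 2\bZ$. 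Because $|w_j| \equiv w_j \pmod{2}$, this forces $\bi(D_L, \alpha) \in 2\bZ$ for every $\alpha \in e(\tri)$, so $D_L$ is $\tri$-congruent and hence $S_\A(L) \in \sSk{\Sigma}_\congr$. The only step requiring nontrivial input is the bracelet identification for $T_w$ together with the matching of its projected intersection number with $|w| \cdot \bi(\gamma,\alpha)$; both are standard facts in the skein-theoretic literature, and the remainder of the argument is a parity computation.
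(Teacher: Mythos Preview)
Your argument is correct. The paper itself does not spell out a proof, remarking only that the lemma is ``clear from the correspondence $\A_\Sigma(\bZ^\sfT)\cong\cL^a(\Sigma,\bZ)_\congr$''; the most direct way to unpack that sentence is slightly different from what you do. Namely, one expands each $T_{w_i}([\gamma_i])$ as a $\bZ$-linear combination of monomials $[\gamma_i]^{k_i}$ with $k_i\equiv w_i\pmod 2$ (the parity property of Chebyshev polynomials of the first kind), so that every monomial term of $S_\A(L)$ is a single simple diagram $\prod_i[\gamma_i]^{k_i}$ whose intersection with each edge $\alpha$ has the parity of $\sum_i w_i\,\bi(\gamma_i,\alpha)$, hence is even by congruence of $L$. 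This avoids invoking the bracelet identity $T_w([\gamma])=\mathrm{Br}_w(\gamma)$, which is standard but not stated in the paper. Your route, by contrast, packages $S_\A(L)$ as a single (immersed) diagram $D_L$ up to a unit and checks congruence once; this is slightly more geometric and has the virtue of producing a concrete congruent representative of $S_\A(L)$, at the cost of importing the bracelet identity from outside the paper. Both arguments are short and essentially equivalent in difficulty.
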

In order to construct $S_\X$, we use the following shifting operation on the curves. 

\begin{dfn}[negative $\bM$-shifting of curves]\label{def:shift_curve}
For a curve $\gamma$ in $\Sigma$ having its endpoints on $\partial^\ast\Sigma$, we define its \emph{(negative) $\bM$-shift} to be the ideal arc $\gamma^{\bM}$ obtained from $\gamma$ by shifting its endpoints to the nearest special point in the negative direction along the boundary. 
See \cref{fig:shifting_curve}.
\end{dfn}

\begin{figure}[ht]
    \centering
\begin{tikzpicture}
\fill[gray!20] (0,1.5) -- (-0.2,1.5) -- (-0.2,-1.5) -- (0,-1.5) --cycle;
\fill[gray!20] (4,1.5) -- (4+0.2,1.5) -- (4+0.2,-1.5) -- (4,-1.5) --cycle;
\draw[thick] (0,1.5) -- (0,-1.5);
\draw[thick] (4,-1.5) -- (4,1.5);
\filldraw(0,1) circle(1.5pt); 
\filldraw(0,0) circle(1.5pt);
\filldraw(0,-1) circle(1.5pt);
\filldraw(4,1) circle(1.5pt);
\filldraw(4,0) circle(1.5pt);
\filldraw(4,-1) circle(1.5pt);
\draw[red,thick] (0,-0.5) to[out=0,in=180] node[midway,above]{$\gamma$} (4,0.5);
\draw[thick,|->] (4.5,0) -- (5.5,0);
\begin{scope}[xshift=6cm]
\fill[gray!20] (0,1.5) -- (-0.2,1.5) -- (-0.2,-1.5) -- (0,-1.5) --cycle;
\fill[gray!20] (4,1.5) -- (4+0.2,1.5) -- (4+0.2,-1.5) -- (4,-1.5) --cycle;
\draw[thick] (0,1.5) -- (0,-1.5);
\draw[thick] (4,-1.5) -- (4,1.5);
\filldraw(0,1) circle(1.5pt); 
\filldraw(0,0) circle(1.5pt);
\filldraw(0,-1) circle(1.5pt);
\filldraw(4,1) circle(1.5pt);
\filldraw(4,0) circle(1.5pt);
\filldraw(4,-1) circle(1.5pt);
\draw[red,thick] (0,0) to[out=0,in=180] node[midway,above]{$\gamma^{\bM}$} (4,0);
\end{scope}
\end{tikzpicture}
    \caption{The negative $\bM$-shift of a curve.}
    \label{fig:shifting_curve}
\end{figure}
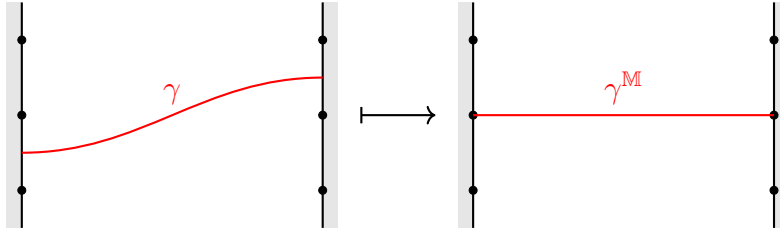

\begin{rem}
The inverse of $\bM$-shifting is the 2-dimensional version of ``moving right'' operation (\cref{moving_right}). 
\end{rem}

\begin{dfn}[skein lifting of integral $\P$-laminations]\label{def:skein_lift_X}
Let $\Sigma$ be an unpunctured marked surface, and $(L=\{(\gamma_i,w_i)\},\nu) \in \cL^p(\Sigma,\bZ)$ be an integral $\P$-lamination, 
where $\nu=(\nu_\alpha) \in \bZ^{\bB}$ is a pinning. We define the corresponding element $S_\X(L,\nu) \in \Sk{\Sigma}[\partial^{-1}]$ in the localized Muller skein algebra, as follows.
\begin{itemize}
    \item For each weighted non-peripheral loop $(\gamma_i,w_i)$, associate the element 
    \begin{align*}
        T_{w_i}([\gamma_i]) \in \Sk{\Sigma}[\partial^{-1}]
    \end{align*}
    as above.
    \item For each weighted non-peripheral arc $(\gamma_i,w_i)$, associate the element 
    \begin{align*}
        [\gamma^\bM]^{w_i} \in \Sk{\Sigma}[\partial^{-1}],
    \end{align*}
    namely the $w_i$-th power of the constant-elevation lift of the $\bM$-shift $\gamma^\bM$.
    \item For each boundary interval $E \in \bB$, associate the element 
    \begin{align*}
        [\alpha]^{\nu_\alpha} \in \Sk{\Sigma}[\partial^{-1}],
    \end{align*}
    namely the $\nu_\alpha$-th power of the boundary ideal arc $\alpha$. 
\end{itemize}
Then $S_\X(L,\nu)$ is defined to be the Weyl-normalized product of these elements. 
\end{dfn}
For an ideal triangulation $\tri$ and an ideal arc $\alpha \in e(\tri)$, the \emph{elementary lamination} is the unique tropical point $\ell_\alpha \in \X_\Sigma(\bZ^\sfT)$ characterized by $\check{\sfx}_\beta^\tri(\ell_\alpha)=\delta_{\beta,\alpha}$ for $\beta \in e(\tri)$. As the notation indicates, it can be verified that $\ell_\alpha$ does not depend on the triangulation $\tri$ that contains the ideal arc $\alpha$. 

\begin{prop}\label{prop:X-lamination_Muller_basis}
The skein lifting map gives a bijection
\begin{align*}
    S_\X:\X_\Sigma(\bZ^\sfT) \xrightarrow{\sim} \mathsf{B}^b(\Sigma,\bM)[\partial^{-1}],
\end{align*}
where the right-hand side denotes the basis of $\Sk{\Sigma}[\partial^{-1}]$ extending the bracelets basis $\mathsf{B}^b(\Sigma,\bM)$ of $\Sk{\Sigma}$.
Moreover, we have $S_\X(\ell_\alpha)=[\alpha]$ for any elementary lamination $\ell_\alpha$. 
\end{prop}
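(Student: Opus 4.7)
My plan is to exhibit a bijection between the index sets of $\cL^p(\Sigma,\bZ)\cong\X_\Sigma(\bZ^\sfT)$ and $\sfB^b(\Sigma,\bM)[\partial^{-1}]$, and to verify that $S_\X$ realizes it. First I would describe the extended bracelets basis explicitly: every element of $\sfB^b(\Sigma,\bM)[\partial^{-1}]$ is, up to Weyl normalization, of the form
\[
\prod_i T_{w_i}([\gamma_i])\,\prod_j [\alpha_j]^{n_j}\,\prod_{E\in\bB}[E]^{m_E},
\]
where $(\gamma_i,w_i)$ ranges over isotopy classes of essential non-peripheral simple loops with positive integer weights, $(\alpha_j,n_j)$ over isotopy classes of essential non-boundary ideal arcs with positive integer weights, and $(m_E)\in\bZ^\bB$ is an arbitrary integer vector (the localization $[\partial^{-1}]$ allowing negative powers). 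On the other side, $\cL^p(\Sigma,\bZ)$ is indexed by pairs $(L,\nu)$ consisting of a multicurve $L$ of essential non-peripheral simple loops and simple transverse arcs with positive integer weights, together with a pinning $\nu\in\bZ^{\bB}$.

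The loop data and the pinning--to--boundary-exponent correspondence match tautologically. The crucial step is to verify that the negative $\bM$-shift of \cref{def:shift_curve} induces a bijection between isotopy classes of essential non-peripheral simple transverse arcs and isotopy classes of essential non-boundary ideal arcs. The inverse is a ``positive $\bM$-shift'' pushing each endpoint of an ideal arc in the positive direction along $\partial\Sigma$ into the adjacent boundary interval (which must be done consistently with the local direction of approach when both endpoints lie at a common special point). Essentiality and non-peripherality are preserved because the shift is an ambient isotopy outside a small neighborhood of $\partial\Sigma$. Given this matching, inspection of \cref{def:skein_lift_X} shows that $S_\X(L,\nu)$ is, up to a unit (the Weyl normalization factor), exactly the bracelets basis element corresponding to the matched data, giving bijectivity of $S_\X$.

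For the identification $S_\X(\ell_\alpha)=[\alpha]$, I would determine $\ell_\alpha$ from its shear coordinates. If $\alpha\in\bB$, the characterization $\check{\sfx}_\beta^\tri(\ell_\alpha)=\delta_{\alpha\beta}$ forces $\ell_\alpha$ to be the lamination with empty multicurve and pinning $\nu_\beta=\delta_{\alpha\beta}$, whence $S_\X(\ell_\alpha)=[\alpha]$ by construction. If $\alpha\in e_{\interior}(\tri)$, let $\gamma_\alpha$ be the positive $\bM$-shift of $\alpha$ and consider the $\P$-lamination $\ell':=((\gamma_\alpha,1),0)$; I would check $\check{\sfx}_\beta^\tri(\ell')=\delta_{\alpha\beta}$ by a local computation. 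The arc $\gamma_\alpha$ is isotopic to $\alpha$ outside a neighborhood of $\partial\Sigma$, so it crosses no edge of $\tri$ other than $\alpha$; within the quadrilateral $Q_\alpha$ it crosses $\alpha$ exactly once in the $+1$ pattern of Figure~\ref{f:intersection sign}, dictated by the positive-shift convention; and it contains no corner subarc at any terminal vertex of a boundary interval. Therefore $\ell_\alpha=\ell'$ and $S_\X(\ell_\alpha)=[\gamma_\alpha^\bM]=[\alpha]$ by \cref{def:skein_lift_X}.

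The main obstacle is the bijectivity of the $\bM$-shift on isotopy classes; particular care is required when an ideal arc has both endpoints at a single special point (where the two endpoints must be pushed into different boundary intervals according to the local approach directions of the arc), and one must check that essentiality and non-peripherality correspond faithfully under the operation. A secondary technicality is the sign in the computation $\check{\sfx}_\alpha^\tri(\ell_\alpha)=+1$ for interior $\alpha$, which must be verified in the Langlands-dual shear convention adopted in \cite[Section 4.4]{Ish}.
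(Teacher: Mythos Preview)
Your proposal is correct and follows essentially the same approach as the paper. The paper's proof is much terser: it asserts that the bijectivity is ``clear from the construction'' (noting only that pinnings correspond to boundary arcs and their inverses), and for the elementary laminations it simply states that $\ell_\alpha$ is given by the arc $\gamma$ with $\gamma^{\bM}=\alpha$ and zero pinning when $\alpha\in e_{\interior}(\tri)$, and by the empty lamination with pinning $\nu_\beta=\delta_{\beta,\alpha}$ when $\alpha\in\bB$---exactly what you write, but without your explicit verification of the shear coordinates or your discussion of the $\bM$-shift bijection on arc isotopy classes.
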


\begin{proof}
The first statement is clear from the construction: observe that the pinnings give rise to boundary arcs (and their inverses). 
For $\alpha \in e_{\interior}(\tri)$, the corresponding elementary lamination $\ell_\alpha$ is given by an arc $\gamma$ with weight one such that $\gamma^\bM=\alpha$, together with the zero pinning. Hence $S_\X(\ell_\alpha)=[\alpha]$ by the second rule in \cref{def:skein_lift_X}. 

For $\alpha \in \bB$, the corresponding elementary lamination $\ell_\alpha$ is given by the empty lamination together with the pinning $\nu=(\nu_\beta)_{\beta \in \bB}$ defined by $\nu_\beta:=\delta_{\beta,\alpha}$. Hence $S_\X(\ell_\alpha)=[\alpha]$ by the third rule in \cref{def:skein_lift_X}.  
\end{proof}

\begin{thm}\label{prop:lifting_compatible}
For any unpunctured marked surface $\Sigma$, we have the commutative diagram
\begin{equation*}
    \begin{tikzcd}
    \cL^a(\Sigma,\bZ) \ar[r,"S_\A"] \ar[d,"\check{p}_\Sigma^\sfT"'] & \overline{\mathscr{S}}_\Sigma(\bB) \ar[d,"\Phi_\Sigma"] \\
    \cL^p(\Sigma,\bZ) \ar[r,"S_\X"'] & \Sk{\Sigma}[\partial^{-1}].
    \end{tikzcd} 
\end{equation*}
Here the horizontal maps are given by skein lifting maps (\cref{def:skein_lift_A,def:skein_lift_X}), and the right vertical map is the state-clasp correspondence (\cref{thm:state-clasp}). In particular, it restricts to the commutative diagram
\begin{equation*} 
    \begin{tikzcd}
    \A_\Sigma(\bZ^\sfT) \ar[r,"S_\A"] \ar[d,"\check{p}_\Sigma^\sfT"'] & \overline{\mathscr{S}}_\Sigma(\bB)_{\congr} \ar[d,"\Phi_\Sigma"] \\
    \X_\Sigma(\bZ^\sfT) \ar[r,"S_\X"'] & \Sk{\Sigma}[\partial^{-1}].
    \end{tikzcd}
\end{equation*}
%where the vertical maps are of index $2$.
\end{thm}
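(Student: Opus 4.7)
The plan is to reduce the commutativity to a single-component verification by exploiting the fact that $\Phi_\Sigma$ is an $\cR$-algebra isomorphism, and then to analyze each of the three geometric types of lamination component (non-peripheral loops, non-peripheral arcs, peripheral arcs) separately. The map $\check{p}_\Sigma^\sfT$ treats these three types very differently: the first two survive with their weights, while a peripheral arc around a vertex $v$ is stripped off and its weight becomes an entry of the pinning at the boundary interval $\alpha$ with $m^-_\alpha=v$.

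First, I would observe that both $S_\A$ and $S_\X$ are defined as Weyl-normalized products of the contributions of individual components. Since $\Phi_\Sigma$ is an algebra homomorphism and the Weyl-normalization prefactors of \eqref{eq:Weyl_ordering} are determined purely by the pairwise $q$-commutation relations of the factors, $\Phi_\Sigma$ automatically intertwines Weyl-normalized products whenever it preserves the $q$-commutation exponents of the building blocks. Thus the theorem is reduced to checking, for each single component, that $\Phi_\Sigma$ of the $\A$-side building block agrees with the $\X$-side building block attached to its image under $\check{p}_\Sigma^\sfT$.

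Next, I would carry out the component-wise check. For a non-peripheral loop $\gamma$ of weight $w$, both sides produce $T_w([\gamma])$, and since a loop has no boundary endpoints, $\Phi_\Sigma$ acts on it as the natural identification of the subalgebra $\mathscr{S}^q_\Sigma(\bM)\subset\sSk{\Sigma}$ with its image in $\Sk{\Sigma}[\partial^{-1}]$. For a non-peripheral arc $\gamma$ of weight $w$, $S_\A$ gives $[\gamma^-]^w$ and $S_\X$ gives $[\gamma^\bM]^w$; the geometric key is that, by \eqref{eq:state_to_clasp}, a state-$-$ endpoint on a boundary interval $\alpha$ is sent by $\Phi_\Sigma$ to a simultaneous crossing at the initial marked point $m^+_\alpha$, which is exactly the nearest marked point in the direction opposite to the boundary orientation---precisely the recipe for the negative $\bM$-shift of Definition \ref{def:shift_curve}. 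Hence $\Phi_\Sigma([\gamma^-])=[\gamma^\bM]$.

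The peripheral arc case is the subtlest and is what I expect to be the main obstacle. For a peripheral arc $\gamma$ of weight $w$ encircling a special point $v$, let $\alpha$ be the boundary interval with $m^-_\alpha=v$ and $\beta$ the other boundary interval adjacent to $v$, so that $m^+_\beta=v$. Then $\check{p}_\Sigma^\sfT$ removes $\gamma$ and contributes $w$ to $\nu_\alpha$, so $S_\X$ produces $[\alpha]^w$. On the $\A$-side $S_\A$ produces $[\gamma^-]^w$; the homomorphism property of $\Phi_\Sigma$ together with the relation $[\gamma^-]^{-1}=[\gamma^+]$ in $\sSk{\Sigma}$ reduces us to $w=1$. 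Applying \eqref{eq:state_to_clasp}, the two state-$-$ endpoints of $\gamma$ are shifted to $m^+_\alpha$ and $m^+_\beta=v$; the resulting ideal arc runs from $m^+_\alpha$ along $\alpha$, wraps once around $v$, and terminates at $v=m^-_\alpha$. This arc is homotopic rel endpoints to $\alpha$ itself, so $\Phi_\Sigma([\gamma^-])=[\alpha]$. The main technical obstacle will be the bookkeeping of $q^{1/2}$ prefactors in Weyl normalizations when several components of the lamination have endpoints clustering on the same boundary interval: one must check that the height ordering of boundary endpoints on the $\A$-side is transported by $\Phi_\Sigma$ to the corresponding ordering of simultaneous crossings at the marked points on the $\X$-side, so that the Weyl prefactors on both sides coincide. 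Finally, the restricted diagram for congruent laminations follows from $S_\A(\A_\Sigma(\bZ^\sfT))\subset\sSk{\Sigma}_\congr$ together with the bijective restriction $\check{p}_\Sigma^\sfT:\A_\Sigma(\bZ^\sfT)\to\X_\Sigma(\bZ^\sfT)$.
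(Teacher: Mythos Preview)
Your proposal is correct and follows essentially the same approach as the paper: reduce to a single weighted component via the fact that $\Phi_\Sigma$ preserves Weyl-normalized products, then verify $\Phi_\Sigma([\gamma^-])=[\gamma^\bM]$ for non-peripheral arcs and $\Phi_\Sigma([\gamma^-])=[\alpha]$ for peripheral arcs directly from the state-clasp rule \eqref{eq:state_to_clasp}. Your treatment is in fact slightly more explicit than the paper's---you spell out why the negative $\bM$-shift matches the state-$-$ rule and why the shifted peripheral arc is homotopic to the boundary interval $\alpha$, and you correctly flag the Weyl-prefactor bookkeeping that the paper asserts without comment.
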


\begin{proof}
Let $L \in \A_\Sigma(\bZ^\sfT)$ be a congruent integral $\A$-lamination. Since $\Phi_\Sigma$ maps a Weyl-normalized product to a Weyl-normalized product, 
we may assume that $L$ consists of a single weighted curve $(\gamma,k)$ without loss of generality. 
\begin{itemize}
    \item If $\gamma$ is a non-peripheral loop, the assertion is obvious.
    \item If $\gamma$ is a non-peripheral arc, then $\check{p}_\Sigma^\sfT(\gamma,k)$ is the same weighted arc. Then we need the equality 
    \begin{align*}
        \Phi_\Sigma([\gamma^-])=[\gamma^\bM],
    \end{align*}
    which immediately follows from the definition of the state-clasp correspondence.
    \item If $\gamma$ is a peripheral arc around a special point $m \in \bM$, then $\check{p}_\Sigma^\sfT(\gamma,k)$ consists of the empty lamination together with the pinning $\nu_\alpha=k$ assigned to the boundary interval $\alpha \in \bB$ such that $m=m_\alpha^-$. Then we need the equality 
    \begin{align*}
        \Phi_\Sigma([\gamma^-])=[\alpha],
    \end{align*}
    which also follows from the state-clasp correspondence. See \cref{fig:11_proof}.
\end{itemize}
Thus the assertion is proved. 
\end{proof}

\begin{figure}[h]
    \centering
\begin{tikzpicture}
\fill[gray!20] (0,0) -- (0,-0.1) -- (4,-0.1) -- (4,0) --cycle;
\draw[thick] (0,0) -- (4,0);
\filldraw(1,0) circle(1.5pt);
\filldraw(3,0) circle(1.5pt) node[below=0.2em]{$m$};
\node at (2,-0.5) {$\alpha$};
\node[red,scale=0.8] at (3-0.6,-0.25) {$-$};
\node[red,scale=0.8] at (3+0.6,-0.25) {$-$};
\draw[red,thick] (3-0.6,0) arc(180:0:0.6) node[midway,above]{$\gamma$};
\begin{scope}[xshift=5.5cm]
\fill[gray!20] (0,0) -- (0,-0.1) -- (4,-0.1) -- (4,0) --cycle;
\draw[thick] (0,0) -- (4,0);
\filldraw(1,0) circle(1.5pt);
\filldraw(3,0) circle(1.5pt) node[below=0.2em]{$m$};
\node at (2,-0.5) {$\alpha$};
\draw[red,thick] (1,0) to[bend left=30] node[midway,above]{$[\gamma^\bM]$} (3,0);
\end{scope}
\end{tikzpicture}
    \caption{Shift of a peripheral arc.}
    \label{fig:11_proof}
\end{figure}
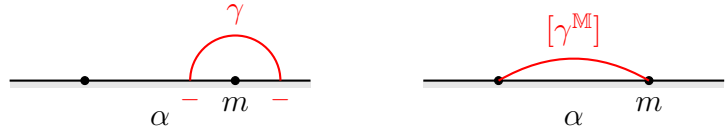

\begin{cor}\label{cor:basis}
For any unpunctured marked surface $\Sigma$, the images $S_\A(\A_\Sigma(\bZ^\sfT))\subset \sSk{\Sigma}_\congr$ and $S_\X(\X_\Sigma(\bZ^\sfT))\subset \Sk{\Sigma}[\partial^{-1}]$ give $\bZ_q$-bases.
\end{cor}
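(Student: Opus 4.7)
The plan is to use the commutative diagram of \cref{prop:lifting_compatible} together with the identification of $S_\X$ given in \cref{prop:X-lamination_Muller_basis}. For the $S_\X$ statement, recall that the bracelets basis $\mathsf{B}^b(\Sigma,\bM)$ is a $\bZ_q$-basis of $\Sk{\Sigma}$ by \cref{prop:Muller_basis}, since it is obtained from the Muller/bangles basis by a unitriangular Chebyshev replacement on each maximal parallel family of simple loops. Its extension by the invertible boundary arcs remains a basis of the Ore localization $\Sk{\Sigma}[\partial^{-1}]$, and \cref{prop:X-lamination_Muller_basis} identifies this extended basis with $S_\X(\X_\Sigma(\bZ^\sfT))$.

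For the $S_\A$ statement I would treat linear independence and spanning separately. For linear independence, the commutative diagram gives $\Phi_\Sigma \circ S_\A = S_\X \circ \check{p}_\Sigma^\sfT$, where $\Phi_\Sigma$ is an $\cR$-algebra isomorphism (\cref{thm:state-clasp}), the restriction $\check{p}_\Sigma^\sfT|_{\A_\Sigma(\bZ^\sfT)}$ is injective (being the restriction of the bijection $\cL^a(\Sigma,\bZ) \to \cL^p(\Sigma,\bZ)$), and $S_\X$ is bijective onto a $\bZ_q$-basis by the previous paragraph. Composing, $S_\A$ has $\bZ_q$-linearly independent image inside $\sSk{\Sigma}_\congr$.

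For spanning, I would start from the $\cR$-basis $\overline{\sfB}(\Sigma,\bB)_\congr$ of $\sSk{\Sigma}_\congr$ noted just after \cref{def:congruent}, and apply the sticking-trick reduction of \cref{prop:admissible_span} to each of its elements. Since the relations (A), (B), (E), (F) preserve the congruence condition, each basis element is thereby rewritten as a $\bZ_q$-linear combination of \emph{congruent} admissible stated $\bB$-tangle diagrams (with loops kept intact). Each such congruent admissible diagram, up to Weyl normalization, is of the form $S_\A^{\mathrm{bangles}}(L)$ for a unique congruent integral $\A$-lamination $L$, where $S_\A^{\mathrm{bangles}}$ is defined as in \cref{def:skein_lift_A} but with the power $\gamma^w$ in place of $T_w(\gamma)$ for each loop component. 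Indeed, corner arcs with states $\mp$ at a special point encode peripheral weights of sign $\pm$ (using the invertibility $[\gamma^-]^{-1}=[\gamma^+]$ inherited from $\Phi_\Sigma$), non-corner arcs with state $-$ encode non-peripheral arcs with positive weights, and parallel simple loop copies encode loop components. Because $T_k$ is monic of degree $k$, the Chebyshev replacement $\gamma^w \mapsto T_w(\gamma)$ is unitriangular, so the $\bZ_q$-spans of $\{S_\A^{\mathrm{bangles}}(L)\}_{L}$ and $\{S_\A(L)\}_{L}$ coincide, showing that $S_\A(\A_\Sigma(\bZ^\sfT))$ spans $\sSk{\Sigma}_\congr$.

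The main obstacle I anticipate is the careful verification of the congruence-preservation step: when running the sticking trick, the intermediate diagrams arising along the way must either retain the congruence property or fall into the bad-arc ideal, so that the admissible diagrams surviving in the final decomposition are indeed congruent. Once this invariance is in place --- which follows from the observation just before \cref{def:congruent} that the defining relations of $\sSk{\Sigma}$ leave the mod-$2$ edge-intersection numbers unchanged --- combining linear independence with spanning yields the basis statement for $S_\A$.
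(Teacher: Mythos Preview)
Your proposal is correct and follows essentially the same approach as the paper's proof: the $S_\X$ statement via \cref{prop:Muller_basis} and \cref{prop:X-lamination_Muller_basis}, linear independence of $S_\A$ via the commutative diagram of \cref{prop:lifting_compatible}, and spanning via \cref{prop:admissible_span}. You spell out more explicitly than the paper does the passage from admissible congruent diagrams to the image of $S_\A$ (the bangles-to-bracelets unitriangularity and the congruence-preservation of the sticking trick), which the paper compresses into a single sentence; the only minor overstatement is the word ``unique'' in identifying an admissible diagram with $S_\A^{\mathrm{bangles}}(L)$, but this is harmless for the spanning argument.
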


\begin{proof}
The assertion for $S_\X$ is a direct consequence of \cref{prop:Muller_basis,prop:X-lamination_Muller_basis}. For $S_\A$, observe that the states of the $\bB$-tangle diagrams in its image are admissible (\cref{def:admissible}). Then \cref{prop:admissible_span} tells us that $S_\A(\A_\Sigma(\bZ^\sfT))$ spans $\sSk{\Sigma}_\congr$ over $\bZ_q$. Moreover, since we know that $S_\X(\X_\Sigma(\bZ^\sfT))$ is linearly independent, our compatibility diagram in \cref{prop:lifting_compatible} implies that $S_\A(\A_\Sigma(\bZ^\sfT))$ is also linearly independent. Thus $S_\A(\A_\Sigma(\bZ^\sfT))$ gives a $\bZ_q$-basis of $\sSk{\Sigma}_\congr$.
\end{proof}

\subsection{Quantum trace and cutting maps.}\label{subsec:trace_cut}
Recall from \cref{sec:skein} that for each ideal triangulation $\tri$ of $\Sigma$, we have the quantum trace map (\cref{def:q-trace})
\begin{align*}
    \mathrm{Tr}_\tri: \sSk{\Sigma}_\congr \to \X_\tri^v
\end{align*}
and the cutting map (\cref{def:cutting})
\begin{align*}
    \mathrm{Cut}_\tri: \Sk{\Sigma}[\partial^{-1}] \to \A_\tri^q.
\end{align*}
Also recall from \cref{sec:cluster} that we have quantum cluster transformations among the quantum tori on their targets. These maps are compatible:

\begin{prop}[Bonahon--Wong \cite{BW11} and Muller \cite{Muller}]
For any flip $\tri \xrightarrow{f_\kappa} \tri'$, we have the commutative diagrams
\begin{equation*}
    \begin{tikzcd}
        & \Frac\X_{\tri'}^v \ar[dd,"\mu_\kappa^\ast"] &     & \Frac\A_{\tri'}^q \ar[dd,"\mu_\kappa^\ast"] \\
    \sSk{\Sigma}_\congr \ar[ru,"\mathrm{Tr}_{\tri'}"] \ar[rd,"\mathrm{Tr}_{\tri}"']& & \Sk{\Sigma}[\partial^{-1}]  \ar[ru,"\mathrm{Cut}_{\tri'}"] \ar[rd,"\mathrm{Cut}_{\tri}"']\\
        & \Frac\X_{\tri}^v  &     & \Frac\A_{\tri}^q.
    \end{tikzcd}
\end{equation*}
In particular, the quantum trace/cutting maps combine to give
\begin{align*}
    \mathrm{Tr}_\Sigma:\sSk{\Sigma}_\congr \to \cO_v(\X_\Sigma), \quad \mathrm{Cut}_\Sigma:\Sk{\Sigma}[\partial^{-1}] \to \cO_q(\A_\Sigma).
\end{align*}
\end{prop}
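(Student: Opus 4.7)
The plan is to handle the two squares of the diagram independently, and in each case reduce the assertion to a local computation on the quadrilateral $Q$ whose diagonals are the flipped edges $\kappa$ and $\kappa'$. This is a standard locality argument: outside $Q$, the two triangulations $\tri$ and $\tri'$ agree, so all quantum trace (resp.\ cutting) data for edges of $\tri \cap \tri'$ is manifestly shared, and only the $\kappa$- versus $\kappa'$-contribution requires genuine checking.

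For the quantum trace square, I would begin by applying the splitting theorem (\cref{Thm_splitting}) along every interior edge of $\tri$ except $\kappa$ and every interior edge of $\tri'$ except $\kappa'$. Since splitting is functorial and commutes in pairs, after these splittings the ambient surface breaks into the quadrilateral $Q$ (with either diagonal) and a collection of triangles common to $\tri$ and $\tri'$. The common-triangle contributions are identical on both sides and can be cancelled; what remains is to verify that the two compositions $\mathrm{Tr}_{Q_\tri}$ and $\mathrm{Tr}_{Q_{\tri'}} \circ \mu_\kappa^*$ (with $\mu_\kappa^*$ extended to the square-root/Chekhov--Fock level, then restricted to the balanced subalgebra) agree on $\sSk{Q}_\congr$. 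Because $\sSk{Q}_\congr$ is generated by a small finite set of diagrams (boundary arcs plus the two diagonal crossings with admissible states), the check amounts to a finite calculation using the triangle formula \eqref{isom_triangle} and the $v$-difference identity \eqref{eq:q-difference} for the quantum dilogarithm. This is essentially Bonahon--Wong's original flip equation, recast in L\^e's stated-skein framework.

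For the cutting square, I would argue analogously: split $\Sigma$ along every non-flipped interior edge shared by $\tri$ and $\tri'$ (the localized Muller cutting map $\mathrm{Cut}_\al$ of \cref{def:cutting} behaves functorially by \cref{thm:split_cut_compatibility}). The statement then reduces to the quadrilateral $Q$, where $\mathrm{Cut}_{Q_\tri}(A_\kappa)$ and $\mathrm{Cut}_{Q_{\tri'}} \circ \mu_\kappa^*(A_\kappa)$ must coincide. Using the relation (A) inside $Q$, the diagonal $\kappa$ can be resolved into a linear combination of products of the four boundary arcs, yielding the quantum Ptolemy identity
\[
A_\kappa A_{\kappa'} = q^{\,?}\, A_\alpha A_\gamma + q^{\,?}\, A_\beta A_\delta,
\]
which is exactly the quantum exchange relation \eqref{eq:q-exchange} in the guise of Goncharov--Shen (\cref{lem:GS-BZ}). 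Since every non-$\kappa$ variable is frozen with respect to this flip, both maps agree identically on it, so the square commutes.

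The main obstacle is the bookkeeping of fractional powers of $q$ (and of $v=q^{-2}$) introduced by Weyl normalizations, the states-to-clasps correspondence \eqref{eq:state_to_clasp}, and the correction $m_{\al\beta}$ on frozen directions inside the ensemble map \eqref{eq:ensemble_linear}. In particular, one must check that the balanced-lattice extension \eqref{eq:ensemble_balanced} of $p_\tri^*$ is the right bridge so that $\mathrm{Tr}_\tri(\sSk{\Sigma}_\congr) \subset \X_\tri^v$ (\cref{prop:congruent_image}) combines consistently across all flips. Once locality and the two quadrilateral identities are established, global assembly into maps landing in $\bigcap_\tri \X_\tri^v = \cO_v(\X_\Sigma)$ and $\bigcap_\tri \A_\tri^q = \cO_q(\A_\Sigma)$ is immediate, by connectedness of the Ptolemy graph $\Tri_\Sigma$ and the definition of the quantum (cluster Poisson/upper cluster) algebras as universally Laurent intersections, given that $\Sigma$ is unpunctured so that $\Tri_\Sigma$ already parametrizes all cluster charts.
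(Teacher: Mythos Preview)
Your locality-to-quadrilateral strategy is correct in spirit and is, in fact, how Bonahon--Wong and Muller establish their respective compatibility statements in the cited works. The paper, however, does not reprove these results: its proof simply invokes \cite{BW11} for the trace side (noting that the Bonahon--Wong diagram \eqref{eq:BW-diagram} lives at the balanced Chekhov--Fock level with Hiatt's transition map $\widetilde{\mu}_\kappa^\ast$, and then restricting to the congruent subalgebra via \cref{prop:congruent_image} so that the target becomes $\X_\tri^v$) and invokes \cite[Theorem 7.15]{Muller} directly for the cutting side. So your proposal is a from-scratch sketch of results the paper merely cites.

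Two points of confusion in your write-up are worth flagging. First, the last paragraph about the ensemble map $p_\tri^\ast$, the frozen correction $m_{\alpha\beta}$, and the balanced-lattice extension \eqref{eq:ensemble_balanced} is entirely irrelevant to this proposition: those ingredients relate the $\X$- and $\A$-sides to each other and enter only in \cref{prop:trace_cut_compatible}, not here where the two squares are independent. Second, you cite \cref{thm:split_cut_compatibility} to justify functoriality of $\mathrm{Cut}_\alpha$, but that theorem compares stated-skein splitting with Muller cutting through the state-clasp isomorphism $\Phi_\Sigma$; it is not the locality statement you need. The commutativity $\mathrm{Cut}_\alpha\circ\mathrm{Cut}_\beta=\mathrm{Cut}_\beta\circ\mathrm{Cut}_\alpha$ for disjoint ideal arcs is built into the discussion after \eqref{eq:cut_composite} and requires nothing beyond the skein relations on the Muller side.
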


\begin{proof}
The commutative diagram for the quantum traces follows from Bonahon--Wong's work \cite{BW11}. Indeed, they have shown that the diagram 
\begin{equation}\label{eq:BW-diagram}
    \begin{tikzcd}
        & \Frac(\cZ_{\tri'}^q)_{\mathrm{bl}} \ar[dd,"\widetilde{\mu}_\kappa^\ast"]  \\
    \sSk{\Sigma} \ar[ru,"\mathrm{Tr}_{\tri'}"] \ar[rd,"\mathrm{Tr}_{\tri}"']\\
        & \Frac(\cZ_{\tri}^q)_{\mathrm{bl}} 
    \end{tikzcd}
\end{equation}
commutes, where $(\cZ_{\tri}^q)_{\mathrm{bl}}$ denotes the \emph{balanced Chekhov--Fock algebra}. The torus embedding given in \cref{eq:emb_to_CF} satisfies $\iota(\X_\tri^v) \subset (\cZ_{\tri}^v)_{\mathrm{bl}}$. The map $\widetilde{\mu}_\kappa^\ast$ is the Hiatt's morphism \cite{Hi}, which restricts to the quantum cluster Poisson transformation on $\X_{\tri'}^v$. Thus our assertion follows by restricting \eqref{eq:BW-diagram} to the congruent subalgebra by \cref{prop:congruent_image}.

The commutative diagram for the cutting maps is a part of \cite[Theorem 7.15]{Muller}. 
\end{proof}

\begin{thm}\label{prop:trace_cut_compatible}
For any unpunctured marked surface $\Sigma$, the following diagram commutes:
\begin{equation*}
    \begin{tikzcd}
    \sSk{\Sigma}_\congr \ar[d,"\Phi_\Sigma"'] \ar[r,"\mathrm{Tr}_\Sigma"] & \cO_v(\X_\Sigma) \ar[d,"p_\Sigma^\ast"] \\
    \Sk{\Sigma}[\partial^{-1}] \ar[r,"\mathrm{Cut}_\Sigma"'] & \cO_q(\A_\Sigma).
    \end{tikzcd}
\end{equation*}
\end{thm}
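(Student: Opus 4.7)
The plan is to fix an ideal triangulation $\tri \in \Tri_\Sigma$ and prove the equality
\begin{equation*}
p_\tri^\ast \circ \mathrm{Tr}_\tri = \mathrm{Cut}_\tri \circ \Phi_\Sigma
\end{equation*}
of $\cR$-algebra homomorphisms $\sSk{\Sigma}_\congr \to \A_\tri^q$; varying $\tri$ then yields commutativity inside $\cO_v(\X_\Sigma)$ and $\cO_q(\A_\Sigma)$, which are defined as intersections over all triangulations.

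The key reduction is to iterate \cref{thm:split_cut_compatibility} along every interior edge $\alpha \in e_{\interior}(\tri)$, producing a commutative square
\begin{equation*}
    \begin{tikzcd}[column sep=huge]
    \sSk{\Sigma} \ar[r,"\widehat{\Psi}_\tri"] \ar[d,"\Phi_\Sigma"'] & \displaystyle\bigotimes_{T} \sSk{T} \otimes \bigotimes_{\alpha} \sSk{B_\alpha} \ar[d,"{[\bigotimes \Phi_T \otimes \bigotimes \Phi_{B_\alpha}]}"] \\
    \Sk{\Sigma}[\partial^{-1}] \ar[r,"\mathrm{Cut}_\tri"'] & \A_\tri^q
    \end{tikzcd}
\end{equation*}
where $\widehat{\Psi}_\tri$ denotes the composite of all $\widehat{\theta}_\alpha$'s and the right vertical arrow is the Weyl-normalized product of the images. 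Each biangle contribution $\Phi_{B_\alpha}$ sends the transverse generator $\alpha_\bot^+$ to the ideal arc $\alpha$, so the biangle factors simply account for the cluster $K_2$-variables $A_\alpha$ for $\alpha \in e_{\interior}(\tri)$. Comparing with the definition $\mathrm{Tr}_\tri = (\bigotimes_T \mathrm{Tr}_T) \circ \overline{\Psi}_\tri$ and absorbing the biangle contributions into the Weyl normalization (they track the stated endpoints along the cut edges), the problem reduces to showing on each triangle $T$ of $\tri$:
\begin{equation*}
    p_T^\ast \circ \mathrm{Tr}_T = \Phi_T \qquad \text{on}\ \sSk{T},
\end{equation*}
where $\mathrm{Cut}_T$ is the identity since $T$ has no interior edges.

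For the triangle, both sides are $\cR$-algebra homomorphisms $\sSk{T} \to \A_T^q$: the extension of $p_T^\ast$ to the balanced subalgebra $(\cZ^q(T))_{\mathrm{bl}}$ from \eqref{eq:ensemble_balanced} applies here because each $\mathrm{Tr}_T(\gamma_i(\ve_1,\ve_2))$ is automatically balanced by inspection of \eqref{isom_triangle}. By $\tau$-equivariance of all the ingredients, equality needs only be checked on the four generators $\gamma_1(\ve_1,\ve_2)$. The bad-arc case $(-,+)$ vanishes on both sides. For $(-,-)$ and $(+,+)$, the formulas \eqref{isom_triangle}, the explicit ensemble map formula \eqref{eq:ensemble_linear} with the boundary correction $m_{\alpha\beta}=-\delta_{\alpha\beta}$, and the state-clasp picture \eqref{eq:state_to_clasp} produce, by direct computation, the same Laurent monomials $A_\beta^{\pm 1}$ for the appropriate boundary edge.

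The main obstacle is the remaining case $(+,-)$, where the state-clasp sends the two endpoints of $\gamma_1(+,-)$ to the same corner vertex, producing a based loop whose value as a Laurent monomial in the boundary $K_2$-variables is not directly read off from the shifting rule alone. I would handle this by applying the sticking trick \eqref{eq:sticking} inside $\sSk{T}$: modulo the bad-arc ideal, $\gamma_1(+,-)$ is rewritten as a scalar multiple of an admissible stated arc, whose image under $\Phi_T$ is covered by the previously-treated cases. This closes the loop computation, completes the triangle verification, and hence, via the earlier reduction, proves the theorem.
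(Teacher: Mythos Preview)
Your proposal is correct and follows essentially the same architecture as the paper's proof: reduce to a single triangulation, iterate \cref{thm:split_cut_compatibility} to get the splitting--cutting square with biangle factors, then verify the triangle case on generators. The paper carries out the ``absorbing the biangle contributions'' step more formally by introducing the split triangulation $\widehat{\tri}$, an auxiliary torus $\X_{\widehat{\tri}}^v$, a coproduct $\delta_{\widehat{\tri}}$, and a split ensemble map $p_{\widehat{\tri}}^\ast$ (\cref{lem:coproduct,lem:compatible_skein}); for the triangle it invokes \cref{prop:admissible_span} up front so that only the diagonal generators $\gamma_i(\ve,\ve)$ need checking, which is exactly what your sticking-trick treatment of $\gamma_1(+,-)$ amounts to.
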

Below we give a proof of this statement.

\begin{lem}\label{lem:triangle_compatible}
\cref{prop:trace_cut_compatible} holds true for the triangle case $\Sigma=T$. 
\end{lem}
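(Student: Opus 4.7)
The plan is to reduce the verification to a direct matrix computation on a minimal set of algebra generators. First, I normalize the diagram: since the triangle $T$ has no interior edges, $\mathrm{Cut}_\tri\colon \Sk{T}[\partial^{-1}] \to \A^q_T$ is the canonical identification (there is nothing to cut). By Example \ref{ex:triangle}, $\mathrm{Tr}_T\colon \sSk{T} \xrightarrow{\sim} \CZ^q(T)_{\mathrm{bl}}$ is an isomorphism, so I extend $p_T^*$ to the balanced Chekhov--Fock algebra via \eqref{eq:ensemble_balanced} and verify the enlarged diagram on the whole of $\sSk{T}$; the asserted commutativity on $\sSk{T}_\congr$ follows by restriction.

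Next, I reduce to a small generating set. Under $\mathrm{Tr}_T$, the corner arcs $\gamma_i(-,-)$ map to $[Z_{i+1} Z_{i+2}]$, and together with their inverses these six elements generate $\CZ^q(T)_{\mathrm{bl}}$ as an $\cR$-algebra (any balanced Laurent monomial in the $Z_j$ is a Weyl-normalized product of such degree-two monomials). The formulas of Example \ref{ex:triangle} further yield $\gamma_i(+,+) = \gamma_i(-,-)^{-1}$ and exhibit the remaining non-vanishing generator $\gamma_i(+,-)$ as a Weyl product of two $\gamma_j(-,-)^{\pm 1}$'s. Hence it suffices to verify commutativity on $\{\gamma_i(-,-)^{\pm 1}\}_{i \in \bZ/3\bZ}$.

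On these generators, the right-then-down composite is immediate: the moving-right definition of $\Phi_T^{-1}$ sends each boundary arc $A_{\alpha_j}$ to the corner arc at its terminal marked point $m^-_{\alpha_j}$ with states $(-,-)$, giving an explicit bijection $A_{\alpha_j} \leftrightarrow \gamma_{i(j)}(-,-)$. Thus $\mathrm{Cut}_\tri \circ \Phi_T\bigl(\gamma_i(-,-)\bigr) = A_{\alpha_j}$ for the unique $j$ with $m^-_{\alpha_j}=v_i$. The down-then-right composite equals
\[
p_T^* \circ \mathrm{Tr}_T\bigl(\gamma_i(-,-)\bigr) \;=\; \mathbf{A}\bigl(-\tfrac{1}{2}\, p_T^*(\sfe_{i+1} + \sfe_{i+2})\bigr),
\]
and using $\varepsilon_{\alpha_k,\alpha_{k+1}}^\tri = 1$ (cyclic, mod $3$) together with $m_{\alpha_k\alpha_k} = -1$ for all three boundary edges, a direct computation in $M^\tri$ yields $-\tfrac{1}{2}\, p_T^*(\sfe_{i+1}+\sfe_{i+2}) = \sff_{\alpha_j}$. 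Both composites therefore produce the same boundary arc, and inversion handles $\gamma_i(-,-)^{-1}$.

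The main --- and essentially only --- obstacle is combinatorial bookkeeping: one must carefully align the cyclic labelling convention for the $\gamma_i$, the moving-right assignment $A_{\alpha_j} \leftrightarrow \gamma_{i(j)}(-,-)$, and the index shift appearing in the quantum trace formula, so that the two composites literally produce the same boundary arc. Once these indices are matched, the argument reduces to a single $3 \times 3$ matrix identity; no non-trivial skein-theoretic manipulation is required.
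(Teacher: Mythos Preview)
Your proposal is correct and takes essentially the same approach as the paper: reduce to the corner-arc generators $\gamma_i(\pm,\pm)$ of $\sSk{T}$, invoke the cyclic $\bZ/3\bZ$-symmetry, and verify the identity by a direct computation (the paper checks on $\gamma_1(+,+)$ and tracks the $q^{1/2}$-factors through the Weyl normalization explicitly, while you equivalently check on $\gamma_i(-,-)$ directly at the lattice level via \eqref{eq:ensemble_balanced}). One cosmetic slip: you have swapped the labels ``right-then-down'' and ``down-then-right'' when naming the two composites $p_T^\ast\circ\mathrm{Tr}_T$ and $\mathrm{Cut}_\tri\circ\Phi_T$.
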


\begin{figure}[ht]
    \centering
\begin{tikzpicture}[scale=0.8]
\draw[blue] (-30:2) -- (90:2) -- (210:2) --cycle;
\foreach \i in {0,120,240} \rotatebox{\i}{
\draw[red,thick] (0,2)++(-60:1) arc(-60:-120:1);
}
\node[red] at (90:0.7) {$\gamma_1$};
\node[red] at (210:0.7) {$\gamma_2$};
\node[red] at (-30:0.7) {$\gamma_3$};
\foreach \i in {-30,90,210} \fill(\i:2) circle(2pt);
\begin{scope}[xshift=5cm,>=latex]
\draw[blue] (-30:2) -- (90:2) -- (210:2) --cycle;
\foreach \i in {-30,90,210}{
{\color{mygreen}
\draw($(\i:2)!0.5!(\i+120:2)$) circle(2pt);
\qarrow{$(\i:2)!0.5!(\i+120:2)$}{$(\i+120:2)!0.5!(\i+240:2)$}
}}
\node[mygreen] at (30:1.3) {$2$};
\node[mygreen] at (150:1.3) {$3$};
\node[mygreen] at (-90:1.3) {$1$};
\foreach \i in {-30,90,210} \fill(\i:2) circle(2pt);
\end{scope}
\end{tikzpicture}
    \caption{Triangle case: corner arcs (Left) and the quiver (Right).}
    \label{fig:triangle_computation}
\end{figure}
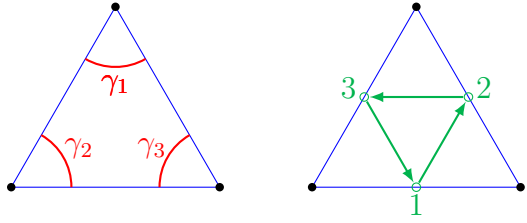

\begin{proof}
Let us label the corner arcs in $T$ and the quiver vertices as in \cref{fig:triangle_computation}. Then recall from \cref{ex:triangle} that the algebra $\sSk{T}$ is generated by $\{\gamma_i(\ve,\ve') \mid i=1,2,3,\ \ve,\ve' \in \{+,-\}\}$. 
By applying \cref{prop:admissible_span}, we see that $\sSk{T}$ can be generated by the elements $\gamma_i^\ve:=\gamma_i(\ve,\ve)$ for $i=1,2,3$ and $\ve \in \{+,-\}$. 
Note that $\gamma_i^-=(\gamma_i^+)^{-1}$ . 
On the other hand, the Muller skein algebra $\Sk{T}[\partial^{-1}]$ coincides with $\cO_q(\A_T)$ via $\mathrm{Cut}_T=\mathrm{id}$.  

Let $\tri$ denote the unique triangulation of $T$. 
Recall the relations $Z_i Z_j= q^{-\ve^\tri_{ij}/2}[Z_i Z_j]$ and $A_i A_j = q^{\pi_{ij}/2}[A_i A_j]$ for $i,j=1,2,3$. We compute
\begin{align*}
    p_T^\ast \circ \mathrm{Tr}^{\tri}(\gamma_1^+) &= p_T^\ast ([Z_2^{-1} Z_3^{-1}]) = p_T^\ast(q^{1/2}Z_2^{-1}Z_3^{-1}) \\
    &=q^{1/2}\mathbf{A}((f_3-f_1-f_2)/2) \cdot \mathbf{A}((f_1-f_2-f_3)/2) \\
    &=q^{1/2} \cdot q^{-\omega_\A(-\sff_2,\sff_3-\sff_1)/8}\mathbf{A}(-\sff_2/2)\mathbf{A}((\sff_3-\sff_1)/2)\\
    &\qquad\qquad\cdot q^{-\omega_\A(\sff_1-\sff_3,-\sff_2)/8}\mathbf{A}((\sff_1-\sff_3)/2)\mathbf{A}(-\sff_2/2) \\
    &=q^{1/2}\cdot q^{-1/2} \mathbf{A}(-\sff_2) \\
    &= A_2^{-1} =\Phi_T(\gamma_1^+).
\end{align*}
By the cyclic symmetry, we get $p_T^\ast \circ \mathrm{Tr}^{\tri}(\gamma_i^+) = \Phi_T(\gamma_i^+)$ for $i=1,2,3$. Thus the assertion is proved.
\end{proof}
We are going to ``glue" the above triangle cases. 
Given an ideal tringulation $\tri$ of $\Sigma$, let $\widehat{\tri}$ be the \emph{split triangulation} obtained from $\tri$ by duplicating each interior edge $\kappa \in e_{\interior}(\tri)$ into a pair $\{\kappa',\kappa''\}$ of parallel ideal arcs that bound a biangle $B_\kappa$. See \cref{fig:split_triangle}. For each $T \in t(\tri)$ and its edge $\kappa$, let us denote by $X_\kappa^T \in \X_T^v$ the corresponding cluster Poisson variable on $T$. 

\begin{figure}[ht]
    \centering
\begin{tikzpicture}[scale=0.8]
\draw[blue] (-2,0) --node[midway,left=0.3em]{$\beta$} (0,-2) --node[midway,right=0.3em]{$\gamma$} (2,0) --node[midway,right=0.3em]{$\delta$} (0,2) --node[midway,left=0.3em]{$\alpha$} cycle;
\draw[blue] (0,-2) -- (0,2);
\fill(-2,0) circle(1.5pt);
\fill(2,0) circle(1.5pt);
\fill(0,-2) circle(1.5pt);
\fill(0,2) circle(1.5pt);
\node[blue] at (0.2,0.5) {$\kappa$};
\node at (-1,0) {$T'$};
\node at (1,0) {$T''$};
\node at (1,-2) {$\triangle$};
\draw[squigarrow] (2.5,0) -- (3.5,0);
\begin{scope}[xshift=6cm]
\draw[blue] (-2,0) --node[midway,left=0.3em]{$\beta'$} (0,-2) --node[midway,right]{$\kappa'$} (0,2) --node[midway,left=0.3em]{$\alpha'$} cycle;
\fill(-2,0) circle(1.5pt);
\fill(0,-2) circle(1.5pt);
\fill(0,2) circle(1.5pt);
\node at (-1,0) {$T'$};
\end{scope}
\begin{scope}[xshift=7.5cm]
\draw[blue] (0,-2) to[bend left=30] (0,2);
\draw[blue] (0,-2) to[bend right=30] (0,2);
\fill(0,-2) circle(1.5pt);
\fill(0,2) circle(1.5pt);

\node at (0,0) {$B_\kappa$};
\end{scope}
\begin{scope}[xshift=9cm]
\draw[blue] (2,0) --node[midway,right=0.3em]{$\gamma''$} (0,-2) --node[midway,left]{$\kappa''$} (0,2) --node[midway,right=0.3em]{$\delta''$} cycle;
\fill(2,0) circle(1.5pt);
\fill(0,-2) circle(1.5pt);
\fill(0,2) circle(1.5pt);
\node at (1,0) {$T''$};
\node at (1,-2) {$\widehat{\triangle}$};
\end{scope}
\end{tikzpicture}
    \caption{The split triangulation $\widehat{\tri}$ associated with an ideal triangulation $\tri$.}
    \label{fig:split_triangle}
\end{figure}
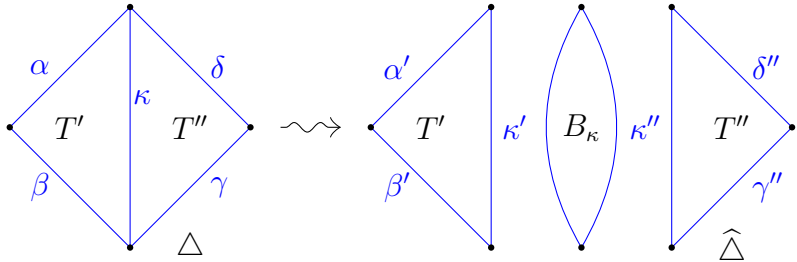

For each $\kappa \in e_{\interior}(\tri)$, we introduce the commutative algebra
\begin{align*}
    \X_{B_\kappa}^v:=\bZ_v[Y_\kappa^{\pm 1}]
\end{align*}
with a formal variable $Y_\kappa$,
and consider 
\begin{align*}
    \X_\wtri^v:=\bigotimes_{T \in t(\tri)} \X_T^v \otimes \bigotimes_{\kappa \in e_{\interior}(\tri)} \X_{B_\kappa}^v.
\end{align*}
We have the embedding (or the ``coproduct'') $\delta_\wtri: \X_\tri^v \to  \X_\wtri^v $
such that 
\begin{align*}
    \delta_\wtri(X_\kappa^\tri):=X_{\kappa'}^{T'}\otimes Y_\kappa \otimes X_{\kappa''}^{T''} \in \X_{T'}^v \otimes \X_{B_\kappa}^v \otimes \X_{T''}^v
\end{align*}
for $\kappa \in e_{\interior}(\tri)$, where $T'$ and $T''$ are the triangles that share the edge $\kappa$ (see \cref{fig:split_triangle}). For a boundary interval $\kappa \in \bB$ contained in a unique triangle $T$, we set $\delta_\wtri(X_\kappa^\tri):=X_\kappa^T$. 

Consider the multi-grading on $\X_\wtri^v$ valued in the lattice $N^\wtri:=\bigoplus_{\kappa \in e_{\interior}(\tri)} (\bZ \sfe_{\kappa'} \oplus \bZ \sfe_{\kappa''})$, where we set
\begin{align*}
    \deg(X_{\kappa'}^{T'}):= \sfe_{\kappa'}, \quad \deg(Y_\kappa):=-\sfe_{\kappa'}-\sfe_{\kappa''}
\end{align*}
in the notation of \cref{fig:split_triangle}. Let $(\X_\wtri^v)_0 \subset \X_\wtri^v$ be the subalgebra of degree $0$. Observe that the image of $\delta_\wtri$ lies in $(\X_\wtri^v)_0$ by $\mathrm{deg}(X_{\kappa'}^{T'}\otimes Y_\kappa \otimes X_{\kappa''}^{T''})=0$. 

Let $p_\wtri^\ast: (\X_\wtri^v)_0 \to \A_{\tri}^q$ be an algebra homomorphism such that 
\begin{align*}
    p_\wtri^\ast\left(\bigotimes_T \xi_T \otimes \bigotimes_\kappa (Y_\kappa)^{n_\kappa}\right):=\bigg[\prod_T p_T^\ast(\xi_T) \cdot A_\kappa^{2n_\kappa}  \bigg],
\end{align*}
where $\xi_T \in \X_T^v$ for $T \in t(\tri)$, and $n_\kappa \in \bZ$ for $\kappa \in e_{\interior}(\tri)$. Let us call $p_\wtri^\ast$ the \emph{split ensemble map}. 

\begin{lem}\label{lem:coproduct}
The following diagram commutes:
\begin{equation*}
    \begin{tikzcd}
    (\X_\wtri^v)_0 \ar[dr,"p_\wtri^\ast"'] &  \X_{\tri}^v \ar[l,"\delta_\wtri"] \ar[d,"p_{\tri}^\ast"] \\
    & \A_\tri^q.
    \end{tikzcd}
\end{equation*}
\end{lem}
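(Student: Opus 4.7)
The plan is to use that both $p_\wtri^\ast \circ \delta_\wtri$ and $p_\tri^\ast$ are algebra homomorphisms from $\X_\tri^v$ to $\A_\tri^q$, and reduce to checking their agreement on the generators $X_\alpha^\tri$, $\alpha \in e(\tri)$. Since the Weyl normalization appearing in the definition of $p_\wtri^\ast$ is performed inside $\A_\tri^q$, applying $p_\wtri^\ast \circ \delta_\wtri$ to a single generator produces a Weyl-ordered monomial of the form $\mathbf{A}^\tri(\nu)$; the verification thus reduces to an equality of exponent vectors $\nu \in M^\tri$, with the $q$-factors from individual triangles absorbed into a single Weyl ordering on the $\tri$-side.

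For a boundary edge $\alpha \in \bB$, let $T$ be the unique triangle of $\tri$ containing $\alpha$. Then $\delta_\wtri(X_\alpha^\tri) = X_\alpha^T$, and since every edge of $T$ viewed as a marked surface is a boundary interval, the frozen correction is $m^T_{\alpha\mu} = -\delta_{\alpha\mu}$. On the $p_\tri^\ast$-side, $\alpha$ belongs only to $T$, so $\ve^\tri_{\alpha\lambda}$ vanishes for $\lambda \notin e(T)$, while $m_{\alpha\lambda} = -\delta_{\alpha\lambda}$. Under the natural inclusion $M^T \hookrightarrow M^\tri$ the two exponent vectors coincide.

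For an interior edge $\kappa$ separating triangles $T'$ and $T''$ (with edges relabeled as in \cref{fig:split_triangle}), we have $\delta_\wtri(X_\kappa^\tri) = X_{\kappa'}^{T'} \otimes Y_\kappa \otimes X_{\kappa''}^{T''}$. The two triangle ensemble maps $p_{T'}^\ast$ and $p_{T''}^\ast$ each contribute a term $-\sff^\tri_\kappa$ coming from the frozen corrections $m^{T'}_{\kappa'\kappa'} = m^{T''}_{\kappa''\kappa''} = -1$, while the factor $Y_\kappa \mapsto A_\kappa^2$ contributes $+2\sff^\tri_\kappa$; these cancel. The remaining contributions over the other edges of $T'$ and $T''$ reassemble, via the additivity $\ve^\tri_{\kappa\mu} = \sum_T \ve_{\kappa\mu}(T)$, into $\sum_\mu \ve^\tri_{\kappa\mu}\sff^\tri_\mu$, which equals $p_\tri^\ast(\sfe^\tri_\kappa)$ since $m_{\kappa\lambda} = 0$ for an interior edge.

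The only conceptual point worth highlighting in the write-up is the cancellation mechanism: the factor $Y_\kappa \mapsto A_\kappa^2$ is precisely the bookkeeping that compensates the two boundary frozen corrections that appear at $\kappa'$ and $\kappa''$ after splitting, effectively restoring $\kappa$ from being ``boundary of two triangles'' back to being an ``interior edge of $\tri$''. Beyond that step, the verification is a transparent consequence of the additivity of the exchange matrix over triangles.
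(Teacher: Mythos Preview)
Your proposal is correct and follows essentially the same approach as the paper: both reduce to checking the identity on generators $X_\alpha^\tri$, using that the outer Weyl normalization in $\A_\tri^q$ makes the verification a comparison of exponent vectors in $M^\tri$. The paper carries out the interior-edge case by an explicit computation with the labels $\alpha,\beta,\gamma,\delta,\kappa$ of \cref{fig:split_triangle} (and leaves the boundary case implicit), while you phrase the same cancellation more abstractly via the additivity $\ve^\tri_{\kappa\mu}=\sum_T \ve_{\kappa\mu}(T)$ and also treat the boundary case; the content is the same.
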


\begin{proof}
It suffices to prove the equation for each generator $X_\kappa^\tri$, $\kappa \in e_{\interior}(\tri)$. Let $T',T''$ denote the triangles that share the edge $\kappa$, and use the labeling of edges as in \cref{fig:split_triangle}. We have $p_{\tri}^\ast X_{\kappa}^\tri=\left[\frac{A_\alpha A_\gamma}{A_\beta A_\delta} \right]$, while
\begin{align*}
    &p_\wtri^\ast \delta_\wtri (X_{\kappa}^\tri) =p_\wtri^\ast (X_{\kappa'}\otimes Y_\kappa \otimes X_{\kappa''})
    = \left[\left[\frac{A_{\alpha'}}{A_{\beta'}A_{\kappa'}}\right] \cdot(\overline{A}_\kappa)^2 \cdot\left[\frac{A_{\gamma''}}{A_{\delta''}A_{\kappa''}} \right]\right]
    =\left[\frac{A_\alpha A_\gamma}{A_\beta A_\delta} \right],
\end{align*}
as desired. 
\end{proof}
Let us consider the splitting map
\begin{align*}
    \Psi_\wtri: \sSk{\Sigma}_\congr \to \bigotimes_{T \in t(\tri)} \sSk{T}_\congr \otimes \bigotimes_{\kappa \in e_{\interior}(\tri)} \sSk{B_\kappa}_\congr 
\end{align*}
with respect to the splitting triangulation $\widehat{\tri}$, and the composite
\begin{align*}
    \mathrm{Tr}_\wtri: \sSk{\Sigma}_\congr \xrightarrow{\Psi_\wtri} \bigotimes_{T \in t(\tri)} \sSk{T}_\congr \otimes \bigotimes_{\kappa \in e_{\interior}(\tri)} \sSk{B_\kappa}_\congr \xrightarrow{\bigotimes \mathrm{Tr}_T \otimes\ \bigotimes \mathrm{Tr_{B_\kappa}}} \X_\wtri^v,
\end{align*}
where $\mathrm{Tr}_T: \sSk{T}_\congr \xrightarrow{\sim} \X_T^v$ is the isomorphism given by the restriction of \eqref{eq:triangle_isom} (recall \cref{eq:emb_to_CF}) for each $T \in t(\tri)$, and 
\begin{align*}
    \mathrm{Tr}_{B_\kappa}: \sSk{B_\kappa}_\congr \xrightarrow{\sim} \X_{B_\kappa}^v, \quad (\kappa_\perp^\pm)^2 \mapsto Y_{\kappa}^{\pm 1},
\end{align*}
for each $\kappa \in e(\tri)$. Here $\kappa^\pm_\perp$ denotes the unique (up to isotopy) $\bB$-tangle diagram in $B_\kappa$ equipped with the state $\pm$ on both of its ends. Its square generates $\sSk{B_\kappa}_\congr$, together with the relation $\kappa^+_\perp\cdot \kappa^-_\perp=1$. 
Then it is easy to see that 
\begin{align}\label{eq:q-trace_enhance}
    \mathrm{Tr}_\wtri = \delta_\wtri \circ \mathrm{Tr}_\tri.
\end{align}
In particular, the image of $\mathrm{Tr}_\wtri$ lies in $(\X_\wtri^v)_0$. 

\begin{lem}\label{lem:compatible_skein}
The following diagram commutes:
\begin{equation}\label{eq:compatible_skein}
    \begin{tikzcd}
        \sSk{\Sigma}_\congr \ar[r,"\mathrm{Tr}_\wtri"] \ar[d,"\Phi_\Sigma"'] & (\X_\wtri^v)_0 \ar[d,"p_\wtri^\ast"]\\
        \Sk{\Sigma}[\partial^{-1}] \ar[r,"\mathrm{Cut}_{\tri}"'] & \A_\tri^q.
    \end{tikzcd}
\end{equation}
\end{lem}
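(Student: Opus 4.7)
The plan is to reduce the commutativity of \eqref{eq:compatible_skein} to factor-by-factor identities on the individual triangles and biangles of the split triangulation $\widehat{\tri}$. Both routes of the diagram factor through the iterated splitting $\widehat{\Psi}_\wtri: \sSk{\Sigma}_\congr \to \bigotimes_{T} \sSk{T}_\congr \otimes \bigotimes_\kappa \sSk{B_\kappa}_\congr$ obtained as the composition of the splittings $\widehat{\theta}_\kappa$ along all interior edges $\kappa \in e_{\interior}(\tri)$, and $\mathrm{Cut}_\tri$ similarly factors as $\mathrm{Cut}_{\kappa_1} \circ \cdots \circ \mathrm{Cut}_{\kappa_n}$ via \eqref{eq:cut_composite}.

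First, by iterating \cref{thm:split_cut_compatibility} once for each interior edge, the Muller/clockwise side of \eqref{eq:compatible_skein} (after identifying $\Sk{\Sigma}[\tri^{-1}] \cong \A_\tri^q$) becomes
\begin{align*}
    \mathrm{Cut}_\tri \circ \Phi_\Sigma = \mathcal{W} \circ \Big( \bigotimes_{T} \Phi_T \otimes \bigotimes_{\kappa} \Phi_{B_\kappa} \Big) \circ \widehat{\Psi}_\wtri,
\end{align*}
where $\mathcal{W}$ denotes taking the Weyl-normalized product of the images inside $\A_\tri^q$. On the other side, by the definition of $\mathrm{Tr}_\wtri$ and of the split ensemble map $p_\wtri^\ast$, one has
\begin{align*}
    p_\wtri^\ast \circ \mathrm{Tr}_\wtri = \mathcal{W} \circ \Big( \bigotimes_{T} (p_T^\ast \circ \mathrm{Tr}_T) \otimes \bigotimes_{\kappa} (p_\wtri^\ast \circ \mathrm{Tr}_{B_\kappa}) \Big) \circ \widehat{\Psi}_\wtri,
\end{align*}
since a pure tensor in $(\X_\wtri^v)_0$ is sent by $p_\wtri^\ast$ precisely to the Weyl product of the images of its triangle factors under $p_T^\ast$ and its biangle factors under $Y_\kappa^{n_\kappa} \mapsto A_\kappa^{2n_\kappa}$.

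It then remains to check the factor-wise identities $p_T^\ast \circ \mathrm{Tr}_T = \Phi_T$ and $p_\wtri^\ast \circ \mathrm{Tr}_{B_\kappa} = \Phi_{B_\kappa}$ (under the natural identifications $\Sk{T}[\partial^{-1}] \cong \A_T^q$ and $\Sk{B_\kappa}[\partial^{-1}] \hookrightarrow \A_\tri^q$). The triangle identity is exactly \cref{lem:triangle_compatible}. The biangle identity is immediate: $\mathrm{Tr}_{B_\kappa}((\kappa_\perp^\pm)^2) = Y_\kappa^{\pm 1}$ and $p_\wtri^\ast(Y_\kappa^{\pm 1}) = A_\kappa^{\pm 2}$, while $\Phi_{B_\kappa}((\kappa_\perp^\pm)^2) = \kappa^{\pm 2}$, which represents the same element $A_\kappa^{\pm 2}$ in $\A_\tri^q$.

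The main obstacle will be the careful bookkeeping of Weyl-ordering constants on both sides, since the tensor-product presentation of a congruent stated $\bB$-tangle produces $q$-powers both from the reordering of triangle/biangle factors and from collapsing to a product inside $\A_\tri^q$. The cluster side is controlled by the $v$-commutation on $(\X_\wtri^v)_0$ together with the compatibility relation \eqref{eq:compatibility_Poisson}, which is exactly what was used to verify that $p_\tri^\ast$ is an algebra map; the Muller side is controlled by the right vertical map of \cref{thm:split_cut_compatibility}, which already builds in the same Weyl normalization at each edge-cut. Matching these two normalizations edge by edge, as $\widehat{\Psi}_\wtri$ is composed out of the $\widehat{\theta}_\kappa$, reduces to verifying the identity on a single transverse intersection at a time, which is precisely the content of \cref{thm:split_cut_compatibility} together with \cref{lem:triangle_compatible}.
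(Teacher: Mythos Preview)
Your proposal is correct and follows essentially the same route as the paper: iterate \cref{thm:split_cut_compatibility} over all interior edges to factor both composites through $\Psi_\wtri$, then identify the resulting right vertical map $\big[\bigotimes_T \Phi_T \otimes \bigotimes_\kappa \Phi_{B_\kappa}\big]$ with $p_\wtri^\ast$ via \cref{lem:triangle_compatible} on the triangles and the direct check on biangles. The paper's version is terser (it absorbs your Weyl-normalization discussion into the statement that the iterated \cref{thm:split_cut_compatibility} already yields the Weyl-normalized product on the right), but the argument is the same.
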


\begin{proof}
We repeatedly apply the compatibility of splitting/cutting maps (\cref{thm:split_cut_compatibility}) for every edge of $\tri$. Obviously we can restrict the diagram to the congruent subalgebras, and obtain the commutative diagram
\begin{equation*}
    \begin{tikzcd}
        \sSk{\Sigma}_\congr \ar[r,"\Psi_\wtri"] \ar[d,"\Phi_\Sigma"'] & \bigotimes_{T \in t(\tri)} \sSk{T}_\congr \otimes \bigotimes_{\kappa \in e_{\interior}(\tri)} \sSk{B_\kappa}_\congr  \ar[d,"{\big[\bigotimes_T \Phi_T \otimes \bigotimes_\kappa \Phi_{B_\kappa}\big]}"]\\
        \Sk{\Sigma}[\partial^{-1}] \ar[r,"\mathrm{Cut}_{\tri}"'] & \Sk{\Sigma}[\tri^{-1}].
    \end{tikzcd}
\end{equation*}
Observe that the algebra in the top-right is identified with $\X_{\wtri}^v$, and the one in the bottom-right is the same as $\A_\tri^q$. By \cref{lem:triangle_compatible}, the map $\big[\bigotimes_T \Phi_T \otimes \bigotimes_\kappa \Phi_{B_\kappa}\big]$ coincides with $p^\ast_\wtri$. 
\end{proof}

\begin{proof}[Proof of \cref{prop:trace_cut_compatible}]
Combining \cref{lem:coproduct,lem:compatible_skein} and \eqref{eq:q-trace_enhance}, we get the commutativity diagram
\begin{equation*}
    \begin{tikzcd}
     \sSk{\Sigma}_\congr \ar[r,"\mathrm{Tr}_\wtri"'] \ar[d,"\Phi_\Sigma"'] \ar[rr,bend left,"\mathrm{Tr}_\tri"] & (\X_\wtri^v)_0 \ar[dr,"p_\wtri^\ast"'] & \X_\tri^v \ar[d,"p_\tri^\ast"] \ar[l,"\delta_\wtri"]\\
    \Sk{\Sigma}[\partial^{-1}] \ar[rr,"\mathrm{Cut}_{\tri}"'] && \A_\tri^q.
    \end{tikzcd}
\end{equation*}
Then the commutativity of the outer-most square is the desired one. 
\end{proof}
Now we define the \emph{quantum duality maps} by 
\begin{align*}
    &\bI_\A:=\mathrm{Tr}_\Sigma \circ S_\A: \A_\Sigma(\bZ^\sfT) \to \cO_v(\X_\Sigma), \\
    &\bI_\X:=\mathrm{Cut}_\Sigma \circ S_\X: \X_\Sigma(\bZ^\sfT) \to \cO_q(\A_\Sigma).
\end{align*}
Here is our main theorem:

\begin{thm}\label{thm:compatibility}
For any unpunctured marked surface $\Sigma$, we have the commutative diagram
\begin{equation*}
    \begin{tikzcd}
    \A_\Sigma(\bZ^\sfT) \ar[r,"\bI_\A"] \ar[d,"\check{p}_\Sigma^\sfT"'] & \cO_v(\X_\Sigma) \ar[d,"p_\Sigma^\ast"] \\
    \X_\Sigma(\bZ^\sfT) \ar[r,"\bI_\X"'] & \cO_q(\A_\Sigma).
    \end{tikzcd}
\end{equation*}
\end{thm}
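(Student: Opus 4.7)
The plan is to paste together the two commutative squares in \eqref{eq:strategy} that have already been established in the preceding two subsections. Concretely, \cref{prop:lifting_compatible} gives the commutativity of the inner left square
\begin{equation*}
    \begin{tikzcd}
    \A_\Sigma(\bZ^\sfT) \ar[r,"S_\A"] \ar[d,"\check{p}_\Sigma^\sfT"'] & \sSk{\Sigma}_{\congr} \ar[d,"\Phi_\Sigma"] \\
    \X_\Sigma(\bZ^\sfT) \ar[r,"S_\X"'] & \Sk{\Sigma}[\partial^{-1}],
    \end{tikzcd}
\end{equation*}
while \cref{prop:trace_cut_compatible} gives the commutativity of the inner right square
\begin{equation*}
    \begin{tikzcd}
    \sSk{\Sigma}_{\congr} \ar[r,"\mathrm{Tr}_\Sigma"] \ar[d,"\Phi_\Sigma"'] & \cO_v(\X_\Sigma) \ar[d,"p_\Sigma^\ast"] \\
    \Sk{\Sigma}[\partial^{-1}] \ar[r,"\mathrm{Cut}_\Sigma"'] & \cO_q(\A_\Sigma).
    \end{tikzcd}
\end{equation*}

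I would then horizontally paste these two squares along the shared middle column $\Phi_\Sigma$ and observe that, by the definitions $\bI_\A := \mathrm{Tr}_\Sigma \circ S_\A$ and $\bI_\X := \mathrm{Cut}_\Sigma \circ S_\X$, the outer rectangle of the pasted diagram coincides with the square asserted in the theorem. Since the horizontal composite of two commutative squares is commutative in any category, and the theorem only claims commutativity as a diagram of sets, no further argument is needed at this stage.

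The real work is not here but in the two prerequisite results. \cref{prop:lifting_compatible} was established by a case analysis on the type of each weighted curve in an integral $\A$-lamination (non-peripheral loop, non-peripheral arc, peripheral arc), using the Langlands dual ensemble $\check{p}_\Sigma^\sfT$ to trade peripheral weights for pinnings and the state-clasp correspondence to match stated $-$-ends with $\bM$-shifts. \cref{prop:trace_cut_compatible} was reduced to the triangle case \cref{lem:triangle_compatible} via the split triangulation $\wtri$, and then assembled through \cref{thm:split_cut_compatibility} (splitting--cutting compatibility) together with the split ensemble map $p_\wtri^\ast$ of \cref{lem:coproduct}. Once those two compatibilities are in place, the present theorem is a formal consequence, and the only subtlety to double-check in the write-up is that one is indeed free to paste in $\mathsf{Set}$, which follows because both inner squares live in $\mathsf{Alg}$ (or in $\mathsf{Set}$, for the lifting square) and the outer commutativity is automatically preserved by the forgetful functor.
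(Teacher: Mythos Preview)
Your proposal is correct and follows exactly the paper's own proof: decompose the asserted square into the two inner squares from \cref{prop:lifting_compatible} and \cref{prop:trace_cut_compatible}, paste them along the shared column $\Phi_\Sigma$, and read off the outer rectangle using the definitions $\bI_\A=\mathrm{Tr}_\Sigma\circ S_\A$ and $\bI_\X=\mathrm{Cut}_\Sigma\circ S_\X$. The additional recap you give of how those prerequisite results were proved is accurate but not needed for the proof itself.
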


\begin{proof}
The diagram can be decomposed as 
\begin{equation*}
    \begin{tikzcd}
    \A_\Sigma(\bZ^\sfT) \ar[r,"S_\A"] \ar[d,"\check{p}_\Sigma^\sfT"'] & \overline{\mathscr{S}}_\Sigma(\bB)_{\congr} \ar[d,"\Phi_\Sigma"] \ar[r,"\mathrm{Tr}_\Sigma"] & \cO_v(\X_\Sigma) \ar[d,"p_\Sigma^\ast"] \\
    \X_\Sigma(\bZ^\sfT) \ar[r,"S_\X"'] & \Sk{\Sigma}[\partial^{-1}] \ar[r,"\mathrm{Cut}_\Sigma"'] & \cO_q(\A_\Sigma),
    \end{tikzcd}
\end{equation*}
whose commutativity follows from \cref{prop:lifting_compatible,prop:trace_cut_compatible}. 
\end{proof}

\begin{rem}[The case with punctures]\label{rem:peripheral}
Let $\Sigma$ be a marked surface with punctures. 
The definition of $\bI_\A(L)$ for an $\A$-lamination $L$ without peripheral loops is still the same as \cref{def:skein_lift_A}. 
Following \cite[Definition 3.1]{AK}, we associate the Casimir element $C_m^{-w_i} \in \cO_v(\X_\Sigma)$ to a weighted peripheral loop $(\gamma_i,w_i)$ around $m \in \bM_\circ$. Here, for each ideal triagulation $\tri$ of $\Sigma$, $C_m$ has the expression
\begin{align*}
    C_m = \bigg[ \prod_{\alpha \in e(\tri)} (X_\alpha^\tri)^{-n_\alpha} \bigg],
\end{align*}
where $n_\alpha$ is the number of half-edges of $\alpha$ incident to $m$. Then for an $\A$-lamination $L$ with peripheral loops, the element $\bI_\A(L) \in \cO_v(\X_\Sigma)$ is defined to be the (commutative) product of these elements.  

One may wonder if this construction factors through a ``skein lifting'' $S_\A$: this point will be discussed in our forthcoming paper \cite{IKp}. 
\end{rem}

The following are properties generally expected for quantum duality maps:

\begin{thm}\label{thm:lowest_terms}
Let $\Sigma$ be any marked surface. 
\begin{enumerate}
    \item For any $L \in \A_\Sigma(\bZ^\sfT)$, the quantum Laurent expression of $\bI_\A(L) \in \cO_v(\X_\Sigma)$ has the unique lowest term 
    \begin{align}\label{eq:lowest_term}
        \bigg[\prod_{\alpha \in e(\tri)}(X_\alpha^\tri)^{-\sfa_\alpha(L)} \bigg]
    \end{align}
    with respect to the polynomial grading for any ideal triangulation $\tri$.
    \item Assume that $\Sigma$ is unpunctured. Then for any $(L,\nu) \in \X_\Sigma(\bZ^\sfT)$, the quantum Laurent expression of $\bI_\X(L,\nu) \in \cO_q(\A_\Sigma)$ has the form\footnote{This property is called ``parametrized by the tropical point $(L,\nu)$'' by Qin \cite{Qin}. The monomial term is the highest term with respect to the \emph{dominance order} \cite{Qin}.}
    \begin{align}\label{eq:pointed}
        \bI_\X(L,\nu)= \bigg[\prod_{\alpha \in e(\tri)} A_\alpha^{\check{\sfx}_\alpha^\tri(L,\nu)} \bigg] \cdot F_\X^\tri(L,\nu)
    \end{align}
    for any ideal triangulation $\tri$, where $F_\X^\tri(L,\nu)$ is a quantum polynomial in $\{p_\tri^\ast X_\alpha^\tri \mid \alpha \in e(\tri)\}$. 
\end{enumerate}
\end{thm}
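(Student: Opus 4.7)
The strategy is to read off the unique lowest term of $\mathrm{Tr}_\tri(S_\A(L))$ from a state-sum expression. By the identity $\mathrm{Tr}_\wtri = \delta_\wtri \circ \mathrm{Tr}_\tri$ of \eqref{eq:q-trace_enhance}, it suffices to identify the lowest-degree summand of $\mathrm{Tr}_\wtri(S_\A(L))$ and pull it back through $\delta_\wtri$. By multiplicativity of $\mathrm{Tr}_\tri$ and the fact that all maps in sight send framed monomials to framed monomials (so Weyl-normalization scalars never shift the multi-degree in $N^\tri$), it suffices to verify the claim on each building block of $S_\A(L)$: a Chebyshev $T_w([\gamma])$ for a simple loop $\gamma$ and $[\gamma^-]^w$ for a non-peripheral arc $\gamma$. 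For a single non-peripheral arc $\gamma$ with state $-$ on both of its original endpoints, $\Psi_\wtri(\gamma^-)$ is a sum indexed by $\{+,-\}$-valued state assignments on the crossings of $\gamma$ with the interior edges of $\tri$. The ``all-$-$'' summand evaluates, in each triangle $T$ via \eqref{isom_triangle}, to a corner-arc contribution $[Z_j^T Z_k^T]$ (with $j,k$ the two edges of $T$ touched by the local endpoints) and, in each biangle $B_\kappa$ along an interior edge $\kappa$, to $Y_\kappa^{-1/2}$. The relation $\delta_\wtri(X_\kappa^\tri) = X_{\kappa'}Y_\kappa X_{\kappa''}$ dualizes on the $Z$-framings to $Z_\kappa = Y_\kappa^{-1/2}Z_{\kappa'}Z_{\kappa''}$, so the per-crossing factor $Z_{\kappa'}\cdot Y_\kappa^{-1/2}\cdot Z_{\kappa''}$ collapses to $Z_\kappa$. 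Summing over all crossings, the total exponent of $Z_\alpha$ becomes $\boldsymbol{i}(\gamma,\alpha)=2\sfa_\alpha(\gamma)$, and under $X_\alpha=Z_\alpha^{-2}$ this yields $[\prod_\alpha X_\alpha^{-\sfa_\alpha(\gamma)}]$.

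\textbf{Uniqueness.} Any summand other than all-$-$ either vanishes (via the rule $\gamma_i(-,+)\mapsto 0$) or differs from all-$-$ by flipping a nonempty set of states $-\to+$; each flip at a crossing with edge $\alpha$ replaces the local factor $[Z_\alpha]$ by $[Z_\alpha^{-1}]$ and $Y_\alpha^{-1/2}$ by $Y_\alpha^{1/2}$, strictly raising the $X_\alpha$-exponent. Hence the all-$-$ monomial is coordinate-wise strictly minimal in $N^\tri$. The same analysis for a simple loop gives lowest term $[\prod X_\alpha^{-\sfa_\alpha(\gamma)}]$. Since $T_w(x)=x^w+(\text{lower powers of }x)$, the lowest term of $\mathrm{Tr}_\tri(T_w([\gamma]))$ coincides with that of $\mathrm{Tr}_\tri([\gamma])^w$, namely $[\prod X_\alpha^{-w\sfa_\alpha(\gamma)}]$, the remaining summands producing strictly higher multi-degrees. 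Weyl-normalizing across the components of $L$ preserves the multi-degree $\lambda_0:=-\sum_\alpha \sfa_\alpha(L)\sfe_\alpha^\tri$, proving the claim.

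\textbf{Plan for Part (2).} Since $\check{p}_\Sigma^\sfT$ is a bijection for unpunctured $\Sigma$, put $L':=(\check{p}_\Sigma^\sfT)^{-1}(L,\nu)$; \cref{thm:compatibility} then gives $\bI_\X(L,\nu)=p_\Sigma^\ast(\bI_\A(L'))$. Part (1) furnishes a factorization $\bI_\A(L')=\mathbf{X}^\tri(\lambda_0)\cdot P$ with $P=1+(\text{strictly higher terms})$ a quantum polynomial in the $X_\alpha^\tri$. Because $p_\tri^\ast$ is an algebra homomorphism sending $\mathbf{X}^\tri(\lambda)\mapsto \mathbf{A}^{\!\tri}(p_\tri^\ast\lambda)$, one obtains
$$
\bI_\X(L,\nu) \;=\; \mathbf{A}^{\!\tri}(p_\tri^\ast\lambda_0)\cdot F_\X^\tri(L,\nu),\qquad F_\X^\tri(L,\nu):=p_\tri^\ast(P),
$$
and $F_\X^\tri(L,\nu)$ is, by construction, a quantum polynomial in the $p_\tri^\ast X_\alpha^\tri$. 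The Langlands-dual ensemble identity $(\check{p}_\Sigma^\sfT)^\ast \check{\sfx}_\beta^\tri = \sum_\alpha(\ve_{\beta\alpha}^\tri-m_{\beta\alpha})\sfa_\alpha$ from \cref{sec:cluster}, combined with skew-symmetry of $\ve^\tri$ and symmetry of $m$, gives $(p_\tri^\ast\lambda_0)_\beta=-\sum_\alpha p^\tri_{\alpha\beta}\sfa_\alpha(L')=\check{\sfx}_\beta^\tri(L,\nu)$, hence $\mathbf{A}^{\!\tri}(p_\tri^\ast\lambda_0)=[\prod_\beta A_\beta^{\check{\sfx}_\beta^\tri(L,\nu)}]$, as required.

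\textbf{Main obstacle.} The most delicate step is the Weyl-normalization bookkeeping underlying the identification $Z_{\kappa'}Y_\kappa^{-1/2}Z_{\kappa''}=Z_\kappa$, which is precisely what turns the naive count of two half-edges of $\alpha$ into the correct exponent $-\sfa_\alpha(\gamma)$ (rather than $-2\sfa_\alpha(\gamma)$) on the $X$-side. A secondary subtlety is Part (1) in the punctured case: peripheral loops around punctures enter through the Casimirs $C_m^{-w}$ of \cref{rem:peripheral} rather than via an arc state-sum, and matching their exponents against $-\sfa_\alpha(L)$ requires a separate direct verification, partly deferred to \cite{IKp}.
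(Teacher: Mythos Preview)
Your core strategy for Part (1)---expand $S_\A(L)$ as a state-sum via splitting, identify the all-$-$ term as the unique lowest, and note that each $-\to+$ flip multiplies by a positive $X$-monomial---matches the paper's. Two points need repair. First, routing through the split triangulation $\wtri$ forces the ill-defined symbol $Y_\kappa^{-1/2}$: a single arc is not congruent, so $\mathrm{Tr}_{B_\kappa}$ (defined only on $\sSk{B_\kappa}_\congr$) does not apply to its biangle piece. The paper sidesteps this entirely by using $\Psi_\tri$ and the triangle maps $\mathrm{Tr}_T$ directly, landing in $(\cZ_\tri^q)_{\mathrm{bl}}$ where the all-$-$ term is the honest framed monomial $[\prod_\alpha (Z_\alpha^\tri)^{\bi(L,\alpha)}]$; the $q^{1/2}$-bookkeeping you flag as the ``main obstacle'' is handled by observing that the height orders on the two copies $\alpha',\alpha''$ of a split edge are reversed, so the Weyl-normalization factors from the two sides cancel. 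Second, you omit \emph{peripheral arcs}, which occur even when $\Sigma$ is unpunctured: when such an arc carries weight $w<0$, its lift is $[\gamma^+]^{|w|}$, and the bad-arc relation $\gamma_i(-,+)\mapsto 0$ forces the all-$+$ state on its splitting, producing a single monomial with \emph{positive} $X$-exponents---consistent with $-\sfa_\alpha(L)$ since such components contribute negatively to the intersection coordinates.

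Your Part (2) has a genuine gap. You invoke \cref{thm:compatibility} to write $\bI_\X(L,\nu)=p_\Sigma^\ast(\bI_\A(L'))$ with $L'=(\check{p}_\Sigma^\sfT)^{-1}(L,\nu)$, but $\check{p}_\Sigma^\sfT$ is a bijection $\cL^a(\Sigma,\bZ)\to\cL^p(\Sigma,\bZ)=\X_\Sigma(\bZ^\sfT)$ on \emph{all} $\A$-laminations, and $L'$ generically fails to be congruent---so $\bI_\A(L')$ is not defined as an element of $\cO_v(\X_\Sigma)$, and neither Part (1) (stated for $L\in\A_\Sigma(\bZ^\sfT)$) nor \cref{thm:compatibility} applies. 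The paper's fix is to rerun the state-sum analysis for $S_\A(\widetilde{L})$ at the $(\cZ_\tri^q)_{\mathrm{bl}}$ level (where no congruence is needed) and then push the lowest term $[\prod_\kappa (Z_\kappa^\tri)^{\bi(\widetilde{L},\kappa)}]$ through the \emph{balanced} ensemble map \eqref{eq:ensemble_balanced}; your exponent computation $-\sum_\alpha p^\tri_{\alpha\beta}\sfa_\alpha(L')=\check{\sfx}_\beta^\tri(L,\nu)$ then carries over verbatim.
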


\begin{proof}
(1): Consider the element $S_\A(L) \in \sSk{\Sigma}$. First assume that $L$ has no peripheral components with negative weights. 
Given an ideal triangulation $\tri$ of $\Sigma$, the splitting map $\Psi_\tri$ expands $S_\A(L)$ into a state-sum. The unique term obtained by assigning the state $-$ to every intersection of $L$ with the edges of $\tri$ produces the asserted term \eqref{eq:lowest_term}, up to a $q^{1/2}$-factor. Here recall the assignment \eqref{isom_triangle}. To see that this $q^{1/2}$-factor is actually $1$, observe that after splitting $S_\A(L)$ along an edge $\alpha \in e(\tri)$, the resulting strands on the new edge $\alpha'$ and those on $\alpha''$ have reversed height orders to each other. See \cref{fig:splitting}. So the extra $q^{1/2}$-factors arising from their Weyl normalization (\cref{dfn_Weyl}) cancels. It shows that the image of each term after splitting under $\mathrm{Tr}_\tri$ has the Weyl-normalized form. In particular, the putative lowest term exactly has the asserted form \eqref{eq:lowest_term}.

If we replace the state $-$ with $+$ at an edge $E \in e(\tri)$, say corresponding to the variable $Z_{i+1}$, then the corresponding assignment $[Z_{i+1}Z_{i+2}]$ is replaced with $[Z_{i+1}^{-1} Z_{i+2}]$. Their monomial difference is $Z_{i+1}^{-2}=X_{i+1}$ up to a $q^{1/2}$-factor. Hence the other terms necessarily have higher polynomial degrees than the term \eqref{eq:lowest_term}, as asserted. 

Observe that a negative peripheral arc component contributes as a monomial corresponding to the state $+$ assigned to every intersection with the edges, due to the bad arc condition. 
The monomial has positive powers in $X_\alpha^\tri$, which agrees with the assertion since the component contributes negatively to the coordinates $\sfa_\alpha(L)$. 

(2): Consider the element $S_\X(L,\nu) \in \Sk{\Sigma}[\partial^{-1}]$, and its pull-back $\Phi_\Sigma^{-1}(S_\X(L,\nu)) = S_\A(\widetilde{L})$ with $\widetilde{L}:=(\check{p}_\Sigma^\sfT)^{-1}(L,\nu) \in \cL^a(\Sigma,\bZ)$. Here the equality follows from the first diagram in \cref{prop:lifting_compatible}. Given an ideal triangulation $\tri$, the splitting map $\Psi_\tri$ expands $S_\A(\widetilde{L})$ into a state-sum as in the previous paragraph. 
Then the unique lowest term is $[\prod_{\kappa \in e(\tri)} (Z^\tri_\kappa)^{\bi(\widetilde{L},\kappa)}]$, which is sent via the ensemble map \eqref{eq:ensemble_balanced} as
\begin{align*}
    p_\tri^\ast\bigg(\bigg[\prod_{\kappa \in e(\tri)} (Z^\tri_\kappa)^{\bi(\widetilde{L},\kappa)}\bigg] \bigg) = \bigg[\prod_{\kappa,\alpha \in e(\tri)} A_\alpha^{-\frac 1 2 \bi(\widetilde{L},\kappa) \ve_{\kappa\alpha}^\tri}\bigg] = \bigg[\prod_{\alpha \in e(\tri)} A_\alpha^{-\sum_{\kappa} a_\kappa(\widetilde{L}) \ve_{\kappa\alpha}^\tri}\bigg],
\end{align*}
and the exponent of $A_\alpha$ in the last term is computed as
\begin{align*}
    -\sum_{\kappa \in e(\tri)} a_\kappa(\widetilde{L}) \ve_{\kappa\alpha}^\tri = \sum_{\kappa \in e(\tri)} a_\kappa(\widetilde{L}) \ve_{\alpha\kappa}^\tri = \check{\sfx}_\alpha^\tri(\check{p}_\tri^\sfT(\widetilde{L})) = \check{\sfx}_\alpha^\tri(L,\nu).
\end{align*}
Here we used the skew-symmetry of $\ve^\tri$. Again, the other terms can be written as the lowest term multiplied by a monomial of $\X$-variables up to $q^{1/2}$-factors. The assertion is proved.  
\end{proof}

\begin{thm}\label{thm:skein_q-Poisson_isom}
Assume that $\Sigma$ is unpunctured and $|\bM_\partial| \geq 2$.
Then the quantum trace/cutting maps give isomorphisms
\begin{align*}
    \mathrm{Tr}_\Sigma: \sSk{\Sigma}_\congr \xrightarrow{\sim} \cO_v(\X_\Sigma), \quad \mathrm{Cut}_\Sigma: \Sk{\Sigma}[\partial^{-1}] \xrightarrow{\sim} \cO_q(\A_\Sigma)
\end{align*}
with $v=q^{-2}$. In particular,
the images $\bI_\A(\A_\Sigma(\bZ^\sfT)) \subset \cO_v(\X_\Sigma)$ and $\bI_\X(\X_\Sigma(\bZ^\sfT)) \subset \cO_q(\A_\Sigma)$ give $\bZ_v$- and $\bZ_q$-bases respectively.
\end{thm}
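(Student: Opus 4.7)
The plan is to handle the two isomorphism claims separately and then extract the basis statement as a consequence. The cutting-map isomorphism $\mathrm{Cut}_\Sigma:\Sk{\Sigma}[\partial^{-1}]\to \cO_q(\A_\Sigma)$ is immediate from Muller's \cite[Theorem 9.8]{Muller} (already invoked in the discussion of \eqref{eq:cutting}), which produces an isomorphism precisely under the hypothesis $|\bM_\partial|\ge 2$, so nothing further is needed on the $\A$-side.

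For the quantum trace map $\mathrm{Tr}_\Sigma$, I would split the argument into injectivity and surjectivity. For injectivity, the cleanest route uses the commutative square of \cref{thm:compatibility}: since $\Phi_\Sigma$ is an isomorphism by \cref{thm:state-clasp} and $\mathrm{Cut}_\Sigma$ is injective by the previous paragraph, the composite $p^\ast_\Sigma\circ\mathrm{Tr}_\Sigma = \mathrm{Cut}_\Sigma\circ \Phi_\Sigma$ is injective on $\sSk{\Sigma}_\congr$, forcing injectivity of $\mathrm{Tr}_\Sigma$. Alternatively, one could observe that each $\mathrm{Tr}_\tri$ factors through the injective splitting map $\overline{\Psi}_\tri$ composed with the triangle isomorphisms $\mathrm{Tr}_T$ of \eqref{eq:triangle_isom}. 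For surjectivity, I would invoke the quantum theta basis construction of Mandel--Qin \cite{MQ}: their results produce a $\bZ_v$-basis $\{\theta_L\}_{L\in \A_\Sigma(\bZ^\sfT)}$ of $\cO_v(\X_\Sigma)$, and identify each $\theta_L$ with our $\bI_\A(L)$. Hence the image $\bI_\A(\A_\Sigma(\bZ^\sfT))=\mathrm{Tr}_\Sigma(S_\A(\A_\Sigma(\bZ^\sfT)))$ spans $\cO_v(\X_\Sigma)$, yielding surjectivity of $\mathrm{Tr}_\Sigma$, and combined with injectivity the isomorphism claim follows.

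Finally, the ``In particular'' basis statements come by transport of structure. On the $\A$-side, $S_\A(\A_\Sigma(\bZ^\sfT))$ is a $\bZ_q$-basis of $\sSk{\Sigma}_\congr$ by \cref{cor:basis}, and applying the isomorphism $\mathrm{Tr}_\Sigma$ turns it into the $\bZ_v$-basis $\bI_\A(\A_\Sigma(\bZ^\sfT))$ of $\cO_v(\X_\Sigma)$. On the $\X$-side, \cref{prop:X-lamination_Muller_basis} says $S_\X$ maps $\X_\Sigma(\bZ^\sfT)$ bijectively onto the bracelets basis of $\Sk{\Sigma}[\partial^{-1}]$, and composing with the $\mathrm{Cut}_\Sigma$ isomorphism gives that $\bI_\X(\X_\Sigma(\bZ^\sfT))$ is a $\bZ_q$-basis of $\cO_q(\A_\Sigma)$.

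The main obstacle is the surjectivity step, which is not self-contained and depends on correctly matching conventions with \cite{MQ}. Specifically, one needs to confirm that Mandel--Qin's theta functions for $\cO_v(\X_\Sigma)$ are indexed by $\A_\Sigma(\bZ^\sfT)$ via precisely the tropical parametrization controlled by the leading monomial of \cref{thm:lowest_terms}(1), that the quantum parameter $v=q^{-2}$ and the Goncharov--Shen frozen-variable correction in \eqref{eq:ensemble_linear} align with the scattering-diagram data used on their side, and that the identification $\theta_L = \bI_\A(L)$ is canonical rather than ambiguous by monomial factors. Once these conventions are pinned down, the remainder of the argument is formal.
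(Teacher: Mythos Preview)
Your proposal is correct and follows essentially the same approach as the paper: Muller's \cite[Theorem 9.8]{Muller} for the $\A$-side, and the identification of $\bI_\A(L)$ with the quantum theta functions of \cite{MQ} to obtain surjectivity of $\mathrm{Tr}_\Sigma$ on the $\X$-side. The paper resolves precisely the obstacle you flag in your final paragraph by citing \cite[Section 10.4 and Theorem 10.11]{MQ} for the identification, and \cite[Proposition 0.14]{GHKK}, \cite[Theorem 1.2(5)]{DM}, and \cite[Theorem 1.3]{GS18} (existence of a cluster Donaldson--Thomas transformation) to conclude that the Full Fock--Goncharov Conjecture holds here, so that the theta functions actually form a $\bZ_v$-basis of $\cO_v(\X_\Sigma)$.
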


\begin{proof}
The assertion for $\cO_q(\A_\Sigma)$ is exactly \cite[Theorem 9.8]{Muller}. 

The proof of the assertion for $\cO_v(\X_\Sigma)$ relies on the recent result of Mandel--Qin \cite{MQ}. By comparing with their construction given in \cite[Section 10.4]{MQ}, we see that the elements $\bI_\A(L) \in \cO_v(\X_\Sigma)$ for $L \in \A_\Sigma(\bZ^\sfT)$ are precisely the quantum theta functions \cite[Theorem 10.11]{MQ}. Moreover, thanks to \cite[Proposition 0.14]{GHKK} and \cite[Theorem 1.2 (5)]{DM} (and \cite[Theorem 1.3]{GS18} for the existence of a cluster Donaldson--Thomas transformation), the Full Fock--Goncharov Conjecture holds in our case so that the quantum theta functions give an $\bZ_v$-basis of $\cO_v(\X_\Sigma)$. In particular, the elements $\bI_\A(L) =\mathrm{Tr}_\Sigma \circ S_\A(L)$ span $\cO_v(\X_\Sigma)$, hence $\mathrm{Tr}_\Sigma$ is surjective. 
\end{proof}

\begin{rem}[Relation to the Allegretti's work \cite{All} in the disk case]\label{rem:Allegretti}

In \cite{All}, Allegretti proposed a construction of quantum duality map $\A_\Sigma(\bZ^\sfT) \to \cO_v(\X_\Sigma)$ when $\Sigma$ is a marked disk purely in terms of quantum cluster varieties. Let us explain how our skein theoretic construction agrees with his one.

In his work, integral $\A$-laminations on a marked disk are defined as isotopy classes of weighted collections $\ell$ of ideal arcs (having endpoints at $\bM$) \cite[Definition 4.13]{All}. Such a collection is obtained as the negative $\bM$-shift $\ell=L^{\bM}$ of an integral $\A$-lamination $L \in \A_\Sigma(\bZ^\sfT)$ in our sense. Take an ideal triangulation $\tri=\tri_\ell$ that contains all the arcs in $\ell$ as its edges, and define \cite[Definition 5.4]{All}
\begin{align}\label{eq:duality_Allegretti}
    \widetilde{\bI}'_\A(\ell):= \mathbf{A}^{\!\tri}\bigg(\sum_{\alpha \in e(\tri)} w_\alpha(\ell)\sff_\alpha^{\tri}\bigg) = \bigg[\prod_{\alpha \in e(\tri)} A_\alpha^{w_\alpha(\ell)}\bigg] \in \cO_q(\A_\Sigma),
\end{align}
where $w_\alpha(\ell) \in \bZ$ denotes the weight of the arc in $\ell$ which is parallel to the edge $\alpha$; it is $0$ if there is no arcs in $\ell$ parallel to $\alpha$. Then it is verified that the element $\bI'_\A(\ell) \in \cO_q(\A_\Sigma)$ acturally lies in the image $p^\ast_\Sigma(\cO_v(\X_\Sigma))$ \cite[Theorem 5.6]{All}. Then define 
\begin{align*}
    \bI'_\A(\ell):=(p^\ast_\Sigma)^{-1}(\widetilde{\bI}'_\A(\ell)) \in \cO_v(\X_\Sigma).
\end{align*}
Here are some remarks:
\begin{itemize}
    \item the relation between his quantum parameters is given by $q=\omega^4$, which agrees with our relation $q=v^{-2}$ since the $\omega$-exponent in the defining relation of his quantum cluster $K_2$-torus (\cite[Definition 4.11]{All}) is $-2$ times ours. 
    \item the $\mathbf{y}$-terms appearing in the proof of \cite[Lemma 5.5]{All} corresponds to the correction by $m_{\alpha\beta}$ in our \eqref{eq:ensemble_linear}. 
\end{itemize}
Then for $\ell=L^{\bM}$ with $L \in \A_\Sigma(\bZ^\sfT)$, taking $\tri=\tri_\ell$, we compute 
\begin{align*}
    p_\Sigma^\ast (\bI_\A(L)) &= \bI_\X(p_\Sigma^\sfT(L)) & &\mbox{(by \cref{thm:compatibility})} \\
    &= \bigg[\prod_{\alpha \in e(\tri)} A_\alpha^{\check{\sfx}_\alpha^\tri(p_\Sigma^\sfT(L))}\bigg] & &\mbox{(by \cref{thm:lowest_terms} (2))}\\
    &=\bigg[\prod_{\alpha \in e(\tri)} A_\alpha^{w_\alpha(\ell)}\bigg] & &\mbox{(see \cref{fig:comparison_Allegretti})} \\
    &=p^\ast_\Sigma(\bI'_\A(\ell)). & &\mbox{(compare to \eqref{eq:duality_Allegretti})}
\end{align*}
Since $p_\Sigma^\ast$ is injective, we get $\bI_\A(L)=\bI'_\A(\ell)$. 
\end{rem}

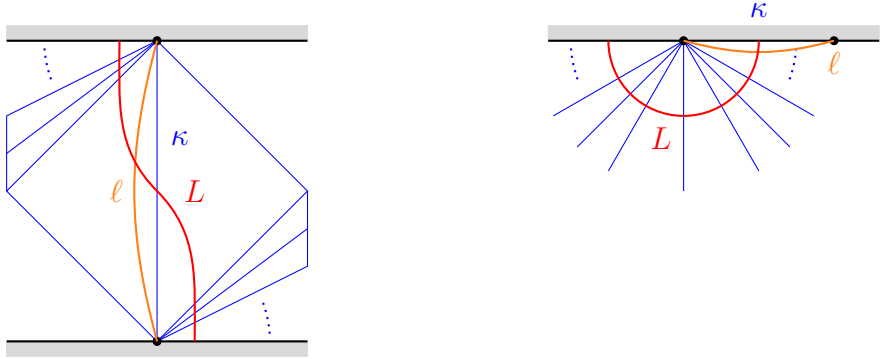
\begin{figure}[ht]
    \centering
\begin{tikzpicture}
\bline{-2,-2}{2,-2}{0.2};
\tline{-2,2}{2,2}{0.2};
\draw[blue] (-2,0) -- (0,-2) -- (2,0) -- (0,2) -- cycle;
\draw[blue] (0,2) -- (0,-2);
\foreach \i in {1,2} \draw(-2,0)++(0,\i*0.5) coordinate(A\i);
\draw[blue] (-2,0) -- (A1) -- (0,2);
\draw[blue] (A1) --(A2) -- (0,2);
\foreach \i in {1,2} \draw(2,0)++(0,-\i*0.5) coordinate(B\i);
\draw[blue] (2,0) -- (B1) -- (0,-2);
\draw[blue] (B1) --(B2) -- (0,-2);
\draw[blue,thick,dotted] (0,2)++(-160:1.5) arc(-160:-178:1.5);
\draw[blue,thick,dotted] (0,-2)++(20:1.5) arc(20:2:1.5);
\filldraw (0,2) circle(1.5pt);
\filldraw (0,-2) circle(1.5pt);
\draw[myorange,thick] (0,-2) to[bend left=15] node[midway,left]{$\ell$} (0,2);
\draw[red,thick] (0.5,-2) to[out=90,in=-45] (0,0) to[out=135,in=-90] (-0.5,2);
\node[blue] at (0.3,0.7) {$\kappa$};
\node[red] at (0.5,0) {$L$};

\begin{scope}[xshift=7cm]
\tline{-1.8,2}{2.6,2}{0.2};
\foreach \i in {-30,-45,-60,-90,-120,-135,-150} \draw[blue] (0,2) --++(\i:2);
\draw[blue,thick,dotted] (0,2)++(-160:1.5) arc(-160:-178:1.5);
\draw[blue,thick,dotted] (0,2)++(-20:1.5) arc(-20:-2:1.5);
\filldraw (0,2) circle(1.5pt);
\filldraw (2,2) circle(1.5pt);
\draw[myorange,thick] (0,2) to[bend right=15]  (2,2) node[below]{$\ell$};
\draw[red,thick] (-1,2) arc(-180:0:1);
\node[blue] at (1,2.4) {$\kappa$};
\node[red] at (-0.3,0.7) {$L$};
\end{scope}
\end{tikzpicture}
    \caption{Comparison of $L$ and $\ell$, where $\ell$ is parallel to an edge $\kappa \in e(\tri)$. In both cases, $\check{\sfx}_\alpha^\tri(p_\Sigma^\sfT(L))=\delta_{\alpha,\kappa}=w_\alpha(\ell)$ for $\alpha \in e(\tri)$.}
    \label{fig:comparison_Allegretti}
\end{figure}
\section{Skein proof of the strong positivity}\label{sec:positivity}

\subsection{Positivity of structure constants of the bracelets basis}

In the rest of the paper, we assume $\CR=\BZ[q^{\pm1/2}]$, the Laurent polynomial ring in $q^{1/2}$ over $\BZ$. 
Let $\BZ_{\geq0}$ denote the set of non-negative integers, and $\BZ_{\geq0}[q^{\pm1/2}]\subset \BZ[q^{\pm1/2}]$ the submonoid consisting of Laurent polynomials with non-negative coefficients. 
Let $\BD$ and $\BA$ denote the disk and the annulus respectively.

\begin{dfn}
Let $A$ be an $\cR$-algebra 
%over $\CR=\BZ[q^{\pm1/2}]$ 
equipped with an $\cR$-basis $\sfB$.
\begin{enumerate}
    \item A finite $\cR$-linear sum $\sum_i c_i B_i$ with $B_i \in \sfB$ is said to have {\em non-negative coefficients} if  $c_i \in \BZ_{\geq0}[q^{\pm 1/2}]$ for all $i$. 
%then we say the linear sum has {\em non-negative coefficients}.
    \item If the structure constants of $\sfB$ are non-negative, namely
    \begin{align*}
        B_1 \cdot B_2 = \sum_{B \in \sfB} c(B_1,B_2;B) B
    \end{align*}
    with $c(B_1,B_2;B) \in \BZ_{\geq0}[q^{\pm 1/2}]$ for any $B \in \sfB$, 
    %i.e. are in $\BZ_{\geq0}[q^{\pm 1/2}]$, 
    then we call $\sfB$ {\it a positive basis}. 
\end{enumerate}
\end{dfn}

After Theorem 1.3 in \cite{Le_positivity}, 
the bracelets basis $\sfB^b(\Sigma,\BM)$ (see \eqref{eq:bracelets} below) of $\Sk{\Sigma}$ has been a candidate for a positive basis. 
Toward a skein theoretic proof of the positivity of bracelets basis, we prove the following theorem. 

The following theorem corresponds to the claim (1) in Theorem \ref{thm_intro}. 
\begin{thm}\label{thm_skein}
Let $(\Sigma,\BM)$ be a marked disk with at least 3 special points or a marked annulus with exactly one special point on each boundary component. Then the bracelets basis $\sfB^b(\Sigma,\BM)$ of the Muller skein algebra $\Sk{\Sigma}$ is positive. 
\end{thm}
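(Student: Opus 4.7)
The plan is to verify positivity by a direct skein-theoretic computation of the structure constants in each of the two cases, exploiting the concrete description of the bases $\sfB^b(\BD,\bM)$ and $\sfB^b(\BA,\bM)$. Once Part~(1) is established, Part~(2) will follow: the skein liftings $S_\A, S_\X$ of Section~\ref{subsec:lifting} send tropical points to basis elements on the nose, so strong positivity of $\BI_\A = \mathrm{Tr}_\Sigma\circ S_\A$ and $\BI_\X = \mathrm{Cut}_\Sigma\circ S_\X$ reduces, via the isomorphism $\Phi_\Sigma$ and the commutative diagram of \cref{introthm:compatibility}, to positivity of the bracelets basis of the Muller skein algebra.

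For the disk case with at least three special points, the key simplification is that every essential simple loop in $\BD$ bounds an embedded disk, so by relation~(D) no nontrivial loop appears in $\Sk{\BD}$. Consequently $\sfB^b(\BD,\bM)$ coincides with the Muller (bangles) basis $\sfB(\BD,\bM)$ of Weyl-normalized simple multi-arcs (no Chebyshev substitution is needed). Given two basis elements $[\alpha], [\beta]$, I would place them in general position so that all double points lie in $\Int \Sigma^\ast$, apply (A) at each interior crossing to obtain a state-sum with coefficients $q, q^{-1} \in \BZ_{\geq 0}[q^{\pm 1/2}]$, and Weyl-normalize the resulting non-crossing arc diagrams using (C), which contributes only positive powers of $q^{\pm 1/2}$. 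Any loop components that arise must be contractible in $\BD$ and are therefore either trivial loops (contributing the scalar $-q^2-q^{-2}$, but these cannot appear from resolving two arc-diagrams in a disk) or bound a disk with a special point, in which case they vanish by~(D). So every structure constant lies in $\BZ_{\geq 0}[q^{\pm 1/2}]$.

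For the annulus $\BA$ with one special point on each boundary, let $\gamma$ denote the essential core loop. The basis $\sfB^b(\BA,\bM)$ consists of Weyl-normalized products $T_k(\gamma)\cdot[\alpha]$ where $k\geq 0$ and $\alpha$ is a simple multi-arc disjoint from $\gamma$; the arcs are classified by their endpoints together with a signed winding number around the annulus. I would split the multiplication table into three sub-cases:
\begin{enumerate}
\item \emph{bracelet $\times$ bracelet}: immediately positive by the product-to-sum identity $T_k(\gamma)\cdot T_l(\gamma)=T_{k+l}(\gamma)+T_{|k-l|}(\gamma)$, which holds since the $T_k(\gamma)$ generate a commutative subalgebra;
\item \emph{arc $\times$ arc}: reduces essentially to the disk analysis after cutting $\BA$ along an arc joining the two boundary circles, with the new feature that loops produced by resolutions wind around $\gamma$ and must be re-expanded as Chebyshev polynomials;
\item \emph{arc $\times$ bracelet}: the main remaining case.
\end{enumerate}

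The hard part will be case~(iii). Although each crossing resolution via (A) is manifestly positive, the expansion $\gamma^n = \sum_j c_{n,j}\, T_{n-2j}(\gamma)$ required to convert a state-sum back into the bracelets basis relies on the recursion $T_{m+2}=xT_{m+1}-T_m$, which carries a minus sign and so can produce apparent sign cancellations in a naive computation. The plan is to aggregate the contributions of all resolutions in a structured way --- grouping parallel strands by a $q$-binomial identity and tracking the result as a Laurent polynomial in $\gamma$ --- and then rewrite the outcome using the Chebyshev polynomials $S_n$ of the \emph{second} kind, whose coefficients are manifestly non-negative integers (the dimensions of weight spaces in $\mathrm{SL}_2(\BC)$-representations). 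As remarked in the introduction, the systematic appearance of $S_n$ in the structure constants strongly suggests a representation-theoretic origin of the positivity; the target computation should make this cancellation transparent and produce an explicit formula expressing each product in case~(iii) as a non-negative $S_n$-combination of bracelets basis elements.
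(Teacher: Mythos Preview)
Your disk argument is essentially correct and matches the paper's, with two small caveats worth tightening: you should place the two basis elements in \emph{minimal} (not merely general) position, which in a disk forces any two component arcs to meet at most once and so precludes closed loops after resolution; and relation~(D) kills a self-returning arc at a single special point, not a closed loop encircling one.  With those fixes, your resolve-and-renormalize argument is exactly what the paper does (see the computation~\eqref{disk_resolution}).

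For the annulus you have inverted the difficulty of cases (ii) and (iii).  Arc~$\times$~bracelet is the \emph{easy} case: there is a clean closed formula (\cref{formula_arc_loop}, due to L\^e)
\[
T_n(z)\cdot \alpha \;=\; q^{n}\,\tau^{n}(\alpha)\;+\;q^{-n}\,\tau^{-n}(\alpha),
\]
where $\tau$ is the Dehn twist along $z$; this is manifestly positive and involves no $S_n$'s.  The genuinely delicate case is arc~$\times$~arc: resolving the interior crossings of $\alpha\cdot\tau^{n}(\alpha)$ produces powers of the core loop $z$, and rewriting $z^k$ in the bracelets basis $\{T_j(z)\}$ uses the recursion $T_{m+2}=xT_{m+1}-T_m$ with its minus sign.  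Your sentence that case~(ii) ``reduces to the disk analysis after cutting, with the new feature that loops must be re-expanded as Chebyshev polynomials'' glosses over exactly this obstruction, and ``cutting $\BA$ along an arc'' is not how the paper proceeds.  Instead the paper introduces auxiliary two-arc diagrams $B_n$ (Figure~\ref{annulus}), proves the identity $B_n=q^{\,n-1}S_{n-1}(z)\cdot B_1$ by a direct skein recursion (\cref{formulra_B}), and then shows (\cref{formula_arcs}) that
\[
\alpha\cdot\tau^{n}(\alpha)\;=\;\sum_{i=1}^{\lfloor n/2\rfloor} q^{\,n-4i+3}\,S_{n-2i}(z)\cdot B_1\;+\;q^{-2\lceil (n+1)/2\rceil}\,\tau^{\lfloor n/2\rfloor}(\alpha)\cdot\tau^{\lceil n/2\rceil}(\alpha),
\]
after which positivity follows from \eqref{Chebyshev's}.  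So your instinct that the Chebyshev polynomials of the second kind govern the positivity is exactly right---but it belongs to case~(ii), not case~(iii), and the mechanism is a recursive identity for the $B_n$ rather than a $q$-binomial state-sum.
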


\paragraph{\textbf{(Chebyshev) bunches}}
Let $P(x)=\sum_{i=0}^n c_i x^i$ be a polynomial in $x$ with $c_i\in \CR\ (i=0,1,\dots,n)$. 
For a one-component multicurve $\al$ on $\Sigma$, 
one can consider the linear sum $P(\al)=\sum_{i=0}^n c_i \al^i\in \Sk{\Sigma}$.

Recall that the Chebyshev polynomials of the first kind are given by the following initial data and the recurrence relation: 
\begin{align}\label{Tn_recurrence}
    T_0(x)=2,\quad T_1(x)=x,\quad T_{n+2}(x)=x T_{n+1}(x)-T_{n}(x)\quad (n\geq0).
\end{align}
It is easy to see that $x^n$ for $n\in \BZ_{\geq 0}$ can be written as a linear sum of $1,T_1(x),\dots, T_n(x)$ with non-negative coefficients.

Given a one-component multicurve $\al$ on $\Sigma$, 
\begin{enumerate}
    \item Its {\it $k$-th bunch} 
    %of a multicurve $\al$ consisting of one connected component 
    is defined to be $P(\alpha)$ with $P(x)=x^k$. Diagrammatically, it is the $k$-parallel copies of $\al$ with simultaneous crossings
    %out over/underpassing information 
    at each of $\BMp$. 
    It is presented by $[\al^k]\in \sS_q(\Sigma)$ using the Weyl normalization.
    \item %For a multicurve $\al$ consisting of one connected component, 
    Its {\it $k$-th Chebyshev bunch} is defined to be $P(\alpha)$, where
    \begin{align*}
        P(x):=\begin{cases}
        x^k & \mbox{if $\al$ is an ideal arc},\\
        T_k(x) & \mbox{if $\al$ is a loop}.
        \end{cases}
    \end{align*}
    %of $\al$ by the $k$-th bunch of $\al$ if $\al$ is an ideal arc and by $T_k(\al)$ if $\al$ is a loop. 
\end{enumerate}
The Chebyshev bunch $T_k(\alpha)$ of a loop is also called a \emph{bracelet} \cite{Th}.

\begin{dfn}[bracelets basis]\label{def:bracelets}
Consider the set
\begin{align}\label{eq:bracelets}
    \sfB^b(\Sigma,\BM)=\big\{[\al_0\, T_{n_1}(\al_1)\,T_{n_2}(\al_2)\dots T_{n_k}(\al_k)]\,\big|\, k\in \bZ_{\geq 0},\ n_i\in \bZ_{\geq 1},\ \text{$\al_0,\dots,\al_k$ satisfy $(1)$--$(3)$}\big\},
\end{align}
where
\begin{enumerate}
\item $\al_0$ is a (possibly empty) simple multicurve consisting only of ideal arcs on $\Sigma$;
\item if $k \geq 1$, then $\al_1,\dots ,\al_k$ are simple loops, which are not homotopic to each other;
\item $\al_0,\al_1,\dots,\al_k$ do not intersect with each other except possibly at their endpoints.
\end{enumerate}
It is known that the set $\sfB^b(\Sigma,\BM)$ is also a basis of $\Sk{\Sigma}$, called the {\em bracelets basis}. See \cite[Section 2.3]{Le_positivity}) for more details.
\end{dfn}

Note that any element in $\sfB^b(\Sigma,\BM)$ is {\it $q^{1/2}$-equivalent} (\emph{i.e.}, equal up to a power of $q^{1/2}$ in $\sSsq(\BM)$) to a product of Chebyshev bunches in $\Sk{\Sigma}$.

In the course of proof, we use the following observations: 
\begin{itemize}
    \item[(a)] For a product of (Chebyshev) bunches $\beta_1 \beta_2\dots \beta_k$, 
    two bunches $\beta_i, \beta_j$ for $|i-j|=1$ are said to be {\it consecutive}. 
    Observe that if the product $\beta_1 \beta_2\dots \beta_k$ with the minimal crossing number has at least one crossings in $\Int \Sigma$, then by applying the relation (C), one can reorder $\beta_1 \beta_2\dots \beta_k$ so that only consecutive (Chebyshev) bunches have crossings in $\Int \Sigma$ up to $q^{1/2}$-equivalence. 
    \item[(b)] By using the relation (C) in Definition \ref{dfn_Muller} and the Reidemeister move II', 
    we have the following. 
    \begin{align}
    q^{-1}\begin{array}{c}\includegraphics[scale=0.27]{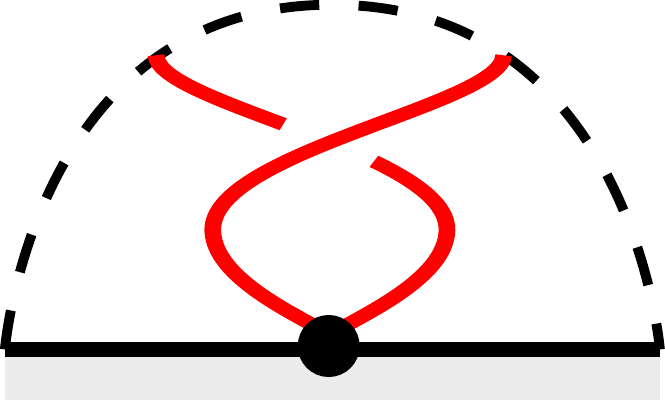}\end{array}=\begin{array}{c}\includegraphics[scale=0.27]{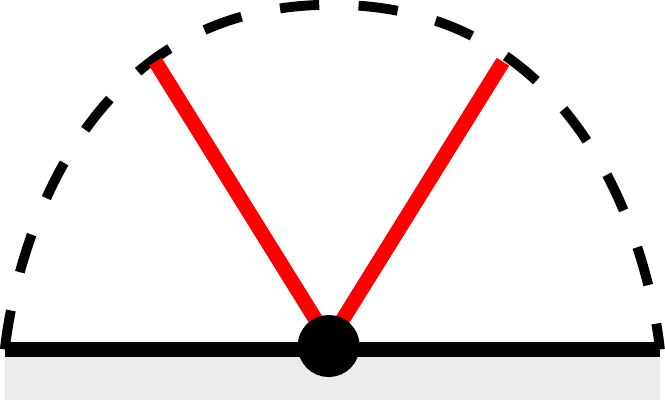}\end{array}=q\begin{array}{c}\includegraphics[scale=0.27]{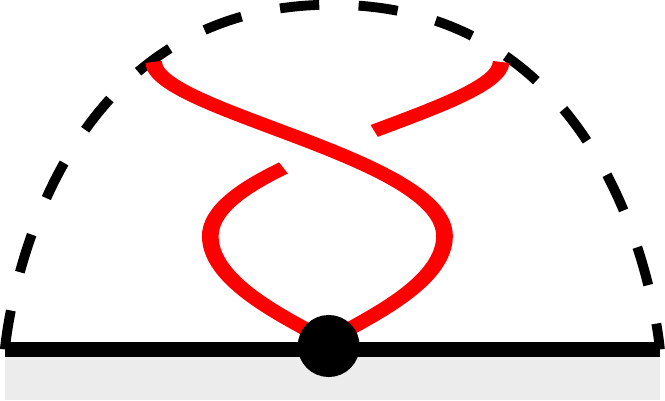}\end{array}\label{rel_nocrossing}
    \end{align}
\end{itemize}

\subsection{Positivity for marked disks}
\begin{proof}[Proof of Theorem \ref{thm_skein} for marked disks]
We consider the case that $\Sigma$ is a marked disk $(\BD,\BM)$ with at least 3 special points and $\BM_\circ =\emptyset$. 
By the observation (a), any element of $\sfB^b(\Sigma,\BM)$ is $q^{1/2}$-equivalent to a product of Chebyshev bunches. 
%To show the positivity 
%of the structure constants 
%of $\sfB^b(\Sigma,\BM)$, we consider more general cases. 
We claim that for (possibly the same) Chebyshev bunches $\al_1,\al_2,\dots,\al_k\in \sfB^b(\Sigma,\BM)$, 
%we consider 
the product $\al_1\al_2 \dots \al_k$ %and show the positivity of its structure constants, i.e. $\al_1\al_2 \dots \al_k$ 
is expanded into a linear sum of finitely many elements of $\sfB^b(\Sigma,\BM)$ with non-negative coefficients.
%in $\BZ_{\geq0}[q^{\pm1/2}]$ in the following. 
There are two possible cases:
\begin{enumerate}
    \item Any two of $\al_1,\al_2,\dots ,\al_k$ do not intersect in $\Int \Sigma$\label{positivity_disk_condi1}
    \item $\al_i$ and $\al_j$ intersect in $\Int \Sigma$ for some $i,j\in \{1,2,\dots, k\}$.\label{positivity_disk_condi2}
\end{enumerate}
In the case (\ref{positivity_disk_condi1}), the product is $q^{1/2}$-equivalent to the Weyl normalization $[\al_1\al_2\dots\al_k]$, which is an element of $\sfB^b(\Sigma,\BM)$. 
This proves the claim for the case (1). 
%This means that the product holds the positivity. % of the structure constants. 
Hence, it is sufficient to show the positivity in the case (\ref{positivity_disk_condi2}).

For the product $\al_1\al_2\dots \al_k$, we can suppose that only consecutive Chebyshev bunches $\al_i,\ \al_{i+1}$ intersect each other in $\Int \BD$ by the observation (a). 
Then, $\al_i$ and $\al_{i+1}$ intersect as depicted in the left-hand side of (\ref{disk_resolution}). 
Using the relation (A), Reidemeister moves II and III and (\ref{rel_nocrossing}), we have 
\begin{eqnarray}
%[\al_i \al_{i+1}]=
\begin{array}{c}\includegraphics[scale=0.18]{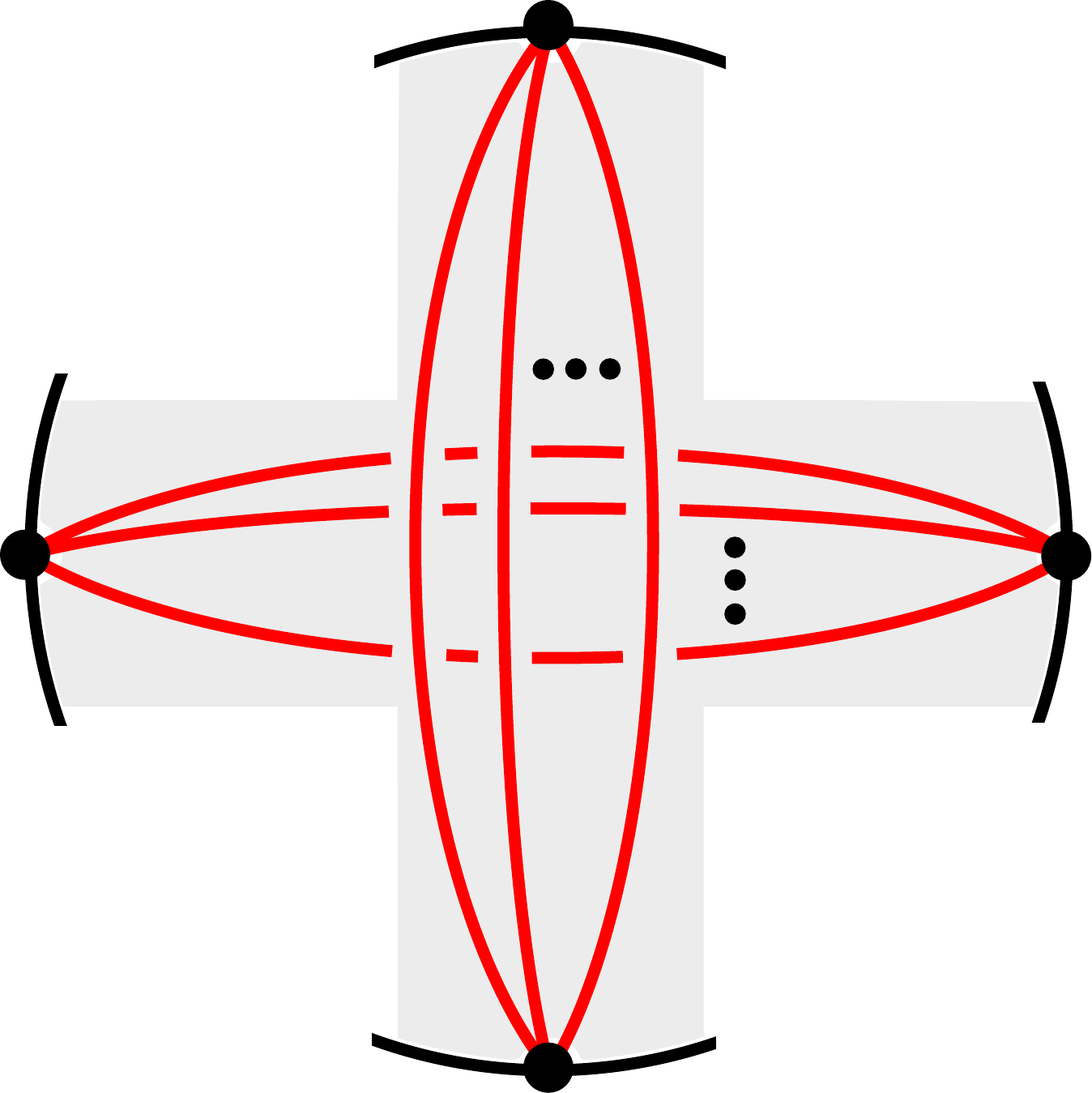}\end{array}
&=&q\begin{array}{c}\includegraphics[scale=0.18]{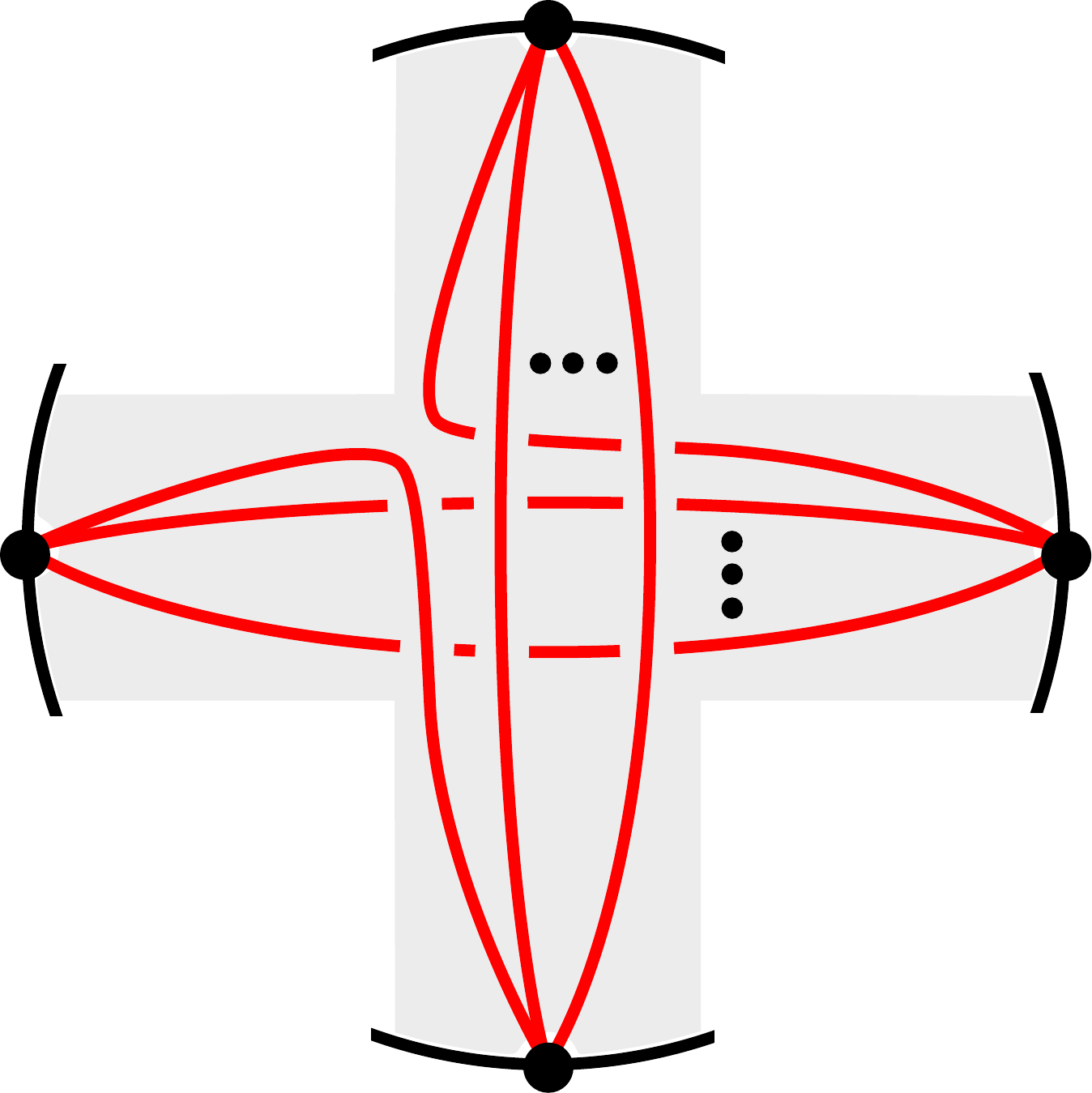}\end{array}+q^{-1}\begin{array}{c}\includegraphics[scale=0.18]{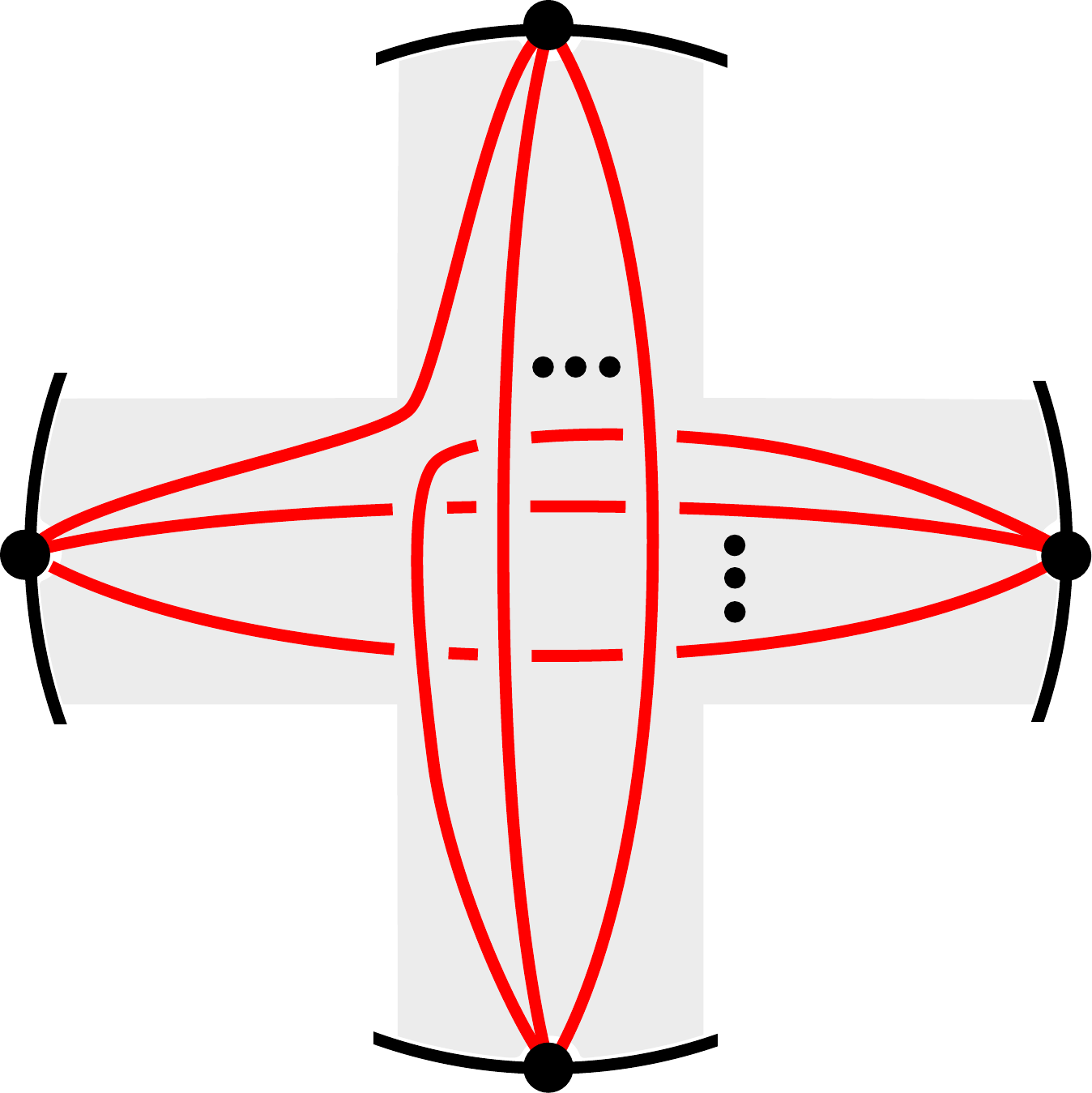}\end{array}\nonumber\\
&=&q^{1+c_1}\begin{array}{c}\includegraphics[scale=0.18]{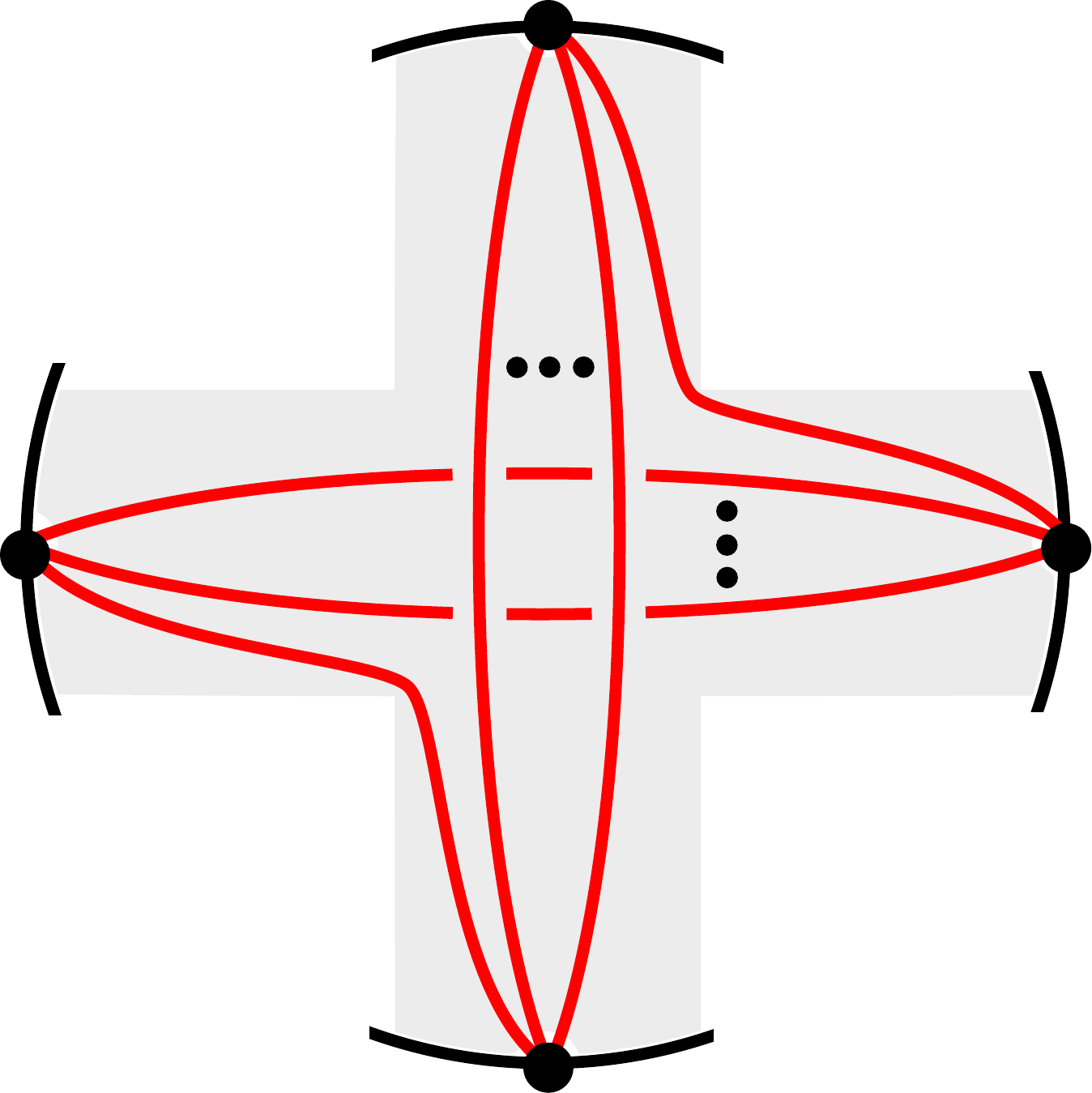}\end{array}+q^{-1+c_2}\begin{array}{c}\includegraphics[scale=0.18]{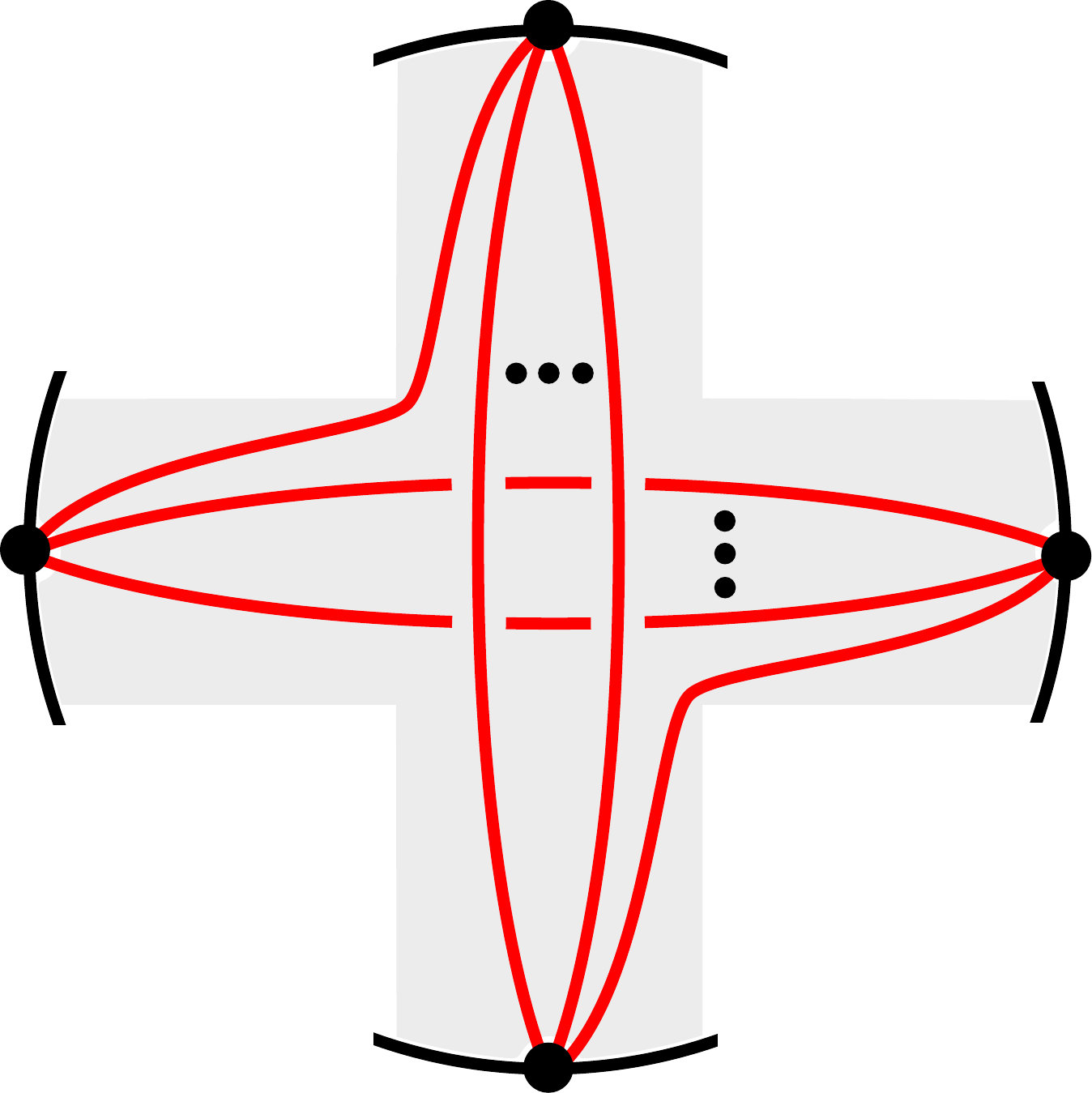}\end{array}, \label{disk_resolution}
\end{eqnarray}
where %the second equality follows by applying 
$c_j\in \BZ\ (j=1,2)$ are determined from (\ref{rel_nocrossing}). 
While we do not draw other Chebyshev bunches in (\ref{disk_resolution}), 
the computation (\ref{disk_resolution}) does not affect the other Chebyshev bunches.

From the computation (\ref{disk_resolution}), 
we have a linear sum of products of Chebyshev bunches with strictly less crossings with non-negative coefficients.  
By applying the operation repeatedly, finally we have a linear sum of products of Chebyshev bunches such that each of them has no crossings in $\Int \BD$. 
Moreover, each term is $q^{1/2}$-equivalent to its Weyl normalization. 
This implies that the structure constants of $\sfB^b(\Sigma,\BM)$ are in $\BZ_{\geq0} [q^{\pm 1/2}]$. 
\end{proof}

\begin{rem} 
\begin{enumerate}
    \item A similar formula for products of two bunches with explicit coefficients given by $q$-binomials is known. See \cite{Ya}.
    The positivity also follows from this result.
    \item Note that the computation (\ref{disk_resolution}) can be applied to general marked surfaces since it is local. 
    In this case, we regard the left-hand side of (\ref{disk_resolution}) as a part of $\BD$. 
\end{enumerate}

\end{rem}

\subsection{Positivity for the marked annulus with two special points}
In the subsection, we show the positivity for the marked annulus with exactly one marked point on each boundary component.

Chebyshev polynomials of the second kind are given by the following initial data and the same recurrence relation for those of the first kind: 
\begin{align}\label{Sn_recurrence}
    S_0(x)=1,\quad  S_1(x)=x,\quad  S_{n+2}(x)=x S_{n+1}(x)-S_{n}(x)\quad (n\geq0).
\end{align}

Note that 
\begin{align}
S_1(x)=T_1(x)=x,\quad S_{n}(x)=T_{n}(x)+S_{n-2}(x)%=\sum_{k=0}^{n}T_{n-2k}(x)
\quad (n\geq 2),\label{Chebyshev's}
\end{align}
where $T_k(x)$ is the $k$-th Chebyshev polynomial of the first kind. 
We will use the well-known relation
\begin{align}
T_m(x)\cdot T_n(x)=T_{m+n}(x)+T_{|m-n|}(x).\label{rel:Chebyshev}
\end{align}

Let $\Gamma$ be a properly embedded arc connecting the two boundary components whose endpoints are not the special points. 
Take an ideal arc $\al$ which connects the two special points and does not intersect with $\Gamma$, see the left of Figure \ref{annulus}. 
\begin{figure}[ht]\centering\includegraphics[width=250pt]{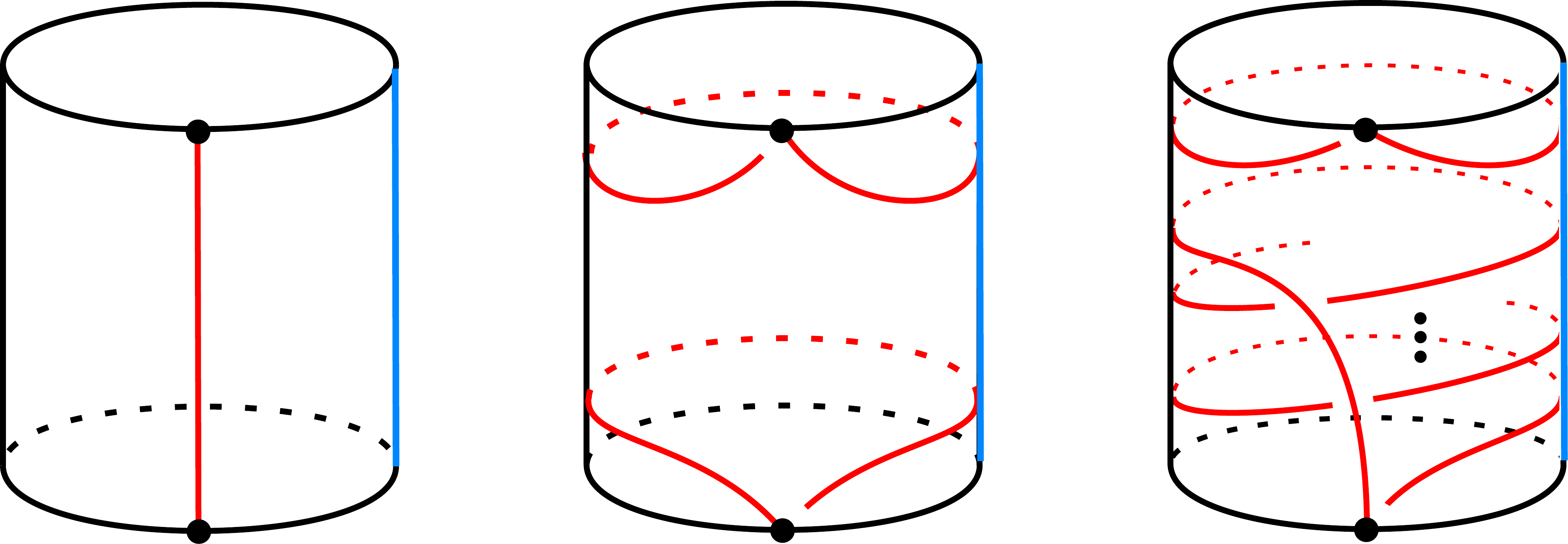}\caption{The blue arc is $\Gamma$. The middle multicurve is $B_1$.}\label{annulus}\end{figure}

Let $z$ be a parallel copy of a boundary component of $\bA$.  
Let $\tau$ denote the right-handed Dehn twist along $z$. 

\begin{prop}[{\cite[Proposition 2.3 (6)]{Le_Kauffman}}]\label{formula_arc_loop}
For $\al$ and $z$ defined as above, 
and a parallel copy $z$ of a boundary component of $\bA$, 
we have  
\begin{align}
T_n(z)\cdot \al =q^n \tau^n(\al)+q^{-n}\tau^{-n}(\al).\label{formula_annulus_loop_arc}
\end{align}
\end{prop}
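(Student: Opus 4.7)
\medskip

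The proposition is a special case of a Chebyshev product-to-twist formula in skein algebras, and the cleanest route is induction on $n$ using the recurrence \eqref{Tn_recurrence}. Here is the plan.

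First I would establish the $n=1$ case by direct skein computation. Isotope $z$ to a simple loop parallel to one boundary component and put $\alpha$ in minimal position with $z$; geometrically $\alpha$ meets $z$ in a single transverse point, so the product $z\cdot\alpha$ (with $z$ stacked over $\alpha$) has exactly one crossing in its $\bM_\partial$-tangle diagram. Applying the Kauffman bracket skein relation (A) at that crossing yields
\begin{align*}
z\cdot\alpha \;=\; q\,\sigma_+ \;+\; q^{-1}\,\sigma_-,
\end{align*}
where $\sigma_\pm$ are the two resolutions. A local inspection on the annulus shows that one resolution reconnects the strands so that the resulting arc winds once around $z$ in the positive direction (i.e.\ $\tau(\alpha)$) and the other winds once in the negative direction (i.e.\ $\tau^{-1}(\alpha)$); the correct matching of signs with $q^{\pm 1}$ is fixed by the convention that $\tau$ is the right-handed Dehn twist. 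This gives $T_1(z)\cdot\alpha = q\,\tau(\alpha)+q^{-1}\tau^{-1}(\alpha)$, and $T_0(z)\cdot\alpha = 2\alpha = \tau^0(\alpha)+\tau^{-0}(\alpha)$ trivially handles $n=0$.

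For the inductive step, assume the formula for $n-1$ and $n$. The same local crossing-resolution argument, applied to the arc $\tau^k(\alpha)$ (which still meets $z$ in a single transverse point for every $k\in\bZ$), yields the key ``ladder'' identity
\begin{align*}
z\cdot\tau^k(\alpha)\;=\;q\,\tau^{k+1}(\alpha)\;+\;q^{-1}\,\tau^{k-1}(\alpha).
\end{align*}
Then I compute
\begin{align*}
z\cdot T_n(z)\cdot\alpha
&= q^n\,z\cdot\tau^n(\alpha) + q^{-n}\,z\cdot\tau^{-n}(\alpha)\\
&= q^{n+1}\tau^{n+1}(\alpha) + q^{n-1}\tau^{n-1}(\alpha) + q^{-n+1}\tau^{-(n-1)}(\alpha) + q^{-n-1}\tau^{-(n+1)}(\alpha),
\end{align*}
and subtracting $T_{n-1}(z)\cdot\alpha = q^{n-1}\tau^{n-1}(\alpha)+q^{-(n-1)}\tau^{-(n-1)}(\alpha)$ by the inductive hypothesis kills the middle two terms, leaving $T_{n+1}(z)\cdot\alpha = q^{n+1}\tau^{n+1}(\alpha)+q^{-(n+1)}\tau^{-(n+1)}(\alpha)$, which closes the induction.

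The main obstacle is bookkeeping in the base case: one must be careful that (i) the single diagrammatic crossing of $z$ with $\tau^k(\alpha)$ has the same sign for every $k$ (so that the ladder identity holds uniformly in $k$), and (ii) the coefficients $q^{\pm 1}$ are paired with $\tau^{\pm 1}$ rather than $\tau^{\mp 1}$, which is forced by the right-handedness convention of $\tau$ and the convention for stacking $z$ over $\alpha$ in $\Sigma\times(-1,1)$. Once these conventions are fixed, the inductive calculation is formal and mirrors the classical identity $T_{n+1}(2\cos\theta) = 2\cos((n+1)\theta)$ under the substitution $z \leftrightarrow \text{trace of monodromy}$, $\tau \leftrightarrow$ shift by one.
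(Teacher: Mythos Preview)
Your inductive argument is correct. The paper itself does not supply a proof of this proposition; it simply quotes the result from \cite[Proposition 2.3 (6)]{Le_Kauffman}. The approach you outline---establishing the ladder identity $z\cdot\tau^k(\alpha)=q\,\tau^{k+1}(\alpha)+q^{-1}\tau^{k-1}(\alpha)$ by a single application of relation (A) and then running the Chebyshev recurrence \eqref{Tn_recurrence}---is exactly the standard proof and matches the argument in L\^e's paper.
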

%\begin{proof}
%From easy computations, we have 
%\begin{align*}
%&T_1(z)\cdot \al=q \tau (\al)+q\tau^{-1}(\al),\\
%&T_2(z)\cdot \al=q^2 \tau^2 (\al)+q^{-2}\tau^{-2}(\al).
%\end{align*}

%Moreover, it is easy to see that, for any $k\in\BZ$, 
%\begin{align}
%z\cdot\tau^k(\al)=q \tau^{k+1} (\al)+q\tau^{k-1}%(\al).\label{formula_one_loop_arc}
%\end{align}

%Assume that (\ref{formula_annulus_loop_arc}) holds for $n=k,k-1\geq 0$. 
%Then, from the recurrence relation \eqref{Tn_recurrence} for $T_n$, 
%we obtain the following: 
%\begin{eqnarray*}
%T_{k+1}(z)\cdot \al &=&(z T_{k}(z)-T_{k-1}(z))\cdot \al\\
%&=&z(q^k \tau^k(\al)+q^{-k}\tau^{-k}(\al))
%-(q^{k-1} \tau^{k-1}(\al)+q^{-(k-1)}\tau^{-(k-1)}(\al))\\
%&=&q^{k+1} \tau^{k+1}(\al)+q^{-(k+1)}\tau^{-(k+1)}(\al),
%\end{eqnarray*}
%where the third equality follows from (\ref{formula_one_loop_arc}). 
%\end{proof}

%Let $B_0$ be the empty diagram on $\BA$.  
For $n\in \bZ_{> 0}$, let $B_n$ be a framed $\BMp$-tangle diagram on $\BA$ consisting of two arcs such that 
\begin{enumerate}
    \item the two arcs do not intersect, 
    \item each arc connects the same boundary component, 
    \item one of them intersects once with $\Gamma$ and the other intersects $n$ times with $\Gamma$ and has $n-1$ number of self-crossings,  
\end{enumerate}
as depicted in the middle and the right of Figure \ref{annulus} with an appropriate height information at the special points.

\begin{prop}\label{formulra_B}
For any integer $n \geq 1$, 
\begin{align}\label{eq:Bn_Sn}
    B_n=q^{n-1}S_{n-1}(z)\cdot B_1.
\end{align} 
\end{prop}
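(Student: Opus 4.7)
The plan is to proceed by induction on $n$, using the Chebyshev recurrence $S_n(z) = z\,S_{n-1}(z) - S_{n-2}(z)$ (with the convention $S_{-1}(z) = 0$). The base case $n=1$ is trivial since $S_0(z) = 1$. For the inductive step it suffices to establish the single skein-theoretic recursion
\begin{align*}
    z\cdot B_n \;=\; q^{-1}\, B_{n+1} \;+\; q\, B_{n-1} \qquad (n\geq 1),
\end{align*}
where we declare $B_0 := 0$. Granting this, the inductive hypotheses $B_n = q^{n-1} S_{n-1}(z)\, B_1$ and $B_{n-1} = q^{n-2} S_{n-2}(z)\, B_1$ together with the Chebyshev recurrence give
\begin{align*}
    q^{n} S_n(z)\, B_1 \;=\; q^{n}\, z\, S_{n-1}(z)\, B_1 \;-\; q^n S_{n-2}(z)\, B_1 \;=\; q\cdot z\cdot B_n \;-\; q^2\cdot B_{n-1} \;=\; B_{n+1},
\end{align*}
completing the induction.

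The skein recursion itself will be proved by a direct local computation. I realize $z$ as an essential simple loop parallel to a boundary component, disjoint from $\Gamma$, in a fixed vertical layer above $B_n$. The loop $z$ can be arranged to cross the long arc of $B_n$ in exactly two points, away from the $n-1$ pre-existing self-crossings of the long arc and disjoint from the short arc. Resolving the two crossings of $z$ with the long arc via the Kauffman bracket relation (A), the two ``parallel'' resolutions produce diagrams isotopic to $B_{n+1}$ and $B_{n-1}$ (after a Reidemeister II cancellation near the kinks thus created), while the two ``mixed'' resolutions cancel against each other using the trivial-boundary-loop relation (D) together with the fact that the two crossings between $z$ and the long arc have opposite local signs once $z$ is isotoped to be boundary-parallel. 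An alternative route would be to invoke \cref{formula_arc_loop}: writing $B_1$ as the Weyl normalization of a short arc with its long-arc companion, and expanding $S_{n-1}(z)$ as $T_{n-1}(z)+T_{n-3}(z)+\cdots$ via \eqref{Chebyshev's}, one can match the Dehn-twisted outputs of \cref{formula_arc_loop} directly with the $B_k$'s.

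The main obstacle is the careful bookkeeping of $q^{\pm 1/2}$-powers arising from (i) the four Kauffman resolutions of the two crossings between $z$ and the long arc, (ii) the Reidemeister II moves used to eliminate the self-crossings introduced by the resolutions, which follow the modified convention \eqref{rel_nocrossing} because of the simultaneous-crossing structure at the special points, and (iii) the Weyl-normalization at the special points where the long and short arcs share endpoints. I would first verify the $n=2$ case explicitly, where both sides can be drawn in full detail and where the convention $B_0 := 0$ reduces the identity to the clean formula $B_2 = q\cdot z\cdot B_1$; this pins down the sign conventions. The general case then reduces to the same local skein move, since the $n-1$ pre-existing self-crossings of the long arc behave as spectators whose writhe contributions cancel identically between the $B_{n+1}$ and $B_{n-1}$ outputs.
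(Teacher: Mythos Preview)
Your approach is essentially the same as the paper's: both establish the recurrence $B_{n+1} = q\,z\cdot B_n - q^2 B_{n-1}$ (your formulation $z\cdot B_n = q^{-1}B_{n+1} + q\,B_{n-1}$ is just a rearrangement) and then induct using the Chebyshev recursion for $S_n$. The only difference is in how the recurrence is obtained: the paper resolves the single top self-crossing of $B_n$ directly via (A) and (B), which is cleaner than your plan of stacking $z$ over $B_n$ and resolving the two resulting crossings; the paper's route avoids the four-term expansion and the cancellation argument for the mixed terms, so the $q$-bookkeeping you flag as the main obstacle essentially disappears.
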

\begin{proof}
The case of $n=1$ is trivial. 
It is easy to see that $B_2=qz\cdot B_1$ and 
$$
B_{n}=qz\cdot B_{n-1}-q^2B_{n-2}
$$
by applying the relations (A) and (B) to the top crossing in the right of Figure \ref{annulus}. 
Suppose that the claim holds for $n=k,k+1\ (k\geq 1)$. 
Then, we have 
\begin{eqnarray*}
B_{k+2}&=&qz\cdot B_{k+1}-q^2B_{k}\\
&=&qz\cdot(q^{k}S_{k}(z)\cdot B_1)-q^2(q^{k-1}S_{k-1}(z)\cdot B_1)\\
&=&q^{k+1}S_{k+1}(z)\cdot B_1,
\end{eqnarray*}
where the last equality follows from the recurrence relation \eqref{Sn_recurrence} for $S_n(z)$. 
\end{proof}

\begin{prop}\label{formula_arcs}
The structure constants of $\al \cdot \tau^n (\al)$ with respect to $\sfB^b(\BA,\BM)$ are in $\BZ_{\geq 0}[q^{\pm 1}]$ for any $n \in \bZ$. 
\end{prop}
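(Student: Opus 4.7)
I divide the argument into cases on $|n|$. For $n=0$, $P_0 = \alpha^2$ is itself (up to a power of $q$) a bracelets basis element, so the claim is immediate. For $|n|=1$, a direct universal-cover computation shows that, in the marked annulus with exactly one special point on each boundary component, $\alpha$ and $\tau^{\pm 1}(\alpha)$ admit disjoint representatives in the interior: they cobound an annular region wrapping once around the core loop $z$. Consequently $P_{\pm 1}$ is represented by a single disjoint pair of arcs, which after Weyl normalizing via relation (C) at the two special points yields a single bracelets basis element with coefficient a non-negative power of $q$.

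For $|n| \geq 2$, the same universal-cover argument gives that $\alpha$ and $\tau^n(\alpha)$ have minimal geometric intersection $|n|-1$ in the interior (for instance, using the lifts of $\alpha$ at heights $2\pi k$ in the infinite strip $[1,2] \times \bR$). My plan is to apply the Kauffman bracket relation (A) to each of these $|n|-1$ crossings, expanding $P_n$ as a sum of $2^{|n|-1}$ terms with coefficients $q^{\pm 1}$. The crucial topological observation is that, because $\alpha$ and $\tau^n(\alpha)$ are in minimal position in $\bA$, no smoothing at any crossing can create a null-homotopic loop --- a consequence of the absence of bigons between two minimally intersecting curves on a surface. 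Consequently, relation (B), which would introduce the negative factor $-q^2-q^{-2}$, is never triggered during the expansion.

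Each resolved diagram is then a simple multicurve consisting of essential arcs (either connecting the two special points, or based-loops at them with various winding numbers) together with some parallel copies of the core loop $z$. Converting to the bracelets basis requires only (i) Weyl normalization at the special points via relation (C), contributing $q^{\pm 1/2}$-factors that cancel in pairs by the symmetry of the two endpoints of each arc, landing in $q^{\pm 1}$; and (ii) the substitution $z^k = \sum_j c_{k,j}\, T_{k-2j}(z)$ with $c_{k,j} \in \bZ_{\geq 0}$, i.e., the standard Chebyshev expansion of $x^k$ valid under the normalization $T_0 = 2$. Both operations manifestly preserve non-negativity, yielding the desired bracelets expansion of $P_n$ with coefficients in $\bZ_{\geq 0}[q^{\pm 1}]$. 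The case $n<0$ follows by the same argument, or alternatively via the orientation-reversing reflection of $\bA$ sending $\tau \leftrightarrow \tau^{-1}$ and $P_n \leftrightarrow P_{-n}$.

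The main obstacle is rigorously verifying the ``no null-homotopic loops'' property: while topologically natural for minimally intersecting curves, this demands a careful bigon-elimination argument specific to the annulus, tracking how simultaneous smoothings at successive crossings of $\alpha \cup \tau^n(\alpha)$ interact with the non-trivial $\pi_1(\bA)$. An alternative would be to use Proposition \ref{formula_arc_loop} to derive $q^n P_n + q^{-n} P_{-n} = \alpha \cdot T_n(z) \cdot \alpha$ and then extract $P_n$ via a symmetry relating it to $P_{-n}$; however, that route introduces a signed linear system whose positive resolution is more delicate than the direct smoothing analysis above.
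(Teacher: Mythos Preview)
Your central topological claim is false already for $n=4$. Label the three interior crossings $c_1,c_2,c_3$ (at heights $1/4,2/4,3/4$ along $\alpha$) and write $a_i,b_i$ for the pieces of $\alpha$ and of $\tau^4(\alpha)$ between consecutive crossings. The smoothing that connects $a_1\!-\!b_1,\,a_2\!-\!b_2$ at $c_1$, then $a_2\!-\!b_3,\,a_3\!-\!b_2$ at $c_2$, then $a_3\!-\!b_3,\,a_4\!-\!b_4$ at $c_3$, produces --- in addition to the two boundary-based arcs --- the free interior loop $a_2\ast b_3\ast a_3^{-1}\ast b_2^{-1}$. Its total winding is $0+1+0-1=0$; in the universal cover it closes up as the parallelogram with vertices $(0,\tfrac14),(0,\tfrac24),(1,\tfrac34),(1,\tfrac24)$. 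This is a null-homotopic free loop, so relation (B) fires and contributes a genuine $-q^2-q^{-2}$ factor. The ``no bigons $\Rightarrow$ no trivial components'' heuristic fails here precisely because the trivial component is a $4$-gon, not a bigon; the bigon argument only controls a \emph{single} smoothing, not the simultaneous resolution of all crossings. (Even for $n=3$ your ``all arcs essential'' claim fails: two of the four resolutions produce a contractible based arc at a special point, which vanishes via (D). That case happens to be harmless for positivity, but it already shows the reasoning is too coarse.)

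So in your scheme the expansion of $\alpha\cdot\tau^n(\alpha)$ for $n\ge 4$ contains honestly negative terms, and positivity only emerges after cancellations --- exactly the phenomenon your argument was designed to avoid. The paper sidesteps this: it resolves one crossing at a time to obtain the recursion $\alpha\cdot\tau^k(\alpha)=q\,B_{k-1}+q^{-2}\,\tau(\alpha)\cdot\tau^{k-1}(\alpha)$, iterates it to the closed form \eqref{formular:twist_arcs}, substitutes the independently proved identity $B_m=q^{m-1}S_{m-1}(z)\,B_1$ from \cref{formulra_B}, and only then passes from $S_k$ to $T_k$ via $S_m=T_m+S_{m-2}$. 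The negative term in the $S$-recursion is absorbed inside $B_m$, so the final expression \eqref{formula:arcs} is manifestly positive in the bracelets basis.
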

\begin{proof}
It is obvious that the claim holds for $n=-1,0,1$. 

For any integer $n\geq 2$, we show
\begin{align}\label{formular:twist_arcs}
\al\cdot \tau^n(\al)=\sum_{i=1}^{\lfloor n/2\rfloor}q^{3-2i} B_{n-2i+1}+q^{-2\lceil (n+1)/2\rceil}\tau^{\lfloor n/2\rfloor}(\al)
\cdot\tau^{\lceil n/2\rceil}(\al),
\end{align}
where $\lfloor\cdot \rfloor$ and $\lceil\cdot \rceil$ are the floor and the ceiling function, respectively. 

For any $k\geq 2$, it is easy to see that 
$$\al\cdot \tau^k(\al)=q\, B_{k-1}+q^{-2}\tau(\al)\cdot \tau^{k-1}(\al)$$
and this implies that (\ref{formular:twist_arcs}) holds for $n=2,3$. 

Assume that (\ref{formular:twist_arcs}) holds for $n=k-2\geq 2$. 
Then, 
\begin{eqnarray*}
\al\cdot\tau^{k}(\al)&=&q\, B_{k-1}+q^{-2}\tau(\al)\cdot\tau^{k-1}(\al)\\
&=&q\, B_{k-1}+q^{-2}\tau\big(\al\cdot\tau^{k-2}(\al)\big)\\
%&=&q\, B_{k-1}+q^{-2}\tau\big(q\,B_{k-3}+q^{-2}\tau(\al)\cdot\tau^{k-3}(\al)\big)\\
&=&q\, B_{k-1}+q^{-2}\tau\bigg(\sum_{i=1}^{\lfloor (k-2)/2\rfloor}q^{3-2i} B_{k-2i-1}+q^{-2\lceil (k-1)/2\rceil}\tau^{\lfloor (k-2)/2\rfloor}(\al)
\cdot\tau^{\lceil (k-2)/2\rceil}(\al)\bigg)\\
&=&\sum_{i=1}^{\lfloor k/2\rfloor}q^{3-2i} B_{k-2i+1}+q^{-2\lceil (k+1)/2\rceil}\tau^{\lfloor k/2\rfloor}(\al)
\cdot\tau^{\lceil k/2\rceil}(\al)
\end{eqnarray*}
where the last equality holds from $\tau (B_k)=B_k$.
By substituting \eqref{eq:Bn_Sn} to (\ref{formular:twist_arcs}), we get
\begin{align}\label{formula:arcs}
    \al\cdot \tau^n(\al)=\sum_{i=1}^{\lfloor n/2\rfloor}q^{n-4i+3} S_{n-2i}(z)\cdot B_1+q^{-2\lceil (n+1)/2\rceil}\tau^{\lfloor n/2\rfloor}(\al)
\cdot\tau^{\lceil n/2\rceil}(\al).
\end{align}
Then by writing $S_k(z)$'s in terms of $T_k(x)$'s by using \eqref{Chebyshev's} repeatedly, we see the desired positivity. 
The proof for the case $n \leq -2$ is similar.
\end{proof}

\begin{proof}[Proof of Theorem \ref{thm_skein} for marked annulus]%unpunctured marked annulus with 2 special points]
The positivity for the product of bunches with no crossings except for $\bM_\partial$ follows from the relation (\ref{rel:Chebyshev}).

It remains to consider the product of bunches with crossings in $\Int \bA$. 
We focus on the number of minimal crossings of such a product. 
%s of bunches. 
We claim that the product of two bunches is a linear sum of products of bunches with strictly less crossings, with non-negative coefficients. 
There are two possible cases for the product of two connected components with crossings in the interior of $\bA$:
\begin{itemize}
    \item the product of an arc and a Chebyshev bunch of $z$, or % connecting different boundary components. 
    \item the product of two arcs connecting different boundary components.
\end{itemize}
The positivity for the first case is ensured by Proposition \ref{formula_arc_loop}, and 
that for the second case is ensured by Propositions \ref{formula_arcs}. 
Moreover, in each term in the expansion of the product, the number of interior crossings is strictly less than that of the original product. 
Then the claim follows by induction on the number of interior crossings.

Finally taking the Weyl normalization of each term, we conclude that the positivity of $\sfB^b(\bA,\BM)$ holds. 
\end{proof}

\begin{rem}
\begin{enumerate}
\item 
An interesting observation from Theorem \ref{thm_skein}
is that Chebyshev polynomials $S_k$ of the second kind appear in (\ref{formula:arcs}) instead of Chebyshev polynomials $T_k$ of the first kind. 
A similar observation is in \cite{LTY}. 
%L\^e--Thurston--Yu \cite{LTY} also observed 
\item One can show the positivity for marked annuli with more special points. 
Although there are additional terms for such marked annuli %i.e. there are arcs connecting different special points on the same boundary 
and we omit the details here, it is not so difficult to obtain similar results. 
\end{enumerate}
\end{rem}

\begin{thm}\label{thm_S_A}Let $(\Sigma,\BM)$ be a marked disk with at least 3 special points or a marked annulus with exactly one special point on each boundary component. 
Then the images of $S_\X$ and $S_\A$ are positive.
\end{thm}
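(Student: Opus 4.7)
The plan is to derive both assertions from Theorem \ref{thm_skein} (positivity of the bracelets basis $\sfB^b(\Sigma,\bM)$) together with the state-clasp compatibility established in Proposition \ref{prop:lifting_compatible}. The structure of the argument is: first upgrade the positivity statement from $\sfB^b(\Sigma,\bM) \subset \Sk{\Sigma}$ to its boundary-localized extension $\sfB^b(\Sigma,\bM)[\partial^{-1}] \subset \Sk{\Sigma}[\partial^{-1}]$, which by Proposition \ref{prop:X-lamination_Muller_basis} is precisely the image of $S_\X$; then transfer the result to the image of $S_\A$ by pulling back along the algebra isomorphism $\Phi_\Sigma$.

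For the $\X$-side, write an arbitrary element in $S_\X(\X_\Sigma(\bZ^\sfT))$ as $[B \cdot \prod_{\al \in \bB} A_\al^{m_\al}]$, where $B \in \sfB^b(\Sigma,\bM)$ is a genuine bracelets basis element and $(m_\al)_{\al \in \bB} \in \bZ^\bB$ encodes the pinning. The key observation is that each boundary arc $A_\al$ with $\al\in \bB$ is \emph{quasi-central} in $\Sk{\Sigma}[\partial^{-1}]$: any loop or interior arc can be isotoped off $\al$, so $A_\al$ commutes with loops and their Chebyshev polynomials, and $A_\al A_\beta = q^{\pi_{\al\beta}} A_\beta A_\al$ for any ideal arc $\beta$ by \eqref{eq:skein_cluster_A_relation}. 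Hence for any $B \in \sfB^b(\Sigma,\bM)$ and any $\al \in \bB$, we have $A_\al \cdot B = q^{c_\al(B)} B \cdot A_\al$ for some integer $c_\al(B)$. Consequently, the product of two elements of $\sfB^b(\Sigma,\bM)[\partial^{-1}]$ satisfies
\begin{equation*}
    \bigl[B_1 \cdot A^{\vec m_1}\bigr]\cdot \bigl[B_2 \cdot A^{\vec m_2}\bigr] = q^{N(B_1,B_2,\vec m_1,\vec m_2)/2}\, (B_1 B_2) \cdot A^{\vec m_1 + \vec m_2},
\end{equation*}
for some integer $N$, where $A^{\vec m}:= \prod_\al A_\al^{m_\al}$. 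Theorem \ref{thm_skein} expands $B_1 B_2 = \sum_{B \in \sfB^b} c(B_1,B_2;B)\, B$ with $c(B_1,B_2;B) \in \bZ_{\geq 0}[q^{\pm 1/2}]$. Multiplying each term by $A^{\vec m_1+\vec m_2}$ and Weyl-normalizing the boundary-arc factors (which only produces further monomial $q^{1/2}$-factors) then presents the product as a non-negative $\bZ_{\geq 0}[q^{\pm 1/2}]$-combination of elements of $\sfB^b(\Sigma,\bM)[\partial^{-1}] = S_\X(\X_\Sigma(\bZ^\sfT))$. This proves positivity for $S_\X$.

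For the $\A$-side, recall from Proposition \ref{prop:lifting_compatible} the commutative square $\Phi_\Sigma \circ S_\A = S_\X \circ \check{p}_\Sigma^\sfT$, together with the fact that $\Phi_\Sigma$ is an $\cR$-algebra isomorphism (Theorem \ref{thm:state-clasp}) and $\check{p}_\Sigma^\sfT$ is bijective. Thus $\Phi_\Sigma$ induces a bijection between the images $S_\A(\A_\Sigma(\bZ^\sfT))$ and $S_\X(\X_\Sigma(\bZ^\sfT))$, intertwining the algebra structures. Since non-negativity of structure constants is preserved under algebra isomorphisms of $\cR$-algebras compatible with the chosen bases, positivity of $S_\X$ immediately implies positivity of $S_\A$.

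The only point that needs care is the bookkeeping in the second paragraph: a bracelets basis element $B$ may itself contain boundary arcs inside its simple-multicurve component $\al_0$, and after multiplying by $A^{\vec m_1+\vec m_2}$ one must re-absorb these into the underlying bracelets datum to land in the distinguished basis $\sfB^b(\Sigma,\bM)[\partial^{-1}]$. This is a purely monomial manipulation via the quasi-centrality of boundary arcs and so only contributes an overall $q^{1/2}$-monomial, not affecting the sign of coefficients. Hence I expect no serious obstacle beyond this verification.
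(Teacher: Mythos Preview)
Your approach is essentially the same as the paper's, and the $\X$-side argument is fine. However, there is a genuine gap in the $\A$-side step.

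You claim that $\Phi_\Sigma$ induces a bijection between $S_\A(\A_\Sigma(\bZ^\sfT))$ and $S_\X(\X_\Sigma(\bZ^\sfT))$. This is false: by \cref{prop:lifting_compatible} we have $\Phi_\Sigma\circ S_\A = S_\X\circ \check{p}_\Sigma^\sfT$, but the map $\check{p}_\Sigma^\sfT\colon \A_\Sigma(\bZ^\sfT)\to \X_\Sigma(\bZ^\sfT)$ is only the restriction of the bijection $\cL^a(\Sigma,\bZ)\to \cL^p(\Sigma,\bZ)$ to the \emph{congruent} sublocus $\A_\Sigma(\bZ^\sfT)=\cL^a(\Sigma,\bZ)_\congr$, which is a proper subset. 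Hence $\Phi_\Sigma$ sends $S_\A(\A_\Sigma(\bZ^\sfT))$ bijectively onto a proper subset of $S_\X(\X_\Sigma(\bZ^\sfT))$, and your transfer-by-isomorphism argument does not apply as stated: a priori the product of two elements of $S_\A(\A_\Sigma(\bZ^\sfT))$ could, after expansion in the larger positive basis $\Phi_\Sigma^{-1}(S_\X(\X_\Sigma(\bZ^\sfT)))$ of $\sSk{\Sigma}$, involve basis elements \emph{outside} the image of $S_\A$.

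The missing ingredient is \cref{cor:basis}: $S_\A(\A_\Sigma(\bZ^\sfT))$ is an $\cR$-basis of the subalgebra $\sSk{\Sigma}_\congr$. Since the product of two congruent elements stays in $\sSk{\Sigma}_\congr$, its expansion in the big positive basis $\Phi_\Sigma^{-1}(S_\X(\X_\Sigma(\bZ^\sfT)))$ can only involve those basis elements lying in $\sSk{\Sigma}_\congr$, and these are precisely the elements of $S_\A(\A_\Sigma(\bZ^\sfT))$. Positivity then follows. This is exactly how the paper closes the argument; once you insert this step your proof is complete and matches the paper's.
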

\begin{proof} 
Since boundary edges are q-commutative with other elements, Theorem \ref{thm_skein} immediately implies that the bracelets basis $\mathsf{B}^b(\Sigma,\bM)[\partial^{-1}]=S_\X(\X_\Sigma(\bZ^\sfT))$ is positive. 

The pull-back $\Phi_\Sigma^{-1}(\mathsf{B}^b(\Sigma,\bM)[\partial^{-1}])$ via the $\CR$-algebra isomorphism 
$\Phi_\Sigma: \sSk{\Sigma} \rightarrow \mathscr{S}^q_\Sigma(\bM)[\partial^{-1}]$ 
in Theorem \ref{thm:state-clasp} is also a positive basis of $\sSk{\Sigma}$.  
By \cref{prop:lifting_compatible}, the image of $S_\A$ consists of the congruent elements in $\Phi_\Sigma^{-1}(\mathsf{B}^b(\Sigma,\bM)[\partial^{-1}])$. 
Since the image of $S_\A$ is an $\cR$-basis of $\sSk{\Sigma}_{\congr}$ (\cref{cor:basis}), the product of any two elements in $S_\A(\A_\Sigma(\bZ^\sfT))$ is presented by a linear sum of elements in $S_\A(\A_\Sigma(\bZ^\sfT))$. Then by the positivity of $\Phi_\Sigma^{-1}(\mathsf{B}^b(\Sigma,\bM)[\partial^{-1}])$, we know that the coefficients in this linear sum are non-negative. 
Thus we conclude that the image of $S_\A$ is a positive basis of $\sSk{\Sigma}_{\congr}$.  
\end{proof}

Via the isomorphisms $\mathrm{Tr}_\Sigma$ and $\mathrm{Cut}_\Sigma$, Theorem \ref{thm_S_A} immediately implies Theorem \ref{thm_intro} (2).

\end{document}